\tikzset{snake it/.style={decorate, decoration=snake}}
\newtheorem{theorem}{Theorem}[section]
\theoremstyle{definition}
\newtheorem{proposition}[theorem]{Proposition}
\newtheorem{lemma}[theorem]{Lemma}
\newtheorem{definition}[theorem]{Definition}
\newtheorem{remark}[theorem]{Remark}
\newtheorem{corollary}[theorem]{Corollary}
\newtheorem{example}[theorem]{Example}
\def\SG#1{\textcolor[rgb]{1.00,0.00,0.00}{[SG:#1]}}
\def\MS#1{\textcolor[rgb]{1.00,0.00,0.00}{[MS:#1]}}
\def\CW#1{\textcolor[rgb]{1.00,0.00,0.00}{[CW:#1]}}
\def\red#1{\textcolor[rgb]{1.00,0.00,0.00}{[#1]}} 
\def\green#1{\textcolor[rgb]{0.00,0.70,0.30}{[#1]}} 
\def\BN{\mathbb N}
\def\BA{\mathbb A}
\def\BB{\mathbb B}
\def\BZ{\mathbb Z}
\def\BQ{\mathbb Q}
\def\BR{\mathbb R}
\def\BC{\mathbb C}
\def\BK{\mathbb K}
\def\BW{\mathbb W}
\def\BH{\mathbb H}
\def\BT{\mathbb T}
\def\calF{\mathcal F}
\def\calW{\mathcal W}
\def\calA{\mathcal A}
\def\calI{\mathcal I}
\def\calK{\mathcal K}
\def\calD{\mathcal D}
\def\calC{\mathcal C}
\def\calE{\mathcal E}
\def\calT{\mathcal T}
\def\calP{\mathcal P}
\def\calS{\mathcal S}
\def\calB{\mathcal B}
\def\calG{\mathcal G}
\def\calX{\mathcal X}
\def\calM{\mathcal M}
\def\calL{\mathcal L}
\def\calV{\mathcal V}
\def\calU{\mathcal U}
\def\calH{\mathcal H}
\def\calJ{\mathcal J}
\def\calN{\mathcal N}
\def\calZ{\mathcal Z}
\def\s{\sigma}
\def\la{\langle}
\def\ra{\rangle}
\def\we{\wedge}
\def\ti{\widetilde}
\def\SL{\mathrm{SL}}
\def\longto{\longrightarrow}
\def\pt{\partial}
\def\ID{I_{\Delta}}
\newcommand\Res[1]{\,\underset{#1}{\mathrm{Res}\,}}
\def\Vol{\mathrm{Vol}}
\def\a{\alpha}
\def\b{\beta}
\def\g{\gamma}
\def\d{\delta}
\def\ve{\varepsilon}
\def\th{\theta}
\def\coeff{\mathrm{coeff}}
\def\PSL{\mathrm{PSL}}
\def\Jac{\mathrm{Jac}}
\def\diagonal{\mathrm{diag}}
\def\AK{\mathrm{AK}}
\def\vlon{v_{\mathrm{lon}}}
\def\Re{\mathrm{Re}}
\def\Im{\mathrm{Im}}
\def\sgn{\mathrm{sgn}}
\def\be{\begin{equation}}
\def\ee{\end{equation}}
\def\ID{I_{\Delta}}
\def\IKD{I^{\Delta}}
\def\Ipre{I^{\mathrm{pre}}}
\def\Ibal{I^{\mathrm{bal}}}
\def\Abar{\overline{A}}
\def\Bbar{\overline{B}}
\def\Cbar{\overline{C}}
\def\emu{e_{\mu}}
\def\elambda{e_{\lambda}}
\def\cxymatrix#1{\xy*[c]\xybox{\xymatrix#1}\endxy}
\def\myu{\mathsf{u}}
\def\myv{\mathsf{v}}
\def\myfun{\eta}
\def\fourier{\mathsf{F}}
\def\mypos{\mathsf{q}}
\def\mymom{\mathsf{p}}
\def\mymu{\mu}
\def\myh{\mathsf{h}}
\def\myz{\mathsf{z}}
\def\myH{\mathsf{H}}
\def\myZ{\mathsf{Z}}
\def\myi{\mathsf{J}}
\def\Bor{\mathcal{B}}
\def\Lap{\mathcal{L}}
\newcommand{\im}{\mathsf{i}}
\newcommand{\fad}{\operatorname{\Phi}_{\mathsf{b}}}
\newcommand{\poc}[2]{\left(#1\right)_{#2}}
\def\bb{\mathsf{b}}
\def\cb{c_{\mathsf{b}}}
\def\ind{\mathrm{ind}}
\def\phih{\widehat{\phi}}
\def\psih{\widehat{\psi}}
\def\Bh{\widehat{B}}
\def\vth{\vartheta}
\def\KLV{\mathrm{KLV}}
\def\z{\zeta}
\def\DJ{\mathrm{DJ}}
\def\DH{\mathrm{DH}}
\def\DK{\mathrm{DJ}}
\def\J{\mathrm{J}}
\def\H{\mathrm{H}}
\def\Zhat{\widehat{\BZ[q]}}
\def\GL{\mathrm{GL}}
\def\e{\mathbf e}  
\def\hol{\mathrm{hol}}
\def\mod{\mathrm{mod}}
\def\diag{\mathrm{diag}}
\def\Av{\mathrm{Av}}
\def\Om{\Omega}
\def\rind{\rho}
\def\sma#1#2#3#4{\bigl(\smallmatrix#1&#2\\#3&#4\endsmallmatrix\bigr)} 
\def\mat#1#2#3#4{\begin{pmatrix}#1&#2\\#3&#4\end{pmatrix}} 
\def\MM{\overset{\longrightarrow}{M}}
\def\comp{\mathrm{comp}}
\def\Sol{\mathrm{Sol}}
\def\Ker{\mathrm{Ker}}
\def\vM{\accentset{\longrightarrow}{M}}
\def\hb{\hbar}
\def\vphi{\varphi}
\def\geom{\mathrm{geom}}
\def\mbfQ{\mathbf Q}
\def\tz{\hat z}
\def\tdelta{\hat \delta}
\def\llangle{\left\langle\!\left\langle}
\def\rrangle{\right\rangle\!\right\rangle}
\newcommand{\om}{\omega}
\newcommand{\prin}[2]{\left[#2\right]_{#1}}
\newcommand{\hev}[2]{\operatorname{\theta}_{#1}\!\left(#2\right)}
\newcommand{\bea}{\begin{equation}\begin{aligned}}
\newcommand{\eea}{\end{aligned}\end{equation}}
\renewcommand{\=}{\;=\;}
\newcommand{\eps}{\varepsilon}
\newcommand{\tq}{\widetilde q}
\newcommand{\tx}{\widetilde x}
\newcommand{\R}{\mathbb{R}}
\newcommand{\Q}{\mathbb{Q}}
\newcommand{\C}{\mathbb{C}}
\newcommand{\Hh}{\mathbb{H}}
\newcommand{\Z}{\mathbb{Z}}
\newcommand{\Li}{\operatorname{Li}}
\newcommand{\hroot}{\hbar^{\frac{1}{2}}}
\newcommand{\hsqrt}{\hbar^{\frac{1}{2}}}
\newcommand{\ezh}{e^{z\hbar^{\frac 12}}}
\def\CW#1{$\spadesuit\spadesuit$ \textcolor{red}{CW: \emph{#1}}
  $\spadesuit\spadesuit$}
\def\VF#1{$\spadesuit\spadesuit$ \textcolor{red}{VF: \emph{#1}}
  $\spadesuit\spadesuit$}
\renewcommand\thepart{\@Roman\c@part}%
\renewcommand\part{%
   \if@noskipsec \leavevmode \fi
   \par
   \addvspace{6.7ex}%
   \@afterindentfalse
   \secdef\@part\@spart}
\def\@part[#1]#2{%
    \ifnum \c@secnumdepth >\m@ne
      \refstepcounter{part}%
      \addcontentsline{toc}{part}{Part~\thepart.\ #1}%
    \else
      \addcontentsline{toc}{part}{#1}%
    \fi
    {\parindent \z@ \raggedright
     \interlinepenalty \@M
     \normalfont
     \ifnum \c@secnumdepth >\m@ne
       \centering\large\scshape \partname~\thepart.%
       \hspace{1ex}%
     \fi%
     \large\scshape #2%
     \markboth{}{}\par}%
    \nobreak
    \vskip 4.7ex
    \@afterheading}
  \def\@spart#1{
  \refstepcounter{part}%
  \addcontentsline{toc}{part}{#1}%
    {\parindent \z@ \raggedright
     \interlinepenalty \@M
     \normalfont
     \centering\large\scshape #1\par}%
     \nobreak
     \vskip 4.7ex
     \@afterheading}
\renewcommand*\l@part[2]{%
  \ifnum \c@tocdepth >-2\relax
    \addpenalty\@secpenalty
    \addvspace{0.75em \@plus\p@}%
    \begingroup
      \parindent \z@ \rightskip \@pnumwidth
      \parfillskip -\@pnumwidth
      {\leavevmode
       \normalsize \bfseries #1\hfil \hb@xt@\@pnumwidth{\hss #2}}\par
       \nobreak
       \if@compatibility
         \global\@nobreaktrue
         \everypar{\global\@nobreakfalse\everypar{}}%
      \fi
    \endgroup
  \fi}
\def\l@subsection{\@tocline{2}{0pt}{2pc}{6pc}{}}
\begin{document}
\title[Summability for State Integrals of hyperbolic knots]{
 Summability for State Integrals of hyperbolic knots}
\author{Veronica Fantini}
\address{Institut des Hautes Études Scientifiques \\
          \newline
         {\tt \url{https://sites.google.com/view/vfantini/home-page}}}
\email{fantini@ihes.fr}
\author{Campbell Wheeler}
\address{Institut des Hautes Études Scientifiques \\
          \newline
         {\tt \url{https://www.ihes.fr/~wheeler/}}}
\email{wheeler@ihes.fr}
\thanks{
  {\em Key words and phrases}:
  3-manifolds,
  3d index,
  asymptotic expansions,
  Borel transform,
  complex Chern--Simons theory,
  Faddeev's quantum dilogarithm,
  hyperbolic geometry,
  ideal triangulations,
  Kashaev invariant,
  knots,
  Laplace transform,
  Neumann-Zagier data,
  perturbation theory,
  resurgence,
  Stokes constants,
  Stokes phenomenon,
  summability,
  Andersen--Kashaev's state-integrals,
  steepest descent,
  Teichm\"uller TQFT,
  thimble integrals,
  volume conjecture.
}

\date{\today }
\dedicatory{\it{To Maxim Kontsevich, on the occasion of his 60th birthday}}

\begin{abstract}
  We prove conjectures of Garoufalidis-Gu-Mariño~\cite{GGM:I} that perturbative series associated with the hyperbolic knots $4_1$ and $5_2$ are resurgent and Borel summable. In the process, we give an algorithm that can be used to explicitly compute the Borel--Laplace resummation as a combination of state integrals of Andersen--Kashaev~\cite{AK:I}.
  This gives a complete description of the resurgent structure in these examples and allows for explicit computations of Stokes constants.
\end{abstract}

\maketitle
{
\footnotesize
\tableofcontents
}

\section{Introduction}

The volume of a hyperbolic knot $K\subset S^3$ has a natural perturbative deformation constructed from the quantum dilogarithm~\cite{Hikami:stateint}. This deformation is a formal power series
\be
  \Phi_{K}(\hbar)
  \=
  \exp\Big(\frac{\mathrm{Vol}(S^3\backslash K)}{\hbar}\Big)\frac{\hbar^{3/2}}{\sqrt{\delta}}(1+A_1\hbar+\cdots)
\ee
where $\delta$ and $A_k$ are elements in the trace field of $K$. This formal power series $\Phi_{K}$ is conjectured to agree to all orders with the asymptotic series of the Kashaev invariant of $K$---expected to exist as part of Kashaev's volume conjecture~\cite{Kashaev:VC}. Independent of Kashaev's volume conjecture, the series $\Phi_{K}$ can be explicitly described in terms of Neumann--Zagier data~\cite{DimGar:QC} and proved to be a topological invariant~\cite{GSW}. This paper is concerned with the analytic properties of $\Phi_{K}$.

\medskip

It was conjecture by Garoufalidis~\cite{Gar:resCS} that $\Phi_{K}$ is resurgent, which means that its Borel transform has endless analytic continuation~\cite{EcalleI}. The singularities of the Borel transform are expected to be located at the values of the Chern--Simons functional at parabolic $\SL_{2}(\BC)$-flat connections of the knot complement. While there have been many examples of resurgent series associated to non-hyperbolic knots~\cite{Zagier:Vas,CG:KZser}, there has been no proof of resurgence of the asymptotic series $\Phi_{K}$ of any hyperbolic knot.

\medskip

A step further in the study of the analytic properties of $\Phi_{K}$ is to address its summability. This requires the existence of analytic functions with prescribed asymptotics. By refining Hikami's original approach~\cite{Hikami:stateint}, Andersen--Kashaev~\cite{AK:I} defined a convergent integral of products of Faddeev's quantum dilogarithm associated to certain triangulations. These integrals are called state integrals as they give a continuous analogue of state sums. Combining ideas from quantum modularity~\cite{GZ:RQMOD} and numerical Borel--Padé--Laplace resummation, Garoufalidis--Gu--Mariño~\cite{GGM:I} gave precise conjectures for the Borel--Laplace resummation of $\Phi_{K}$ for the two simplest hyperbolic knots $4_1$ and $5_2$. The main result of this paper proves their conjectures for these examples, and it can be summarised in the following theorem.

\begin{theorem}\label{thm:4152}
The series $\Phi_{4_1}$ and $\Phi_{5_2}$ are Borel--Laplace summable and their resummations are equal to combinations of state integrals.
\end{theorem}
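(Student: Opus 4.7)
The plan is to realise the Borel--Laplace resummation of $\Phi_K$ analytically through the Andersen--Kashaev state integral $I_K(\bb)$ and to extract it via a Lefschetz thimble decomposition. For each of $K=4_1,5_2$ we begin with the explicit ideal triangulation whose AK state integral is a convergent integral of a product of Faddeev quantum dilogarithms $\fad$ over a shifted real cycle in $\BC^g$. The first step is to enumerate the critical points of the associated potential $V$: these correspond to boundary-parabolic $\SL_2(\BC)$-flat connections on $S^3\setminus K$, and for $4_1$ and $5_2$ are small in number, with a distinguished geometric saddle $\sigma_\geom$ corresponding to the complete hyperbolic structure.

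Next I would deform the integration cycle of $I_K(\bb)$ into an integer linear combination of steepest descent thimbles $\calJ_\sigma$ attached to these critical points. Using that $\log\fad(z)$ has leading quasi-classical behaviour $\tfrac{1}{2\pi\im\bb^2}\mathrm{Li}_2(-e^z)$, this yields
\begin{equation*}
    I_K(\bb) \= \sum_\sigma n_\sigma J_\sigma(\bb), \qquad J_\sigma(\bb) \= \int_{\calJ_\sigma}\prod_i\fad(z_i)\,dz,
\end{equation*}
with explicit intersection numbers $n_\sigma\in\BZ$. A similar analysis carried out on the modular partners of the AK state integral, obtained by varying the triangulation and the choice of shift of the real cycle as in \cite{GGM:I}, gives enough thimble integrals to form a basis of the span of the $J_\sigma$, and the resulting linear system can be inverted to recover each individual $J_\sigma$ as a $\BZ$-linear combination of state integrals.

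The core analytic step is to prove that the geometric thimble integral $J_{\sigma_\geom}(\bb)$ is the Borel--Laplace sum of $\Phi_K$ up to an explicit prefactor. I would combine Faddeev's integral representation of $\fad$ with a uniform stationary phase estimate along $\calJ_{\sigma_\geom}$ to establish Gevrey-$1$ bounds on the tail of the asymptotic expansion. This yields both convergence of the Borel transform $\calB\Phi_K$ in a disc and its analytic continuation along the positive real axis; a Watson-type estimate on the Laplace integral then identifies $\calL\,\calB\,\Phi_K$ with $J_{\sigma_\geom}(\bb)$. The matching of the leading stationary-phase data with the prefactor of $\Phi_K$ from \cite{GSW} is a direct Neumann--Zagier computation in dimension two for $4_1$ and dimension three for $5_2$.

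The main obstacle is the global thimble geometry coupled with the uniform Gevrey control. The potential $V$ lives on a non-compact multi-dimensional complex variety with logarithmic singularities coming from the poles of $\log\fad$, so proving that the deformation of the original cycle into the sum of thimbles is well-defined requires controlling the integrand on the boundary and ruling out contributions escaping to infinity. For $5_2$, whose trace field has degree three, the non-real critical points introduce genuine Stokes phenomena that must be tracked through the deformation, and the inversion of the linear system expressing $J_{\sigma_\geom}$ in terms of state integrals depends sensitively on the combinatorics of those Stokes data. Finally, the Gevrey-$1$ bounds require sharper estimates on $\fad$ than those typically used in stationary phase, since the relevant thimbles pass close to the singular locus of $V$.
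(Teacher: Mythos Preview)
Your plan has the right ingredients but inverts the direction in which the key deformation must be run, and this is not a cosmetic choice: it is exactly the obstruction the paper has to work around. You propose to deform the original Andersen--Kashaev cycle into a $\BZ$-combination of thimbles $\calJ_\sigma$ and then invert to express each $J_\sigma$ as a combination of state integrals. The difficulty is that the thimble $\calC_{p_0,\vartheta}$ lives on the Riemann surface $\Sigma$ of the dilogarithm potential $V$, whereas the state integral contours $\calJ_{\ell,\tau}$ live in a single copy of $\BC$; the integrand $\Phi((z-\ell)\tau;\tau)^B$ is not a function on $\Sigma$, and asymptotically it selects a particular branch of the dilogarithm (Theorem~\ref{thm:fad.asymp}). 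So there is no single state integral whose contour can simply be pushed onto a given thimble, and the linear-algebra inversion you describe does not have a well-posed target without first solving this branching problem.

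What the paper does instead is constructive and one-dimensional. It realises both $4_1$ and $5_2$ as the one-variable $(A,B)$-family (with $(A,B)=(1,2)$ and $(2,3)$), so there is no multi-dimensional thimble geometry to control. It then builds, by an explicit terminating algorithm, a finite $\BZ[q^{\pm}]$-combination of state integrals $\calI_{m,\ell}$ that tracks the thimble piecewise: on each segment of $\calC_{p_0,\vartheta}$ between crossings of $\BZ+\tau^{-1}\BR$, one uses the $\calI_{m,\ell}$ whose branch of $D_\theta$ matches that segment, and the leftover straight-line ``tails'' are themselves expanded into further state integrals on shifted sheets. The crucial observation making this work is that different $\calI_{m,\ell}$ can share the same leading asymptotics on overlapping regions (via the quasi-periodicity $\Phi(z-\tau;\tau)=(1-\e(z))\Phi(z;\tau)$ and Equation~\eqref{eq:alg-tails}), so the patches glue. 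Termination follows because each iteration pushes the tail base points strictly outward in a partial order, eventually leaving a compact set where their class vanishes. Once this combination $\calI_{p_0,V_0,\vartheta}$ is in hand, the uniform Gevrey-1 asymptotics over a sector of opening $>\pi$ follow from Theorem~\ref{thm:fad.asymp} and stationary phase, and Nevanlinna's theorem identifies it with the Borel--Laplace sum of $\Phi_\Xi$. Your Gevrey estimates and Watson-type argument are the right endgame, but they need this branching-aware construction as input rather than a direct cycle deformation.
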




The method that we use to prove these conjectures seems easily generalisable and will likely lead to many resurgence and summability results for a wide class of asymptotic series associated to $q$-hypergeometric functions. This would have a variety of applications in a number of areas such as topological strings~\cite{GM:tsres}, quantum $K$-theory~\cite{GivLee:QK}, and complex Chern--Simons theory~\cite{GGM:I}. Hopefully, this can also lend insights to the infinite dimensional approach considered in~\cite{KS:floer}.

\subsection{Deforming the volume of a three-manifold}
In this paper, we will consider a two parameter family of examples of asymptotic series similar to those that would come from a knot. These examples were considered in~\cite{GK:qser}. In particular, for some $A,B\in\BZ_{>0}$ with~$A\neq B$, we will consider the function
\be\label{eq:V}
  V:\Sigma\rightarrow\BC/\BZ
  \qquad\text{such that}\qquad
  V(z,m)\=B\frac{\Li_{2}(\e(z))}{(2\pi i)^2}+\frac{B}{24}+\frac{A}{2}z(z+1)+mz\,,
\ee
where $\e(z)=\exp(2\pi iz)$ and $\Sigma$ is the associated Riemann surface. In particular, $\Sigma$ is defined by taking points $p=(z,m)$ where $z\in\BC\backslash(\BZ-i\BR)$ and $m\in\BZ$ with the equivalences 
\be\label{eq:surface}
\begin{aligned}
(k+0+iy,m)&\sim(k-0+iy,m+B)\qquad\text{for }y<0\,,\;k\in\BZ\,,\\
(z,m+A)&\sim(z+1,m)\,.
\end{aligned}
\ee
Given the second equivalence relation, we can always represent points $p\in\Sigma$ as $p=(z,m)$ with $m=0,\cdots,A-1$. The critical points of $V$ are in bijective correspondence with the roots of the polynomial
\be
  P(x)\=(-x)^A-(1-x)^B\,.
\ee
Indeed, to each root $P(x_0)=0$ the point
\be\label{eq:crit.pts}
  p_0
  \=
  (z_0,m_0)
  \=
  \Big(\frac{\log(x_0)}{2\pi i},\frac{B}{2\pi i}\log(1-x_0)-A\frac{\log(x_0)}{2\pi i}-\frac{A}{2}\Big)
\ee
is a critical point of $V$. These roots are all distinct and therefore the critical points are non-degenerate (see Appendix~\ref{app:crit.pts}). Notice that all ambiguities in the logarithms lead to equivalent points in $\Sigma$. The function $V$ computes the volumes of the $4_1$ and $5_2$ knots with the parameters $(A,B)=(1,2)$ and $(2,3)$, respectively. Indeed, we find that $(z_1,m_1)=(-1/6,0)$ and $(z_2,m_2)=(-5/6,0)$ are the two critical points when $A=1$ and $B=2$ and these have volumes
\be\label{eq:41.crit.val}
  V(-1/6,0)\=0.051418\cdots i\,,
  \qquad
  V(-5/6,0)\=-0.051418\cdots i\,.
\ee
We will denote the collection $(A,B,p_0)$ by $\Xi$. Let $\mathcal{V}\subset\BC$ denote the set of critical values, which includes $\BZ$. Denote the set of rays $\BR_{\geq0}\frac{2\pi i}{\mathcal{V}-V_0}$ by $\mathcal{S}(V_{0})\subseteq\BC$. These form a peacock pattern so that $\BC\backslash\mathcal{S}(V_{0})$ consists of a countable set of cones, see Figure~\ref{fig:41.stokes}.

\medskip

We are interested in a perturbative expansion of certain integrals at the critical points of the function $V$. These integrals involve the formal series $\Psi(z,\hbar)\in\BC\llbracket\hbar\rrbracket$ defined by
\be
  \Psi(z,\hbar)
  \=
  \mu_8\exp\Big(-\frac{(2\pi i)^2}{24\hbar}-\frac{\hbar}{24}-\sum_{k=0}^{\infty}\frac{B_{k}}{k!}\Li_{2-k}(\e(z))\hbar^{k-1}\Big)\,,
\ee
where $\mu_8^8=1$ is an eighth root of unity and $B_k$ denotes the $k$-th Bernoulli number. Explicitly, for each $\Xi$ the formal series is defined by
\be\label{eq:form.int}
  \Phi_{\Xi}(\hbar)\=\int \Psi(z,\hbar)^B\exp\Big(-\frac{A}{2\hbar}(2\pi i)^2z\Big(z+1-\frac{\hbar}{2\pi i}\Big)+(2\pi i)^2\frac{m_0z}{\hbar}\Big)dz\,,
\ee
where we take the formal Gaussian integration around the critical point $z_0$. More precisely, consider the series
\be
\sum_{k=0}^{\infty}\sum_{\ell=-\lfloor k/3\rfloor}^{\infty}a_{k,\ell}w^k\hbar^\ell\in\BC\llbracket\hbar\,,w\rrbracket
\ee
defined through the following equation
\be
\begin{aligned}
  &\exp\Big(-\frac{V(z_0,m_0)}{\hbar}-\frac{\delta}{\hbar} w^2\Big)
  \sum_{k=0}^{\infty}\sum_{\ell=-\lfloor k/3\rfloor}^{\infty}a_{k,\ell}w^k\hbar^\ell\\
  &=\;
  \Psi(z_0+w,\hbar)^B\exp\Big(-\frac{A}{2\hbar}(2\pi i)^2(z_0+w)\Big(z_0+w+1-\frac{\hbar}{2\pi i}\Big)-(2\pi i)^2\frac{m_0(z_0+w)}{\hbar}\Big)\,,
\end{aligned}
\ee
for some constant $\delta$. 
Then, formal Gaussian integration gives
\be\label{eq:def.phi}
  \Phi_{\Xi}(\hbar)\=
  \sqrt{\frac{-2\pi i\hbar}{\delta}}
  \exp\Big(-\frac{V(z_0,m_0)}{\hbar}\Big)
  \sum_{k=0}^{\infty}\sum_{\ell=-\lfloor 2k/3\rfloor}^{\infty}a_{2k,\ell}\frac{(2k-1)!!}{\delta^k}\hbar^{\ell+k}.
\ee
Given that $B_{k}\Li_{2-k}(x)=O((k!)^2(2\pi|\log(x)|)^{-k})$ and exponentiation preserves the property of being Gevery-1~\cite[Thm.~5.55]{diverg-resurg-i}, it is easy to see that these series are Gevery-1, i.e. the coefficient of $\hbar^k$ grows like $A\,k!\,C^k$ as $k\to\infty$. We will prove that $\Phi_{\Xi}$ is in fact Borel--Laplace summable in Theorem~\ref{thm:main_intro}. Importantly, for $(A,B)=(1,2)$ and $(2,3)$, the series $\Phi_{\Xi}$ give the series $\Phi_{4_1}$ and $\Phi_{5_2}$ respectively. This was explicitly illustrated in~\cite[Sec.~7]{GSW} but known in previous work such as~\cite[Sec.~1.3]{GK:qser}.
\subsection{State integrals}
We are interested in the resurgent and summability properties of the series $\Phi_{\Xi}(\hbar)$, which was defined in terms of the quantum dilogarithm $\Psi(z,\hbar)$ in Equation~\eqref{eq:def.phi}. The latter was shown to be resurgent and Borel--Laplace summable to a function given in terms of Faddeev's quantum dilogarithm $\Phi(z;\tau)$ (see~\cite{GK:resQD,andersen.mero}). The function $\Phi(z;\tau)$ is a meromorphic function of $(z,\tau)\in\BC\times(\BC\backslash\BR_{\leq0})$ with poles located on a cone $z\in\BZ_{\geq0}+\tau\BZ_{\geq0}$ and zeros located on the opposite cone $z\in\BZ_{<0}+\tau\BZ_{<0}$. In addition, it has a variety of descriptions
\be\label{eq:fad.expressions}
  \frac{(q\e(z);q)_{\infty}}{(\e(z/\tau);\tq)_{\infty}}
  \=
  \Phi(z;\tau)
  \=
  \exp\Big(
    \int_{i\sqrt{\tau}\BR+\varepsilon\sqrt{\tau}}\frac{\e((z+1+\tau)w/\tau)}{(\e(w)-1)(\e(w/\tau)-1)}
    \frac{dw}{w}
  \Big)\,,
\ee
where $(x;q)_{k}=\prod_{j=0}^{k-1}(1-q^jx)$, the first equality holds for $\tau\in\BH$, and the second for $\Re(-\sqrt{\tau}-1/\sqrt{\tau})<\Re(z/\sqrt{\tau})<0$. We refer to~\cite{AK:I,Faddeev} and Appendix~\ref{app:Faddeev} for general properties of the function~$\Phi(z;\tau)$. We can now consider integrals of the same form as Equation~\eqref{eq:form.int} replacing\footnote{The relation between the variables is $\hbar=-2\pi i/\tau$.} $\Psi(z,\hbar)$ by $\Phi(z\tau;\tau)$: for $(A,B)\in\BZ_{>0}^2$ with $A\neq B$ and $m,\ell\in\BZ$ we define
\be\label{eq:state_int}
\calI_{m,\ell}(\tau)
\;:=\;
\mu_8^B\tq^{-B/24}q^{B/24}
\int_{\calJ_{\ell,\tau}}\Phi((z-\ell)\tau;\tau)^B \e\Big(\frac{A}{2} z(z\tau+\tau+1)+mz\tau\Big) dz\,,
\ee
where $\calJ_{\ell,\tau}:=(\tfrac{i}{\sqrt{\tau}}e^{-iA\epsilon}\BR_{\geq0}-\frac{1}{2}+\ell)\cup(\tfrac{i}{\sqrt{\tau}}e^{-i(A-B)\epsilon}\BR_{\leq0}-\frac{1}{2}+\ell)$ for some small $\epsilon>0$. The fact that $\Phi(z;\tau)$ is meromorphic for $\tau\in\BC\backslash\BR_{\leq0}$ implies that $\calI_{m,\ell}(\tau)$ is holomorphic for $\tau\in\BC\backslash\BR_{\leq0}$.
These integrals are not independent. Indeed, using the quasi-periodicity of Faddeev's quantum dilogarithm, $\Phi(z-\tau;\tau)=(1-\e(z))\Phi(z;\tau)$,
one easily finds the relations
\be\label{eq:state_integral_1}
\sum_{k=0}^B\binom{B}{k}(-1)^kq^{-\ell k}\calI_{m+ k,\ell}(\tau)
\=
\calI_{m,\ell+1}(\tau)
\=(-1)^A q^{A+m}\calI_{m+ A,\ell}(\tau)\,.
\ee
Therefore, over the field $\BQ(q)$ there are $\max\{A,B\}$ independent integrals. The Andersen--Kashaev state integrals for $4_1$ and $5_2$ given in~\cite{AK:I} correspond to $m=\ell=0$. 

The conjectural description of the resurgent structure of the series $\Phi_{K}$ requires the introduction of all of the $\max\{A,B\}$ integrals~\cite{GGM:I}. What was not previously clear was how to construct the explicit combination of state integrals to give the Borel--Laplace resummation of one of the asymptotic series of interest. In Section~\ref{sec:alg}, we describe an algorithm to compute this combination using the geometry of the function~$V$ and the asymptotics of Faddeev's quantum dilogarithm. 

\medskip

To understand the asymptotics of $\Phi(z\tau;\tau)$ for large $\tau$, we need to consider an unusual but useful principle branch of the dilogarithm function. For $\theta\in(0,2\pi)$ define the domain $\BC_{\theta}=\BC\backslash\big((\BZ_{\geq0}+e^{i\theta}\BR_{\leq0})\cup(\BZ_{<0}+ e^{i\theta}\BR_{\geq0})\big)$ depicted in Figure~\ref{fig:prin.bran.fad}. 
\begin{figure}[ht]
\begin{tikzpicture}[scale=0.5]
\draw[<->,thick] (0,-3) -- (0,3);
\draw[<->,thick] (-7.5,0) -- (7.5,0);
\foreach \x in {-6,-4,-2,0,2,4,6}{\filldraw[red](\x,0) circle (2pt);};
\draw[red] (0,0) -- (3,-3);
\draw[red] (2,0) -- (2+3,-3);
\draw[red] (4,0) -- (4+3,-3);
\draw[red] (6,0) -- (6+1,-1);
\draw[red] (-2,0) -- (-2-3,3);
\draw[red] (-4,0) -- (-4-3,3);
\draw[red] (-6,0) -- (-6-1,1);
\end{tikzpicture}
\caption{Domain of $\mathrm{D}_{3\pi/4}(z)$ given by $\BC\backslash\big((\BZ_{\geq0}+(i-1)\BR_{\leq0})\cup(\BZ_{<0}+(i-1)\BR_{\geq0})\big)$.}
\label{fig:prin.bran.fad}
\end{figure}

\noindent Define $\mathrm{D}_{\theta}:\BC_{\theta}\rightarrow\BC$ to be
\be
  \mathrm{D}_{\theta}(z)
  \=
  \int_{z}^{e^{i\theta/2}\infty}
  \int_{w}^{e^{i\theta/2}\infty}
  \frac{\e(\z)}{1-\e(\z)}d\z\,dw\,,
\ee
where the contours are contained in $\BC_{\theta}$. This branch of the dilogarithm allows for us to describe the asymptotics of $\Phi(z\tau;\tau)$.
\begin{theorem}\label{thm:fad.asymp}
Suppose that $\tau\in\BC\backslash\BR_{\leq0}$ and $\varepsilon\in\BR_{>0}$. Then as $|\tau|\rightarrow \infty$ with fixed argument and $z$ is bounded away from $\BC\backslash\BC_{\theta}$ by $\varepsilon$, there exists a constant $C>0$, independent of $\varepsilon$, such that
\be
    \Big|\e(-\mathrm{D}_{\theta}(z)\tau)\Phi(z\tau;\tau)
    -
    \e\Big(\frac{1}{2}\mathrm{D}_{\theta}'(z)
    +
    \sum_{k=2}^{K}\frac{B_k}{k!}\frac{(2\pi i)^{k-2}}{\tau^{k-1}}\Li_{2-k}(\e(z))\Big)
    \Big|
    <
    C\,\varepsilon^{-K}\,K!\,|\tau|^{-K}\,,
\ee
with $\theta=\arg(-1/\tau)\in(0,2\pi)$.
\end{theorem}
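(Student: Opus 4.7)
The plan is to expand the integral representation of $\Phi(z;\tau)$ from~\eqref{eq:fad.expressions} asymptotically in $1/\tau$. After the substitution $z\mapsto z\tau$ and the identity $\e(zw+w+w/\tau)=\e(zw)\e(w)\e(w/\tau)$, one has
\[
\log\Phi(z\tau;\tau)\=\int_{C}\frac{\e(zw)\e(w)\e(w/\tau)}{(\e(w)-1)(\e(w/\tau)-1)}\frac{dw}{w},
\]
with $C=i\sqrt{\tau}\BR+c\sqrt{\tau}$ for some small $c>0$, initially valid for $z$ in a $\tau$-dependent strip. I would then use the Bernoulli generating series $x/(1-e^{-x})=\sum_{k\ge 0}(-1)^{k}B_{k}x^{k}/k!$ to write
\[
\frac{\e(w/\tau)}{\e(w/\tau)-1}\=\sum_{k=0}^{K}\frac{(-1)^{k}B_{k}}{k!}\Big(\frac{2\pi iw}{\tau}\Big)^{k-1}+R_{K}(w/\tau),
\]
with an explicit Taylor remainder $R_{K}$, and insert this into the integrand. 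Since the remaining factor is $\tau$-independent, this cleanly separates the $\tau$-dependence: the $k$-th term contributes $((-1)^{k}B_{k}/k!)(2\pi i/\tau)^{k-1}$ times a fixed contour integral.

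Each of these contour integrals can be evaluated by residues: after closing $C$ to the left in the $w$-plane, the residues at $w\in\BZ_{\le 0}$ sum geometrically to give exactly $\Li_{2-k}(\e(z))$ for $k\ge 2$, reproducing the terms of the statement (modulo polynomial-in-$z$ contributions from the higher-order pole at $w=0$). The $k=0$ contribution is identified with $-2\pi i\tau\mathrm{D}_{\theta}(z)$ by checking that both sides have the same second derivative in $z$, namely $\tau\e(z)/(1-\e(z))$ up to a constant; the specific branch is pinned down by the slope of $C$ through $\sqrt{\tau}$, which selects $\theta=\arg(-1/\tau)$ and the domain $\BC_{\theta}$ on which $\mathrm{D}_{\theta}$ is single-valued. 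The $k=1$ contribution yields $-\log(1-\e(z))/2$ (after absorbing linear-in-$z$ terms into the leading exponent and constant-in-$z$ ambiguities into the overall eighth-root-of-unity prefactor), which agrees with $2\pi i\cdot\tfrac{1}{2}\mathrm{D}_{\theta}'(z)$ via the identity $\mathrm{D}_{\theta}'(z)=-\log(1-\e(z))/(2\pi i)$; the latter follows from integrating the defining formula of $\mathrm{D}_{\theta}$ and the decay $\log(1-\e(\zeta))\to 0$ as $\zeta\to e^{i\theta/2}\infty$ inside $\BC_{\theta}$.

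The remainder is estimated by inserting $R_{K}$ into the contour integral; using $|B_{2K}|\sim 2(2K)!/(2\pi)^{2K}$ gives the factorial growth, the explicit $(w/\tau)^{K}$ supplies $|\tau|^{-K}$, and the polylogarithm bound $|\Li_{2-k}(\e(z))|\lesssim(k-2)!/|1-\e(z)|^{k-1}$ produces the $\varepsilon^{-K}$ since $|1-\e(z)|\gtrsim\varepsilon$ by the hypothesis that $z$ is at distance $\ge\varepsilon$ from $\BC\backslash\BC_{\theta}$, which in particular bounds $z$ away from the lattice $\BZ$. Finally the identity is extended from the strip of validity of the integral representation to all $z\in\BC_{\theta}$ at distance $\ge\varepsilon$ from its complement using the quasi-periodicity $\Phi(z-\tau;\tau)=(1-\e(z))\Phi(z;\tau)$ to shift $z\tau$ back into the strip, and to all $\tau\in\BC\backslash\BR_{\le 0}$ by analyticity. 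The main obstacle is the uniform remainder bound: one must verify that a single contour, deformed so as to stay at distance $\gtrsim\varepsilon$ from all relevant singularities uniformly in $K$, yields the Gevrey-$1$ estimate $C\varepsilon^{-K}K!|\tau|^{-K}$ without additional $K$-dependent losses from contour deformation.
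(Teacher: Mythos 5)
Your argument is a single-method proof built entirely on Faddeev's integral formula: Taylor-expand the $w/\tau$ factor using the Bernoulli generating series, evaluate each term by residues to produce $\Li_{2-k}(\e(z))$, and bound the Taylor remainder inside the contour integral. This is precisely the strategy of the paper's Lemma~\ref{lem:asymp.qpoch.3}, so you have correctly reproduced one piece of the proof — but the paper uses that lemma only for $\theta=\arg(-1/\tau)\in(\pi-\varepsilon,\pi+\varepsilon)$, i.e.\ $\tau$ near the positive reals, precisely because the strip $\Re(-\sqrt\tau-1/\sqrt\tau)<\Re(z\sqrt\tau)<0$ on which the contour integral converges is narrow and $\tau$-dependent. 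For $\Im(\tau)\neq 0$ the paper switches method entirely: Lemma~\ref{lem:asymp.qpoch} applies Euler--Maclaurin summation to $\log(\e(z);\tq)_\infty=-\sum_{n\geq 0}\mathrm{L}_\theta'(z-n/\tau)$ (a discrete sum, not the contour integral), and Lemma~\ref{lem:asymp.qpoch.2} combines it with the modular inversion $\Phi(z;\tau)\Phi(-\tau-z;\tau)=iq^{1/6}\tq^{-1/6}\e(z^2/2\tau+z/2+z/2\tau)$ — not the $\tau$-quasi-periodicity you invoke — to reach the lower half of the $z$-plane. Theorem~\ref{thm:fad.asymp} then follows by stitching the three regimes together.

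The genuine gap in your proposal is exactly the step you flag as the ``main obstacle.'' Extending from the narrow strip to all of $\BC_\theta$ at distance $\geq\varepsilon$ via repeated quasi-periodicity requires $O(|z|)$ shifts $z\mapsto z\pm 1$; each shift inserts a factor $1-\e(z\tau)$ whose modulus is either exponentially close to $1$ or exponentially large, and absorbing these consistently into $\e(\mathrm{D}_\theta(z)\tau)$ requires tracking the branch of $\mathrm{D}_\theta$ across the cut structure of $\BC_\theta$. You would then have to verify that the accumulated errors still satisfy the uniform bound $C\,\varepsilon^{-K}K!\,|\tau|^{-K}$ with $C$ independent of $z$ and $\varepsilon$ — an argument you describe but do not give. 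The paper sidesteps this by choosing Euler--Maclaurin for $\Im(\tau)\neq 0$, which handles arbitrary $z$ in one step with no bootstrapping. A smaller point: you attribute the $K!$ to the growth of $B_K$, but $B_K/k!$ is bounded; in the paper the factorial and the $\varepsilon^{-K}$ both come from the remainder integral $\int_0^\infty e^{-2\pi\varepsilon w}w^{K-1}\,dw=(2\pi\varepsilon)^{-K}(K-1)!$, with the polylogarithm bound~\eqref{eq:bound.polylog} supplying an independent $(K-1)!\,\varepsilon^{1-K}$ in the Euler--Maclaurin regime.
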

\noindent The proof of this theorem is described in Appendix~\ref{app:Faddeev} (see Theorem~\ref{thm:fad.asy.app}).
\subsection{Proving Borel--Laplace summability}\label{sec:BL-intro}
Starting with a Gevrey-1 formal power series, the process of resurgence requires convergence and then analytic continuation of the series's Borel transform~\cite{EcalleI,diverg-resurg-i}. 
Recall that the Borel transform $\calB$ is defined as the formal inverse of the Laplace transform
\be\label{eq:laplace}
\calL^\vartheta(\phi):=\int_0^{e^{i\vartheta}\infty} e^{-\z/\hbar} \phi(\z) \, d\z\,,
\ee
where $\vartheta\in [0,2\pi)$. 
More precisely, the Borel transform maps Gevrey-1 formal series to convergent power series in a neighbourhood of the origin 
\be\label{eq:borel}
\calB\colon\sum_{n\geq 1} a_n \hbar^n\in\BC[\![\hbar]\!]\quad\mapsto\quad \sum_{n\geq 1} \frac{a_n}{n!} \zeta^{n-1}\in\BC\{\zeta\}\,,
\ee 
where $|a_n|\leq C A^n n!$ for some constants $A,C>0$.

When the analytic continuation of the Borel transform has good growth conditions at infinity, its Laplace transform in the direction $\vartheta$ is well defined and gives an analytic function, which is called the Borel--Laplace sum of the series $s_\vartheta:=\calL^\vartheta\circ\calB$. As the argument $\vartheta$ varies, the resummation $s_\vartheta$ can jump at {\em Stokes rays} defined by rays from the origin with arguments agreeing with that of singularities in Borel plane. 
Following \'Ecalle's formalism~\cite{EcalleI,diverg-resurg-i}, the Stokes automorphism $\mathfrak{S}_{\vartheta}$ associated to each Stokes ray is defined by comparing two later Borel--Laplace sums  
\be
s_{\vartheta_+}\=s_{\vartheta_-}\mathfrak{S}_{\vartheta}
\ee
where $\vartheta_\pm=\vartheta\pm\epsilon$ for some small $\epsilon$. When the difference of the two lateral Borel--Laplace sum of $\varphi$ is given by a transseries like 
\be
s_{\vartheta_+}\varphi-s_{\vartheta_-}\varphi\=\sum_{\arg(\omega)=\vartheta} S_\omega e^{-\omega\tau}s_{\vartheta_-}(\varphi_\omega)\,
\ee
where $S_\omega\in\BC$, and $\varphi_\omega\in\BC[\![\tau^{-1}]\!]$ are the secondary resurgent series arising at the singularity~$\omega$ (see for example~\cite[Sec.~5]{GGM:I}). Then the Stokes automorphism allows the computation of the Stokes constants $S_\omega$. Indeed, by factorising $\mathfrak{S}_{\vartheta}$ into automorphisms associated to each singularity $\omega$ in various orders, we can compute the so called alien derivatives~\cite[Def.~6.63]{diverg-resurg-i}. Hence we can compute the Stokes constants $S_\omega$ from $\mathfrak{S}_\vartheta$.  
Moreover, it is enough to compute the Stokes automorphisms between two distinct direction; given two such directions $\vartheta_1, \vartheta_2$, we can write
\be
  \mathfrak{S}_{\vartheta_1,\vartheta_2}
  \=
  \prod_{\vartheta\in(\vartheta_1,\vartheta_2)}\mathfrak{S}_{\vartheta}\,.
\ee
Therefore, factorising $\mathfrak{S}_{\vartheta_1,\vartheta_2}$ into $\mathfrak{S}_{\vartheta}$ and the previous remarks allow us to compute the individual Stokes constants. This formalism is closely related to the analytic wall-crossing structure of Kontsevich--Soibelman~\cite{kontsevich2022analyticity}.

When the Borel--Laplace sum exists, it has uniform asymptotics as $|\tau|\to\infty$ for~$\arg(\tau)$ contained in an interval of length greater than or equal to $\pi$. Conversely, an analytic function with such asymptotic properties is the resummation of its asymptotics~\cite{nevanlinna}. Indeed, the analytic continuation of the Borel transform can be explicitly computed using Mordell's formula for the inverse Laplace transform~\cite{watson.reg}. 
Using this method and Theorem~\ref{thm:fad.asymp} we can deduce that Faddeev's quantum dilogarithm is the resummation of its asymptotics.
\begin{corollary}\label{cor:fad.resum}
The Borel--Laplace sum of $\Psi(z;-2\pi i/\tau)$ in the direction $\vartheta$ is given by $\mu_8\tq^{-1/24}q^{1/24}\Phi(z\tau;\tau)$
\begin{itemize}
  \item for $\arg(z+1)+\tfrac{\pi}{2}<\vartheta<\arg(z)+\tfrac{\pi}{2}$ with $\Im(z)\geq0$,
  \item and for $\arg(z)-\tfrac{\pi}{2}<\vartheta<\arg(z+1)-\tfrac{\pi}{2}$ with $\Im(z)\leq0$.
\end{itemize}
\end{corollary}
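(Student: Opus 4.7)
The plan is to apply the Nevanlinna theorem cited as \cite{nevanlinna}: a holomorphic function admitting a uniform Gevrey-1 asymptotic expansion on an open sector in $\hbar$ of opening strictly greater than $\pi$ coincides with the Borel--Laplace sum of its asymptotic series in any Laplace direction whose associated half-plane of convergence lies inside that sector; equivalently, the analytic continuation of the Borel transform is represented explicitly via Mordell's inverse Laplace formula.

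The first step is to translate Theorem~\ref{thm:fad.asymp} into the variable $\hbar = -2\pi i/\tau$. Using the identity $(2\pi i)^{k-1}/\tau^{k-1} = (-\hbar)^{k-1}$ and the vanishing of the odd Bernoulli numbers $B_k$ for $k \geq 3$, one verifies that Theorem~\ref{thm:fad.asymp}, multiplied by the prefactor $\mu_8\tq^{-1/24}q^{1/24}$, produces exactly the formal series $\Psi(z, \hbar)$: the prefactor $\tq^{-1/24}q^{1/24}$ gives the $\exp(-(2\pi i)^2/(24\hbar) - \hbar/24)$ factor, the leading $\e(-\mathrm{D}_\theta(z)\tau)$ yields $\exp(-\Li_2(\e(z))/\hbar)$ (the $k=0$ contribution), the correction $\e(\mathrm{D}'_\theta(z)/2)$ gives $(1-\e(z))^{-1/2}$ (the $k=1$ term via $B_1=-1/2$), and the remaining $k \geq 2$ terms translate directly. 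The remainder bound $C\varepsilon^{-K}K!|\tau|^{-K}$ becomes the Gevrey-1 bound $C'\varepsilon^{-K}K!|\hbar|^K$.

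The second step is to exhibit a sector in $\hbar$ of opening strictly greater than $\pi$ on which this asymptotic is uniform. For $\Im(z) > 0$, the hypothesis that $z \in \BC_\theta$ at distance at least $\varepsilon$ from $\BC \setminus \BC_\theta$ constrains $\theta = \arg(-1/\tau)$ to an open interval whose endpoints are reached when $\theta$ meets $\arg(z+1)$ (the excluded ray from integer $-1$ passing through $z$) or $\arg(z)+\pi$ (from integer $0$); the excluded rays from farther integers impose compatible constraints. This safe interval has length $\arg(z)+\pi - \arg(z+1) > \pi$, and the angular relation between $\theta$, $\arg(\tau)$, and $\arg(\hbar)$ translates it to a sector in $\hbar$ of the same opening exceeding $\pi$. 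Careful angular computation identifies the corollary's interval $(\arg(z+1)+\pi/2, \arg(z)+\pi/2)$ as precisely the set of Laplace directions $\vartheta$ whose half-planes in $\hbar$ sit inside this sector---equivalently, the open interval separating the two nearest Borel-plane singularities at $2\pi i(z+1)$ and $2\pi i z$. Nevanlinna's theorem then delivers the claimed identification.

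The case $\Im(z) \leq 0$ follows by an analogous argument with the orientations of the excluded rays reversed. The main obstacle is the bookkeeping in the second step: tracking the $\pi/2$ shifts among the conventions for $\arg(\hbar)$, $\arg(\tau)$, $\theta$, and $\vartheta$, and verifying which excluded rays from $\BC \setminus \BC_\theta$ are the binding constraints as $\theta$ ranges over its safe interval.
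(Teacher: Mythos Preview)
Your proposal is correct and follows essentially the same route as the paper: identify the maximal interval of $\theta=\arg(-1/\tau)$ for which $z$ stays in $\BC_\theta$ (bounded by the rays from the integers $-1$ and $0$), deduce from Theorem~\ref{thm:fad.asymp} a uniform Gevrey-1 expansion on a sector of opening exceeding $\pi$, and invoke Nevanlinna. The paper's proof is terser---it records only the $\theta$-interval $(\arg(z+1),\arg(z)+\pi)$ for $\Im(z)>0$ and leaves the translation to $\vartheta$ implicit---so your explicit verification that the prefactor $\mu_8\tq^{-1/24}q^{1/24}$ produces exactly $\Psi(z,\hbar)$ and your remark about the $\pi/2$ bookkeeping are welcome additions, but not a different argument.
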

\begin{proof}
By definition, if $z\in\BC\backslash(\BR_{\leq-1}\cup\BR_{\geq0})$ then $z\in\BC_\pi$. Then the first time the branch cuts of $D_{\theta}$ intersect the point $z$ are located at $\theta=\arg(\pm(z+1))$ and $\theta=\arg(\pm z)$. From Theorem~\ref{thm:fad.asymp}, the function $\Phi(z\tau;\tau)$ therefore has uniform asymptotics for $\arg(z+1)<\theta<\arg(z)+\pi$ or $\arg(z)-\pi<\theta<\arg(z+1)$ depending on the sign of $\Im(z)$.
\end{proof}
The same method can also be used to prove that exponential integrals along their steepest descent contours are the Borel--Laplace sum of their asymptotics (a different proof is given in~\cite{borel_reg}). Thus, given that $\Phi_\Xi$ is defined by formal Gaussian integration, one might expect that a similar argument would work for the state integrals in Equation~\eqref{eq:state_int}. However, the state integrals' contours are not defined on $\Sigma$ or on the Riemann surface of the dilogarithm. Moreover asymptotically, Faddeev's quantum dilogarithm chooses branch cuts of the dilogarithm. Therefore, trying to deform state integrals into thimble integrals cannot always be done. To remedy this, we make the crucial observation that the integrands of different $\calI_{m,\ell}$ can have the same leading asymptotics depending on the branching of $D_{\th}$. This allows us to patch together a variety of integrals that follow the contour of steepest descent up to a finite collection of extra integrals (we will call {\em tails}). The asymptotics of these finitely many integrals can again be represented as a collection of state integrals up to a finite collection of new tails. This process can be repeated and proved to eventually terminate, which happens when the tails have a (relatively) exponentially small contribution. This leads to our following main result.
\begin{theorem}\label{thm:main_intro}
The asymptotic series $\Phi_{\Xi}(-2\pi i/\tau)$ are Borel--Laplace summable with Stokes rays on the countable set of rays $\tau\in\mathcal{S}(V_{0})$ with two accumulation points at $\arg(\tau)=\pm\frac{\pi}{2}$. Moreover, there is an algorithm to compute the Borel--Laplace resummation away from Stokes rays, which gives rise to a $\BZ[q^{\pm}]$-linear combination of state integrals $\calI_{m,\ell}(\tau)$ for $\Re(\tau)>0$ and $\calI_{m,\ell}^{-}(\tau)$ for $\Re(\tau)<0$.\footnote{See Remark~\ref{rem:neg.rt} or Equation~\eqref{eq:neg.si} for a definition.}
\end{theorem}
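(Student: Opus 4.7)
The plan is to prove Theorem~\ref{thm:main_intro} by constructing, through an explicit algorithm, a $\BZ[q^{\pm}]$-linear combination of the state integrals $\calI_{m,\ell}(\tau)$ (or $\calI_{m,\ell}^{-}(\tau)$ when $\Re(\tau)<0$) whose asymptotic expansion as $|\tau|\to\infty$ agrees with $\Phi_{\Xi}(-2\pi i/\tau)$ uniformly in a sector of opening strictly greater than $\pi$. Once such an analytic realisation is produced, Nevanlinna's theorem (as recalled in Section~\ref{sec:BL-intro}) immediately upgrades the formal Gevrey-$1$ bound already noted for $\Phi_{\Xi}$ to Borel--Laplace summability, with the resummation equal to this combination of state integrals, and with Stokes rays precisely the directions where the uniformity of the asymptotics fails.

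The central input is Theorem~\ref{thm:fad.asymp}: on the domain $\BC_{\theta}$ with $\theta=\arg(-1/\tau)$, the integrand of $\calI_{m,\ell}(\tau)$ is, to leading order in $\tau$, $\exp(-\tau V_{m,\ell,\theta}(z)/(2\pi i))$, where $V_{m,\ell,\theta}$ is $V$ expressed using the branch $\mathrm{D}_{\theta}(z-\ell)$ of the dilogarithm, lifted to the sheet of $\Sigma$ selected by $(m,\ell)$. Thus each state integral is asymptotically a \emph{thimble-like} integral for a specific branch of $V$. The relations~\eqref{eq:state_integral_1}, which involve precisely $\BZ[q^{\pm}]$ coefficients, let me glue integrands across the cuts of $\mathrm{D}_{\theta}$: two integrals $\calI_{m,\ell}$ and $\calI_{m',\ell'}$ whose leading-order exponents agree on an overlap combine, with appropriate $\BZ[q^\pm]$-weights, into a single integral along a longer contour on $\Sigma$.

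I would then execute the algorithm advertised in the introduction as follows. Fix the steepest descent path $\gamma_0$ through the critical point $p_0$ on $\Sigma$ for the direction $\arg(\tau)$, cover it by a finite collection of arcs on which a single branch $(m,\ell,\theta)$ is well defined, and for each arc select the state integral whose asymptotic integrand matches that branch. Summing the corresponding $\calI_{m,\ell}$ with the glueing coefficients produces an integral along a contour deformable to $\gamma_0$ plus a finite collection of \emph{tails}: unmatched pieces of the $\calJ_{\ell,\tau}$ running to infinity. Each tail lives in a region where $\Re(V)$ strictly exceeds $\Re(V(p_0))$, and can be fed back into the same patching procedure, producing a new $\BZ[q^{\pm}]$-combination of state integrals plus deeper tails whose exponents have even larger real part. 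Termination is the point where the remaining tails are subdominant to every coefficient of $\Phi_{\Xi}$ relative to $e^{-V(p_0)/\hbar}$, at which stage the tails cannot affect the asymptotic expansion and can be discarded. Identification of the Stokes rays with $\mathcal{S}(V_0)$ is automatic: the patching recipe is constant on each connected chamber of $\BC\setminus\mathcal{S}(V_0)$ and must change precisely when $\gamma_0$ rotates through a Lefschetz thimble connecting $p_0$ to another critical point, i.e.\ through a direction in $\BR_{\geq0}(2\pi i)/(\mathcal{V}-V_0)$; the $\BZ\subset\mathcal{V}$ contribution forces the accumulation of rays at $\arg(\tau)=\pm\pi/2$.

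The principal obstacle will be proving termination together with the uniform asymptotic bound. Concretely, one must show that (i) the patching is globally consistent as $\theta$ varies inside a chamber so that the resulting combination of state integrals is $\theta$-independent there; (ii) at each iteration the tails are pushed into regions with strictly larger $\Re(V)$, and the cumulative gain after finitely many steps exceeds any prescribed polynomial order in $\hbar^{-1}$, which will require a quantitative version of the non-degeneracy of the critical points established in Appendix~\ref{app:crit.pts} together with convexity of $\Re(V)$ away from its saddles; and (iii) the final combination satisfies Gevrey-$1$ error bounds uniform in the sector, so that Nevanlinna applies. Once these three points are in place, the theorem follows, and the combinatorics of the algorithm simultaneously delivers the explicit resummation formulas and the positions of the Stokes rays.
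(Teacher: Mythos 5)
Your overall strategy is the same as the paper's: use Theorem~\ref{thm:fad.asymp} to identify state integral integrands as thimble-like for a specific branch of $V$, patch them together along $\calC_{p_0,\vartheta}$ using the $\BZ[q^{\pm}]$-relations of Equation~\eqref{eq:state_integral_1}, iterate on the unmatched tails, and then invoke Nevanlinna once a fixed finite combination is shown to have the right Gevrey-$1$ asymptotics in a sector of width $>\pi$. Your points (i) and (iii) are handled in the paper essentially as you envision: (i) is the locally-constant statement (Lemma following Corollary~\ref{cor:end-algorithm}) and (iii) is the uniform error bound proved from Theorem~\ref{thm:fad.asymp} together with the observation that the contour can be deformed into $\calZ_+$ up to tails.

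Where your plan has a genuine gap is the termination argument. You propose to terminate once "the cumulative gain in $\Re(V)$ after finitely many steps exceeds any prescribed polynomial order in $\hbar^{-1}$," and you flag that this will require a quantitative strengthening of non-degeneracy plus some convexity of $\Re(V)$. The difficulty is that this criterion only certifies termination \emph{to a given order}; to apply Nevanlinna you need a single analytic function with the prescribed asymptotics to \emph{all} orders with uniform Gevrey-$1$ constants, and the gain in $\Re(V)$ per iteration has no obvious positive lower bound (new base points can lie arbitrarily close to existing ones, so the volume differences can be arbitrarily small). Moreover, knowing that $\Re(V)$ increases does not by itself prevent the algorithm from producing infinitely many state integrals, only that their contributions become small. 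The paper sidesteps this entirely with a purely combinatorial argument: the new base points in step~(IV) strictly increase for the lexicographic order on $\Re(z_+)\times\{0,\dots,A-1\}$, and since the algorithm only tracks points in the finite set $\calP\subset W_{\epsilon,M}$, after at most $|\calP|$ iterations every new base point has left $\calP$, at which point its cycle class is trivial by Corollary~\ref{cor:half.thimb.stateint.zero}. This yields literal termination of the algorithm with a finite $\BZ[q^{\pm}]$-combination, and the asymptotic suppression of the final tails (which is what your $\Re(V)$-monotonicity argument does establish) then furnishes the error bound. If you want to push your analytic route you would still need a discrete finiteness input of this kind; without it the argument as stated does not produce the single finite combination that Theorem~\ref{thm:main_intro} asserts.

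A smaller omission: your patching of state integrals across dilogarithm cuts is done informally by "matching branches," but the paper makes this precise via the relative homology $H_1(\Sigma,D_{\infty,\theta})$ of Section~\ref{sec:geometry}, with the state integral contours $\tilde\calJ_{\ell,m,\tau}$ forming a basis (Proposition~\ref{prop:base_homology}) and the thimble and half-thimble classes $[\calC_{p_0,\vartheta}]$, $[\gamma_{p,\theta}]$, $[\gamma_{e,\theta}]$ lying in it (Corollaries~\ref{cor:thimb.stateint} and~\ref{cor:half.thimb.stateint}). That framework is what guarantees the coefficients you extract are well-defined and integral, not just asymptotic matches; without it the "glueing coefficients" in your step require separate justification.
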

\begin{remark}\label{rem:neg.rt}
All of the proofs and statements work perfectly well for $\Re(\tau)<0$. To avoid cluttering the notation we only give the details for $\Re(\tau)>0$. To deal with the case $\Re(\tau)<0$ one would need to use the additional function
\be
  \Phi^{-}(z;\tau)
  \=
  \Phi(-z+1+\tau;-\tau)^{-1}
  \=
  \frac{(q\e(-z);q)_{\infty}}{(\e(z/\tau);\tq)_{\infty}}\,.
\ee
The function $\mu_8\tq^{-1/24}q^{1/24}\Phi^{-}(z\tau;\tau)$ gives the resummation of $\Psi(z;-2\pi i/\tau)$ for some $\vartheta\in (-\pi/2, \pi/2)$. The function $\Phi^-$ comes with its own state integrals
\be\label{eq:neg.si}
\calI_{m,\ell}^{-}(\tau)
\;:=\;
\mu_8^B\tq^{-B/24}q^{B/24}
\int_{\calJ_{\ell,\tau}^{-}}\Phi^{-}((z-\ell)\tau;\tau)^B \e\Big(\frac{A}{2} z(z\tau+\tau+1)+mz\tau\Big) dz\,,
\ee
where $\calJ_{\ell,\tau}^{-}:=(\tfrac{i}{\sqrt{-\tau}}e^{iA\epsilon}\BR_{\geq0}+\frac{1}{2}+\ell)\cup(\tfrac{i}{\sqrt{-\tau}}e^{+ i(A-B)\epsilon}\BR_{\leq0}+\frac{1}{2}+\ell)$ for some small $\epsilon>0$.
\end{remark}

%
We can use Theorem~\ref{thm:main_intro} to describe the full resurgent structure for our examples. First, notice that there are $\max\{A,B\}$ critical points. One can also see that there are exactly $\max\{A,B\}^2$ independent state integrals and asymptotic series. Indeed, consider the family of asymptotic series
\be
  \Phi_{\Xi,j}(\hbar)
  \=
  \int_{\calC_{p_0}} \Psi(z,\hbar)^B\exp\Big(-\frac{A}{2\hbar}(2\pi i)^2z\Big(z+1-\frac{\hbar}{2\pi i}\Big)+(2\pi i)^2\frac{m_0z}{\hbar}+2\pi izj\Big)dz\,,
\ee
and collect all these asymptotic series into one matrix
\be\label{eq:phimat}
  (\widehat{\Phi}_j(\hbar))_{\ell,p_0}
  \=
  \Phi_{(A,B,p_0),j+\ell}(\hbar)\,.
\ee
Then from Theorem~\ref{thm:main_intro}, we know that this whole matrix of asymptotic series is resummable for $\vartheta\in\arg(\BC\backslash(\cup_{p_0}\calS(V_0)))$ to a matrix we denote $\widehat{\calI}_{j,\vartheta}(\tau)$. We show that this matrix is invertible in Corollary~\ref{cor:phi.is.invert}. Moreover, the theorem implies that the entries of this matrix consist of finite $\BZ[q]$-integral combinations of state integrals. Therefore, for $\vartheta_1,\vartheta_2\in(\tfrac{\pi}{2},\tfrac{3\pi}{2})\cap\arg(\BC\backslash(\cup_{p_0}\calS(V_0)))$ we find that
\be
  \widehat{\calI}_{j,\vartheta_2}(\tau)
  \=
  \widehat{\calI}_{j,\vartheta_1}(\tau)
  \mathfrak{S}_{\vartheta_1,\vartheta_2}(q)
\quad\text{where}\quad
  \mathfrak{S}_{\vartheta_1,\vartheta_2}(q)
  \in
  \GL_{\max\{A,B\}}(\BZ[q^{\pm}])\,.
\ee
Moreover, for $\vartheta_1<\vartheta_2<\vartheta_3$ we have
\be
  \mathfrak{S}_{\vartheta_1,\vartheta_2}(q)
  \mathfrak{S}_{\vartheta_2,\vartheta_3}(q)
  \=
  \mathfrak{S}_{\vartheta_1,\vartheta_3}(q)\,.
\ee
 In Section~\ref{sec:ex}, we will compute a couple of Stokes constants explicitly using the algorithm of Section~\ref{sec:alg}. However, as will be clear to the reader this is not an efficient method of computation for $\vartheta$ near $\pm\tfrac{\pi}{2}$. The method is extremely effective when $\vartheta$ is near $0,\pi$ as the algorithm terminates almost immediately. Therefore, armed with Theorem~\ref{thm:main_intro} and an easy computation when $\vartheta$ is near $0,\pi$, we can compute the other Stokes constants using the methods of~\cite{GGM:I}. This involves computing the Stokes automorphism for a small $\epsilon>0$
\be
\begin{aligned}
  \mathsf{S}_+(q)
  &\=
  \mathfrak{S}_{\pi+\epsilon,\epsilon}(q)\in\GL_{\max\{A,B\}}(\BZ[q^{-1}][\![q]\!])\,,\\
  \mathsf{S}_-(q)
  &\=
  \mathfrak{S}_{\epsilon,-\pi+\epsilon}(q)\in\GL_{\max\{A,B\}}(\BZ[q][\![q^{-1}]\!])\,.
\end{aligned}
\ee
These series then store all the information of the Stokes constants. This allows for a complete computation of the Stokes constants with proofs.

This paper is organised as follows. In Section~\ref{sec:geometry} we define the geometric setting, namely a relative homology to which both the class of thimbles and of state integrals contours belong. Section~\ref{sec:asymp_state_int} is dedicated to the proof of our main results. More precisely, in Section~\ref{sec:alg} we illustrated the algorithm that leads to the proof of Theorem~\ref{thm:main_intro} in Section~\ref{sec:proof_main}. Finally in Section~\ref{sec:ex}, we illustrate the algorithm in two examples: the case $(A,B)=(1,2)$ corresponding to the $4_1$ knot in Section~\ref{sec:4_1}, and the case $(A,B)=(4,1)$ in Section~\ref{sec:A4B1}. As a result of the decomposition into state integrals, we verify the numerical computations of the first Stokes constants~\cite{GGM:I,Wh:thesis} in both examples. There are two appendices.   

\section*{Acknowledgements}

The authors wish to thank Jørgen Andersen, Bertrand Eynard, Stavros Garoufalidis, Jie Gu, Maxim Kontsevich, Marcos Mariño. The work of V.F. and C.W. has been carried out at Institut des Hautes Études Scientifiques and we thank the institute for fantastic working conditions.
This paper is a result of the ERC-SyG project, Recursive and Exact New Quantum Theory (ReNewQuantum), which received funding from the European Research Council (ERC) under the European Union's Horizon 2020 research and innovation program under grant agreement No 810573. C.W. has been supported by the Huawei Young Talents Program.

\section{From thimbles to state integrals}\label{sec:geometry}

To begin with, we must understand the homology theory that governs the thimbles associated to the function $V$ and the state integrals $\calI_{m,\ell}$. All of the topology is concentrated near $z\in\BR$ in the coordinates used in Equation~\eqref{eq:V}. There are logarithmic branch cuts at $z\in\BZ$ thus the first homology of $\Sigma$ is not finitely generated. However, for a given $\vartheta\in (-\pi/2,\pi/2)\cup(\pi/2,3\pi/2)$, we are only interested in a finitely generated subgroup. These groups can be described and depend on $\vartheta$ but allows for a decomposition of the thimbles associated to $V$ into state integral contours.

\subsection{Existence of thimbles}

To effectively understand the structure of thimbles associated to $V$, we will use two descriptions of $\Sigma$ and $V$. In particular, we choose branch cuts in the upper half plane to define a function $\Lambda$ (see Figure~\ref{fig:branchcut}).
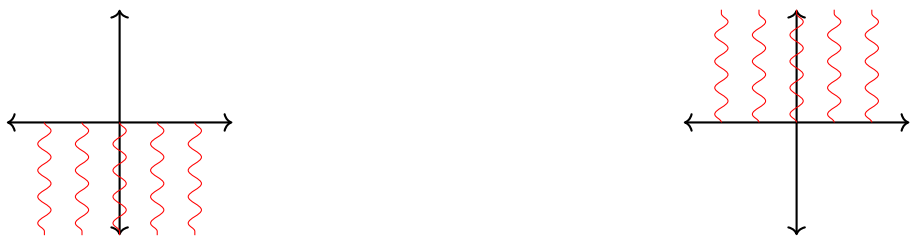
\begin{figure}[ht]
\begin{tikzpicture}
\draw[<->,thick] (-3,-1.5) -- (-3,1.5);
\draw[<->,thick] (-4.5,0) -- (-1.5,0);
\draw[red,snake it] (-3,0) -- (-3,-1.5);
\draw[red,snake it] (-2,0) -- (-2,-1.5);
\draw[red,snake it] (-3.5,0) -- (-3.5,-1.5);
\draw[red,snake it] (-2.5,0) -- (-2.5,-1.5);
\draw[red,snake it] (-4,0) -- (-4,-1.5);
.. controls (-3.555,-2) and (-1.5,-1) .. (-1.5,0);
\draw[<->,thick] (6,-1.5) -- (6,1.5);
\draw[<->,thick] (4.5,0) -- (7.5,0);
\draw[red,snake it] (5,0) -- (5,1.5);
\draw[red,snake it] (7,0) -- (7,1.5);
\draw[red,snake it] (6.5,0) -- (6.5,1.5);
\draw[red,snake it] (6,0) -- (6,1.5);
\draw[red,snake it] (5.5,0) -- (5.5,1.5);
\end{tikzpicture}
\caption{The two conventions for the branch cuts of the function $V$.}\label{fig:branchcut}
\end{figure}

\noindent Taking the principle branch of $\Li_2$ we find that for
\be\label{eq:V_Lambda}
\begin{aligned}
  V(z_+,m_+)
  &\=
  B\frac{\Li_{2}(\e(z_+))}{(2\pi i)^2}+\frac{B}{24}+\frac{A}{2}z_+(z_++1)+m_+z_+\,,\\
  \Lambda(z_-,m_-)
  &\=
  -\frac{B\Li_{2}(\e(-z_-))}{(2\pi i)^2}
  +\frac{B}{12}
  -\frac{B}{2}\Big(z_--\frac{1}{2}\Big)^2
  +\frac{A}{2}z_-(z_-+1)+m_-z_-\,,
\end{aligned}
\ee
and $p=(z_+,m_+)\in\Sigma$ we have\footnote{In fact we have more generally, $V(z,m)=\Lambda(z,m+Bk)-Bk(k+1)/2\in\BC$ for $k=\lfloor \Re(z)\rfloor$.}
\be
  V(p)\=V(z_+,m_+)\=\Lambda(z_-,m_-)\in\BC/\BZ\,,
\ee
where $z_-=z_-(p)=z_+(p)$ and $m_-=m_-(p)=m_+(p)+B\lfloor z_+(p)\rfloor$.
It will be useful to regard the surface $\Sigma$ as being given by $A$-copies of the complex plane, glued together along the branch cuts at each integer point $k\in\BZ$. In particular, allowing only $m\in\{0,\cdots,A-1\}$ as opposed to $m\in\BZ$. This can always be done using the second relation in Equation~\eqref{eq:surface}. We have the derivative $V':\Sigma\rightarrow\BC$ given by
\be\label{eq:tangent}
\begin{aligned}
  V'(z_+,m_+)
  &\=
  \frac{-B\log(1-\e(z_+))}{2\pi i}+Az_++\frac{A}{2}+m_+\,,\\
  \Lambda'(z_-,m_-)
  &\=
  \frac{-B\log(1-\e(-z_-))}{2\pi i}+(A-B)z_-+\frac{A}{2}+\frac{B}{2}+m_-\,.
\end{aligned}
\ee
Fix a critical point $p_0=(z_0,m_0)$ satisfying $V'(p_0)=0$, as given in Equation~\eqref{eq:crit.pts}. Denote the critical value of $p_0$ as $V_{0}=V(p_0)$. We are interested in the level sets 
\be
\Re((V(p)-V_0) e^{-i\vartheta})\in\cos(2\pi\vartheta)\BZ
\ee
as we vary $\vartheta\in(-\pi/2,\pi/2)\cup(\pi/2,3\pi/2)$. 
\begin{definition}\label{def:GGamma_p}
If $p\in\Sigma$ such that $V(p)\in V_0-i e^{i\vartheta}\lambda+\BZ$ for some constant $\lambda>0$, then let~$\Gamma_{p,\vartheta}$ denote the connected component of $V_0-i e^{i\vartheta}\BR_{\geq\lambda}+\BZ$ containing~$p$. For the critical point $p_0$, let $\mathcal{C}_{p_0,\vartheta}$ denote the connected component of $V_0-i e^{i\vartheta}\BR_{\geq0}+\BZ$ containing $p_0$.
\end{definition}
\begin{lemma}\label{lem:thimble}
Suppose that $ e^{i\vartheta}\in\BC\backslash\mathcal{S}(V_{0})$ and $p\in\Sigma$ such that $V(p)\in V_0-i e^{i\vartheta}\BR_{\geq0}+\BZ$. Then the set $\Gamma_{p,\vartheta}$ is a smooth curve with a parametrisation $\gamma:\BR_{\geq0}\rightarrow\Sigma$ such that
\be\label{eq:tangent.lim}
  \lim_{t\to\infty}
  \gamma'(t)
  \=
  i\sqrt{i e^{i\vartheta}}
  \quad\text{or}\quad
  -i\sqrt{i e^{i\vartheta}\frac{|(A-B)|}{A-B}}\,.
\ee
\end{lemma}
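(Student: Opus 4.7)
The plan is to prove the lemma in three steps: smoothness of $\Gamma_{p,\vartheta}$ via the implicit function theorem, non-compactness and existence of a proper smooth parametrisation, and the computation of the asymptotic tangent using the leading behaviour of $V$ and $\Lambda$ at infinity.

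For smoothness, I would observe that the hypothesis $e^{i\vartheta}\notin\calS(V_0)=\BR_{\geq 0}\frac{2\pi i}{\calV-V_0}$ forbids any critical value $v\in\calV$ from satisfying $v - V_0 \in -ie^{i\vartheta}\BR_{>0} + \BZ$, since this would force $2\pi i/(v-V_0)\in \BR_{>0}\cdot e^{i\vartheta}$. Hence $V'$ does not vanish on the preimage $V^{-1}(V_0-ie^{i\vartheta}\BR_{\geq\lambda}+\BZ)$, and the holomorphic implicit function theorem identifies this preimage as a smooth real one-submanifold of $\Sigma$. Using $\lambda$ as a local parameter (defined by $V(q) \equiv V_0 - ie^{i\vartheta}\lambda \pmod{\BZ}$) gives a smooth parametrisation $\tilde\gamma$ of the connected component $\Gamma_{p,\vartheta}$ containing $p$.

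To extend $\tilde\gamma$ to all of $[\lambda_0,\infty)$, the only possible obstructions are a critical point of $V$ (ruled out above) or escape to infinity in $\Sigma$. If $\tilde\gamma$ remained in a compact subset of $\Sigma$ as $\lambda\to\infty$, any accumulation point $q$ of $\tilde\gamma(\lambda_n)$ for some $\lambda_n\to\infty$ would satisfy $V(q) \equiv V_0 - ie^{i\vartheta}\lambda_n \pmod{\BZ}$ for each $n$, which is impossible in $\BC/\BZ$ with $\lambda_n$ unbounded. Therefore $\tilde\gamma$ escapes every compact subset of $\Sigma$; since $\Sigma$ has precisely two ends at infinity, distinguished by $\Im(z)\to\pm\infty$, the curve tends to exactly one of them.

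For the asymptotic tangent, I would exploit the two presentations~\eqref{eq:V_Lambda}. At the upper end $\Li_2(\e(z))\to 0$, so $V(z,m)=\tfrac{A}{2}z^2+O(z)$ and $V'(z,m)=Az+O(1)$; inverting $V(\tilde\gamma(\lambda))\sim V_0-ie^{i\vartheta}\lambda$ yields $z(\tilde\gamma(\lambda))\sim\sqrt{-2ie^{i\vartheta}\lambda/A}$, and the arc-length re-parametrisation $\gamma(s)=\tilde\gamma(\lambda(s))$ then satisfies $\lim_{s\to\infty}\gamma'(s) = i\sqrt{ie^{i\vartheta}}$, with the square-root branch fixed by $\Im(z)>0$. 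The lower end uses $\Lambda(z,m)=\tfrac{A-B}{2}z^2+O(z)$; replacing $A$ by $A-B$ in the same analysis yields the second limit $-i\sqrt{ie^{i\vartheta}|A-B|/(A-B)}$, where the factor $|A-B|/(A-B)=\sgn(A-B)$ encodes the sign flip of the square-root branch when $A<B$. The main obstacle will be the branch bookkeeping: one must switch from the $V$-presentation to the $\Lambda$-presentation upon crossing into the lower half-plane (Figure~\ref{fig:branchcut}), and the correct square-root branch at infinity is pinned down by which half-plane the curve escapes into.
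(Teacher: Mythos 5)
Your overall strategy (implicit function theorem for smoothness, escape from compacta, leading-order quadratic behaviour of $V$ and $\Lambda$ at infinity) is sound in outline and parallels the paper's, but there is a genuine gap at the crucial step where you conclude the curve escapes to $\Im(z)\to\pm\infty$.

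The claim that ``$\Sigma$ has precisely two ends at infinity, distinguished by $\Im(z)\to\pm\infty$'' is false. The surface $\Sigma$ has logarithmic branch points at each $z\in\BZ$ on each sheet, and these punctures are themselves ends: the curve could spiral into a neighbourhood of a branch point, leaving every compact subset of $\Sigma$ without $\Im(z)\to\pm\infty$. (Indeed the paper notes that $H_1(\Sigma)$ is not finitely generated.) Your accumulation-point argument correctly shows $\tilde\gamma$ leaves every compact set, but does nothing to distinguish the branch-point ends from the ends at $\Im(z)\to\pm\infty$, nor to rule out wandering with $\Re(z)\to\pm\infty$ and $\Im(z)$ bounded. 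This is exactly where the hypothesis $e^{i\vartheta}\notin\calS(V_0)$ is used a second time: since $\BZ\subseteq\calV$, the ray $V_0-ie^{i\vartheta}\BR_{\geq 0}$ is bounded away from $0\in\BC/\BZ$, while the estimates $|V(z+k,m)|<C_1|z\log(-iz)|+C_2(1+|k|)|z|$ (and its $\Lambda$-analogue) show $V$ is small mod $\BZ$ near every branch point. Together these pin $\Gamma_{p,\vartheta}$ inside a region $\Sigma_\epsilon$ bounded away from all punctures. This is about half of the paper's proof and is missing from yours.

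A related and non-cosmetic consequence: your expansion $V'(z,m)=Az+O(1)$ is only uniform on $\Sigma_\epsilon$; near the punctures $\log(1-\e(z))$ is unbounded, so without first establishing $\Gamma_{p,\vartheta}\subset\Sigma_\epsilon$ the $O(1)$ estimate fails and the tangent analysis cannot be justified. Once you do restrict to $\Sigma_\epsilon$, you still need the observation (made in the paper via the sign of $\Im(ie^{i\vartheta})$ against $\Im(V(p)-V(p_0))$) that the quadrant of $z_+(p)$ is determined for large $|z|$, which fixes the branch of the square root and forces the curve to escape in the $\Im(z)$ direction rather than along the real axis. Your branch bookkeeping remark gestures at this but the argument as written does not close it.
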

\begin{proof}
Firstly, notice that
\be
  \frac{\Li_{2}(\e(z))}{(2\pi i)^2}+\frac{1}{24}
  \=
  -\frac{\Li_{2}(1-\e(z))}{(2\pi i)^2}
  -\frac{\Li_{1}(\e(z))}{2\pi i}z
\ee
and therefore, for small enough $|z|$ there is a constant $C_0>0$ such that
\be
  \Big|\frac{\Li_{2}(\e(z))}{(2\pi i)^2}+\frac{1}{24}\Big|
  <
  C_0|z\log(-iz)|\,.
\ee
Therefore, there exists constants $C_1,C_2>0$ such that, for $k\in\BZ$ and small enough $|z|$ (independent of $k$), the representative of $V$ with the smallest absolute value\footnote{That means we choose a representative with $|\Re(V(z+k))|\leq1/2$.} satisfies
\be\label{eq:size.V}
\begin{aligned}
  &|V(z+k,m)|
  \=
  \bigg|\frac{B\Li_{2}(\e(z))}{(2\pi i)^2}+\frac{B}{24}
  +\frac{A}{2}(z+k)(z+k+1)+mz\bigg|\\
  &<
  BC_0|z\log(z)|
  +
  A|z|^2+|Ak+m+\frac{A}{2}||z|
  <
  C_1|z\log(-iz)|+C_2(1+|k|)|z|\,,
\end{aligned}
\ee
and similarly
\be\label{eq:size.lam}
  |\Lambda(z+k,m)|
  <
  C_1|z\log(iz)|+C_2(1+|k|)|z|\,.
\ee
Consider the set\footnote{Here we restrict to $m=0,\dots,A-1$ to have a finite number of conditions for each point $x$.}
\be
  \Sigma_\epsilon\=\bigcup_{\pm}\Big\{x\in\Sigma\,\Big|\,\Im(z_{\pm}(p))\in\pm\BR_{\geq0}\text{ and }\forall k\in\BZ\quad|z_{\pm}(x)-k|\geq\frac{\epsilon}{1+|k|}\Big\}\,.
\ee
Given $ e^{i\vartheta}\notin\mathcal{S}(V_{0})$ the set
\be
  V_0-i e^{i\vartheta}\BR_{\geq0}+\BZ
\ee
is disjoint from $0\in\BC/\BZ$. From Equations~\eqref{eq:size.V}~and~\eqref{eq:size.lam} for any $\delta>0$, small enough $\epsilon=\epsilon(\delta)$, and $x\in\Sigma\backslash\Sigma_{\epsilon}$ we have $|V(x)|<\delta$. Therefore, there exists $\epsilon>0$ such that $\Gamma_{p,\vartheta}\subset\Sigma_\epsilon$. 

On the set $\Sigma_{\epsilon}$ we can approximate the derivative of $V$. To do this we notice that for a fixed $\alpha\in(0,1)$ there exists constants $C_{3},C_{4}>0$ such that for $p\in\Sigma_{\epsilon}$ with $\Im(z_{+}(p))\geq0$ and $k\in\BZ$ with $\Re(z_+(p)-k)\in(-1/2, 1/2]$ we have bounds
\be
  \Big|\frac{\log(1-\e(z))}{2\pi i}\Big|
  \leq
  \frac{C_{3}}{|z-k|^{\alpha}}
  \leq
  \frac{C_{3}(1+|k|)^{\alpha}}{\epsilon^{\alpha}}
  <\frac{C_{4}|z|^{\alpha}}{\epsilon^{\alpha}}\,.
\ee
Therefore, there is a constant $C_{5}$ such that
\be
  |V'(z)-Az|
  <
  |z|\Big(\frac{C_{5}}{|z|^{1-\alpha}}+\frac{A}{2|z|}+\frac{m}{|z|}\Big)\,.
\ee
A similar bound works for $\Im(z_-(p))\leq 0$ replacing $A$ by $A-B$. Therefore, for any $\delta_1>0$ there exists an $M>0$ such that for $p\in\Sigma_{\epsilon}$ with $\Im(z_+(p))\geq0$ and $|z_+(p)|>M$ 
\be
  |V'(p)-Az_+(p)|<|z_+(p)|\delta_1\,,
\ee
and for $\Im(z_-(p))\leq0$ and $|z_-(p)|>M$ we find that
\be
  |V'(p)-(A-B)z_-(p)|<|z_-(p)|\delta_1\,.
\ee
Given the set of $p\in\Sigma_{\epsilon}$ with $|z_{\pm}(p)|\leq M$ is compact, we see that the intersection with $\Gamma_{p,\vartheta}$ is also compact. The remaining portion of $\Gamma_{p,\vartheta}$ therefore has $|z_\pm(p)|>M$. Assuming that $M$ is big enough, $|z_+(p)|>M$, and $\Im(z_+(p))>0$ we see that
\be
  |\Im(V(p)-V(p_0)-\tfrac{A}{2}z_+(p)(z_+(p)+1)-mz_+(p))|
  \leq
  C_{6}\,,
\ee
for some constant $C_6>0$.
Given that $\Im(V(p)-V(p_0))\Im(i e^{i\vartheta})^{-1}<0$, this implies that $\Re(z_+(p))\Im(i e^{i\vartheta})<0$. Therefore, the quadrant containing $z_+(p)$ is determined by the sign of $\Im(i e^{i\vartheta})$. Moreover, the curve $\Gamma_{p,\vartheta}$ has tangent at a point $p$ given by
\be\label{eq:tangent-GGamma_p}
  \frac{ e^{i\vartheta}}{iV'(p)}\,.
\ee
Therefore for $|z_+(p)|>M$, there is a constant $C_{7}$ independent of $\delta_1$ such that for all $\Im(z_+(p))\geq 0$
\be\label{eq:tangent-outside-ball}
  \Big|\frac{e^{i\vartheta}|z_+(p)|}{i V'(p)}-\frac{e^{i\vartheta}|z_+(p)|}{i Az_+(p)}\Big|
  <\delta_1C_7\,.
\ee
Therefore, the tangent direction always points towards a line parallel to $i\sqrt{i e^{i\vartheta}}$, which is in the same quadrant as $z_+(p)$. Similarly for the lower half plane we find that the tangent direction always points towards a line parallel to $-i\sqrt{i(A-B) e^{i\vartheta}}$, which is in the same quadrant as $z_-(p)$. Therefore, in the limit, the tangent is described by Equation~\eqref{eq:tangent.lim}.
\end{proof}
\begin{corollary}\label{cor:thimble}
If $e^{i\vartheta}\in\BC\backslash\mathcal{S}(V_{0})$, then the set $\mathcal{C}_{p_0,\vartheta}$ is a smooth curve with a parametrisation $\gamma:\BR\rightarrow\Sigma$ such that the limit of $\gamma'$ at $\pm\infty$ is given in Equation~\eqref{eq:tangent.lim}.
\end{corollary}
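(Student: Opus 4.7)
The plan is to reduce to Lemma~\ref{lem:thimble} by analysing $\mathcal{C}_{p_0,\vartheta}$ near the critical point $p_0$ using the Morse lemma, and then gluing the local picture to the two semi-infinite smooth curves provided by the lemma on either side of $p_0$.

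First, because $p_0$ is a non-degenerate critical point of $V$ (as recorded in the body of the text and verified in Appendix~\ref{app:crit.pts}), the holomorphic Morse lemma produces a biholomorphism $\phi$ from a neighbourhood $U$ of $0\in\BC$ onto a neighbourhood of $p_0$ in $\Sigma$ such that $V(\phi(w))-V_0 = w^2$. Under $\phi$, the locus $V(p)-V_0 \in -ie^{i\vartheta}\BR_{\geq 0}$ corresponds to $w^2 \in -ie^{i\vartheta}\BR_{\geq 0}$, namely the line $\sqrt{-ie^{i\vartheta}}\,\BR \cap U$. Consequently $\mathcal{C}_{p_0,\vartheta}\cap\phi(U)$ is a smooth curve passing through $p_0$ with tangent direction $\sqrt{-ie^{i\vartheta}/\tfrac{1}{2}V''(p_0)}$ (up to sign).

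Next, I fix a small $\lambda_0>0$ and set $p_{\pm} = \phi(\pm\lambda_0\sqrt{-ie^{i\vartheta}})$, so that $V(p_{\pm}) = V_0 - ie^{i\vartheta}\lambda_0^2$. Applying Lemma~\ref{lem:thimble} to each of $p_+$ and $p_-$ produces two smooth semi-infinite curves $\Gamma_{p_{\pm},\vartheta}$ with parametrisations $\gamma_{\pm}\colon\BR_{\geq 0}\to\Sigma$ starting at $p_{\pm}$ and having the asymptotic tangents described in Equation~\eqref{eq:tangent.lim}. The curve $\mathcal{C}_{p_0,\vartheta}$ equals the union $\{p_0\}\cup\Gamma_{p_+,\vartheta}\cup\Gamma_{p_-,\vartheta}$ together with the short arcs from $p_\pm$ to $p_0$ inside $\phi(U)$; this follows since $e^{i\vartheta}\notin\mathcal{S}(V_0)$ guarantees that $\{V=V_0-ie^{i\vartheta}\BR_{\geq 0}+\BZ\}$ contains no critical value other than $V_0$ itself in the relevant component, so every non-critical point of $\mathcal{C}_{p_0,\vartheta}$ lies on one of the two semi-infinite pieces.

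Gluing the three pieces together, I define $\gamma\colon\BR\to\Sigma$ by taking $\gamma(0)=p_0$, letting $\gamma|_{[0,\infty)}$ agree with the concatenation of the short arc $p_0\to p_+$ and $\gamma_+$, and analogously for $\gamma|_{(-\infty,0]}$ with $p_-$ and $\gamma_-$. Smoothness across $p_0$ is automatic from the Morse normal form, and the limits of $\gamma'(t)$ as $t\to\pm\infty$ are exactly those of $\gamma_{\pm}$, i.e.\ those stated in Equation~\eqref{eq:tangent.lim}. The only subtle point---the main obstacle, though a mild one---is matching the tangent to the Morse-model curve at $p_0$ with the direction along which $\Gamma_{p_{\pm},\vartheta}$ approach $p_0$; this compatibility is immediate from the fact that both $p_+$ and $p_-$ were chosen on the same smooth local branch produced by $\phi$, and that on a non-critical point of $V$ the condition $V(p)\in V_0-ie^{i\vartheta}\BR_{\geq 0}+\BZ$ uniquely determines the curve's tangent via Equation~\eqref{eq:tangent-GGamma_p}.
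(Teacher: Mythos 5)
The paper does not give an explicit proof of this corollary — it is stated as an immediate consequence of Lemma~\ref{lem:thimble} — and your argument is the natural way to fill in the details: the holomorphic Morse lemma gives the smooth local normal form of $\mathcal{C}_{p_0,\vartheta}$ through the non-degenerate critical point $p_0$, and the two outgoing branches are exactly curves of the form $\Gamma_{p_\pm,\vartheta}$ to which Lemma~\ref{lem:thimble} applies directly, with the asymptotic tangents in Equation~\eqref{eq:tangent.lim}. Your proof is correct and matches the intended deduction.
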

This describes the limiting behaviour of the thimbles associated to the finite set of critical points of $V$. We see that outside a compact set the behaviour is trivial and tends to a straight line towards infinity.
This is illustrated in the following example.
\begin{example}
As an example of Corollary~\ref{cor:thimble}, we can consider the case $(A,B)=(1,2)$. As we saw in the introduction, the critical points were located at $(-5/6,0)$ and $(-1/6,0)$ in Equation~\eqref{eq:41.crit.val}. Taking $\vartheta=1.6650$ we plot the set $\calC_{(-5/6,0),\vartheta}$ in Figure~\ref{fig:thimb.41}. We can see that the thimble wraps around the branch points at $-2$ and $-1$ and then tends to infinity along lines parallel to those predicted by Equation~\eqref{eq:tangent.lim}.
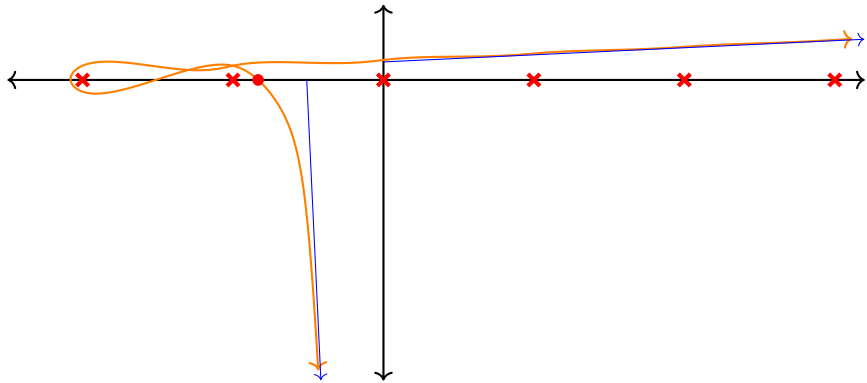
\begin{figure}[ht]
\begin{tikzpicture}[scale=2]
\draw[<->,thick] (-2.5,0) -- (3.2,0);
\draw[<->,thick] (0,-2) -- (0,0.5);
\draw[orange,thick,<->] (-0.43333, -1.9326)--(-0.44333, -1.7687)--(-0.45333, -1.6119)--(-0.46333, -1.4632)--(-0.47333, -1.3237)--(-0.48333, -1.1942)--(-0.49333, -1.0754)--(-0.50333, -0.96772)--(-0.51333, -0.87121)--(-0.52333, -0.78545)--(-0.53333, -0.70972)--(-0.54333, -0.64306)--(-0.55333, -0.58439)--(-0.56333, -0.53264)--(-0.57333, -0.48678)--(-0.58333, -0.44592)--(-0.59333, -0.40928)--(-0.60333, -0.37619)--(-0.61333, -0.34612)--(-0.62333, -0.31860)--(-0.63333, -0.29327)--(-0.64333, -0.26982)--(-0.65333, -0.24799)--(-0.66333, -0.22758)--(-0.67333, -0.20840)--(-0.68333, -0.19032)--(-0.69333, -0.17322)--(-0.70333, -0.15699)--(-0.71333, -0.14154)--(-0.72333, -0.12681)--(-0.73333, -0.11273)--(-0.74333, -0.099259)--(-0.75333, -0.086346)--(-0.76333, -0.073958)--(-0.77333, -0.062065)--(-0.78333, -0.050641)--(-0.79333, -0.039667)--(-0.80333, -0.029127)--(-0.81333, -0.019008)--(-0.82333, -0.0093009)--(-0.83333, 0)--(-0.84333, 0.0088985)--(-0.85333, 0.017396)--(-0.86333, 0.025491)--(-0.87333, 0.033181)--(-0.88333, 0.040462)--(-0.89333, 0.047330)--(-0.90333, 0.053778)--(-0.91333, 0.059802)--(-0.92333, 0.065397)--(-0.93333, 0.070561)--(-0.94333, 0.075292)--(-0.95333, 0.079593)--(-0.96333, 0.083470)--(-0.97333, 0.086930)--(-0.98333, 0.089986)--(-0.99333, 0.092652)--(-1.0033, 0.094944)--(-1.0133, 0.096881)--(-1.0233, 0.098481)--(-1.0333, 0.099765)--(-1.0433, 0.10075)--(-1.0533, 0.10145)--(-1.0633, 0.10190)--(-1.0733, 0.10209)--(-1.0833, 0.10206)--(-1.0933, 0.10181)--(-1.1033, 0.10136)--(-1.1133, 0.10073)--(-1.1233, 0.099910)--(-1.1333, 0.098927)--(-1.1433, 0.097789)--(-1.1533, 0.096503)--(-1.1633, 0.095080)--(-1.1733, 0.093526)--(-1.1833, 0.091850)--(-1.1933, 0.090059)--(-1.2033, 0.088160)--(-1.2133, 0.086158)--(-1.2233, 0.084060)--(-1.2333, 0.081871)--(-1.2433, 0.079596)--(-1.2533, 0.077241)--(-1.2633, 0.074810)--(-1.2733, 0.072308)--(-1.2833, 0.069739)--(-1.2933, 0.067106)--(-1.3033, 0.064415)--(-1.3133, 0.061669)--(-1.3233, 0.058870)--(-1.3333, 0.056024)--(-1.3433, 0.053133)--(-1.3533, 0.050200)--(-1.3633, 0.047228)--(-1.3733, 0.044221)--(-1.3833, 0.041182)--(-1.3933, 0.038113)--(-1.4033, 0.035017)--(-1.4133, 0.031897)--(-1.4233, 0.028756)--(-1.4333, 0.025596)--(-1.4433, 0.022420)--(-1.4533, 0.019230)--(-1.4633, 0.016030)--(-1.4733, 0.012821)--(-1.4833, 0.0096063)--(-1.4933, 0.0063882)--(-1.5033, 0.0031691)--(-1.5233, -0.0032617)--(-1.5333, -0.0064685)--(-1.5433, -0.0096661)--(-1.5533, -0.012852)--(-1.5633, -0.016024)--(-1.5733, -0.019179)--(-1.5833, -0.022314)--(-1.5933, -0.025428)--(-1.6033, -0.028516)--(-1.6133, -0.031577)--(-1.6233, -0.034607)--(-1.6333, -0.037604)--(-1.6433, -0.040564)--(-1.6533, -0.043484)--(-1.6633, -0.046362)--(-1.6733, -0.049194)--(-1.6833, -0.051976)--(-1.6933, -0.054705)--(-1.7033, -0.057377)--(-1.7133, -0.059988)--(-1.7233, -0.062535)--(-1.7333, -0.065012)--(-1.7433, -0.067416)--(-1.7533, -0.069742)--(-1.7633, -0.071984)--(-1.7733, -0.074138)--(-1.7833, -0.076197)--(-1.7933, -0.078156)--(-1.8033, -0.080008)--(-1.8133, -0.081747)--(-1.8233, -0.083365)--(-1.8333, -0.084854)--(-1.8433, -0.086205)--(-1.8533, -0.087409)--(-1.8633, -0.088456)--(-1.8733, -0.089335)--(-1.8833, -0.090032)--(-1.8933, -0.090535)--(-1.9033, -0.090829)--(-1.9133, -0.090896)--(-1.9233, -0.090717)--(-1.9333, -0.090270)--(-1.9433, -0.089532)--(-1.9533, -0.088475)--(-1.9633, -0.087065)--(-1.9733, -0.085264)--(-1.9833, -0.083029)--(-1.9933, -0.080306)--(-2.0033, -0.077028)--(-2.0133, -0.073115)--(-2.0233, -0.068459)--(-2.0333, -0.062912)--(-2.0433, -0.056253)--(-2.0533, -0.048106)--(-2.0633, -0.037701)--(-2.0733, -0.022747)--(-2.0793, -0.0057982)--(-2.0793, 0.016623)--(-2.0733, 0.033969)--(-2.0633, 0.049578)--(-2.0533, 0.060644)--(-2.0433, 0.069475)--(-2.0333, 0.076853)--(-2.0233, 0.083160)--(-2.0133, 0.088626)--(-2.0033, 0.093398)--(-1.9933, 0.097586)--(-1.9833, 0.10127)--(-1.9733, 0.10451)--(-1.9633, 0.10736)--(-1.9533, 0.10986)--(-1.9433, 0.11205)--(-1.9333, 0.11396)--(-1.9233, 0.11560)--(-1.9133, 0.11701)--(-1.9033, 0.11821)--(-1.8933, 0.11921)--(-1.8833, 0.12002)--(-1.8733, 0.12066)--(-1.8633, 0.12115)--(-1.8533, 0.12149)--(-1.8433, 0.12169)--(-1.8333, 0.12177)--(-1.8233, 0.12172)--(-1.8133, 0.12157)--(-1.8033, 0.12132)--(-1.7933, 0.12097)--(-1.7833, 0.12052)--(-1.7733, 0.12000)--(-1.7633, 0.11939)--(-1.7533, 0.11872)--(-1.7433, 0.11797)--(-1.7333, 0.11716)--(-1.7233, 0.11629)--(-1.7133, 0.11536)--(-1.7033, 0.11438)--(-1.6933, 0.11336)--(-1.6833, 0.11229)--(-1.6733, 0.11118)--(-1.6633, 0.11003)--(-1.6533, 0.10885)--(-1.6433, 0.10763)--(-1.6333, 0.10639)--(-1.6233, 0.10512)--(-1.6133, 0.10383)--(-1.6033, 0.10253)--(-1.5933, 0.10120)--(-1.5833, 0.099861)--(-1.5733, 0.098509)--(-1.5633, 0.097149)--(-1.5533, 0.095783)--(-1.5433, 0.094413)--(-1.5333, 0.093042)--(-1.5233, 0.091672)--(-1.5133, 0.090306)--(-1.5033, 0.088946)--(-1.4933, 0.087595)--(-1.4833, 0.086255)--(-1.4733, 0.084929)--(-1.4633, 0.083620)--(-1.4533, 0.082329)--(-1.4433, 0.081060)--(-1.4333, 0.079815)--(-1.4233, 0.078597)--(-1.4133, 0.077409)--(-1.4033, 0.076253)--(-1.3933, 0.075132)--(-1.3833, 0.074049)--(-1.3733, 0.073007)--(-1.3633, 0.072009)--(-1.3533, 0.071058)--(-1.3433, 0.070157)--(-1.3333, 0.069311)--(-1.3233, 0.068521)--(-1.3133, 0.067792)--(-1.3033, 0.067127)--(-1.2933, 0.066530)--(-1.2833, 0.066005)--(-1.2733, 0.065556)--(-1.2633, 0.065187)--(-1.2533, 0.064903)--(-1.2433, 0.064707)--(-1.2333, 0.064605)--(-1.2233, 0.064601)--(-1.2133, 0.064699)--(-1.2033, 0.064906)--(-1.1933, 0.065226)--(-1.1833, 0.065663)--(-1.1733, 0.066223)--(-1.1633, 0.066910)--(-1.1533, 0.067729)--(-1.1433, 0.068683)--(-1.1333, 0.069775)--(-1.1233, 0.071008)--(-1.1133, 0.072381)--(-1.1033, 0.073893)--(-1.0933, 0.075540)--(-1.0833, 0.077314)--(-1.0733, 0.079204)--(-1.0633, 0.081195)--(-1.0533, 0.083267)--(-1.0433, 0.085395)--(-1.0333, 0.087555)--(-1.0233, 0.089719)--(-1.0133, 0.091859)--(-1.0033, 0.093951)--(-0.99333, 0.095976)--(-0.98333, 0.097917)--(-0.97333, 0.099764)--(-0.96333, 0.10151)--(-0.95333, 0.10315)--(-0.94333, 0.10469)--(-0.93333, 0.10612)--(-0.92333, 0.10745)--(-0.91333, 0.10868)--(-0.90333, 0.10982)--(-0.89333, 0.11087)--(-0.88333, 0.11183)--(-0.87333, 0.11271)--(-0.86333, 0.11351)--(-0.85333, 0.11424)--(-0.84333, 0.11490)--(-0.83333, 0.11549)--(-0.82333, 0.11602)--(-0.81333, 0.11650)--(-0.80333, 0.11691)--(-0.79333, 0.11728)--(-0.78333, 0.11759)--(-0.77333, 0.11786)--(-0.76333, 0.11808)--(-0.75333, 0.11826)--(-0.74333, 0.11840)--(-0.73333, 0.11850)--(-0.72333, 0.11857)--(-0.71333, 0.11861)--(-0.70333, 0.11861)--(-0.69333, 0.11859)--(-0.68333, 0.11854)--(-0.67333, 0.11846)--(-0.66333, 0.11836)--(-0.65333, 0.11824)--(-0.64333, 0.11811)--(-0.63333, 0.11795)--(-0.62333, 0.11777)--(-0.61333, 0.11758)--(-0.60333, 0.11738)--(-0.59333, 0.11717)--(-0.58333, 0.11694)--(-0.57333, 0.11671)--(-0.56333, 0.11646)--(-0.55333, 0.11622)--(-0.54333, 0.11596)--(-0.53333, 0.11571)--(-0.52333, 0.11545)--(-0.51333, 0.11519)--(-0.50333, 0.11493)--(-0.49333, 0.11468)--(-0.48333, 0.11443)--(-0.47333, 0.11418)--(-0.46333, 0.11394)--(-0.45333, 0.11370)--(-0.44333, 0.11348)--(-0.43333, 0.11326)--(-0.42333, 0.11306)--(-0.41333, 0.11287)--(-0.40333, 0.11269)--(-0.39333, 0.11253)--(-0.38333, 0.11238)--(-0.37333, 0.11226)--(-0.36333, 0.11215)--(-0.35333, 0.11207)--(-0.34333, 0.11200)--(-0.33333, 0.11197)--(-0.32333, 0.11195)--(-0.31333, 0.11197)--(-0.30333, 0.11201)--(-0.29333, 0.11209)--(-0.28333, 0.11219)--(-0.27333, 0.11233)--(-0.26333, 0.11251)--(-0.25333, 0.11272)--(-0.24333, 0.11297)--(-0.23333, 0.11326)--(-0.22333, 0.11359)--(-0.21333, 0.11397)--(-0.20333, 0.11439)--(-0.19333, 0.11485)--(-0.18333, 0.11536)--(-0.17333, 0.11592)--(-0.16333, 0.11653)--(-0.15333, 0.11719)--(-0.14333, 0.11791)--(-0.13333, 0.11867)--(-0.12333, 0.11948)--(-0.11333, 0.12034)--(-0.10333, 0.12125)--(-0.093333, 0.12221)--(-0.083333, 0.12321)--(-0.073333, 0.12425)--(-0.063333, 0.12532)--(-0.053333, 0.12643)--(-0.043333, 0.12756)--(-0.033333, 0.12870)--(-0.023333, 0.12986)--(-0.013333, 0.13101)--(-0.0033333, 0.13217)--(0.0066667, 0.13331)--(0.016667, 0.13443)--(0.026667, 0.13553)--(0.036667, 0.13660)--(0.046667, 0.13764)--(0.056667, 0.13865)--(0.066667, 0.13962)--(0.076667, 0.14055)--(0.086667, 0.14144)--(0.096667, 0.14229)--(0.10667, 0.14310)--(0.11667, 0.14387)--(0.12667, 0.14461)--(0.13667, 0.14530)--(0.14667, 0.14596)--(0.15667, 0.14659)--(0.16667, 0.14718)--(0.17667, 0.14773)--(0.18667, 0.14826)--(0.19667, 0.14875)--(0.20667, 0.14921)--(0.21667, 0.14965)--(0.22667, 0.15006)--(0.23667, 0.15044)--(0.24667, 0.15080)--(0.25667, 0.15114)--(0.26667, 0.15145)--(0.27667, 0.15174)--(0.28667, 0.15202)--(0.29667, 0.15227)--(0.30667, 0.15251)--(0.31667, 0.15273)--(0.32667, 0.15293)--(0.33667, 0.15312)--(0.34667, 0.15330)--(0.35667, 0.15347)--(0.36667, 0.15362)--(0.37667, 0.15376)--(0.38667, 0.15390)--(0.39667, 0.15402)--(0.40667, 0.15414)--(0.41667, 0.15425)--(0.42667, 0.15436)--(0.43667, 0.15445)--(0.44667, 0.15455)--(0.45667, 0.15464)--(0.46667, 0.15473)--(0.47667, 0.15482)--(0.48667, 0.15491)--(0.49667, 0.15499)--(0.50667, 0.15508)--(0.51667, 0.15517)--(0.52667, 0.15527)--(0.53667, 0.15536)--(0.54667, 0.15546)--(0.55667, 0.15557)--(0.56667, 0.15568)--(0.57667, 0.15579)--(0.58667, 0.15592)--(0.59667, 0.15605)--(0.60667, 0.15619)--(0.61667, 0.15634)--(0.62667, 0.15651)--(0.63667, 0.15668)--(0.64667, 0.15687)--(0.65667, 0.15707)--(0.66667, 0.15728)--(0.67667, 0.15751)--(0.68667, 0.15775)--(0.69667, 0.15801)--(0.70667, 0.15829)--(0.71667, 0.15858)--(0.72667, 0.15890)--(0.73667, 0.15923)--(0.74667, 0.15958)--(0.75667, 0.15996)--(0.76667, 0.16035)--(0.77667, 0.16077)--(0.78667, 0.16121)--(0.79667, 0.16167)--(0.80667, 0.16216)--(0.81667, 0.16267)--(0.82667, 0.16321)--(0.83667, 0.16377)--(0.84667, 0.16435)--(0.85667, 0.16496)--(0.86667, 0.16559)--(0.87667, 0.16625)--(0.88667, 0.16693)--(0.89667, 0.16763)--(0.90667, 0.16835)--(0.91667, 0.16909)--(0.92667, 0.16985)--(0.93667, 0.17062)--(0.94667, 0.17140)--(0.95667, 0.17220)--(0.96667, 0.17300)--(0.97667, 0.17381)--(0.98667, 0.17462)--(0.99667, 0.17543)--(1.0067, 0.17624)--(1.0167, 0.17704)--(1.0267, 0.17784)--(1.0367, 0.17862)--(1.0467, 0.17939)--(1.0567, 0.18015)--(1.0667, 0.18089)--(1.0767, 0.18162)--(1.0867, 0.18233)--(1.0967, 0.18302)--(1.1067, 0.18369)--(1.1167, 0.18434)--(1.1267, 0.18498)--(1.1367, 0.18559)--(1.1467, 0.18618)--(1.1567, 0.18676)--(1.1667, 0.18731)--(1.1767, 0.18785)--(1.1867, 0.18837)--(1.1967, 0.18887)--(1.2067, 0.18936)--(1.2167, 0.18983)--(1.2267, 0.19028)--(1.2367, 0.19072)--(1.2467, 0.19114)--(1.2567, 0.19154)--(1.2667, 0.19194)--(1.2767, 0.19232)--(1.2867, 0.19269)--(1.2967, 0.19304)--(1.3067, 0.19339)--(1.3167, 0.19372)--(1.3267, 0.19405)--(1.3367, 0.19436)--(1.3467, 0.19467)--(1.3567, 0.19497)--(1.3667, 0.19526)--(1.3767, 0.19554)--(1.3867, 0.19582)--(1.3967, 0.19609)--(1.4067, 0.19636)--(1.4167, 0.19662)--(1.4267, 0.19688)--(1.4367, 0.19713)--(1.4467, 0.19739)--(1.4567, 0.19764)--(1.4667, 0.19788)--(1.4767, 0.19813)--(1.4867, 0.19838)--(1.4967, 0.19862)--(1.5067, 0.19887)--(1.5167, 0.19912)--(1.5267, 0.19936)--(1.5367, 0.19961)--(1.5467, 0.19987)--(1.5567, 0.20012)--(1.5667, 0.20038)--(1.5767, 0.20065)--(1.5867, 0.20092)--(1.5967, 0.20119)--(1.6067, 0.20147)--(1.6167, 0.20176)--(1.6267, 0.20205)--(1.6367, 0.20235)--(1.6467, 0.20266)--(1.6567, 0.20298)--(1.6667, 0.20330)--(1.6767, 0.20363)--(1.6867, 0.20398)--(1.6967, 0.20433)--(1.7067, 0.20469)--(1.7167, 0.20507)--(1.7267, 0.20546)--(1.7367, 0.20585)--(1.7467, 0.20626)--(1.7567, 0.20669)--(1.7667, 0.20712)--(1.7767, 0.20757)--(1.7867, 0.20803)--(1.7967, 0.20850)--(1.8067, 0.20899)--(1.8167, 0.20949)--(1.8267, 0.21001)--(1.8367, 0.21053)--(1.8467, 0.21107)--(1.8567, 0.21163)--(1.8667, 0.21219)--(1.8767, 0.21277)--(1.8867, 0.21336)--(1.8967, 0.21396)--(1.9067, 0.21457)--(1.9167, 0.21520)--(1.9267, 0.21583)--(1.9367, 0.21646)--(1.9467, 0.21711)--(1.9567, 0.21775)--(1.9667, 0.21841)--(1.9767, 0.21906)--(1.9867, 0.21972)--(1.9967, 0.22038)--(2.0067, 0.22103)--(2.0167, 0.22169)--(2.0267, 0.22234)--(2.0367, 0.22299)--(2.0467, 0.22363)--(2.0567, 0.22426)--(2.0667, 0.22489)--(2.0767, 0.22551)--(2.0867, 0.22612)--(2.0967, 0.22672)--(2.1067, 0.22731)--(2.1167, 0.22789)--(2.1267, 0.22846)--(2.1367, 0.22903)--(2.1467, 0.22958)--(2.1567, 0.23011)--(2.1667, 0.23064)--(2.1767, 0.23116)--(2.1867, 0.23167)--(2.1967, 0.23216)--(2.2067, 0.23265)--(2.2167, 0.23313)--(2.2267, 0.23359)--(2.2367, 0.23405)--(2.2467, 0.23450)--(2.2567, 0.23493)--(2.2667, 0.23536)--(2.2767, 0.23579)--(2.2867, 0.23620)--(2.2967, 0.23660)--(2.3067, 0.23700)--(2.3167, 0.23739)--(2.3267, 0.23778)--(2.3367, 0.23816)--(2.3467, 0.23853)--(2.3567, 0.23890)--(2.3667, 0.23926)--(2.3767, 0.23962)--(2.3867, 0.23997)--(2.3967, 0.24032)--(2.4067, 0.24066)--(2.4167, 0.24101)--(2.4267, 0.24135)--(2.4367, 0.24168)--(2.4467, 0.24202)--(2.4567, 0.24235)--(2.4667, 0.24268)--(2.4767, 0.24302)--(2.4867, 0.24335)--(2.4967, 0.24368)--(2.5067, 0.24401)--(2.5167, 0.24434)--(2.5267, 0.24467)--(2.5367, 0.24501)--(2.5467, 0.24534)--(2.5567, 0.24568)--(2.5667, 0.24602)--(2.5767, 0.24637)--(2.5867, 0.24671)--(2.5967, 0.24706)--(2.6067, 0.24741)--(2.6167, 0.24777)--(2.6267, 0.24813)--(2.6367, 0.24850)--(2.6467, 0.24887)--(2.6567, 0.24925)--(2.6667, 0.24963)--(2.6767, 0.25002)--(2.6867, 0.25041)--(2.6967, 0.25081)--(2.7067, 0.25122)--(2.7167, 0.25164)--(2.7267, 0.25206)--(2.7367, 0.25249)--(2.7467, 0.25292)--(2.7567, 0.25337)--(2.7667, 0.25382)--(2.7767, 0.25428)--(2.7867, 0.25475)--(2.7967, 0.25522)--(2.8067, 0.25571)--(2.8167, 0.25620)--(2.8267, 0.25670)--(2.8367, 0.25720)--(2.8467, 0.25772)--(2.8567, 0.25824)--(2.8667, 0.25877)--(2.8767, 0.25930)--(2.8867, 0.25985)--(2.8967, 0.26039)--(2.9067, 0.26095)--(2.9167, 0.26151)--(2.9267, 0.26207)--(2.9367, 0.26264)--(2.9467, 0.26321)--(2.9567, 0.26379)--(2.9667, 0.26436)--(2.9767, 0.26494)--(2.9867, 0.26552)--(2.9967, 0.26610)--(3.0067, 0.26668)--(3.0167, 0.26726)--(3.0267, 0.26783)--(3.0367, 0.26841)--(3.0467, 0.26898)--(3.0567, 0.26955)--(3.0667, 0.27011)--(3.0767, 0.27067)--(3.0867, 0.27123)--(3.0967, 0.27178)--(3.1067, 0.27232)--(3.1167, 0.27286);
\filldraw[red] (-5/6,0) circle (1pt);
\draw[ultra thick,red,xshift=3cm] (-0.04,-0.04)--(0.04,0.04);
\draw[ultra thick,red,xshift=3cm] (0.04,-0.04)--(-0.04,0.04);
\draw[ultra thick,red,xshift=2cm] (-0.04,-0.04)--(0.04,0.04);
\draw[ultra thick,red,xshift=2cm] (0.04,-0.04)--(-0.04,0.04);
\draw[ultra thick,red,xshift=1cm] (-0.04,-0.04)--(0.04,0.04);
\draw[ultra thick,red,xshift=1cm] (0.04,-0.04)--(-0.04,0.04);
\draw[ultra thick,red] (-0.04,-0.04)--(0.04,0.04);
\draw[ultra thick,red] (0.04,-0.04)--(-0.04,0.04);
\draw[ultra thick,red,xshift=-1cm] (-0.04,-0.04)--(0.04,0.04);
\draw[ultra thick,red,xshift=-1cm] (0.04,-0.04)--(-0.04,0.04);
\draw[ultra thick,red,xshift=-2cm] (-0.04,-0.04)--(0.04,0.04);
\draw[ultra thick,red,xshift=-2cm] (0.04,-0.04)--(-0.04,0.04);
\draw[blue,yshift=0.12cm,->,opacity=0.5] (0,0)--(3.1964,0.15074);
\draw[blue,xshift=-0.51cm,->,opacity=0.5] (0,0)--(0.094213,-1.9978);
\end{tikzpicture}
\caption{This figure depicts in \textcolor{orange}{orange} the analytic continuation of the set $\calC_{(-5/6,0),\vartheta}$ for $\vartheta=1.6650$ and $(A,B)=(1,2)$. The \textcolor{blue}{blue} lines are parallel to the lines $\sqrt{-i e^{i\vartheta}}$ and $\sqrt{i e^{i\vartheta}}$. The \textcolor{red}{red} crosses are the branch points of $V$.
}\label{fig:thimb.41}
\end{figure}
\end{example}
Lemma~\ref{lem:thimble} gives a complete description of the thimbles outside a compact set (depending on $\vartheta$). Next we study the behaviour of the thimbles inside the compact set given by Corollary~\ref{cor:thimble} (specifically the constant $M$ from Lemma~\ref{lem:thimble}). This will be described in the following sections.

\subsection{A relative homology for state integrals contours}
In this section, we define the homology $H_\bullet(\Sigma,D_{\infty,\theta})$ relative to a region $D_{\infty,\theta}$. The first relative homology group $H_1(\Sigma,D_{\infty,\theta})$ contains lifts of the state integral contours $\calJ_{\ell,\tau}$ to $\Sigma$ and the thimbles $\calC_{p_0,\vartheta}$. First, we construct a finite dimensional homology $H_\bullet(X_{\epsilon,M},D_{\infty,\theta})$ defined for a compact surface $X_{\epsilon,M}\subset\Sigma$. Then, $H_\bullet(\Sigma,D_{\infty,\theta})$ will be defined via the limits $\epsilon\to0$ and $M\to\infty$.
\subsubsection{The surface $X_{\epsilon,M}$}\label{sec:X_eps_M}
We choose the set of coordinates $z_{m,\pm}$ for $m=0,\ldots, A-1$. For given constants $M,\epsilon>0$, we define the surface $X_{\epsilon, M}$ by gluing different local coordinate charts. Let $m\in\{0,\dots,A-1\}$, on the $m$-sheet of $\Sigma$ we define 
\begin{align}
{\calU}_{\epsilon,M,m}^{\pm}&\;:=\;\left\lbrace p\in\Sigma\colon |\Re(z_{m,\pm}(p))|\leq M\,,\; \Im(\pm z_{m,\pm}(p))\geq-\frac{\epsilon}{M} \right\rbrace\,, \\
{\calV}_{\epsilon,m}^{\pm}&\;:=\;\bigcup_{k\in\BZ}\Big\{p\in\Sigma \colon |z_{m,\pm}(p)-k|\geq\frac{\epsilon}{1+|k|}\Big\}\,,\\
{\calW}_{\epsilon,M,m}^{\pm}&\;:=\;\left\lbrace p\in\Sigma\colon \Im(\pm z_{m,\pm}(p))\geq\frac{\epsilon}{M} \right\rbrace\,.
\end{align}
Then, we define the surface
\be
  X_{\epsilon, M}
  \=
  \Sigma_{\epsilon,M}^{+}\cup\Sigma_{\epsilon,M}^{-}\subset\Sigma\,,
  \qquad\text{where}\qquad 
  \Sigma_{\epsilon,M}^{\pm}
  \;:=\;\bigcup_{m=1}^A{\calU}_{\epsilon,M,m}^{\pm}\cap{\calV}_{\epsilon,m}^{\pm}\cup{\calW}_{\epsilon,M,m}^{\pm}\,.
\ee
We represent one of the sheets defining $\Sigma_{\epsilon,M}^{+}$ in Figure~\ref{fig:sigma_eps_M}.
\begin{figure}[ht]
\begin{tikzpicture}[scale=0.9]
\filldraw[fill=pink!30,color=pink!30] (-4,0) rectangle (8,2.5);
\filldraw[fill=pink!30,color=pink!30] (-4,-0.2) rectangle (8,0);
\foreach \x in {0,...,147}
\draw[red,xshift=\x*0.1 cm] (-6.5,0.3)--(-4.3,2.5);
\foreach \x in {0,...,21}
\draw[red] (8.3+\x*0.1,0.3)--(10.5,2.5-\x*0.1);
\foreach \x in {0,...,21}
\draw[red] (-6.5,2.5-\x*0.1)--(-6.5+\x*0.1,2.5);
\filldraw[color=pink!30,pattern=north east lines,pattern color=red] (-5,-3) rectangle (-5.5,-3.5);
\filldraw[fill=pink!30,color=pink!30] (-2,-3) rectangle (-2.5,-3.5);
\filldraw[fill=white,thick] (0.5,-3.25) circle (0.25);
\node[right,font=\tiny] at (-5,-3.25) {$\calW^+_{\epsilon,M,m}$};
\node[right,font=\tiny] at (-2,-3.25) {$\calU^+_{\epsilon,M,m}$};
\node[right,font=\tiny] at (0.75,-3.25) {$\BC\setminus \calV^+_{\epsilon,m}$};
\filldraw[fill=white,thick](2,0) circle (0.6);
\filldraw[fill=white,thick](0,0) circle (0.5);
\filldraw[fill=white,thick](4,0) circle (0.5);
\filldraw[fill=white,thick](-4,0) circle (0.3);
\filldraw[fill=white,thick](8,0) circle (0.3);
\filldraw[fill=white,thick](-6,0) circle (0.2);
\filldraw[fill=white,thick](10,0) circle (0.2);
\filldraw[fill=white,thick](6,0) circle (0.4);
\filldraw[fill=white,thick](-2,0) circle (0.4);
\draw[red,snake it] (2,0) -- (2,-2.5);
\draw[red,snake it] (-2,0) -- (-2,-2.5);
\draw[red,snake it] (0,0) -- (0,-2.5);
\draw[red,snake it] (4,0) -- (4,-2.5);
\draw[red,snake it] (-4,0) -- (-4,-2.5);
\draw[red,snake it] (6,0) -- (6,-2.5);
\draw[red,snake it] (8,0) -- (8,-2.5);
\draw[red,snake it] (-6,0) -- (-6,-2.5);
\draw[red,snake it] (10,0) -- (10,-2.5);
\foreach \x in {-6,-4,-2,0,2,4,6,8,10}
\filldraw (\x,0) circle (2.5pt);
\draw[->] (-7,0) -- (10.5,0);
\draw[->] (2,-2.5)--(2,3);
\end{tikzpicture}
\caption{The $m$-sheet of $\Sigma_{\epsilon,M}^+$ in the $z_{m,+}$ coordinates.}\label{fig:sigma_eps_M}
\end{figure}
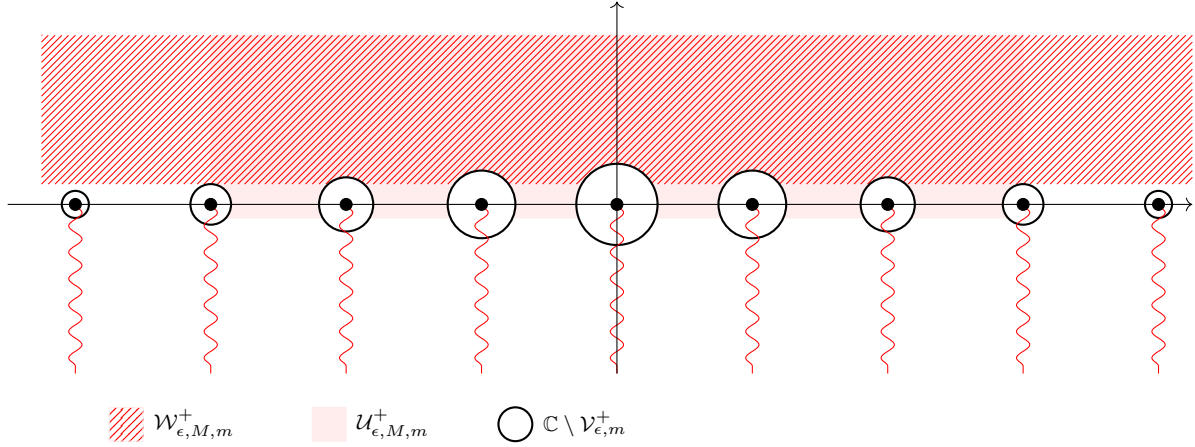
\subsubsection{Behaviour at infinity}
The dilogarithm $\Li_{2}(\e(z))$ is exponentially small as $\Im(z)\to\infty$. Therefore, the behaviour of the thimbles at infinity is governed by the quadratic functions
\be
\begin{aligned}
  &P_{+}:\Sigma_{\epsilon,M}^{+}\rightarrow\BC\,,
  &\qquad\text{such that}\qquad&
  P_{+}(p)\=\frac{A}{2}\,z_{+}(p)^2\,,\\
  &P_{-}:\Sigma_{\epsilon,M}^{-}\rightarrow\BC\,,
  &\qquad\text{such that}\qquad&
  P_{-}(p)\=\frac{A-B}{2}\,z_{-}(p)^2\,.
\end{aligned}
\ee
We define the sets $D_{L,\theta}^{\pm}\subseteq P_{\pm}^{-1}\big(\{w\in\BC\;|\;-\Im(we^{-i\theta})>L\}\big)$ to be the connected components containing points $p$ such that $z_{+}(p)=e^{i\theta/2}e^{-iA\epsilon}\in D_{L,\theta}^{+}$ and $z_{-}(p)=-e^{i\theta/2}e^{-i(A-B)\epsilon}\in D_{L,\theta}^{-}$ respectively. For large enough $L$, these sets have arguments with $\theta\in(0,2\pi)$
\be\label{eq:D.angles}
\begin{aligned}
  \arg(D_{L,\theta}^{+})
  &\=
  \Big(\max\big\{0,\tfrac{\theta}{2}-\tfrac{\pi}{2}\big\},\min\big\{\pi,\tfrac{\theta}{2}\big\}\Big)\,,\\
  \arg(D_{L,\theta}^{-})
  &\=
  \Big(\max\big\{\pi,\tfrac{\theta}{2}+\tfrac{3\pi}{4}-\tfrac{(A-B)\pi}{4|A-B|}\big\},\min\big\{2\pi,\tfrac{\theta}{2}+\tfrac{5\pi}{4}-\tfrac{(A-B)\pi}{4|A-B|}\big\}\Big)\,.
\end{aligned}
\ee
The regions are depicted in blue for $\theta\in(\tfrac{\pi}{2},\tfrac{3\pi}{2})$ in Figure~\ref{fig:good.region.I} and Figure~\ref{fig:good.region.II}.
We denote with $D_{L,\theta}$ the union of the regions $D_{L,\theta}^+$ and $D_{L,\theta}^{-}$.

\subsubsection{The relative homology} 
%
%
\begin{proposition}\label{prop:relative-homology} 
For $\epsilon,M,L>0$, and $\theta\in(0,2\pi)$, the relative homology group $H_1(X_{\epsilon,M},D_{L,\theta})$ is finite dimensional. 
\end{proposition}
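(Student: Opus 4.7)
The plan is to show that the pair $(X_{\epsilon,M}, D_{L,\theta})$ deformation retracts onto a pair $(Y, Y')$ where $Y$ is a compact $2$-manifold with boundary and $Y'\subset Y$ is a closed subcomplex; finite-dimensionality of $H_1(X_{\epsilon,M},D_{L,\theta})$ will then follow from the long exact sequence of the pair applied to $(Y,Y')$, since $Y$ and $Y'$ both have finitely generated homology.

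First I would analyse the topology of $X_{\epsilon,M}$. Each of the $A$ sheets is a plane truncated to the strip $|\Re z_{m,\pm}|\leq M$ (with small discs around integers removed) together with the unbounded region $|\Im z_{m,\pm}|\geq \epsilon/M$. Only finitely many integers $k$ with $|k|\leq M$ contribute punctures, and the gluing along branch cuts from integer points occurs in the strip, so inside any large disc $X_{\epsilon,M}$ is a compact surface with finitely generated homology. Outside a sufficiently large compact set $K$, the complement $X_{\epsilon,M}\setminus K$ breaks up into finitely many unbounded ends, one for each sheet and each choice of sign, lying in the upper or lower half-plane portion of that sheet.

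Second I would handle the ends using the quadratic maps $P_{\pm}$. On each end, $P_{\pm}$ sends the end to a half-plane in $\BC$, and the intersection $D_{L,\theta}^{\pm}\cap \text{end}$ is by construction the preimage of a half-plane in this $\BC$, hence an unbounded convex wedge whose angular opening is described by Equation~\eqref{eq:D.angles}. The complement of $D_{L,\theta}^{\pm}$ in the end is then a finite disjoint union of wedge-shaped unbounded contractible regions, each of which admits a radial deformation retraction onto its intersection with $\partial K$. Combining these retractions across all ends with the identity on $K$, I obtain a deformation retraction of the pair $(X_{\epsilon,M},D_{L,\theta})$ onto $(Y,Y\cap D_{L,\theta})$, where $Y$ is the compact subsurface obtained from $K$ by attaching the compact arcs produced at each end, and $Y\cap D_{L,\theta}$ is a finite union of arcs and discs.

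The main obstacle is the careful construction of the deformation retraction on each end: one must check that the angular intervals in Equation~\eqref{eq:D.angles} actually leave a well-defined complement consisting of finitely many contractible pieces, and that the retraction can be made continuous at the interface between $K$ and $X_{\epsilon,M}\setminus K$ while preserving the subspace $D_{L,\theta}$. The boundary cases $\theta=\tfrac{\pi}{2}, \tfrac{3\pi}{2}$ and the dependence on the sign of $A-B$ require case analysis, but away from the closure of $\mathcal{S}(V_0)$ the wedges $D_{L,\theta}^{\pm}$ are bounded away from the real axis and the argument goes through uniformly. Once the deformation retraction is in place, $Y$ is a compact $2$-manifold with boundary and $Y\cap D_{L,\theta}$ is a compact closed subset, so both have finite-dimensional homology and hence so does $H_1(Y,Y\cap D_{L,\theta})\cong H_1(X_{\epsilon,M},D_{L,\theta})$.
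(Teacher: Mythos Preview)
Your approach via deformation retraction onto a compact pair is correct in principle and would establish the proposition. However, it is genuinely different from the paper's argument, which instead applies the Mayer--Vietoris sequence to the decomposition $X_{\epsilon,M}=\Sigma_{\epsilon,M}^{+}\cup\Sigma_{\epsilon,M}^{-}$. The key observation in the paper is that each piece $\Sigma_{\epsilon,M}^{\pm}$ is a disjoint union of discs and $D_{L,\theta}^{\pm}$ is connected, so the relative homology of each piece vanishes; Mayer--Vietoris then yields an explicit isomorphism $H_1(X_{\epsilon,M},D_{L,\theta})\cong H_0(\Sigma_{\epsilon,M}^{+}\cap\Sigma_{\epsilon,M}^{-})$, and the intersection is a finite disjoint union of discs indexed by pairs $(m,\ell)$.

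The advantage of the paper's route is that it gives more than finite-dimensionality: the isomorphism to $H_0(\Sigma_{\epsilon,M}^{+}\cap\Sigma_{\epsilon,M}^{-})$ is reused immediately afterward (Proposition~\ref{prop:base_homology}) to identify the state integral contours $[\tilde{\calJ}_{\ell,m,\tau}]$ with the basis points $[p_{m,\ell}]$, so one obtains the desired basis for free. Your compactification argument, while sound, leaves you with an abstract finite rank and would require additional work to exhibit the state integral contours as a basis. The paper's approach also sidesteps the case analysis you flag concerning the angular intervals and the continuity of the retraction at the interface, since no explicit retraction is needed.
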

\begin{proof}
We consider the Mayer--Vietoris sequence:
\[
\begin{tikzcd}
 0\arrow[r]& H_1(\Sigma_{\epsilon,M}^{+}\,;D_{L,\theta}^{+})\oplus H_1(\Sigma_{\epsilon,M}^{-}\,;D_{L,\theta}^{-})\arrow[r]&H_1(X_{\epsilon,M}\,,D_{L,\theta})\arrow[dll,controls={(3.5,-0.5) and (-3.5,0.5)}] \\
H_0(\Sigma_{\epsilon,M}^{+}\cap\Sigma_{\epsilon,M}^{-})\arrow[r]&H_0(\Sigma_{\epsilon,M}^{+}\,,D_{L,\theta}^{+})\oplus H_0(\Sigma_{\epsilon,M}^{-}\,,D_{L,\theta}^{-})\arrow[r]&H_0(X_{\epsilon,M}\,,D_{L,\theta})\arrow[r]&0
\end{tikzcd}
\]
Notice that $H_n(\Sigma_{\epsilon,M}^{-}\,,D_{L,\theta}^{-})\cong H_n(\Sigma_{\epsilon,M}^{+}\,,D_{L,\theta}^{+})\cong \{0\}$ for every $n\ge 0$, because $\Sigma_{\epsilon,M}^{\pm}$ are homeomorphic to a disjoint union of discs and $D_{L,\theta,\pm}$ are connected; therefore $H_1(X_{\epsilon,M}\,,D_{L,\theta})\cong H_0(\Sigma_{\epsilon,M}^{+}\cap\Sigma_{\epsilon,M}^{-})$.

\medskip

In addition, $\Sigma_{\epsilon,M}^{-}\cap\Sigma_{\epsilon,M}^{+}$ is homeomorphic to the disjoint union of finitely many discs (see Figure~\ref{fig:intersection}), hence $H_0(\Sigma_{\epsilon,M}^{-}\cap\Sigma_{\epsilon,M}^{+})$ is generated by a subset of the class of points $[p_{m,\ell}]$ labeled by $m=0,\ldots, A-1$ and $\ell=-M,\dots,M$. Thus, we deduce that $H_1(X_{\epsilon,M}\,,D_{L,\theta})$ is finite dimensional too. 
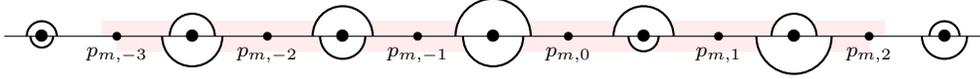
\begin{figure}[ht]
\begin{tikzpicture}
\filldraw[fill=pink!30,color=pink!30] (-3,-0.2) rectangle (7,0);
\filldraw[fill=pink!30,color=pink!30] (-3.2,0) rectangle (7.2,0.2);
\foreach \x in {1,2,3,4}
\filldraw[fill=white,thick] (-6+2*\x+\x*0.1+0.1,0) arc (0:180:\x*0.1+0.1);
\foreach \x in {1,2,3}
\filldraw[fill=white,thick] (10-2*\x+\x*0.1+0.1,0) arc (0:180:\x*0.1+0.1);
\filldraw[fill=white,thick] (-4+0.15,0) arc (0:-180:0.15);
\filldraw[fill=white,thick] (-2+0.4,0) arc (0:-180:0.4);
\filldraw[fill=white,thick] (0+0.3,0) arc (0:-180:0.3);
\filldraw[fill=white,thick] (2+0.4,0) arc (0:-180:0.4);
\filldraw[fill=white,thick] (4+0.2,0) arc (0:-180:0.2);
\filldraw[fill=white,thick] (6+0.5,0) arc (0:-180:0.5);
\filldraw[fill=white,thick] (8+0.3,0) arc (0:-180:0.3);
\foreach \x in {-4,-2,0,2,4,6,8}
\draw node at (\x,0) {$\bullet$};
\draw (-4.5,0) -- (8.5,0);
\node[below,font=\tiny] at (1,0) {$p_{m,-1}$};
\node[font=\tiny] at (1,0) {$\bullet$};
\node[below,font=\tiny] at (3,0) {$p_{m,0}$};
\node[font=\tiny] at (3,0) {$\bullet$};
\node[below,font=\tiny] at (5,0) {$p_{m,1}$};
\node[font=\tiny] at (5,0) {$\bullet$};
\node[below,font=\tiny] at (7,0) {$p_{m,2}$};
\node[font=\tiny] at (7,0) {$\bullet$};
\node[below,font=\tiny] at (-1,0) {$p_{m,-2}$};
\node[font=\tiny] at (-1,0) {$\bullet$};
\node[below,font=\tiny] at (-3,0) {$p_{m,-3}$};
\node[font=\tiny] at (-3,0) {$\bullet$};
\end{tikzpicture}
\caption{The projection of $\Sigma_{\epsilon,M}^+\cap\Sigma_{\epsilon,M}^{-}$ on the $m$-sheet in the $z_{m,+}$ coordinates.}\label{fig:intersection}
\end{figure} 
%
\end{proof}
Let $M,\epsilon>0$ and $\theta\in(0,2\pi)$, we define
the homology 
\be
H_\bullet(X_{\epsilon,M},D_{\infty,\theta}):= H_{\bullet}\left(\varprojlim_{L\ge M} C_\bullet(X_{\epsilon,M},D_{L,\theta})\right)\,.
\ee 
For every $L>L'\geq M>0$, the homotopy equivalence between $D_{L,\theta}$ and $D_{L',\theta}$ induces an isomorphism of chain complex $C_\bullet(X_{\epsilon,M},D_{L,\theta})\to C_\bullet(X_{\epsilon,M},D_{L',\theta})$. Hence, there exists a long exact sequence 
\be\label{eq:exact_sequence}
\ldots\to H_{n}(D_{L,\theta})\to H_{n}(X_{\epsilon,M})\to H_n(X_{\epsilon,M},D_{\infty,\theta})\to\ldots 
\ee
for $L$ large enough. From Proposition~\ref{prop:relative-homology}, we deduce that $H_1(X_{\epsilon,M},D_{\infty,\theta})$ is finite dimensional too.
By a similar argument, we can define the homology of $\Sigma$ relative to $D_{\infty,\theta}$: notice that for every $M'\geq M>0$ and $\epsilon\geq\epsilon'>0$ the homotopy equivalence between $X_{\epsilon,M}$ and $X_{\epsilon',M'}$ induces an isomorphism of chain complexes $C_\bullet(X_{\epsilon,M},D_{\infty,\theta})\to C_\bullet(X_{\epsilon',M'},D_{\infty,\theta})$, thus 
\be\label{eq:full_H}
H_\bullet(\Sigma,D_{\infty,\theta}):=H_\bullet\left(\varprojlim_{M,\epsilon} C_\bullet(X_{\epsilon,M},D_{\infty,\theta})\right)
\ee 
defines the infinite dimensional homology group of $\Sigma$ relative to $D_{\infty,\theta}$.  
\subsection{A state integral basis}\label{sec:state_int_basis}
Let $\tilde{\calJ}_{\ell,m,\tau}$ be the lift of $\calJ_{\ell,\tau}$ to the $m$-th sheet of $\Sigma$ in the $z_+$ coordinates. The following proposition shows that such classes generate the homology $H_1(\Sigma,D_{\infty,\theta})$ of Equation~\eqref{eq:full_H}.
\begin{proposition}\label{prop:base_homology}
Let $\epsilon, M>0$. A subset of the state integral contours $\tilde{\calJ}_{\ell,m,\tau}$ for $|\ell|<M$ and $m\in\{0,\dots,A-1\}$ define a basis for $H_1(X_{\epsilon,M},D_{\infty,\theta})$, where $\theta=\arg(-1/\tau)\in(0,2\pi)$. 
\end{proposition}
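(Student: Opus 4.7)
The plan is to transport the basis problem through the Mayer--Vietoris isomorphism already established in the proof of Proposition~\ref{prop:relative-homology}. That argument gives an isomorphism $H_1(X_{\epsilon,M}, D_{\infty,\theta}) \cong H_0(\Sigma_{\epsilon,M}^{+} \cap \Sigma_{\epsilon,M}^{-})$, and the target is generated by point classes $[p_{m,\ell}]$ indexed by the connected components of the intersection that appear in Figure~\ref{fig:intersection}. If one can show that each lift $\tilde{\calJ}_{\ell,m,\tau}$ defines a relative $1$-cycle whose image under the Mayer--Vietoris connecting homomorphism is, up to sign, a single generator $[p_{m,\ell}]$, then the corresponding set of contours is automatically a basis.

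The first step is to verify that $\tilde{\calJ}_{\ell,m,\tau}$ is a relative cycle, i.e.\ that its two infinite ends land in $D_{\infty,\theta}^{+}$ and $D_{\infty,\theta}^{-}$ respectively. Using $\theta = \arg(-1/\tau)$, the upper ray of $\calJ_{\ell,\tau}$ travels in the direction $(i/\sqrt{\tau})e^{-iA\epsilon}$ of argument $\theta/2 - A\epsilon$, which for $\epsilon>0$ sufficiently small lies in the admissible window $(\max\{0,\theta/2-\pi/2\},\min\{\pi,\theta/2\})$ of~\eqref{eq:D.angles}. The lower ray is handled the same way in the coordinate $z_{-}$ on the $m$-th sheet, where the two cases $A>B$ and $A<B$ are encoded in the sign factor $(A-B)/|A-B|$ controlling the second line of~\eqref{eq:D.angles}; the asymptotic quadratic form $P_{\pm}$ dominates along these rays, so the ends are captured by $D_{L,\theta}^{\pm}$ for all $L\ge M$ large enough, which is precisely what is needed to pass to the projective limit in~\eqref{eq:full_H}.

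Second, I would compute the image under the connecting homomorphism. By construction, $\tilde{\calJ}_{\ell,m,\tau}$ crosses $\Sigma_{\epsilon,M}^{+}\cap\Sigma_{\epsilon,M}^{-}$ transversally exactly once, through the point $-\tfrac{1}{2}+\ell$ on the $m$-th sheet, which lies inside precisely the component of the intersection corresponding to $p_{m,\ell}$. Splitting the cycle at this crossing into an upper half contained in $\Sigma_{\epsilon,M}^{+}$ and a lower half contained in $\Sigma_{\epsilon,M}^{-}$ gives two relative chains whose boundary in $X_{\epsilon,M}/D_{\infty,\theta}$ reduces, after cancellation at infinity, to $\pm p_{m,\ell}$; hence the Mayer--Vietoris map sends $[\tilde{\calJ}_{\ell,m,\tau}]\mapsto\pm[p_{m,\ell}]$.

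Finally, discarding those pairs $(m,\ell)$ for which $p_{m,\ell}$ fails to generate an independent class in $H_0(\Sigma_{\epsilon,M}^{+}\cap\Sigma_{\epsilon,M}^{-})$ after the identifications inherent in~\eqref{eq:full_H}, the surviving contours map bijectively onto a generating set of the target and therefore form a basis. The main nuisance, and the only non-routine part of the argument, will be making this boundary-index bookkeeping precise: specifying exactly which subset of the candidate pairs with $|\ell|<M$ and $m\in\{0,\dots,A-1\}$ is retained, once one accounts for the connected components lost when a disc component of the intersection is pushed to the edge of $X_{\epsilon,M}$. After that combinatorial step, the finite-dimensionality statement of Proposition~\ref{prop:relative-homology} together with the explicit identification of generators completes the proof.
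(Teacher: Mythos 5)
Your proposal follows essentially the same route as the paper: pass through the Mayer--Vietoris isomorphism $H_1(X_{\epsilon,M},D_{\infty,\theta})\cong H_0(\Sigma_{\epsilon,M}^{+}\cap\Sigma_{\epsilon,M}^{-})$ from Proposition~\ref{prop:relative-homology} and identify each contour class $[\tilde{\calJ}_{\ell,m,\tau}]$ with a generating point class $[p_{m,\ell}]$. The paper's own proof is terse, citing ``by construction'' for the relative-cycle claim and asserting the identification with $[p_{\ell,m}]$ directly, while you flesh out the two steps it leaves implicit: the check via Equation~\eqref{eq:D.angles} that the ends of $\calJ_{\ell,\tau}$ land in $D^{\pm}_{L,\theta}$ for all large $L$, and the explicit computation that the Mayer--Vietoris connecting map sends $[\tilde{\calJ}_{\ell,m,\tau}]$ to $\pm[p_{m,\ell}]$ because the contour crosses $\Sigma^{+}_{\epsilon,M}\cap\Sigma^{-}_{\epsilon,M}$ transversally exactly once at $-\tfrac12+\ell$ on the $m$-th sheet. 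Your closing remark that pinning down precisely which subset of $(m,\ell)$ survives is the only nontrivial bookkeeping is fair, and it is exactly the part the paper also elides by writing ``A subset of'' in the statement without specifying it.
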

\begin{proof}
By construction we find that $[\tilde{\calJ}_{\ell,m,\tau}]\in H_1(X_{\epsilon,M}, D_{\infty,\theta})$, where $|\ell|\leq M$. Recall, as we show in the proof of Proposition~\ref{prop:relative-homology}, that for every $L\geq M$ there is an isomorphism 
$H_1(X_{\epsilon,M}\,,D_{L,\theta})\cong H_0(\Sigma_{\epsilon,M}^{+}\cap\Sigma_{\epsilon,M}^{-})$, which induces the isomorphism
\be\label{eq:iso_homology}
H_1(X_{\epsilon,M}\,,D_{\infty,\theta})\cong H_0(\Sigma_{\epsilon,M}^{+}\cap\Sigma_{\epsilon,M}^{-})
\ee
as deduced from Equation~\eqref{eq:exact_sequence}. 
Then, through the isomorphism in Equation~\eqref{eq:iso_homology}, the contours $[\tilde{\calJ}_{\ell,m,\tau}]$ can be identified with the class of points $[p_{\ell,m}]\in H_0(\Sigma_{\epsilon,M}^{+}\cap\Sigma_{\epsilon,M}^{-})$ for some $m=0,\ldots,A-1$ and $|\ell|<M$.   
\end{proof}
\subsubsection{The class of the thimble $\calC_{p_0,\vartheta}$}
We can now describe the class of the thimble $\mathcal{C}_{p_0,\vartheta}$.
\begin{lemma}\label{lem:class-thimble}
If $ e^{i\vartheta}\notin\mathcal{S}(V_0)$ and $\vartheta\in(\tfrac{\pi}{2},\tfrac{3\pi}{2})$, then there exists an $M_0>0$ such that for $M>M_0$ we have $\big[\mathcal{C}_{p_0,\vartheta}\big]\in H_1(X_{\epsilon,M}\,,D_{\infty,\theta})$, for all $\theta\in(\vartheta-\tfrac{\pi}{2},\vartheta+\tfrac{\pi}{2})$.
\end{lemma}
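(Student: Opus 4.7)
The strategy is to verify two properties of the thimble $\calC_{p_0,\vartheta}$: first that it is contained in $X_{\epsilon,M}$ for a suitable $\epsilon$ and all sufficiently large $M$, and second that its two asymptotic ends escape into $D_{L,\theta}^{+}$ and $D_{L,\theta}^{-}$ respectively for every $L\ge M$ and every $\theta\in(\vartheta-\tfrac{\pi}{2},\vartheta+\tfrac{\pi}{2})$.

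The containment in $X_{\epsilon,M}$ is essentially already established in the proof of Lemma~\ref{lem:thimble}. The hypothesis $e^{i\vartheta}\notin\calS(V_0)$ together with the bounds~\eqref{eq:size.V}--\eqref{eq:size.lam} produces, for $\epsilon$ small enough, a constant $M_0$ such that the thimble stays at distance at least $\epsilon/(1+|k|)$ from every branch point $k\in\BZ$ and such that, outside $|z_{\pm}|\le M_0$, it lies in the straight-line wedges dictated by Equation~\eqref{eq:tangent.lim}. These are precisely the defining conditions of the sets $\calV_{\epsilon,m}^{\pm}$ and $\calW_{\epsilon,M,m}^{\pm}$, so $\calC_{p_0,\vartheta}\subset X_{\epsilon,M}$ for every $M\ge M_0$.

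To match the two ends with $D_{L,\theta}^{\pm}$, I would choose the sign of the square roots in Equation~\eqref{eq:tangent.lim} so that $\Im(z_+)\to+\infty$ at the upper end and $\Im(z_-)\to-\infty$ at the lower end. For $\vartheta\in(\tfrac{\pi}{2},\tfrac{3\pi}{2})$ this yields the limiting arguments
\[
\arg(z_+)\to\tfrac{\vartheta}{2}-\tfrac{\pi}{4},\qquad
\arg(z_-)\to\begin{cases}\tfrac{3\pi}{4}+\tfrac{\vartheta}{2}, & A>B,\\ \tfrac{\vartheta}{2}-\tfrac{3\pi}{4}, & A<B.\end{cases}
\]
Since $P_{\pm}(z_{\pm})$ is a scalar multiple of $z_{\pm}^{2}$, doubling these angles and comparing with $\theta$ reduces the condition $-\Im(P_{\pm}(z_{\pm})e^{-i\theta})>L$ to $\theta\in(\vartheta-\tfrac{\pi}{2},\vartheta+\tfrac{\pi}{2})$, uniformly in both sign cases. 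Checking against Equation~\eqref{eq:D.angles} then shows that the asymptotic argument of $z_{\pm}$ lies in the sub-interval singled out for the distinguished test point of $D_{L,\theta}^{\pm}$, so the two tails of the thimble land in the correct connected components.

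Combining these two facts gives a well-defined class $[\calC_{p_0,\vartheta}]\in H_1(X_{\epsilon,M},D_{L,\theta})$ for each $L\ge M$, and passage to the inverse limit in $L$ yields the desired element of $H_1(X_{\epsilon,M},D_{\infty,\theta})$. The principal subtlety lies in the choice of connected component: several regions of the plane share the same argument range modulo $\pi$ under $z\mapsto z^2$, and one must carry out the sign bookkeeping above carefully, especially in the case distinction $A>B$ versus $A<B$ for the lower end.
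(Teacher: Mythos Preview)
Your proposal is correct and follows essentially the same approach as the paper: invoke Corollary~\ref{cor:thimble} (via Lemma~\ref{lem:thimble}) for containment in $X_{\epsilon,M}$, then verify that the limiting tangent directions of Equation~\eqref{eq:tangent.lim} land in the argument ranges~\eqref{eq:D.angles} of $D_{\infty,\theta}^{\pm}$ for $\theta\in(\vartheta-\tfrac{\pi}{2},\vartheta+\tfrac{\pi}{2})$. The paper is terser and defers the angle comparison to Figures~\ref{fig:good.region.I}--\ref{fig:good.region.II}, whereas you write out the explicit limiting arguments $\vartheta/2-\pi/4$ and $\vartheta/2\pm 3\pi/4$ and check both the preimage condition under $P_{\pm}$ and the connected-component condition against~\eqref{eq:D.angles}; your version is in fact more complete on this point, and your remark about the $A>B$ versus $A<B$ sign bookkeeping is exactly the content the paper's figures are meant to convey.
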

\begin{proof}
By Corollary~\ref{cor:thimble}, the thimble $\mathcal{C}_{p_0,\vartheta}$ has a smooth parametrization given by $\gamma\colon\BR\to\Sigma$. In addition, from Corollary~\ref{cor:thimble}, there exists $\epsilon, M>0$ such that $\mathcal{C}_{p_0,\vartheta}\subset\Sigma_\epsilon$ and for $|z|>M$ the tangent is approximated by Equation~\eqref{eq:tangent.lim}. Therefore using Equations~\eqref{eq:D.angles}, we see that
\be
  \arg(i\sqrt{ie^{i\vartheta}})\in\arg(D_{\infty,\theta}^{+})\,,
  \qquad\text{and}\qquad
 \arg\Big(-i\sqrt{ie^{i\vartheta}\frac{|(A-B)|}{A-B}}\Big)\in\arg(D_{\infty,\theta}^{-})\,,
\ee
for each $\theta\in(\vartheta-\tfrac{\pi}{2},\vartheta+\tfrac{\pi}{2})$. The proof by pictures is given in Figures~\ref{fig:good.region.I}~and~\ref{fig:good.region.II}.
\end{proof}
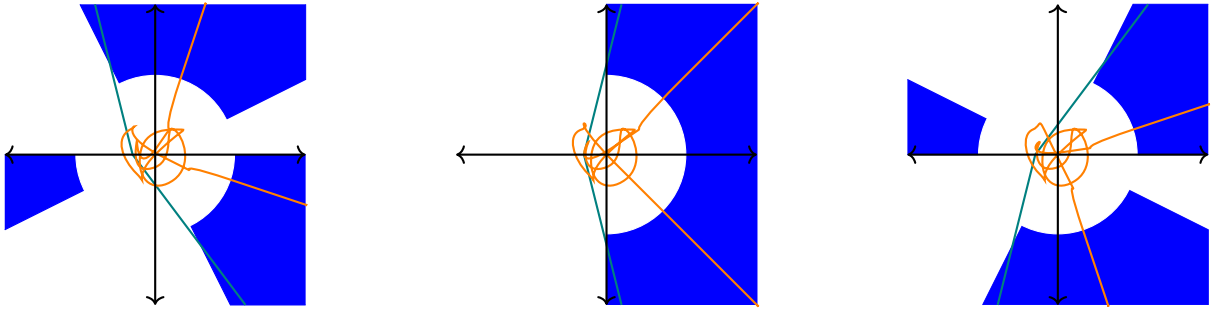
\begin{figure}[ht]
\begin{tikzpicture}
\filldraw[blue,opacity=0.2] (-2,-1) -- (0,0) -- (-2,0);
\filldraw[blue,opacity=0.2] (2,0) -- (0,0) -- (1,-2) -- (2,-2);
\filldraw[blue,opacity=0.2] (-1,2) -- (0,0) -- (2,1) -- (2,2);
\filldraw[white](0,0) circle (30pt);
\draw[teal,thick] (-1+0.2,2) -- (-0.3,0) -- (1+0.2,-2);
\draw[orange,thick] plot [smooth, tension=2] coordinates {(2,-2/3) (1,-1/3) (1/8,-1/24) (-1/3,1/3) (-1/4,-1/4) (0,0) (1/3,1/3) (-1/5,0) (1/4,-1/3) (1/4,1/3) (0,-1/6) (-1/4,1/7) (1/24,1/8) (1/3,1) (2/3,2)};
\draw[<->,thick] (-2,0) -- (2,0);
\draw[<->,thick] (0,-2) -- (0,2);
\filldraw[blue,opacity=0.2,xshift=6cm] (0,-2) -- (0,2) -- (2,2) -- (2,-2);
\filldraw[white,xshift=6cm](0,0) circle (30pt);
\draw[teal,thick,xshift=6cm] (0.2,2) -- (-0.3,0) -- (0.2,-2);
\draw[orange,thick,xshift=6cm] plot [smooth, tension=2] coordinates {(2,-2) (1,-1) (1/8,-1/8) (-1/3,1/3) (-1/4,-1/4) (0,0) (1/3,1/3) (-1/5,0) (1/4,-1/3) (1/4,1/3) (0,-1/6) (-1/4,1/7) (1/8,1/8) (1,1) (2,2)};
\draw[<->,thick,xshift=6cm] (-2,0) -- (2,0);
\draw[<->,thick,xshift=6cm] (0,-2) -- (0,2);
\filldraw[blue,opacity=0.2,xshift=12cm] (-1,-2) -- (0,0) -- (2,-1) -- (2,-2);
\filldraw[blue,opacity=0.2,xshift=12cm] (2,0) -- (0,0) -- (1,2) -- (2,2);
\filldraw[blue,opacity=0.2,xshift=12cm] (-2,1) -- (0,0) -- (-2,0);
\filldraw[white,xshift=12cm](0,0) circle (30pt);
\draw[teal,thick,xshift=12cm] (1+0.2,2) -- (-0.3,0) -- (-1+0.2,-2);
\draw[orange,thick,xshift=12cm] plot [smooth, tension=2] coordinates {(2/3,-2) (1/3,-1) (1/24,-1/8) (-1/3,1/3) (-1/4,-1/4) (0,0) (1/3,1/3) (-1/5,0) (1/4,-1/3) (1/4,1/3) (0,-1/6) (-1/4,1/7) (1/8,1/24) (1,1/3) (2,2/3)};
\draw[<->,thick,xshift=12cm] (-2,0) -- (2,0);
\draw[<->,thick,xshift=12cm] (0,-2) -- (0,2);
\end{tikzpicture}
\caption{For $-1/\tau= e^{i\vartheta}=e^{i\theta}$, $\Re(\tau)>0$ and $A-B<0$, these pictures illustrate from left to right the regions near infinity with exponential decay $D_{\infty,\theta}$ in \textcolor{blue}{blue} for $\Im(\tau)<0$, $\Im(\tau)=0$, and $\Im(\tau)>0$. A state integral contour $\tilde{\calJ}_{\ell,m,\tau}^{+}$ is depicted in \textcolor{teal}{teal} and the thimble $\mathcal{C}_{p_0,\vartheta}$ is depicted in \textcolor{orange}{orange}.}\label{fig:good.region.I}
\end{figure}
\begin{figure}[ht]
\begin{tikzpicture}
\filldraw[blue,opacity=0.2] (-1,2) -- (0,0) -- (2,1) -- (2,2);
\filldraw[blue,opacity=0.2] (0,0) -- (1,-2) -- (-2,-2)-- (-2,-1);
\filldraw[white](0,0) circle (30pt);
\draw[teal,thick] (-1+0.2,2) -- (-0.3,0) -- (1-0.4,-2);
\draw[orange,thick] plot [smooth, tension=2] coordinates {(-2/3,-2) (-1/3,-1) (-1/24,-1/8) (-1/3,1/3) (-1/4,-1/4) (0,0) (1/3,1/3) (-1/5,0) (1/4,-1/3) (1/4,1/3) (0,-1/6) (-1/4,1/7) (1/24,1/8) (1/3,1) (2/3,2)};
\draw[<->,thick] (-2,0) -- (2,0);
\draw[<->,thick] (0,-2) -- (0,2);
\filldraw[blue,opacity=0.2,xshift=6cm] (0,0) -- (0,2) -- (2,2) -- (2,0);
\filldraw[blue,opacity=0.2,xshift=6cm] (0,0) -- (0,-2) -- (-2,-2)-- (-2,0);
\filldraw[white,xshift=6cm](0,0) circle (30pt);
\draw[teal,thick,xshift=6cm] (0.2,2) -- (-0.3,0) -- (-0.4,-2);
\draw[orange,thick,xshift=6cm] plot [smooth, tension=2] coordinates {(-2,-2) (-1,-1) (-1/8,-1/8) (-1/3,1/3) (-1/4,-1/4) (0,0) (1/3,1/3) (-1/5,0) (1/4,-1/3) (1/4,1/3) (0,-1/6) (-1/4,1/7) (1/8,1/8) (1,1) (2,2)};
\draw[<->,thick,xshift=6cm] (-2,0) -- (2,0);
\draw[<->,thick,xshift=6cm] (0,-2) -- (0,2);
\filldraw[blue,opacity=0.2,xshift=12cm] (2,-1) -- (0,0) -- (1,2) -- (2,2);
\filldraw[blue,opacity=0.2,xshift=12cm] (-2,1) -- (0,0) -- (-1,-2) -- (-2,-2);
\filldraw[white,xshift=12cm](0,0) circle (30pt);
\draw[teal,thick,xshift=12cm] (1+0.2,2) -- (-0.3,0) -- (-1-0.4,-2);
\draw[orange,thick,xshift=12cm] plot [smooth, tension=2] coordinates {(-2,-2/3) (-1,-1/3) (-1/8,-1/24) (-1/3,1/3) (-1/4,-1/4) (0,0) (1/3,1/3) (-1/5,0) (1/4,-1/3) (1/4,1/3) (0,-1/6) (-1/4,1/7) (1/8,1/24) (1,1/3) (2,2/3)};
\draw[<->,thick,xshift=12cm] (-2,0) -- (2,0);
\draw[<->,thick,xshift=12cm] (0,-2) -- (0,2);
\end{tikzpicture}
\caption{This reproduces Figure~\ref{fig:good.region.I} with the change $A-B>0$.}\label{fig:good.region.II}
\end{figure}
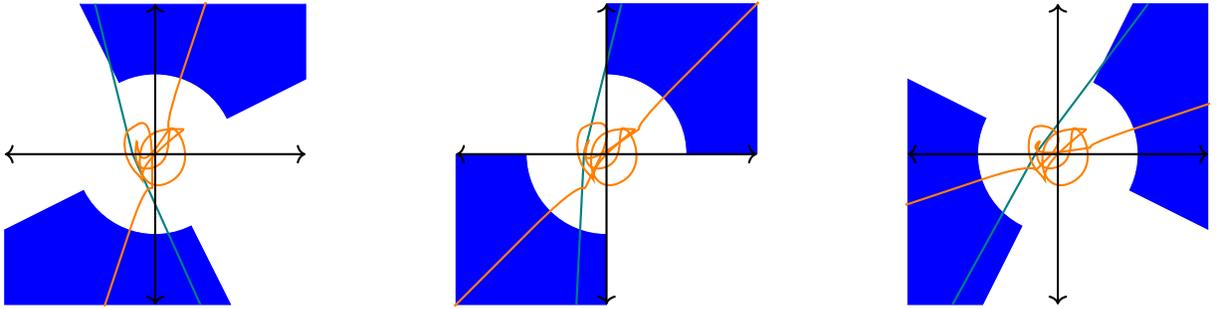
\noindent We see that Lemma~\ref{lem:class-thimble} and Proposition~\ref{prop:base_homology} imply the following corollary.
\begin{corollary}\label{cor:thimb.stateint}
If $e^{i\vartheta}\notin\mathcal{S}(V_0)$ and $\vartheta\in(\tfrac{\pi}{2},\tfrac{3\pi}{2})$, the thimble $[\mathcal{C}_{p_0,\vartheta}]$ can be written in terms of a finite collection of state integral contours $[\tilde{\calJ}_{i,m,\tau}]$ for all $\tau\in\BC\backslash\BR_{\leq0}$.
\end{corollary}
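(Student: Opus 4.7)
The plan is to combine Lemma~\ref{lem:class-thimble} with Proposition~\ref{prop:base_homology}: the former places the thimble class in the finite-dimensional relative homology, and the latter provides a basis of that same group by state integral contours.

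Concretely, fix $\vartheta$ satisfying the hypotheses and use Lemma~\ref{lem:class-thimble} to obtain $\epsilon>0$ and $M_0>0$ such that $[\calC_{p_0,\vartheta}]\in H_1(X_{\epsilon,M},D_{\infty,\theta})$ for every $M>M_0$ and every $\theta\in(\vartheta-\tfrac{\pi}{2},\vartheta+\tfrac{\pi}{2})$. For a given $\tau\in\BC\backslash\BR_{\leq 0}$ set $\theta=\arg(-1/\tau)\in(0,2\pi)$; since $\vartheta\in(\tfrac{\pi}{2},\tfrac{3\pi}{2})$, the interval $(\vartheta-\tfrac{\pi}{2},\vartheta+\tfrac{\pi}{2})$ sits inside $(0,2\pi)$, so there are $\tau$ for which $\theta$ lies in this intersection. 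Proposition~\ref{prop:base_homology} then supplies a basis of $H_1(X_{\epsilon,M},D_{\infty,\theta})$ by the finitely many classes $[\tilde{\calJ}_{\ell,m,\tau}]$ with $|\ell|<M$ and $m\in\{0,\dots,A-1\}$, and expanding $[\calC_{p_0,\vartheta}]$ in this basis yields the desired finite integer combination of state integral contours. The finiteness is automatic from the finite-dimensionality established in Proposition~\ref{prop:relative-homology}.

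To extend the expansion to every $\tau\in\BC\backslash\BR_{\leq 0}$, I would invoke the canonical isomorphism $H_1(X_{\epsilon,M},D_{\infty,\theta})\cong H_0(\Sigma_{\epsilon,M}^{+}\cap\Sigma_{\epsilon,M}^{-})$ from Equation~\eqref{eq:iso_homology}, under which each class $[\tilde{\calJ}_{\ell,m,\tau}]$ is identified with the $\tau$-independent point class $[p_{\ell,m}]$. The contours $\calJ_{\ell,\tau}$ depend continuously on $\tau$ while avoiding the poles of $\Phi((z-\ell)\tau;\tau)$ throughout $\BC\backslash\BR_{\leq 0}$, so the coefficients of the expansion are $\tau$-independent, and the same finite combination represents the thimble for all $\tau\in\BC\backslash\BR_{\leq 0}$. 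The main subtlety, and where I would focus the technical effort, is precisely this $\tau$-independence: one has to verify that the chain of identifications in the proof of Proposition~\ref{prop:base_homology} is compatible as $\theta=\arg(-1/\tau)$ sweeps across the boundary of $(\vartheta-\tfrac{\pi}{2},\vartheta+\tfrac{\pi}{2})$, which I would do by tracking the generators $[p_{\ell,m}]$ through the relevant homotopy and confirming no jumps occur.
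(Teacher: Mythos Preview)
Your proposal is correct and follows exactly the paper's route: the paper states only that ``Lemma~\ref{lem:class-thimble} and Proposition~\ref{prop:base_homology} imply the following corollary'' with no further argument. Your elaboration on the $\tau$-independence via the identification $H_1(X_{\epsilon,M},D_{\infty,\theta})\cong H_0(\Sigma_{\epsilon,M}^{+}\cap\Sigma_{\epsilon,M}^{-})$ is more careful than the paper itself, which treats this point as implicit.
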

\subsubsection{A mix of state integrals and steepest descent contours.}
It turns out that we are interested in more classes than just the steepest descent contours and the state integral contours. Indeed, we want a combination of both. For a point $p\in\Sigma$ such that $V(p)\in V_0-i e^{i\vartheta}\BR_{>0}+\BZ$, we can define an element of the homology $H_1(\Sigma,D_{\infty,\theta})$ from Equation~\eqref{eq:full_H}. We assume that $e^{i\vartheta}\notin\mathcal{S}(V_0)$. For $V(p)=V_0-i e^{i\vartheta}\lambda+\BZ$, we define $\gamma_{p,\theta}$ as follows (see also Figure~\ref{fig:half.thimb.41}):
\begin{definition}\label{def:gamma_p}
For $V(p)=V_0-i e^{i\vartheta}\lambda+\BZ$, the contour $\gamma_{p,\theta}$ is the curve that contains the connected component of $p$ in the set $\{r\in\Sigma\;|\;V(r)\in V_0-i e^{i\vartheta}\BR_{\geq\lambda}+\BZ\}$ union the straight line in the direction $e^{i\theta/2-iA\epsilon}$ for $\Im(z_+(p))>0$ and $-e^{i\theta/2-i(A-B)\epsilon}\BR_{\leq0}$ for $\Im(z_+(p))<0$. 
\end{definition}
One can see that $\gamma_{p,\theta}$ is a straight line connected to $\Gamma_{p,\theta}$ from Definition~\ref{def:GGamma_p}. We call the straight line segment a \emph{tail}.
Similarly, associated to any connected segment $e\subseteq\{r\in\Sigma\;|\;V(r)\in V_0-i e^{i\vartheta}\BR_{\geq\lambda}+\BZ\}$ with boundary $\partial e=p_+\cup p_-$ we can define the contour $\gamma_{e,\theta}$ as follows (see also Figure~\ref{fig:edge.thimb.41}): 
\begin{definition}\label{def:gamma_e}
Let $\gamma_{e,\theta}$ be the union of $e$ and the straight lines in the direction $e^{i\theta/2-iA\epsilon}$ for $\Im(z_+(p_\pm))>0$ and $-e^{i\theta/2-i(A-B)\epsilon}\BR_{\leq0}$ for $\Im(z_\pm(p))<0$.
\end{definition}
Again we call the straight line segments tails. Essentially by definition, these contours give elements of the homology $H_1(\Sigma,D_{\infty,\theta})$ from Equation~\eqref{eq:full_H}, which is completely analogous to the results of Corollary~\ref{cor:thimb.stateint} and Lemma~\ref{lem:class-thimble}. One can see that at the level of homology $[\gamma_{e,\theta}]=[\gamma_{p_+,\theta}]-[\gamma_{p_-,\theta}]$. Moreover, Lemma~\ref{lem:class-thimble} and Proposition~\ref{prop:base_homology} imply the following corollary:
\begin{corollary}\label{cor:half.thimb.stateint}
If $e^{i\vartheta}\in\BC\backslash\mathcal{S}(V_{0})$, $\theta\in(\vartheta-\tfrac{\pi}{2},\vartheta+\tfrac{\pi}{2})$, $p\in\Sigma$ is such that $V(p)\in V_0-i e^{i\vartheta}\BR_{>0}+\BZ$, and $e\subseteq\Sigma$ is a segment with $V(e)\subseteq V_0-i e^{i\vartheta}\BR_{>0}+\BZ$, then the contours $[\gamma_{p,\theta}]$ and $[\gamma_{e,\theta}]$ can each be written in terms of a finite collection of state integral contours $[\tilde{\calJ}_{i,m,\tau}]$.
\end{corollary}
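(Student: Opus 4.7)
My plan is to mirror the proofs of Lemma~\ref{lem:class-thimble} and Corollary~\ref{cor:thimb.stateint}, with the simple modification that the part of the contour in the level set is now of finite length (possibly zero) and the escape to infinity is along straight lines rather than the asymptotic tangent directions of Equation~\eqref{eq:tangent.lim}. The conclusion will then be immediate from Proposition~\ref{prop:base_homology} once we establish that $[\gamma_{p,\theta}]$ and $[\gamma_{e,\theta}]$ are well-defined elements of $H_1(\Sigma,D_{\infty,\theta})$.

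First I would handle $[\gamma_{p,\theta}]$. By Definition~\ref{def:gamma_p}, the contour $\gamma_{p,\theta}$ is the union of the level-set curve $\Gamma_{p,\vartheta}$ starting at $p$ (to which Lemma~\ref{lem:thimble} applies verbatim, since the hypothesis $e^{i\vartheta}\notin\mathcal{S}(V_{0})$ is retained) and a finite straight segment in direction $e^{i\theta/2-iA\varepsilon}$ or $-e^{i\theta/2-i(A-B)\varepsilon}$ attached at $p$, depending on the sign of $\Im(z_+(p))$. The tangent limit of $\Gamma_{p,\vartheta}$ at infinity still lands in $\arg(D_{\infty,\theta}^{\pm})$ by the same computation as in Lemma~\ref{lem:class-thimble} (provided $\theta\in(\vartheta-\tfrac{\pi}{2},\vartheta+\tfrac{\pi}{2})$). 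The tail direction is, by construction, precisely the asymptotic direction of the state integral contour $\tilde{\calJ}_{\ell,m,\tau}$; hence the same verification using Equation~\eqref{eq:D.angles} shows that for small $\varepsilon$ the tail's argument lies in $\arg(D_{\infty,\theta}^{\pm})$. Consequently both components of $\gamma_{p,\theta}$ escape to infinity inside $D_{\infty,\theta}$, so choosing $M$ large enough that the compact portion of $\gamma_{p,\theta}$ lies in $X_{\epsilon,M}$ gives a well-defined class in $H_1(X_{\epsilon,M},D_{\infty,\theta})$. Since this group is finite-dimensional by Proposition~\ref{prop:relative-homology} and the state integral contours $[\tilde{\calJ}_{i,m,\tau}]$ form a basis by Proposition~\ref{prop:base_homology}, the class $[\gamma_{p,\theta}]$ is a finite integer combination of them.

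For $[\gamma_{e,\theta}]$, Definition~\ref{def:gamma_e} gives a compact segment $e$ with boundary $\partial e=p_+\cup p_-$, together with two tails attached at $p_\pm$ in the same directions as above. Each tail is of the form used to build $\gamma_{p_\pm,\theta}$, so the same verification of admissibility applies. The relation $[\gamma_{e,\theta}]=[\gamma_{p_+,\theta}]-[\gamma_{p_-,\theta}]$ stated before the corollary then gives the decomposition into state integral contours directly from the case already treated. Alternatively, one can argue the compact-plus-tails picture yields a class in $H_1(X_{\epsilon,M},D_{\infty,\theta})$ for large $M$ and invoke Proposition~\ref{prop:base_homology} once more.

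The main obstacle is the angular check that the tail directions $e^{i\theta/2-iA\varepsilon}$ and $-e^{i\theta/2-i(A-B)\varepsilon}$ lie in $\arg(D_{\infty,\theta}^{+})$ and $\arg(D_{\infty,\theta}^{-})$ respectively, over the full allowed range $\theta\in(\vartheta-\tfrac{\pi}{2},\vartheta+\tfrac{\pi}{2})$. This is essentially the same case analysis as in Lemma~\ref{lem:class-thimble}, split according to the signs of $\Im(\tau)$ and $A-B$; the ranges in Equation~\eqref{eq:D.angles} have been designed precisely so that both the steepest-descent tangents and these straight-line tails are admissible. A picture analogous to Figures~\ref{fig:good.region.I} and~\ref{fig:good.region.II}, but with the thimble replaced by the tail-plus-level-set contour, makes the verification transparent; the argument shows in fact that the tails and the steepest-descent tangents point into the same connected component of $D_{\infty,\theta}$, which is what makes the decomposition into state integrals possible.
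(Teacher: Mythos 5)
Your proposal is correct and takes essentially the same route the paper implicitly takes: after the definitions of $\gamma_{p,\theta}$ and $\gamma_{e,\theta}$, the paper asserts that these give classes in $H_1(\Sigma,D_{\infty,\theta})$ ``completely analogously'' to Lemma~\ref{lem:class-thimble}, notes $[\gamma_{e,\theta}]=[\gamma_{p_+,\theta}]-[\gamma_{p_-,\theta}]$, and then invokes Lemma~\ref{lem:class-thimble} and Proposition~\ref{prop:base_homology}. You spell out the one nontrivial check the paper leaves to the reader — that both the level-set tangent at infinity and the straight-line tails (which point along the asymptotic directions of the $\tilde{\calJ}_{\ell,m,\tau}$) fall in $\arg(D_{\infty,\theta}^{\pm})$ for all $\theta\in(\vartheta-\tfrac{\pi}{2},\vartheta+\tfrac{\pi}{2})$, via Equation~\eqref{eq:D.angles} — which is exactly what the paper intends.
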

In fact, for most choices of $e$ and $p$ we find that $[\gamma_{p,\theta}]=0$ and $[\gamma_{e,\theta}]=0$. They can only give rise to non-trivial classes when either the flow from $p$ crosses the reals or similarly when $e$ intersects the reals. For example, in Figure~\ref{fig:half.thimb.41} we see that $[\gamma_{p_1,\theta}]=0$ and in Figure~\ref{fig:edge.thimb.41} we see that $[\gamma_{e_1,\theta}]=0$. More precisely, denoting the set $W_{\epsilon,M}:=\{p\in X_{\epsilon,M}\;\vert\; |z_\pm(p)|\leq M\}$, we have the following result:
\begin{corollary}\label{cor:half.thimb.stateint.zero}
If $e^{i\vartheta}\in\BC\backslash\mathcal{S}(V_{0})$, and $\theta\in(\vartheta-\tfrac{\pi}{2},\vartheta+\tfrac{\pi}{2})$, then there exists constants $M,\epsilon>0$ such that for any $p\in\Sigma_{\epsilon}\backslash W_{\epsilon,M}$ or $e\subseteq\Sigma_{\epsilon}\backslash W_{\epsilon,M}$ with $V(p),V(e)\subset V_0-i e^{i\vartheta}\BR_{>0}+\BZ$, we find that $[\gamma_{p,\theta}]=[\gamma_{e,\theta}]=0\in H_1(\Sigma,D_{\infty,\theta})$.
\end{corollary}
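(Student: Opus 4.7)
The plan is to show that when $p$ (respectively $e$) sits far from the origin on a single sheet and single half-plane, the curve $\gamma_{p,\theta}$ (respectively $\gamma_{e,\theta}$) consists of two infinite ends that both enter the \emph{same} connected component of $D_{\infty,\theta}$, together with a bounded portion that can be completed to the boundary of a topological disc in $\Sigma$, whence its relative homology class is zero.

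First, I would fix $\epsilon>0$ small and $M>0$ large (depending on $\vartheta$ and $\theta$) and work one sheet at a time. Without loss of generality assume $\Im(z_+(p))>0$, the lower half-plane case being treated symmetrically via $z_-$ and the $\Lambda$-description in Equation~\eqref{eq:V_Lambda}. Applying Lemma~\ref{lem:thimble} and the tangent estimate~\eqref{eq:tangent-outside-ball}, the flow $\Gamma_{p,\vartheta}$ emanating from $p$ stays in the upper half-plane of the $m$-sheet (where $V$ is single-valued by the branch choice of Figure~\ref{fig:branchcut}), is therefore disjoint from all branch cuts, and has tangent approaching the limit prescribed by Equation~\eqref{eq:tangent.lim}.

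Second, I would verify that both ends of $\gamma_{p,\theta}$ land in a single connected component $D^+_{\infty,\theta}$. For the flow end, along $\Gamma_{p,\vartheta}$ one has $V(p)-V_0\in -ie^{i\vartheta}\BR_{\geq\lambda}+\BZ$ with $\lambda\to\infty$; since $V(p)$ is approximated by $P_+(p)=\tfrac{A}{2}z_+(p)^2$ modulo lower order terms for $|z_+(p)|>M$, this forces $-\Im(P_+(p)e^{-i\vartheta})\to+\infty$. Writing $\theta=\vartheta+\phi$ with $|\phi|<\pi/2$ (which is exactly the hypothesis $\theta\in(\vartheta-\tfrac{\pi}{2},\vartheta+\tfrac{\pi}{2})$), the factor $\cos\phi>0$ preserves the inequality and gives $-\Im(P_+(p)e^{-i\theta})>L$ eventually, so the flow end enters $D^+_{L,\theta}$. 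For the tail, parametrised as $z_+(p)+te^{i\theta/2-iA\epsilon}$ with $t\geq 0$, the leading contribution $P_+\sim \tfrac{A}{2}t^2e^{i\theta-2iA\epsilon}$ yields $-\Im(P_+e^{-i\theta})\sim\tfrac{A}{2}t^2\sin(2A\epsilon)\to+\infty$, placing the tail end into the same $D^+_{L,\theta}$.

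Finally, since the flow and the tail both leave $p$ into the upper half-plane on the same sheet and tend to infinity within the same angular sector of $D^+_{\infty,\theta}$, they can be closed at infinity by an arc contained in the connected set $D^+_{\infty,\theta}$ to form a Jordan curve bounding a topological disc $\Omega\subset\Sigma_\epsilon$. Then $\partial\Omega=\gamma_{p,\theta}-\sigma$ for some 1-chain $\sigma\subset D_{\infty,\theta}$, and hence $[\gamma_{p,\theta}]=0$ in $H_1(\Sigma,D_{\infty,\theta})$. For $\gamma_{e,\theta}$, the disc $\Omega$ is replaced by the region bounded by $e$, the two tails from the endpoints $p_\pm$ of $e$, and a closing arc in $D^+_{\infty,\theta}$; the same argument applies. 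The main technical point is verifying that $\Omega$ is an honest simply-connected 2-chain inside $\Sigma$: this relies on $M$ being large enough that the flow cannot return toward the branch cuts (which lie only in the opposite half-plane under our convention), a fact ensured by the uniform tangent control from Lemma~\ref{lem:thimble}.
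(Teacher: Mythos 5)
Your proof is correct and takes essentially the same route as the paper: both rest on the observation (via the tangent estimate~\eqref{eq:tangent-outside-ball} from Lemma~\ref{lem:thimble}) that for a base point outside $W_{\epsilon,M}$ the flow $\Gamma_{p,\vartheta}$ and the tail never cross the real line, so $\gamma_{p,\theta}$ remains in a single half-plane of a single sheet. The paper concludes triviality of the class implicitly through the Mayer--Vietoris isomorphism $H_1(X_{\epsilon,M},D_{\infty,\theta})\cong H_0(\Sigma^+_{\epsilon,M}\cap\Sigma^-_{\epsilon,M})$ of Proposition~\ref{prop:relative-homology}, while you spell out the same mechanism by closing $\gamma_{p,\theta}$ at infinity inside $D^+_{\infty,\theta}$ and exhibiting a bounding relative $2$-chain in the contractible half-sheet.
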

\begin{proof}
From Equation~\eqref{eq:tangent-outside-ball} in the proof of Lemma~\ref{lem:thimble}, there exists constants $\epsilon, M>0$ such that for $x\in\Sigma_{\epsilon}\backslash W_{\epsilon,M}$ with $\Re(z_\pm(x))\neq0$ and $V(x)\in V_0-i e^{i\vartheta}\BR_{>0}+\BZ$ the imaginary part of the tangent to $\Gamma_{p,\theta}$ (or $\Gamma_{p_e,\theta}$) always has the same sign as $\Im(z_\pm(p))$ (or $\Im(z_\pm(p_e))$). This implies that the curves $\gamma_{p,\theta}$ and $\gamma_{e,\theta}$ satisfying the assumptions would never cross the real line and therefore represent trivial classes.
\end{proof}
\begin{figure}[ht]
\begin{tikzpicture}[scale=2]
\draw[<->,thick] (-3.5,0) -- (2.2,0);
\draw[<->,thick] (0,-2) -- (0,0.5);
\draw[orange,thick,opacity=0.5] (-0.64333, -0.26982)--(-0.65333, -0.24799)--(-0.66333, -0.22758)--(-0.67333, -0.20840)--(-0.68333, -0.19032)--(-0.69333, -0.17322)--(-0.70333, -0.15699)--(-0.71333, -0.14154)--(-0.72333, -0.12681)--(-0.73333, -0.11273)--(-0.74333, -0.099259)--(-0.75333, -0.086346)--(-0.76333, -0.073958)--(-0.77333, -0.062065)--(-0.78333, -0.050641)--(-0.79333, -0.039667)--(-0.80333, -0.029127)--(-0.81333, -0.019008)--(-0.82333, -0.0093009)--(-0.83333, 0)--(-0.84333, 0.0088985)--(-0.85333, 0.017396)--(-0.86333, 0.025491)--(-0.87333, 0.033181)--(-0.88333, 0.040462)--(-0.89333, 0.047330)--(-0.90333, 0.053778)--(-0.91333, 0.059802)--(-0.92333, 0.065397)--(-0.93333, 0.070561)--(-0.94333, 0.075292)--(-0.95333, 0.079593)--(-0.96333, 0.083470)--(-0.97333, 0.086930)--(-0.98333, 0.089986)--(-0.99333, 0.092652)--(-1.0033, 0.094944)--(-1.0133, 0.096881)--(-1.0233, 0.098481)--(-1.0333, 0.099765)--(-1.0433, 0.10075)--(-1.0533, 0.10145)--(-1.0633, 0.10190)--(-1.0733, 0.10209)--(-1.0833, 0.10206)--(-1.0933, 0.10181)--(-1.1033, 0.10136)--(-1.1133, 0.10073)--(-1.1233, 0.099910)--(-1.1333, 0.098927)--(-1.1433, 0.097789)--(-1.1533, 0.096503)--(-1.1633, 0.095080)--(-1.1733, 0.093526)--(-1.1833, 0.091850)--(-1.1933, 0.090059)--(-1.2033, 0.088160)--(-1.2133, 0.086158)--(-1.2233, 0.084060)--(-1.2333, 0.081871)--(-1.2433, 0.079596)--(-1.2533, 0.077241)--(-1.2633, 0.074810)--(-1.2733, 0.072308)--(-1.2833, 0.069739)--(-1.2933, 0.067106)--(-1.3033, 0.064415)--(-1.3133, 0.061669)--(-1.3233, 0.058870)--(-1.3333, 0.056024)--(-1.3433, 0.053133)--(-1.3533, 0.050200)--(-1.3633, 0.047228)--(-1.3733, 0.044221)--(-1.3833, 0.041182)--(-1.3933, 0.038113)--(-1.4033, 0.035017)--(-1.4133, 0.031897)--(-1.4233, 0.028756)--(-1.4333, 0.025596)--(-1.4433, 0.022420)--(-1.4533, 0.019230)--(-1.4633, 0.016030)--(-1.4733, 0.012821)--(-1.4833, 0.0096063)--(-1.4933, 0.0063882)--(-1.5033, 0.0031691)--(-1.5233, -0.0032617)--(-1.5333, -0.0064685)--(-1.5433, -0.0096661)--(-1.5533, -0.012852)--(-1.5633, -0.016024)--(-1.5733, -0.019179)--(-1.5833, -0.022314)--(-1.5933, -0.025428)--(-1.6033, -0.028516)--(-1.6133, -0.031577)--(-1.6233, -0.034607)--(-1.6333, -0.037604)--(-1.6433, -0.040564)--(-1.6533, -0.043484)--(-1.6633, -0.046362)--(-1.6733, -0.049194)--(-1.6833, -0.051976)--(-1.6933, -0.054705)--(-1.7033, -0.057377)--(-1.7133, -0.059988)--(-1.7233, -0.062535)--(-1.7333, -0.065012)--(-1.7433, -0.067416)--(-1.7533, -0.069742)--(-1.7633, -0.071984)--(-1.7733, -0.074138)--(-1.7833, -0.076197)--(-1.7933, -0.078156)--(-1.8033, -0.080008)--(-1.8133, -0.081747)--(-1.8233, -0.083365)--(-1.8333, -0.084854)--(-1.8433, -0.086205)--(-1.8533, -0.087409)--(-1.8633, -0.088456)--(-1.8733, -0.089335)--(-1.8833, -0.090032)--(-1.8933, -0.090535);
\filldraw[red] (-5/6,0) circle (1pt);
\draw[<-,violet,thick] (-0.43333, -1.9326)--(-0.44333, -1.7687)--(-0.45333, -1.6119)--(-0.46333, -1.4632)--(-0.47333, -1.3237)--(-0.48333, -1.1942)--(-0.49333, -1.0754)--(-0.50333, -0.96772)--(-0.51333, -0.87121)--(-0.52333, -0.78545)--(-0.53333, -0.70972)--(-0.54333, -0.64306)--(-0.55333, -0.58439)--(-0.56333, -0.53264)--(-0.57333, -0.48678)--(-0.58333, -0.44592)--(-0.59333, -0.40928)--(-0.60333, -0.37619)--(-0.61333, -0.34612)--(-0.62333, -0.31860)--(-0.63333, -0.29327)--(-0.64333, -0.26982);
\filldraw[violet] (-0.64333, -0.26982) circle (1pt) node[right] {$p_1$};
\filldraw[violet] (-0.65, -1.7) node {$\gamma_{p_1,\theta}$};
\draw[violet,->,thick] (-0.64333, -0.26982)--(-0.64333-1.3460, -0.26982-1.4793);
\foreach \x in {-3,-2,-1,0,1,2}
\draw[ultra thick,red,xshift=\x cm] (-0.04,-0.04)--(0.04,0.04);
\foreach \x in {-3,-2,-1,0,1,2}
\draw[ultra thick,red,xshift=\x cm] (0.04,-0.04)--(-0.04,0.04);
\filldraw[magenta] (-1.8933, -0.090535) circle (1pt) node[below] {\;\;\;$p_2$};;
\filldraw[magenta] (-3.15, -1.2) node {$\gamma_{p_2,\theta}$};
\draw[magenta,->,thick] (-1.9033, -0.090829)--(-1.9133, -0.090896)--(-1.9233, -0.090717)--(-1.9333, -0.090270)--(-1.9433, -0.089532)--(-1.9533, -0.088475)--(-1.9633, -0.087065)--(-1.9733, -0.085264)--(-1.9833, -0.083029)--(-1.9933, -0.080306)--(-2.0033, -0.077028)--(-2.0133, -0.073115)--(-2.0233, -0.068459)--(-2.0333, -0.062912)--(-2.0433, -0.056253)--(-2.0533, -0.048106)--(-2.0633, -0.037701)--(-2.0733, -0.022747)--(-2.0793, -0.0057982)--(-2.0793, 0.016623)--(-2.0733, 0.033969)--(-2.0633, 0.049578)--(-2.0533, 0.060644)--(-2.0433, 0.069475)--(-2.0333, 0.076853)--(-2.0233, 0.083160)--(-2.0133, 0.088626)--(-2.0033, 0.093398)--(-1.9933, 0.097586)--(-1.9833, 0.10127)--(-1.9733, 0.10451)--(-1.9633, 0.10736)--(-1.9533, 0.10986)--(-1.9433, 0.11205)--(-1.9333, 0.11396)--(-1.9233, 0.11560)--(-1.9133, 0.11701)--(-1.9033, 0.11821)--(-1.8933, 0.11921)--(-1.8833, 0.12002)--(-1.8733, 0.12066)--(-1.8633, 0.12115)--(-1.8533, 0.12149)--(-1.8433, 0.12169)--(-1.8333, 0.12177)--(-1.8233, 0.12172)--(-1.8133, 0.12157)--(-1.8033, 0.12132)--(-1.7933, 0.12097)--(-1.7833, 0.12052)--(-1.7733, 0.12000)--(-1.7633, 0.11939)--(-1.7533, 0.11872)--(-1.7433, 0.11797)--(-1.7333, 0.11716)--(-1.7233, 0.11629)--(-1.7133, 0.11536)--(-1.7033, 0.11438)--(-1.6933, 0.11336)--(-1.6833, 0.11229)--(-1.6733, 0.11118)--(-1.6633, 0.11003)--(-1.6533, 0.10885)--(-1.6433, 0.10763)--(-1.6333, 0.10639)--(-1.6233, 0.10512)--(-1.6133, 0.10383)--(-1.6033, 0.10253)--(-1.5933, 0.10120)--(-1.5833, 0.099861)--(-1.5733, 0.098509)--(-1.5633, 0.097149)--(-1.5533, 0.095783)--(-1.5433, 0.094413)--(-1.5333, 0.093042)--(-1.5233, 0.091672)--(-1.5133, 0.090306)--(-1.5033, 0.088946)--(-1.4933, 0.087595)--(-1.4833, 0.086255)--(-1.4733, 0.084929)--(-1.4633, 0.083620)--(-1.4533, 0.082329)--(-1.4433, 0.081060)--(-1.4333, 0.079815)--(-1.4233, 0.078597)--(-1.4133, 0.077409)--(-1.4033, 0.076253)--(-1.3933, 0.075132)--(-1.3833, 0.074049)--(-1.3733, 0.073007)--(-1.3633, 0.072009)--(-1.3533, 0.071058)--(-1.3433, 0.070157)--(-1.3333, 0.069311)--(-1.3233, 0.068521)--(-1.3133, 0.067792)--(-1.3033, 0.067127)--(-1.2933, 0.066530)--(-1.2833, 0.066005)--(-1.2733, 0.065556)--(-1.2633, 0.065187)--(-1.2533, 0.064903)--(-1.2433, 0.064707)--(-1.2333, 0.064605)--(-1.2233, 0.064601)--(-1.2133, 0.064699)--(-1.2033, 0.064906)--(-1.1933, 0.065226)--(-1.1833, 0.065663)--(-1.1733, 0.066223)--(-1.1633, 0.066910)--(-1.1533, 0.067729)--(-1.1433, 0.068683)--(-1.1333, 0.069775)--(-1.1233, 0.071008)--(-1.1133, 0.072381)--(-1.1033, 0.073893)--(-1.0933, 0.075540)--(-1.0833, 0.077314)--(-1.0733, 0.079204)--(-1.0633, 0.081195)--(-1.0533, 0.083267)--(-1.0433, 0.085395)--(-1.0333, 0.087555)--(-1.0233, 0.089719)--(-1.0133, 0.091859)--(-1.0033, 0.093951)--(-0.99333, 0.095976)--(-0.98333, 0.097917)--(-0.97333, 0.099764)--(-0.96333, 0.10151)--(-0.95333, 0.10315)--(-0.94333, 0.10469)--(-0.93333, 0.10612)--(-0.92333, 0.10745)--(-0.91333, 0.10868)--(-0.90333, 0.10982)--(-0.89333, 0.11087)--(-0.88333, 0.11183)--(-0.87333, 0.11271)--(-0.86333, 0.11351)--(-0.85333, 0.11424)--(-0.84333, 0.11490)--(-0.83333, 0.11549)--(-0.82333, 0.11602)--(-0.81333, 0.11650)--(-0.80333, 0.11691)--(-0.79333, 0.11728)--(-0.78333, 0.11759)--(-0.77333, 0.11786)--(-0.76333, 0.11808)--(-0.75333, 0.11826)--(-0.74333, 0.11840)--(-0.73333, 0.11850)--(-0.72333, 0.11857)--(-0.71333, 0.11861)--(-0.70333, 0.11861)--(-0.69333, 0.11859)--(-0.68333, 0.11854)--(-0.67333, 0.11846)--(-0.66333, 0.11836)--(-0.65333, 0.11824)--(-0.64333, 0.11811)--(-0.63333, 0.11795)--(-0.62333, 0.11777)--(-0.61333, 0.11758)--(-0.60333, 0.11738)--(-0.59333, 0.11717)--(-0.58333, 0.11694)--(-0.57333, 0.11671)--(-0.56333, 0.11646)--(-0.55333, 0.11622)--(-0.54333, 0.11596)--(-0.53333, 0.11571)--(-0.52333, 0.11545)--(-0.51333, 0.11519)--(-0.50333, 0.11493)--(-0.49333, 0.11468)--(-0.48333, 0.11443)--(-0.47333, 0.11418)--(-0.46333, 0.11394)--(-0.45333, 0.11370)--(-0.44333, 0.11348)--(-0.43333, 0.11326)--(-0.42333, 0.11306)--(-0.41333, 0.11287)--(-0.40333, 0.11269)--(-0.39333, 0.11253)--(-0.38333, 0.11238)--(-0.37333, 0.11226)--(-0.36333, 0.11215)--(-0.35333, 0.11207)--(-0.34333, 0.11200)--(-0.33333, 0.11197)--(-0.32333, 0.11195)--(-0.31333, 0.11197)--(-0.30333, 0.11201)--(-0.29333, 0.11209)--(-0.28333, 0.11219)--(-0.27333, 0.11233)--(-0.26333, 0.11251)--(-0.25333, 0.11272)--(-0.24333, 0.11297)--(-0.23333, 0.11326)--(-0.22333, 0.11359)--(-0.21333, 0.11397)--(-0.20333, 0.11439)--(-0.19333, 0.11485)--(-0.18333, 0.11536)--(-0.17333, 0.11592)--(-0.16333, 0.11653)--(-0.15333, 0.11719)--(-0.14333, 0.11791)--(-0.13333, 0.11867)--(-0.12333, 0.11948)--(-0.11333, 0.12034)--(-0.10333, 0.12125)--(-0.093333, 0.12221)--(-0.083333, 0.12321)--(-0.073333, 0.12425)--(-0.063333, 0.12532)--(-0.053333, 0.12643)--(-0.043333, 0.12756)--(-0.033333, 0.12870)--(-0.023333, 0.12986)--(-0.013333, 0.13101)--(-0.0033333, 0.13217)--(0.0066667, 0.13331)--(0.016667, 0.13443)--(0.026667, 0.13553)--(0.036667, 0.13660)--(0.046667, 0.13764)--(0.056667, 0.13865)--(0.066667, 0.13962)--(0.076667, 0.14055)--(0.086667, 0.14144)--(0.096667, 0.14229)--(0.10667, 0.14310)--(0.11667, 0.14387)--(0.12667, 0.14461)--(0.13667, 0.14530)--(0.14667, 0.14596)--(0.15667, 0.14659)--(0.16667, 0.14718)--(0.17667, 0.14773)--(0.18667, 0.14826)--(0.19667, 0.14875)--(0.20667, 0.14921)--(0.21667, 0.14965)--(0.22667, 0.15006)--(0.23667, 0.15044)--(0.24667, 0.15080)--(0.25667, 0.15114)--(0.26667, 0.15145)--(0.27667, 0.15174)--(0.28667, 0.15202)--(0.29667, 0.15227)--(0.30667, 0.15251)--(0.31667, 0.15273)--(0.32667, 0.15293)--(0.33667, 0.15312)--(0.34667, 0.15330)--(0.35667, 0.15347)--(0.36667, 0.15362)--(0.37667, 0.15376)--(0.38667, 0.15390)--(0.39667, 0.15402)--(0.40667, 0.15414)--(0.41667, 0.15425)--(0.42667, 0.15436)--(0.43667, 0.15445)--(0.44667, 0.15455)--(0.45667, 0.15464)--(0.46667, 0.15473)--(0.47667, 0.15482)--(0.48667, 0.15491)--(0.49667, 0.15499)--(0.50667, 0.15508)--(0.51667, 0.15517)--(0.52667, 0.15527)--(0.53667, 0.15536)--(0.54667, 0.15546)--(0.55667, 0.15557)--(0.56667, 0.15568)--(0.57667, 0.15579)--(0.58667, 0.15592)--(0.59667, 0.15605)--(0.60667, 0.15619)--(0.61667, 0.15634)--(0.62667, 0.15651)--(0.63667, 0.15668)--(0.64667, 0.15687)--(0.65667, 0.15707)--(0.66667, 0.15728)--(0.67667, 0.15751)--(0.68667, 0.15775)--(0.69667, 0.15801)--(0.70667, 0.15829)--(0.71667, 0.15858)--(0.72667, 0.15890)--(0.73667, 0.15923)--(0.74667, 0.15958)--(0.75667, 0.15996)--(0.76667, 0.16035)--(0.77667, 0.16077)--(0.78667, 0.16121)--(0.79667, 0.16167)--(0.80667, 0.16216)--(0.81667, 0.16267)--(0.82667, 0.16321)--(0.83667, 0.16377)--(0.84667, 0.16435)--(0.85667, 0.16496)--(0.86667, 0.16559)--(0.87667, 0.16625)--(0.88667, 0.16693)--(0.89667, 0.16763)--(0.90667, 0.16835)--(0.91667, 0.16909)--(0.92667, 0.16985)--(0.93667, 0.17062)--(0.94667, 0.17140)--(0.95667, 0.17220)--(0.96667, 0.17300)--(0.97667, 0.17381)--(0.98667, 0.17462)--(0.99667, 0.17543)--(1.0067, 0.17624)--(1.0167, 0.17704)--(1.0267, 0.17784)--(1.0367, 0.17862)--(1.0467, 0.17939)--(1.0567, 0.18015)--(1.0667, 0.18089)--(1.0767, 0.18162)--(1.0867, 0.18233)--(1.0967, 0.18302)--(1.1067, 0.18369)--(1.1167, 0.18434)--(1.1267, 0.18498)--(1.1367, 0.18559)--(1.1467, 0.18618)--(1.1567, 0.18676)--(1.1667, 0.18731)--(1.1767, 0.18785)--(1.1867, 0.18837)--(1.1967, 0.18887)--(1.2067, 0.18936)--(1.2167, 0.18983)--(1.2267, 0.19028)--(1.2367, 0.19072)--(1.2467, 0.19114)--(1.2567, 0.19154)--(1.2667, 0.19194)--(1.2767, 0.19232)--(1.2867, 0.19269)--(1.2967, 0.19304)--(1.3067, 0.19339)--(1.3167, 0.19372)--(1.3267, 0.19405)--(1.3367, 0.19436)--(1.3467, 0.19467)--(1.3567, 0.19497)--(1.3667, 0.19526)--(1.3767, 0.19554)--(1.3867, 0.19582)--(1.3967, 0.19609)--(1.4067, 0.19636)--(1.4167, 0.19662)--(1.4267, 0.19688)--(1.4367, 0.19713)--(1.4467, 0.19739)--(1.4567, 0.19764)--(1.4667, 0.19788)--(1.4767, 0.19813)--(1.4867, 0.19838)--(1.4967, 0.19862)--(1.5067, 0.19887)--(1.5167, 0.19912)--(1.5267, 0.19936)--(1.5367, 0.19961)--(1.5467, 0.19987)--(1.5567, 0.20012)--(1.5667, 0.20038)--(1.5767, 0.20065)--(1.5867, 0.20092)--(1.5967, 0.20119)--(1.6067, 0.20147)--(1.6167, 0.20176)--(1.6267, 0.20205)--(1.6367, 0.20235)--(1.6467, 0.20266)--(1.6567, 0.20298)--(1.6667, 0.20330)--(1.6767, 0.20363)--(1.6867, 0.20398)--(1.6967, 0.20433)--(1.7067, 0.20469)--(1.7167, 0.20507)--(1.7267, 0.20546)--(1.7367, 0.20585)--(1.7467, 0.20626)--(1.7567, 0.20669)--(1.7667, 0.20712)--(1.7767, 0.20757)--(1.7867, 0.20803)--(1.7967, 0.20850)--(1.8067, 0.20899)--(1.8167, 0.20949)--(1.8267, 0.21001)--(1.8367, 0.21053)--(1.8467, 0.21107)--(1.8567, 0.21163)--(1.8667, 0.21219)--(1.8767, 0.21277)--(1.8867, 0.21336)--(1.8967, 0.21396)--(1.9067, 0.21457)--(1.9167, 0.21520)--(1.9267, 0.21583)--(1.9367, 0.21646)--(1.9467, 0.21711)--(1.9567, 0.21775)--(1.9667, 0.21841)--(1.9767, 0.21906)--(1.9867, 0.21972)--(1.9967, 0.22038)--(2.0067, 0.22103);
\draw[magenta,->,thick] (-1.8933, -0.090535)--(-1.8933-1.3460, -0.090535-1.4793);
\end{tikzpicture}
\caption{This figure depicts in \textcolor{orange}{orange} the set $\calC_{(-5/6,0),\vartheta}$ for $\vartheta=1.6650$ and $(A,B)=(1,2)$. The \textcolor{violet}{violet} contour represents the curve $\gamma_{p_1,\theta}$, which gives a trivial cycle, and the \textcolor{magenta}{magenta} contour represents the curve $\gamma_{p_2,\theta}$, which gives the class of a state integral contour. The \textcolor{red}{red} crosses are the branch points of the function $V$. The straight line segments are the tails.}
\label{fig:half.thimb.41}
\end{figure}
\begin{figure}[ht]
\begin{tikzpicture}[scale=2]
\draw[<->,thick] (-3.5,0) -- (2.2,0);
\draw[<->,thick] (0,-2) -- (0,0.5);
\draw[orange,thick,opacity=0.5]
(-0.64333, -0.26982)--(-0.65333, -0.24799)--(-0.66333, -0.22758)--(-0.67333, -0.20840)--(-0.68333, -0.19032)--(-0.69333, -0.17322)--(-0.70333, -0.15699)--(-0.71333, -0.14154)--(-0.72333, -0.12681)--(-0.73333, -0.11273)--(-0.74333, -0.099259)--(-0.75333, -0.086346)--(-0.76333, -0.073958)--(-0.77333, -0.062065)--(-0.78333, -0.050641)--(-0.79333, -0.039667)--(-0.80333, -0.029127)--(-0.81333, -0.019008)--(-0.82333, -0.0093009)--(-0.83333,0)--(-0.84333, 0.0088985)--(-0.85333, 0.017396)--(-0.86333, 0.025491)--(-0.87333, 0.033181)--(-0.88333, 0.040462)--(-0.89333, 0.047330)--(-0.90333, 0.053778)--(-0.91333, 0.059802)--(-0.92333, 0.065397);
\draw[orange,thick,opacity=0.5,<-] (-0.43333, -1.9326)--(-0.44333, -1.7687)--(-0.45333, -1.6119)--(-0.46333, -1.4632)--(-0.47333, -1.3237)--(-0.48333, -1.1942)--(-0.49333, -1.0754)--(-0.50333, -0.96772);
\draw[orange,thick,opacity=0.5,->] (-1.8933, -0.090535)--(-1.9033, -0.090829)--(-1.9133, -0.090896)--(-1.9233, -0.090717)--(-1.9333, -0.090270)--(-1.9433, -0.089532)--(-1.9533, -0.088475)--(-1.9633, -0.087065)--(-1.9733, -0.085264)--(-1.9833, -0.083029)--(-1.9933, -0.080306)--(-2.0033, -0.077028)--(-2.0133, -0.073115)--(-2.0233, -0.068459)--(-2.0333, -0.062912)--(-2.0433, -0.056253)--(-2.0533, -0.048106)--(-2.0633, -0.037701)--(-2.0733, -0.022747)--(-2.0793, -0.0057982)--(-2.0793, 0.016623)--(-2.0733, 0.033969)--(-2.0633, 0.049578)--(-2.0533, 0.060644)--(-2.0433, 0.069475)--(-2.0333, 0.076853)--(-2.0233, 0.083160)--(-2.0133, 0.088626)--(-2.0033, 0.093398)--(-1.9933, 0.097586)--(-1.9833, 0.10127)--(-1.9733, 0.10451)--(-1.9633, 0.10736)--(-1.9533, 0.10986)--(-1.9433, 0.11205)--(-1.9333, 0.11396)--(-1.9233, 0.11560)--(-1.9133, 0.11701)--(-1.9033, 0.11821)--(-1.8933, 0.11921)--(-1.8833, 0.12002)--(-1.8733, 0.12066)--(-1.8633, 0.12115)--(-1.8533, 0.12149)--(-1.8433, 0.12169)--(-1.8333, 0.12177)--(-1.8233, 0.12172)--(-1.8133, 0.12157)--(-1.8033, 0.12132)--(-1.7933, 0.12097)--(-1.7833, 0.12052)--(-1.7733, 0.12000)--(-1.7633, 0.11939)--(-1.7533, 0.11872)--(-1.7433, 0.11797)--(-1.7333, 0.11716)--(-1.7233, 0.11629)--(-1.7133, 0.11536)--(-1.7033, 0.11438)--(-1.6933, 0.11336)--(-1.6833, 0.11229)--(-1.6733, 0.11118)--(-1.6633, 0.11003)--(-1.6533, 0.10885)--(-1.6433, 0.10763)--(-1.6333, 0.10639)--(-1.6233, 0.10512)--(-1.6133, 0.10383)--(-1.6033, 0.10253)--(-1.5933, 0.10120)--(-1.5833, 0.099861)--(-1.5733, 0.098509)--(-1.5633, 0.097149)--(-1.5533, 0.095783)--(-1.5433, 0.094413)--(-1.5333, 0.093042)--(-1.5233, 0.091672)--(-1.5133, 0.090306)--(-1.5033, 0.088946)--(-1.4933, 0.087595)--(-1.4833, 0.086255)--(-1.4733, 0.084929)--(-1.4633, 0.083620)--(-1.4533, 0.082329)--(-1.4433, 0.081060)--(-1.4333, 0.079815)--(-1.4233, 0.078597)--(-1.4133, 0.077409)--(-1.4033, 0.076253)--(-1.3933, 0.075132)--(-1.3833, 0.074049)--(-1.3733, 0.073007)--(-1.3633, 0.072009)--(-1.3533, 0.071058)--(-1.3433, 0.070157)--(-1.3333, 0.069311)--(-1.3233, 0.068521)--(-1.3133, 0.067792)--(-1.3033, 0.067127)--(-1.2933, 0.066530)--(-1.2833, 0.066005)--(-1.2733, 0.065556)--(-1.2633, 0.065187)--(-1.2533, 0.064903)--(-1.2433, 0.064707)--(-1.2333, 0.064605)--(-1.2233, 0.064601)--(-1.2133, 0.064699)--(-1.2033, 0.064906)--(-1.1933, 0.065226)--(-1.1833, 0.065663)--(-1.1733, 0.066223)--(-1.1633, 0.066910)--(-1.1533, 0.067729)--(-1.1433, 0.068683)--(-1.1333, 0.069775)--(-1.1233, 0.071008)--(-1.1133, 0.072381)--(-1.1033, 0.073893)--(-1.0933, 0.075540)--(-1.0833, 0.077314)--(-1.0733, 0.079204)--(-1.0633, 0.081195)--(-1.0533, 0.083267)--(-1.0433, 0.085395)--(-1.0333, 0.087555)--(-1.0233, 0.089719)--(-1.0133, 0.091859)--(-1.0033, 0.093951)--(-0.99333, 0.095976)--(-0.98333, 0.097917)--(-0.97333, 0.099764)--(-0.96333, 0.10151)--(-0.95333, 0.10315)--(-0.94333, 0.10469)--(-0.93333, 0.10612)--(-0.92333, 0.10745)--(-0.91333, 0.10868)--(-0.90333, 0.10982)--(-0.89333, 0.11087)--(-0.88333, 0.11183)--(-0.87333, 0.11271)--(-0.86333, 0.11351)--(-0.85333, 0.11424)--(-0.84333, 0.11490)--(-0.83333, 0.11549)--(-0.82333, 0.11602)--(-0.81333, 0.11650)--(-0.80333, 0.11691)--(-0.79333, 0.11728)--(-0.78333, 0.11759)--(-0.77333, 0.11786)--(-0.76333, 0.11808)--(-0.75333, 0.11826)--(-0.74333, 0.11840)--(-0.73333, 0.11850)--(-0.72333, 0.11857)--(-0.71333, 0.11861)--(-0.70333, 0.11861)--(-0.69333, 0.11859)--(-0.68333, 0.11854)--(-0.67333, 0.11846)--(-0.66333, 0.11836)--(-0.65333, 0.11824)--(-0.64333, 0.11811)--(-0.63333, 0.11795)--(-0.62333, 0.11777)--(-0.61333, 0.11758)--(-0.60333, 0.11738)--(-0.59333, 0.11717)--(-0.58333, 0.11694)--(-0.57333, 0.11671)--(-0.56333, 0.11646)--(-0.55333, 0.11622)--(-0.54333, 0.11596)--(-0.53333, 0.11571)--(-0.52333, 0.11545)--(-0.51333, 0.11519)--(-0.50333, 0.11493)--(-0.49333, 0.11468)--(-0.48333, 0.11443)--(-0.47333, 0.11418)--(-0.46333, 0.11394)--(-0.45333, 0.11370)--(-0.44333, 0.11348)--(-0.43333, 0.11326)--(-0.42333, 0.11306)--(-0.41333, 0.11287)--(-0.40333, 0.11269)--(-0.39333, 0.11253)--(-0.38333, 0.11238)--(-0.37333, 0.11226)--(-0.36333, 0.11215)--(-0.35333, 0.11207)--(-0.34333, 0.11200)--(-0.33333, 0.11197)--(-0.32333, 0.11195)--(-0.31333, 0.11197)--(-0.30333, 0.11201)--(-0.29333, 0.11209)--(-0.28333, 0.11219)--(-0.27333, 0.11233)--(-0.26333, 0.11251)--(-0.25333, 0.11272)--(-0.24333, 0.11297)--(-0.23333, 0.11326)--(-0.22333, 0.11359)--(-0.21333, 0.11397)--(-0.20333, 0.11439)--(-0.19333, 0.11485)--(-0.18333, 0.11536)--(-0.17333, 0.11592)--(-0.16333, 0.11653)--(-0.15333, 0.11719)--(-0.14333, 0.11791)--(-0.13333, 0.11867)--(-0.12333, 0.11948)--(-0.11333, 0.12034)--(-0.10333, 0.12125)--(-0.093333, 0.12221)--(-0.083333, 0.12321)--(-0.073333, 0.12425)--(-0.063333, 0.12532)--(-0.053333, 0.12643)--(-0.043333, 0.12756)--(-0.033333, 0.12870)--(-0.023333, 0.12986)--(-0.013333, 0.13101)--(-0.0033333, 0.13217)--(0.0066667, 0.13331)--(0.016667, 0.13443)--(0.026667, 0.13553)--(0.036667, 0.13660)--(0.046667, 0.13764)--(0.056667, 0.13865)--(0.066667, 0.13962)--(0.076667, 0.14055)--(0.086667, 0.14144)--(0.096667, 0.14229)--(0.10667, 0.14310)--(0.11667, 0.14387)--(0.12667, 0.14461)--(0.13667, 0.14530)--(0.14667, 0.14596)--(0.15667, 0.14659)--(0.16667, 0.14718)--(0.17667, 0.14773)--(0.18667, 0.14826)--(0.19667, 0.14875)--(0.20667, 0.14921)--(0.21667, 0.14965)--(0.22667, 0.15006)--(0.23667, 0.15044)--(0.24667, 0.15080)--(0.25667, 0.15114)--(0.26667, 0.15145)--(0.27667, 0.15174)--(0.28667, 0.15202)--(0.29667, 0.15227)--(0.30667, 0.15251)--(0.31667, 0.15273)--(0.32667, 0.15293)--(0.33667, 0.15312)--(0.34667, 0.15330)--(0.35667, 0.15347)--(0.36667, 0.15362)--(0.37667, 0.15376)--(0.38667, 0.15390)--(0.39667, 0.15402)--(0.40667, 0.15414)--(0.41667, 0.15425)--(0.42667, 0.15436)--(0.43667, 0.15445)--(0.44667, 0.15455)--(0.45667, 0.15464)--(0.46667, 0.15473)--(0.47667, 0.15482)--(0.48667, 0.15491)--(0.49667, 0.15499)--(0.50667, 0.15508)--(0.51667, 0.15517)--(0.52667, 0.15527)--(0.53667, 0.15536)--(0.54667, 0.15546)--(0.55667, 0.15557)--(0.56667, 0.15568)--(0.57667, 0.15579)--(0.58667, 0.15592)--(0.59667, 0.15605)--(0.60667, 0.15619)--(0.61667, 0.15634)--(0.62667, 0.15651)--(0.63667, 0.15668)--(0.64667, 0.15687)--(0.65667, 0.15707)--(0.66667, 0.15728)--(0.67667, 0.15751)--(0.68667, 0.15775)--(0.69667, 0.15801)--(0.70667, 0.15829)--(0.71667, 0.15858)--(0.72667, 0.15890)--(0.73667, 0.15923)--(0.74667, 0.15958)--(0.75667, 0.15996)--(0.76667, 0.16035)--(0.77667, 0.16077)--(0.78667, 0.16121)--(0.79667, 0.16167)--(0.80667, 0.16216)--(0.81667, 0.16267)--(0.82667, 0.16321)--(0.83667, 0.16377)--(0.84667, 0.16435)--(0.85667, 0.16496)--(0.86667, 0.16559)--(0.87667, 0.16625)--(0.88667, 0.16693)--(0.89667, 0.16763)--(0.90667, 0.16835)--(0.91667, 0.16909)--(0.92667, 0.16985)--(0.93667, 0.17062)--(0.94667, 0.17140)--(0.95667, 0.17220)--(0.96667, 0.17300)--(0.97667, 0.17381)--(0.98667, 0.17462)--(0.99667, 0.17543)--(1.0067, 0.17624)--(1.0167, 0.17704)--(1.0267, 0.17784)--(1.0367, 0.17862)--(1.0467, 0.17939)--(1.0567, 0.18015)--(1.0667, 0.18089)--(1.0767, 0.18162)--(1.0867, 0.18233)--(1.0967, 0.18302)--(1.1067, 0.18369)--(1.1167, 0.18434)--(1.1267, 0.18498)--(1.1367, 0.18559)--(1.1467, 0.18618)--(1.1567, 0.18676)--(1.1667, 0.18731)--(1.1767, 0.18785)--(1.1867, 0.18837)--(1.1967, 0.18887)--(1.2067, 0.18936)--(1.2167, 0.18983)--(1.2267, 0.19028)--(1.2367, 0.19072)--(1.2467, 0.19114)--(1.2567, 0.19154)--(1.2667, 0.19194)--(1.2767, 0.19232)--(1.2867, 0.19269)--(1.2967, 0.19304)--(1.3067, 0.19339)--(1.3167, 0.19372)--(1.3267, 0.19405)--(1.3367, 0.19436)--(1.3467, 0.19467)--(1.3567, 0.19497)--(1.3667, 0.19526)--(1.3767, 0.19554)--(1.3867, 0.19582)--(1.3967, 0.19609)--(1.4067, 0.19636)--(1.4167, 0.19662)--(1.4267, 0.19688)--(1.4367, 0.19713)--(1.4467, 0.19739)--(1.4567, 0.19764)--(1.4667, 0.19788)--(1.4767, 0.19813)--(1.4867, 0.19838)--(1.4967, 0.19862)--(1.5067, 0.19887)--(1.5167, 0.19912)--(1.5267, 0.19936)--(1.5367, 0.19961)--(1.5467, 0.19987)--(1.5567, 0.20012)--(1.5667, 0.20038)--(1.5767, 0.20065)--(1.5867, 0.20092)--(1.5967, 0.20119)--(1.6067, 0.20147)--(1.6167, 0.20176)--(1.6267, 0.20205)--(1.6367, 0.20235)--(1.6467, 0.20266)--(1.6567, 0.20298)--(1.6667, 0.20330)--(1.6767, 0.20363)--(1.6867, 0.20398)--(1.6967, 0.20433)--(1.7067, 0.20469)--(1.7167, 0.20507)--(1.7267, 0.20546)--(1.7367, 0.20585)--(1.7467, 0.20626)--(1.7567, 0.20669)--(1.7667, 0.20712)--(1.7767, 0.20757)--(1.7867, 0.20803)--(1.7967, 0.20850)--(1.8067, 0.20899)--(1.8167, 0.20949)--(1.8267, 0.21001)--(1.8367, 0.21053)--(1.8467, 0.21107)--(1.8567, 0.21163)--(1.8667, 0.21219)--(1.8767, 0.21277)--(1.8867, 0.21336)--(1.8967, 0.21396)--(1.9067, 0.21457)--(1.9167, 0.21520)--(1.9267, 0.21583)--(1.9367, 0.21646)--(1.9467, 0.21711)--(1.9567, 0.21775)--(1.9667, 0.21841)--(1.9767, 0.21906)--(1.9867, 0.21972)--(1.9967, 0.22038)--(2.0067, 0.22103);
\filldraw[red] (-5/6,0) circle (1pt);
\draw[violet,ultra thick] (-0.50333, -0.96772)--(-0.51333, -0.87121)--(-0.52333, -0.78545)--(-0.53333, -0.70972)--(-0.54333, -0.64306)--(-0.55333, -0.58439)--(-0.56333, -0.53264)--(-0.57333, -0.48678)--(-0.58333, -0.44592)--(-0.59333, -0.40928)--(-0.60333, -0.37619)--(-0.61333, -0.34612)--(-0.62333, -0.31860)--(-0.63333, -0.29327)--(-0.64333, -0.26982);
\filldraw[violet] (-0.64333, -0.26982) circle (1pt);
\filldraw[violet] (-0.50333, -0.96772) circle (1pt);
\draw[violet] (-0.42, -0.58439) node {$e_{1}$};
\draw[violet] (-1.45, -1.7) node {$\gamma_{e_1,\theta}$};
\draw[violet,->,thick] (-0.64333, -0.26982)--(-0.64333-1.3460, -0.26982-1.4793);
\draw[violet,->,thick] (-0.50333, -0.96772)--(-0.50333-1.3460*0.55, -0.96772-1.4793*0.55);
\foreach \x in {-3,-2,-1,0,1,2}
\draw[ultra thick,red,xshift=\x cm] (-0.04,-0.04)--(0.04,0.04);
\foreach \x in {-3,-2,-1,0,1,2}
\draw[ultra thick,red,xshift=\x cm] (0.04,-0.04)--(-0.04,0.04);
\filldraw[magenta] (-0.93333, 0.070561) circle (1pt);
\filldraw[magenta] (-1.8933, -0.090535) circle (1pt);
\draw[magenta,ultra thick]
(-0.93333, 0.070561)--(-0.94333, 0.075292)--(-0.95333, 0.079593)--(-0.96333, 0.083470)--(-0.97333, 0.086930)--(-0.98333, 0.089986)--(-0.99333, 0.092652)--(-1.0033, 0.094944)--(-1.0133, 0.096881)--(-1.0233, 0.098481)--(-1.0333, 0.099765)--(-1.0433, 0.10075)--(-1.0533, 0.10145)--(-1.0633, 0.10190)--(-1.0733, 0.10209)--(-1.0833, 0.10206)--(-1.0933, 0.10181)--(-1.1033, 0.10136)--(-1.1133, 0.10073)--(-1.1233, 0.099910)--(-1.1333, 0.098927)--(-1.1433, 0.097789)--(-1.1533, 0.096503)--(-1.1633, 0.095080)--(-1.1733, 0.093526)--(-1.1833, 0.091850)--(-1.1933, 0.090059)--(-1.2033, 0.088160)--(-1.2133, 0.086158)--(-1.2233, 0.084060)--(-1.2333, 0.081871)--(-1.2433, 0.079596)--(-1.2533, 0.077241)--(-1.2633, 0.074810)--(-1.2733, 0.072308)--(-1.2833, 0.069739)--(-1.2933, 0.067106)--(-1.3033, 0.064415)--(-1.3133, 0.061669)--(-1.3233, 0.058870)--(-1.3333, 0.056024)--(-1.3433, 0.053133)--(-1.3533, 0.050200)--(-1.3633, 0.047228)--(-1.3733, 0.044221)--(-1.3833, 0.041182)--(-1.3933, 0.038113)--(-1.4033, 0.035017)--(-1.4133, 0.031897)--(-1.4233, 0.028756)--(-1.4333, 0.025596)--(-1.4433, 0.022420)--(-1.4533, 0.019230)--(-1.4633, 0.016030)--(-1.4733, 0.012821)--(-1.4833, 0.0096063)--(-1.4933, 0.0063882)--(-1.5033, 0.0031691)--(-1.5233, -0.0032617)--(-1.5333, -0.0064685)--(-1.5433, -0.0096661)--(-1.5533, -0.012852)--(-1.5633, -0.016024)--(-1.5733, -0.019179)--(-1.5833, -0.022314)--(-1.5933, -0.025428)--(-1.6033, -0.028516)--(-1.6133, -0.031577)--(-1.6233, -0.034607)--(-1.6333, -0.037604)--(-1.6433, -0.040564)--(-1.6533, -0.043484)--(-1.6633, -0.046362)--(-1.6733, -0.049194)--(-1.6833, -0.051976)--(-1.6933, -0.054705)--(-1.7033, -0.057377)--(-1.7133, -0.059988)--(-1.7233, -0.062535)--(-1.7333, -0.065012)--(-1.7433, -0.067416)--(-1.7533, -0.069742)--(-1.7633, -0.071984)--(-1.7733, -0.074138)--(-1.7833, -0.076197)--(-1.7933, -0.078156)--(-1.8033, -0.080008)--(-1.8133, -0.081747)--(-1.8233, -0.083365)--(-1.8333, -0.084854)--(-1.8433, -0.086205)--(-1.8533, -0.087409)--(-1.8633, -0.088456)--(-1.8733, -0.089335)--(-1.8833, -0.090032)--(-1.8933, -0.090535);
\draw[magenta,->,thick] (-1.8933, -0.090535)--(-1.8933-1.3460, -0.090535-1.4793);
\draw[magenta,->,thick] (-0.93333, 0.070561)--(-0.93333+1.3460*0.3, 0.070561+1.4793*0.3);
\draw[magenta] (-1.4, -0.1) node {$e_{2}$};
\filldraw[magenta] (-3.15, -1.2) node {$\gamma_{e_2,\theta}$};
\end{tikzpicture}
\caption{This figure depicts in \textcolor{orange}{orange} the set $\calC_{(-5/6,0),\vartheta}$ for $\vartheta=1.6650$ and $(A,B)=(1,2)$. The \textcolor{violet}{violet} contour represents the curve $\gamma_{e_1,\theta}$, which gives a trivial cycle, and the \textcolor{magenta}{magenta} contour represents the curve $\gamma_{e_2,\theta}$, which gives the class of a state integral contour. The \textcolor{red}{red} crosses are the branch points of the function~$V$. The straight line segments are the tails.}
\label{fig:edge.thimb.41}
\end{figure}
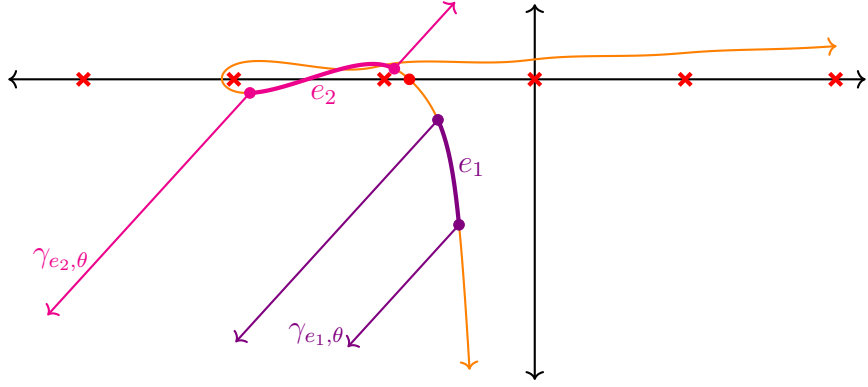
\section{Asymptotics of state integrals}\label{sec:asymp_state_int}
We will now describe an algorithm to construct a combination of state integrals for each $e^{i\vartheta}\in\BC\backslash\calS(V_0)$. This will be a locally constant assignment and have uniform asymptotics given by the saddle point approximation at a critical point $p_0$ for all $\arg(-1/\tau)=\theta\in[\vartheta-\tfrac{\pi}{2},\vartheta+\tfrac{\pi}{2}]$. When $\vartheta$ crosses the Stokes rays, the algorithm will produce different combinations of state integrals. This gives an explicit method that can be used to compute Stokes constants. We will use the algorithm to give a constructive proof of the main Theorem~\ref{thm:main_intro}.

\subsection{A steepest descent algorithm}\label{sec:alg}
Fix a critical point $p_0\in\Sigma$ with a lift of the critical value $V_0\in\BC$ and $\vartheta\in\arg(\BC\backslash\calS(V_{0}))\cap(\tfrac{\pi}{2},\tfrac{3\pi}{2})$. This determines the steepest descent contour~$\calC_{p_0,\vartheta}$ and constants $\epsilon,M>0$ so that all points $p\notin W_{\epsilon,M}$ satisfy $[\gamma_{p,\theta}]=0$ (see Corollary~\ref{cor:half.thimb.stateint.zero}). Let $W=W_{\epsilon,M}$ and consider the set
\be
\calZ\;:=\;\{p\in W\;|\;\Re(e^{-i\vartheta}(V_0-V(p)))\in\cos(2\pi\vartheta)\BZ\}\,.
\ee
We can identify $\calZ$ with $\big[e^{-i\vartheta}(V-V_0)\big]^{-1}(i\BR/e^{-i\vartheta}\BZ)\cap W$, thus $\calZ$ has a natural orientation induced by $\BR$. We define 
\be
\begin{aligned}
\calZ_+&\;:=\;\big[e^{-i\vartheta}(V_0-V)\big]^{-1}(i\BR_{\geq 0}/e^{-i\vartheta}\BZ)\cap W\subset\calZ\,,\\
\calP&\;:=\;\{p\in W\cap\calZ_+ \;\vert\; \Re(z(p)e^{-i\vartheta})\in\cos(2\pi\vartheta)\BZ\}\,.
\end{aligned}
\ee
Notice that $\calP$ is a finite set. To each point $p\in\calP\setminus\{p_0\}$ we can associate the cycle $\gamma_{p,\theta}$, which represents a class in $H_1(\Sigma,D_{\infty,\theta})$ as in Corollary~\ref{cor:half.thimb.stateint}. The points $p\in\calP$ partition the set $\calZ_+$ into a set of edges and we define $\calE$ to be the set of edges whose end points lie in $\calP$, i.e. 
\be
\calE\;:=\;\big\{e\in\pi_0(\calZ_+\setminus\calP)\colon \partial e\subseteq\calP\big\}\,.
\ee
We denote the boundary of $e$ by $(p_e, r_e)$, where the order respects the orientation of $\calZ_+$. To each $e\in\calE$ we associate a cycle $\gamma_{e,\theta}$, which represents a class in $H_1(\Sigma,D_{\infty,\theta})$ as shown in Corollary~\ref{cor:half.thimb.stateint}. 
In order to describe the algorithm we need the following lemma:
\begin{lemma}\label{lem:}
Let $p=(z,m)\in\Sigma$ and $e^{i\vartheta}\in\BC\setminus\calS(V_0)$. The sequence of tangent directions to $\Gamma_{(z,m+k),\vartheta}$ at the point $(z,m+k)$ is strictly monotone for $k\in\mathrm{sign}(\Im(z))\BZ_{\geq 0}$.
\end{lemma}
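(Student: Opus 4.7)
The plan is to reduce the statement to a one-variable monotonicity computation. The key observation is that $V$ in Equation~\eqref{eq:V} depends on $m$ only through the linear term $mz$, so differentiating in $z$ gives $V'(z, m+k) = V'(z, m) + k$ for every integer $k$. Writing $c := V'(z, m)$, Equation~\eqref{eq:tangent-GGamma_p} identifies the tangent direction to $\Gamma_{(z, m+k),\vartheta}$ at $(z, m+k)$ with $e^{i\vartheta}/(i(c+k))$, whose argument modulo $2\pi$ equals $\vartheta - \pi/2 - \arg(c+k)$. Consequently, strict monotonicity of the sequence of tangent directions is equivalent to strict monotonicity of the sequence $\arg(c+k)$.

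Treating $k$ as a real parameter and differentiating gives
\[
\frac{d}{dk}\arg(c+k) \= \Im\!\Big(\frac{1}{c+k}\Big) \= -\frac{\Im(c)}{|c+k|^2},
\]
which has constant, nonzero sign throughout $\BR$ precisely when $\Im(c) \neq 0$. In that case $\arg(c+k)$ is strictly monotone on the whole real line, and a fortiori on the integer subset $\mathrm{sign}(\Im(z))\BZ_{\geq 0}$; the sign of the derivative, i.e.\ the direction of monotonicity, is opposite to $\mathrm{sign}(\Im(c))$. The lemma therefore reduces to establishing $\Im V'(z, m) \neq 0$.

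For this last step I would use the formula
\[
\Im V'(z, m) \= A\,\Im(z) + \frac{B}{2\pi}\log|1 - \e(z)|
\]
extracted from Equation~\eqref{eq:tangent} and argue that the two summands cannot cancel under the running hypothesis. This nonvanishing — together with the matching of signs $\mathrm{sign}(\Im V'(z,m)) = \mathrm{sign}(\Im(z))$ that is suggested by the restriction $k \in \mathrm{sign}(\Im(z))\BZ_{\geq 0}$ in the statement — is the main obstacle. I expect it can be verified by keeping $(z, m+k)$ inside the admissible region $W_{\epsilon,M}$ of Corollary~\ref{cor:half.thimb.stateint.zero}: there $|z-k| \geq \epsilon/(1+|k|)$ for every $k \in \BZ$, so the singular term $\log|1 - \e(z)|$ is bounded below by a constant depending only on $\epsilon$, while $A\,\Im(z)$ grows linearly in $|\Im(z)|$ with the correct sign; combining the two shows that the two contributions add constructively rather than cancelling. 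With $\Im(c)$ fixed in sign, strict monotonicity of the sequence of tangent directions follows at once from the derivative computation above, and its direction is pinned down by $\mathrm{sign}(\Im(z))$.
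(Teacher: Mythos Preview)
Your reduction is precisely the paper's: its entire proof is the single line recording that the tangent at $(z,m+k)$ is $e^{i\vartheta}(iV'(z,m)+ik)^{-1}$, with the monotonicity left implicit from there. Your derivative computation $\tfrac{d}{dk}\arg(c+k)=-\Im(c)/|c+k|^{2}$ makes that step explicit and correctly isolates $\Im V'(z,m)\neq 0$ as the hypothesis actually driving strict monotonicity.

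The gap is in your final paragraph. The claim that the two summands of $\Im V'(z,m)=A\,\Im(z)+\tfrac{B}{2\pi}\log|1-\e(z)|$ ``add constructively'' on $W_{\epsilon,M}$ does not hold: for $\Im(z)>0$ small and $z$ close to an integer, $\log|1-\e(z)|$ is large negative while $A\,\Im(z)$ is small positive, so the terms compete rather than reinforce, and by continuity $\Im V'(z,m)$ vanishes on a curve in the upper half plane. The $\Sigma_{\epsilon}$ bound $|z-k|\geq\epsilon/(1+|k|)$ only caps $|\log|1-\e(z)||$ from above, which does nothing to prevent this cancellation. The paper's one-line proof does not address this point either, so you have located a shared gap rather than introduced one; note that where the lemma is actually used (step~(IV) of the algorithm) only the existence of a threshold $k_{0}$ separating two half-plane regimes is needed, and the footnote there already handles the borderline case where the tangent lies on $e^{i\vartheta}\BR$.
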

\begin{proof}
From Equation~\eqref{eq:tangent-GGamma_p} in the proof of Lemma~\ref{lem:thimble} and Equation~\eqref{eq:tangent}, the tangent to $\Gamma_{(z,m+k),\vartheta}$ at the point $(z,m+k)$ is $e^{i\vartheta}(iV'(z,m)+ik)^{-1}$. 
\end{proof}
To the data $(p_0,V_0,\vartheta)$ we can define a finite combination of state integrals. To begin the algorithm, in place of step $(\rm I)$, we consider the critical point $p_0$ and the lift $V_0\in\BC$ such that $V_0=V(p_0)\in\BC/\BZ$, and the thimble $\calC_{p_0,\vartheta}$ as in Corollary~\ref{cor:thimble}. Then we go on from step $(\rm II)$. Here is the algorithm:
\begin{itemize}
  \item[$(\rm I)$] Let $p\in\calP$ with a lift of $V(p)$ to $\BC$ and its associated cycle $\gamma_{p,\vartheta}$ (see Definition~\ref{def:gamma_p}).
  \item[$(\rm II)$] The cycle $\gamma_{p,\theta}$ can be decomposed as union of $\gamma_{e,\theta}$ for every $e\subset\gamma_{p,\theta}$ and $\gamma_{r,\theta}$ where $r\in\calP\cap\gamma_{p,\theta}$ is the last point with respect to the order inherited from $\gamma_{p,\theta}$. 
  \item[$(\rm III)$] To each $\gamma_{e,\theta}$ we associate the integral
  \be\label{eq:ge.si}
  \int_{\gamma_{e,\theta}} \e\big(\tfrac{A}{2}z(z+1+\tfrac{1}{\tau})\tau+mz\tau+n\tau\big)\Phi((z-\ell)\tau;\tau)^B\, dz\,,
  \ee 
  where $m$ and $n$ are dictated by the original lift of $V(p)$ and $\ell=\lceil \Re(z(x))+\tan(\vartheta)\Im(z(x))\rceil$ for some $x\in e$, the ceiling of the projection onto the reals parallel to~$e^{-i\vartheta}$. If $\Im(z(p_e))\Im(z(r_e))<0$ then $\gamma_{e,\theta}$ is homotopic to a state integral contour, hence the integral in Equation~\eqref{eq:ge.si} is equal to $q^n\calI_{m,\ell}$. Otherwise, it vanishes. 
  \item[$(\rm IV)$] To each point $(z,m)\in\partial\big(\calE\cap\gamma_{p,\theta}\big)$, we take the collection of points $(z,m\pm k)\in\calP$ where $k=1,\ldots, N$ and $N\leq |\calP|$, and the sign $\pm$ equals the sign of $\Im(z)$.  To these collections we assign the lifts $V((z,m\pm k))=V(z,m)\pm k(z-\ell)$ and take their associated cycles $-\gamma_{(z,m\pm k),\theta}$ weighted by constants $a_{k}\in\BZ$ defined as follows: from Lemma~\ref{lem:} there exists a constant $k_0$ such that 
\begin{itemize}
  \item[$\bullet$]  for $0<k<k_0$,  the tangent\footnote{If the tangent is equal to $\pm i e^{i\vartheta}$, then we consider the direction of higher derivatives.} at $(z,m\pm k)$ to the set $-\gamma_{(z,m\pm k),\theta}$ is in $-i e^{i\vartheta}\mathbb{H}$ if $\Im(z)>0$, and is in $i(A-B) e^{i\vartheta}\mathbb{H}$ if $\Im(z)<0$,
  \item[$\bullet$] for $k\geq k_0$, the tangent at $(z,m\pm k)$ to the set $-\gamma_{(z,m\pm k),\theta}$ is in $i e^{i\vartheta}\mathbb{H}$ if $\Im(z)>0$, and is in $-i(A-B) e^{i\vartheta}\mathbb{H}$ if $\Im(z)<0$,
\end{itemize}
and $a_k$ are the unique constants that satisfy the equation
\be\label{eq:alg-tails}
\begin{aligned}
& \big[\Phi((w-\ell)\tau;\tau)^B-\Phi((w-\ell-1)\tau;\tau)^B\big] \e\Big(\frac{A}{2}w(w+1+\tfrac{1}{\tau})\tau+mw\tau\Big)\\
&\=\sum_{0<k<k_0}a_k \Phi((w-\ell_0)\tau;\tau)^B\e\Big(\frac{A}{2}w(w+1+\tfrac{1}{\tau})\tau+(m\pm k)w\tau\Big)\\
&\quad + \sum_{k\geq k_0}a_k  \Phi((w-\ell_1)\tau;\tau)^B\e\Big(\frac{A}{2}w(w+1+\tfrac{1}{\tau})\tau+(m\pm k)w\tau\Big)\,,
\end{aligned}
\ee
for every $w$ with $\Im(w)=\Im(z)$, and where $\ell_0=\ell+1$ and $\ell_1=\ell$ if $\Im(z)>0$, and $\ell_0=\ell+(1-\mathrm{sign}(A-B))/2$ and $\ell_1=\ell+(1+\mathrm{sign}(A-B))/2$ if $\Im(z)<0$.
\end{itemize}
The algorithm terminates if at step $(\rm IV)$ there are no $(z,m\pm k)\in\calP$.
\begin{corollary}\label{cor:end-algorithm}
This algorithm terminates after finitely many steps producing a function $\calI_{p_0,V_0,\vartheta}(\tau)$ given as a $\BZ$-linear combination of $q^n\calI_{m,\ell}$.
\end{corollary}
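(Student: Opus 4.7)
The plan is first to establish the structural finiteness built into the algorithm, then to identify a strictly monotone measure on the recursion tree that forces termination. Finiteness of $\calP$ is immediate: $\calP$ is the intersection of the compact set $W=W_{\epsilon,M}$ with a discrete lattice defined by the conditions $\Re(e^{-i\vartheta}(V_0-V(p)))\in\cos(2\pi\vartheta)\BZ$ and $\Re(z(p)e^{-i\vartheta})\in\cos(2\pi\vartheta)\BZ$. Finiteness of $\calE$ then follows, since its elements are exactly the connected components of $\calZ_+\setminus\calP$ whose endpoints lie in the finite set $\calP$. For step~(III), the identification of the integral along $\gamma_{e,\theta}$ with either $q^n\calI_{m,\ell}$ (when $\Im(z(p_e))\Im(z(r_e))<0$) or $0$ (otherwise) follows from Corollaries~\ref{cor:half.thimb.stateint} and~\ref{cor:half.thimb.stateint.zero}, while the parameters $m,n,\ell$ are read off from the original lift of $V(p)$ and the choice of $x\in e$. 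Integrality of the coefficients $a_k$ in Equation~\eqref{eq:alg-tails} comes from iterating the quasi-periodicity $\Phi(z-\tau;\tau)^B=(1-\e(z))^B\Phi(z;\tau)^B$, which gives a $\BZ[q^{\pm}]$-linear expansion.

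For termination, which I expect to be the main obstacle, I would organise the output as a rooted tree. The root is the initial contour $\calC_{p_0,\vartheta}$, and the children of a node $\gamma_{q,\theta}$ are the new cycles $-\gamma_{(z,m\pm k),\theta}$ produced at step~(IV) applied to the boundary points of the edges on $\gamma_{q,\theta}$. The key monotonicity is this: at each iteration of step~(IV), the new endpoints $(z,m\pm k)$ have $m$-coordinate shifted strictly in the direction fixed by $\mathrm{sign}(\Im z)$; moreover, since $\mathrm{sign}(\Im z)$ is preserved along a single descending flow, applying step~(IV) at a child node continues to shift in the same direction. Because for each projection $z$ the set of $m$ with $(z,m\pm k)\in\calP$ is finite, each branch exhausts its available targets after finitely many iterations, and at that point step~(IV) generates no new cycles so the branch terminates.

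To make this rigorous I would invoke the tangent-direction lemma preceding the algorithm: it ensures that the sequence of tangent directions to $\Gamma_{(z,m+k),\vartheta}$ at $(z,m+k)$ is strictly monotone in $k\in\mathrm{sign}(\Im z)\BZ_{\geq 0}$, so that the choice of constant $k_0$ in step~(IV) is well defined and the tails generated at distinct iterations cannot coincide with tails already processed. Combined with Corollary~\ref{cor:half.thimb.stateint.zero}, which guarantees that once the endpoints of a cycle escape $W_{\epsilon,M}$ the associated class in $H_1(\Sigma,D_{\infty,\theta})$ is trivial (and hence contributes nothing further to step~(IV)), this bounds the depth of the recursion tree by a function of $|\calP|$. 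Assembling the $\BZ$-integer contributions from step~(III) with the integrality of the $a_k$ from step~(IV), the algorithm outputs $\calI_{p_0,V_0,\vartheta}(\tau)$ as a finite $\BZ$-linear combination of $q^n\calI_{m,\ell}$, as required.
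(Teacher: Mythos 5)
Your preparatory material is fine: the finiteness of $\calP$ and $\calE$, the identification in step~(III) via Corollaries~\ref{cor:half.thimb.stateint} and~\ref{cor:half.thimb.stateint.zero}, and the integrality of the $a_k$ from the quasi-periodicity of Faddeev's dilogarithm are all correct and agree with what the paper takes as given. The gap is in the termination argument.

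Your proposed measure of progress is ``$m$ shifts monotonically in the direction fixed by $\mathrm{sign}(\Im z)$, and for each fixed projection $z$ there are finitely many admissible $m$.'' This is a statement about a \emph{single} application of step~(IV) at a \emph{fixed} base point $(z,m)$. But between consecutive applications of step~(IV) the algorithm passes through step~(II): you flow along the new cycle $\gamma_{(z,m\pm k),\theta}$ until you hit a new boundary point $(z',m')\in\calP$, and the new $z'$ is generally different from $z$, while the new $m'$ is dictated by the branch of $V$ the flow lands on, not by $m\pm k$. Your argument offers no control on how $(z',m')$ compares to previously visited points, so the ``exhaustion of available targets'' at a fixed $z$ does not propagate to a bound on the depth of the recursion tree; in principle a branch could alternate between two $z$-columns and revisit values of $m$ indefinitely. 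The appeal to the tangent-direction Lemma~\ref{lem:} is similarly local (it concerns the family $\Gamma_{(z,m+k),\vartheta}$ at a fixed $z$ as $k$ varies) and does not rule out coincidences of tails across different $z$-values. Also, $\mathrm{sign}(\Im z)$ need not be preserved from one base point to the next: the flow $\Gamma_{p,\theta}$ can and does cross the real axis, which is exactly what produces nontrivial state-integral contributions, so the claimed consistency of direction between parent and child nodes does not hold in general.

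What is missing is a \emph{globally} well-founded quantity that strictly increases under the passage from a base point to each of its children. The paper's proof supplies precisely this: it orders $\calP\cap\BH$ lexicographically by the pair $(\Re(z_+(p)),m)$ with $m\in\{0,\dots,A-1\}$ (and similarly for the lower half-plane), observes that step~(IV) strictly increases this order---either because $m$ increases within a sheet, or because shifting past $m=A-1$ passes to the next sheet via $(z,m+A)\sim(z+1,m)$, which strictly increases $\Re(z_+)$---and concludes termination from finiteness of $\calP$. The first coordinate $\Re(z_+)$ is essential: it is what absorbs the movement to new $z$-columns that your argument leaves uncontrolled. Once that measure is in place, your use of Corollary~\ref{cor:half.thimb.stateint.zero} to discard base points that escape $W_{\epsilon,M}$ is correct and finishes the proof; I would suggest replacing your fixed-$z$ exhaustion argument by the lexicographic monotonicity.
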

\begin{proof}
We can partially order the points $p$ in the set $\calP\cap\BH$ according to the $\Re(z_{+}(p))$ and $m\in \{0,\ldots ,A-1\}$, in particular we use the lexicographic order on $\BR\times\{0,\ldots ,A-1\}$. The new base points appearing at step $(\mathrm{IV})$ of the algorithm always strictly increase with respect to this order. Hence after a finite number of steps all points will move outside $\calP$ (a finite set). A similar argument holds in the lower half-plane. Then from Corollary~\ref{cor:half.thimb.stateint.zero}, all of these points $p$ have trivial associated cycles $[\gamma_{p,\vartheta}]=0$. 
\end{proof}
\begin{lemma}
The function $\calI_{p_0,V_0,\vartheta}(\tau)$ is locally constant as $\vartheta$ varies in $\arg(\BC\backslash\calS(V_{0}))\cap(\tfrac{\pi}{2},\tfrac{3\pi}{2})$.
\end{lemma}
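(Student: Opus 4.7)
The strategy is to note that $\calI_{p_0,V_0,\vartheta}(\tau)$ is produced by the algorithm as a $\BZ$-linear combination of the $\vartheta$-independent state integrals $q^n\calI_{m,\ell}(\tau)$, so local constancy in $\vartheta$ reduces to showing that every discrete choice made by the algorithm is locally constant on each connected component of $\arg(\BC\backslash\calS(V_0))\cap(\tfrac{\pi}{2},\tfrac{3\pi}{2})$: namely, the set $\calP$, its edge graph $\calE$, the integer labels $(m,\ell,n)$ attached to each edge, the threshold $k_0$ appearing at step~(IV), and the integer coefficients $a_k$.

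First, I would show that the geometric picture on the compact region $W_{\epsilon,M}$ deforms by an ambient isotopy as $\vartheta$ varies inside a connected component of the admissible range. By Corollary~\ref{cor:thimble} the thimble $\calC_{p_0,\vartheta}$ varies smoothly in $\vartheta$, and more generally the level foliation $\{V(p)\in V_0-ie^{i\vartheta}\BR+\BZ\}$ on $W_{\epsilon,M}$ is a smooth family in $\vartheta$. Because $e^{i\vartheta}\notin\calS(V_0)$, no critical value of $V$ meets the zero leaf and no two leaves collide; consequently the combinatorial type of $(\calP,\calE)$ is preserved, and each cycle $\gamma_{p,\theta}$ and $\gamma_{e,\theta}$ of Definitions~\ref{def:gamma_p}--\ref{def:gamma_e} represents a locally constant class in $H_1(\Sigma,D_{\infty,\theta})$. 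The labels $m$ and $n$, dictated by the initial lift of $V(p)$ and by counting integer crossings along a cycle, are then combinatorial invariants of $(\calP,\calE)$ and hence locally constant.

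Next, I would handle the integer data introduced at steps~(III) and~(IV). The ceiling $\ell=\lceil\Re(z(x))+\tan(\vartheta)\Im(z(x))\rceil$ at step~(III) can only jump when the projection of a tail crosses an integer, which forces the level line to pass through a branch point of $V$ and therefore places $e^{i\vartheta}$ in $\calS(V_0)$. At step~(IV), the threshold $k_0$ is defined by a strict monotonicity inequality on the sequence of tangent directions (the lemma just preceding the algorithm); since these tangents are smooth in $\vartheta$, $k_0$ is locally constant away from the coincidence angles, which again lie in $\calS(V_0)$. Once $k_0$, $\ell_0$, and $\ell_1$ are fixed, equation~\eqref{eq:alg-tails} becomes a $\vartheta$-independent identity in the single variable $w$ that determines $a_k$ uniquely, so the $a_k$ are locally constant integers. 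Combining with the uniform finite termination provided by Corollary~\ref{cor:end-algorithm}, the full output $\calI_{p_0,V_0,\vartheta}(\tau)$ is locally constant in $\vartheta$.

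The main obstacle is the bookkeeping in the previous paragraph: verifying that each potential transition---collisions of level leaves, integer crossings of the ceiling $\ell$, or coincidences of successive tangent directions---is actually captured by the Stokes set $\calS(V_0)$. Each such transition forces a critical value of $V$, or an integer translate thereof coming from the branch cuts, to sit on the ray from $V_0$ in direction $-ie^{i\vartheta}$, and these rays are precisely the elements of the peacock pattern $\calS(V_0)$.
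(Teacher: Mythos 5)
Your reduction to showing local constancy of the discrete data is a sound starting point, and your handling of the ceiling $\ell$ and threshold $k_0$ away from combinatorial transitions is consistent with the algorithm. However, there is a genuine gap at the key step: you assert that because $e^{i\vartheta}\notin\calS(V_0)$, ``no two leaves collide; consequently the combinatorial type of $(\calP,\calE)$ is preserved.'' This does not follow, and it is in fact false. The Stokes condition $e^{i\vartheta}\notin\calS(V_0)$ controls only whether the level set $\calZ_+$ hits other critical points of $V$; it does not control the intersections of $\calZ_+$ with the auxiliary family of lines $\BZ+e^{-i\vartheta}\BR$ that define $\calP$. As $\vartheta$ varies inside a single connected component of $\arg(\BC\backslash\calS(V_0))$, points of $\calP$ can and do appear, disappear, or exit the compact window $W_{\epsilon,M}$ — e.g.\ through a tangency of $\calZ_+$ with one of these lines, which is an event wholly unrelated to the peacock pattern $\calS(V_0)$.

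The paper's proof confronts this directly rather than ruling it out: after establishing that $\calZ_+$ deforms continuously and that cycles $\gamma_{p,\vartheta}$ cannot cross $\BR$ without passing through a branch point (which $\calZ_+$ avoids), it classifies the possible jumps in $\calP$. Points either appear/disappear in pairs in the interior of $W$ (then both lie in the same half-plane, so the cycle $\gamma_{e,\vartheta}$ they bound is null-homologous in $H_1(\Sigma,D_{\infty,\theta})$), or a single point exits $\partial W$ (then $[\gamma_{p,\vartheta}]=0$ by Corollary~\ref{cor:half.thimb.stateint.zero}). In both cases the change in $\calP$ contributes a trivial class, so the output state-integral combination is unchanged. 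Your proof needs exactly this case analysis of how $\calP$ can change and why each such change is invisible to the answer; without it, the argument does not go through.

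A secondary slip: you claim the ceiling $\ell$ ``can only jump when the projection of a tail crosses an integer, which forces the level line to pass through a branch point of $V$.'' Crossing one of the lines $\BZ+e^{-i\vartheta}\BR$ does not mean crossing a branch point of $V$; those lines pass through integer points but extend into the upper and lower half-planes where there are no branch points. Such crossings are precisely the $\calP$-events that the paper's proof must — and does — address.
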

\begin{proof}
For $\vartheta\in\arg(\BC\backslash\calS(V_{0}))\cap(\tfrac{\pi}{2},\tfrac{3\pi}{2})$ contained in a connected component, the sets $\calZ_{+}$ are all homotopic. We can restrict to a compact subinterval and on this compact set of $\vartheta$, we can choose $\epsilon,M>0$ uniformly. Then for small enough variations of $\vartheta$, we can identify $p\in\calP$ while $\vartheta$ varies. Since $p$ can not cross the reals without going through a branch point, which $\calZ_+$ does not intersect, the cycles $\gamma_{p,\vartheta}$ are all homotopic. This implies that the algorithm will generically produce the same $\calI_{p_0,V_0,\vartheta}(\tau)$ for small variations of $\vartheta$. Besides the generic behaviour, there are critical directions across which the set $\calP$ can gain or loose points. Points in $\calP$ either appear and disappear in pairs in the interior of $W$---therefore they will both have the same sign of $\Im(z_\pm)$---or a single point falls outside the boundary of $W$.
If two points appear or disappear inside $W$, given that both points are in the same half plane, the cycle associated to them is trivial and this does not affect the function $\calI_{p_0,V_0,\vartheta}(\tau)$. In the other case, if a point $p\in\calP$ falls outside the boundary of $W$ then $[\gamma_{p,\vartheta}]=0$ by Corollary~\ref{cor:half.thimb.stateint.zero} and therefore, as $[\gamma_{p,\vartheta}]$ is constant for all $\vartheta$ in our compact set, it does not affect $\calI_{p_0,V_0,\vartheta}(\tau)$. 
\end{proof}
If $\rm I$ denotes a connected component of $\BC\backslash\calS(V_{0}))\cap(\tfrac{\pi}{2},\tfrac{3\pi}{2})$, then we denote $\calI_{p_0,V_0,I}(\tau)$ the function $\calI_{p_0,V_0,\vartheta}(\tau)$ for any $\vartheta\in\arg{I}$. 
\subsection{Proof of Theorem~\ref{thm:main_intro}}\label{sec:proof_main}
We now have all the ingredients we need to prove our main theorem. The main idea is to compute the asymptotics of the function $\calI_{p_0,V_0,\vartheta}(\tau)$ for $\arg(-1/\tau)=\theta\in(-\tfrac{\pi}{2}+\vartheta,\tfrac{\pi}{2}+\vartheta)$. We will state this in the following theorem, which is essentially equivalent to Theorem~\ref{thm:main_intro}.
\begin{theorem}
If $[a,b]\subseteq\arg(\BC\backslash\calS(V_{0}))\cap(\tfrac{\pi}{2},\tfrac{3\pi}{2})$, $\vartheta\in[a,b]$, $\arg(-1/\tau)=\theta\in(a-\tfrac{\pi}{2},b-\tfrac{\pi}{2})$ and $|\tau|\to\infty$, then there exists constants $C=C(a,b),\varepsilon=\varepsilon(a,b)>0$
\be
  |\e(-V_0\tau)
  \calI_{p_0,V_0,\vartheta}(\tau)
  -
  \Phi_{\Xi}^{(K)}(-2\pi i/\tau)|
  <
  C\,\varepsilon^{-K}\,K!\,|\tau|^{-K}\,,
\ee
where $\Phi_{\Xi}^{(K)}(\hbar)$ is the truncation of the series $\Phi_{\Xi}(\hbar)$ to $\mathrm{O}(\hbar^{K})$.
\end{theorem}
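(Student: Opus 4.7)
The plan is to identify $\calI_{p_0,V_0,\vartheta}(\tau)$ with a thimble integral of the Faddeev integrand along $\calC_{p_0,\vartheta}$, and then to apply Theorem~\ref{thm:fad.asymp} together with a saddle-point analysis at $p_0$. First I would verify that the combination of state integrals produced by the algorithm of Section~\ref{sec:alg} represents the class $[\calC_{p_0,\vartheta}]$ in $H_1(\Sigma,D_{\infty,\theta})$. At each step the decomposition $\gamma_{p,\theta}=\sum_{e}\gamma_{e,\theta}+\gamma_{r,\theta}$ is a homological identity, and the telescoping recipe~\eqref{eq:alg-tails} of step~(IV) expresses $\gamma_{r,\theta}$ in terms of shifted half-thimbles $\gamma_{(z,m\pm k),\theta}$ whose classes vanish once the base points leave $W_{\epsilon,M}$, by Corollary~\ref{cor:half.thimb.stateint.zero} and Corollary~\ref{cor:end-algorithm}. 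Since the Faddeev integrand is meromorphic with no poles in the relevant region of $\Sigma$, deforming through these homologies yields
\[
  \calI_{p_0,V_0,\vartheta}(\tau)
  \=
  \int_{\calC_{p_0,\vartheta}}\Phi((z-\ell)\tau;\tau)^{B}\,\e\Big(\tfrac{A}{2}z(z\tau+\tau+1)+mz\tau+n\tau\Big)\,dz\,,
\]
where the piecewise data $(\ell,m,n)$ are fixed so that the lifted value $V(z,m)-n$ glues to the lift $V_0$ along the thimble.

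Next I would apply Theorem~\ref{thm:fad.asymp} to the Faddeev factor along $\calC_{p_0,\vartheta}$. Compactness of $[a,b]$ and the condition $\theta\in(a-\tfrac{\pi}{2},b-\tfrac{\pi}{2})$ ensure, via Lemma~\ref{lem:thimble} and Corollary~\ref{cor:thimble}, that the thimble stays uniformly $\varepsilon$-bounded away from the branch cuts of $\mathrm{D}_\theta$ for every admissible $(\vartheta,\theta)$, giving the required $\varepsilon=\varepsilon(a,b)>0$. The theorem then produces a uniform Gevrey-$1$ remainder for $\Phi((z-\ell)\tau;\tau)^B$ along the whole thimble; multiplying by the classical factor collapses the leading exponential to $\e(-V(z,m)\tau+n\tau)$, and expanding the subleading Faddeev series to order $K$ and collecting powers of $\hbar=-2\pi i/\tau$ reproduces precisely the integrand of Equation~\eqref{eq:form.int}, namely $\Psi(z,\hbar)^B$ times the $A$-dependent exponential.

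Finally I would perform the saddle-point analysis at $p_0$. Along $\calC_{p_0,\vartheta}$ the phase $V(z,m)-V_0-n$ equals $-ie^{i\vartheta}\lambda$ with $\lambda\ge 0$ by construction, so after factoring out $\e(-V_0\tau)$ the remaining exponential kernel is genuinely Gaussian to leading order in $w=z-z_0$ and decays in the admissible range of $\tau$. Term-by-term formal Gaussian integration of the truncated Faddeev expansion then reproduces the definition of $\Phi_{\Xi}^{(K)}(-2\pi i/\tau)$ from Equation~\eqref{eq:def.phi}. Combining the uniform $O(\varepsilon^{-K}K!\,|\tau|^{-K})$ remainder from Theorem~\ref{thm:fad.asymp} with a standard Gaussian $L^1$-estimate on the thimble, and a Cauchy estimate for the Taylor remainder of the sub-Gaussian factor around $z_0$, yields the desired bound $C\,\varepsilon^{-K}K!\,|\tau|^{-K}$.

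The main obstacle will be step one: rigorously tracking each iteration of the algorithm, in particular the mixed-$k$ prescription of Equation~\eqref{eq:alg-tails}, as a homological identity with consistent lifts of the multivalued function $V$, so that the finite $\BZ[q^{\pm}]$-combination of state integrals output by the algorithm equals on the nose, and not merely homologously, the thimble integral above. Once this identification is pinned down the remaining steps reduce to a standard steepest-descent argument with Gevrey-$1$ control on the subdominant factor supplied by Theorem~\ref{thm:fad.asymp}.
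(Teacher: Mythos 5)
There is a genuine gap in the first step of your plan, and it is not merely a bookkeeping issue as you seem to hope. You claim that the algorithm's output equals, on the nose, a thimble integral
$\int_{\calC_{p_0,\vartheta}}\Phi((z-\ell)\tau;\tau)^{B}\,\e(\cdots)\,dz$
with piecewise-constant $(\ell,m,n)$. This cannot hold. The state integral contours $\gamma_{e,\theta}$ consist of a thimble segment $e$ together with straight-line tails to infinity; when one sums over the edges, adjacent tails carry \emph{different} integrands (different $\ell$), so they do not cancel but instead contribute an integral of the jump $\bigl[\Phi((w-\ell)\tau;\tau)^B-\Phi((w-\ell-1)\tau;\tau)^B\bigr]\e(\cdots)$ along each tail. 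Step~(IV) of the algorithm does not make these jump integrals vanish; it re-expresses them via Equation~\eqref{eq:alg-tails} in terms of new half-thimbles based at shifted points $(z,m\pm k)$, which produces new state integrals not corresponding to any segment of $\calC_{p_0,\vartheta}$. You can see this concretely in the $4_1$ example in region $\mathrm{III}$ (Equation~\eqref{eq:I3.41}), where the terms $4q^2\calI_{1,-1}-4q\calI_{0,0}$ arise purely from such corrections and have no counterpart in a piecewise thimble integral. Since the integrands for different $\ell$ are genuinely different functions, the fact that a $\gamma_{p,\theta}$ with trivial homology class gives a vanishing integral does not rescue the identification: the tails carry jump integrands, and those are nonzero.

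The paper's actual argument sidesteps the exact identification entirely. It shows instead that the combination of state integrals differs from the saddle-point contribution at $p_0$ (the integral along $\calZ_+$) by a finite collection of tails, that each iteration of the algorithm pushes the tails' base points strictly outward (Corollary~\ref{cor:end-algorithm}), and that once a base point lies outside $W_{\epsilon,M}$ the corresponding tail contribution is \emph{exponentially small} relative to $\e(-V_0\tau)$ --- via Corollary~\ref{cor:half.thimb.stateint.zero}, not via homological vanishing. The exponentially small tails are then swallowed by the $C\,\varepsilon^{-K}K!\,|\tau|^{-K}$ error term, and the remaining $\calZ_+$ integral is handled by stationary phase at $p_0$. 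Your steps 2 and 3 (uniform Gevrey-1 control of the Faddeev factor via Theorem~\ref{thm:fad.asymp}, followed by Gaussian integration) are essentially sound and close to what the paper implicitly uses, but they must be applied to the $\calZ_+$ piece of the decomposition, with the tails estimated separately as exponentially small, rather than to a purported exact thimble integral that does not exist.
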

\begin{proof}
Firstly, as we vary $\arg(-1/\tau)=\theta\in(\vartheta-\tfrac{\pi}{2},\vartheta+\tfrac{\pi}{2})$, we can deform the contours of the state integrals of $\calI_{p_0,V_0,\vartheta}(\tau)$ so that they are contained in $\calZ_{+}$ besides tails based at the set $\{p\in X\cap\calZ_+ \colon \Im(z(p)e^{-i\theta})\in\sin(2\pi\theta)\BZ\}$. Considering this deformation we can now compute the asymptotics.

\medskip

The first iteration of the algorithm gives a collection of state integrals (from step III) whose asymptotics is determined by the saddle point approximation at $p_0$ with the addition of the asymptotics of the tails (from in step II). The tail at $(z,m)$ is 
\be\label{eq:tail}
\int_{z}^{\pm e^{i\theta}\infty} \big[\Phi((w-\ell)\tau;\tau)^B-\Phi((w-\ell-1)\tau;\tau)^B\big] \e\Big(\frac{A}{2}w(w+1+\tfrac{1}{\tau})\tau+mw\tau\Big)\,dw\,.
\ee
Step IV produces a new collection of cycles $\gamma_{(z,m\pm k)}$ (described in Equation~\eqref{eq:alg-tails}) whose asymptotics is equal to the asymptotics of the remaining tails. Therefore, at any iteration of the algorithm, the asymptotics are determined by the asymptotics of a finite collection of tails and the saddle point approximation at $p_0$.

\medskip

By construction, after a finite number of steps, the starting points of these tails will be outside $W$ (see Corollary~\ref{cor:end-algorithm}). Notice that the volumes of the tails' base points are always decreasing, and therefore eventually they leave the finite set $\calP$. If a tail starts outside $W$ its asymptotics are exponentially smaller than the asymptotics coming from the saddle point approximation at $p_0$. This can be seen from the proof of Corollary~\ref{cor:half.thimb.stateint.zero}.

\medskip

Therefore, the tails in $\calI_{p_0,V_0,\vartheta}(\tau)$ only make exponentially small contributions and the asymptotics are determined by the integral along the set $\calZ_+$. For $(\vartheta-\tfrac{\pi}{2},\vartheta+\tfrac{\pi}{2})$, the set $\calZ_+$ is contained in the region where $\Im(V(p)\tau)>0$ for the local lift of $V(p)\in\BC$. We therefore see that $\calI_{p_0,V_0,\vartheta}(\tau)$ has asymptotics determined by the stationary phase approximation at $p_0$ (see for example~\cite[Chap.~9, Sec.~2.2, Theorem~2.1]{Olver}). This was exactly how $\Phi_{\Xi}$ was defined in Equation~\eqref{eq:def.phi} and therefore this completes the proof.
\end{proof}
\begin{proof}[Proof of Theorem~\ref{thm:main_intro}]
To transfer this result to a proof of the main theorem, we simply follow the same proof as in Corollary~\ref{cor:fad.resum}. Given these asymptotics we can apply Nevanlinna's theorem~\cite{nevanlinna} or simply just take Mordell's inverse Laplace transform to define the analytic continuation of the Borel transform in the cone $\theta\in(a,b)$. This proves the Borel summability. We can run exactly the same construction and proof for $\Re(\tau)<0$ using the functions given in Remark~\ref{rem:neg.rt} to give the full result including $\Re(\tau)<0$.
\end{proof}
\begin{proof}[Proof of Theorem~\ref{thm:4152}]
The result follows immediately by specialising Theorem~\ref{thm:main_intro} to the cases of $(A,B)$ equal to $(1,2)$ for $4_1$ and $(2,3)$ for $5_2$. 
\end{proof}

\section{Examples}\label{sec:ex}

We now go though two explicit examples. One is associated to the figure eight knot $4_1$, where $(A,B)=(1,2)$ and the other is for $(A,B)=(4,1)$. The first was studied in~\cite{GGM:I} and full conjectural answers were given for the resurgent structures. We will prove the exact formulas given in~\cite{GGM:I} are correct by applying our method to compute the Borel--Laplace resummation. A similar analysis for the example $(A,B)=(4,1)$ was carried out in~\cite{Wh:thesis}. Here again conjectural formulas were given and we will also prove them.

\subsection{\texorpdfstring{The $4_1$ knot}{The 4\_1 knot}}\label{sec:4_1}
This example corresponds to $A=1$ and $B=2$. See for example~\cite[Sec.~7]{GSW}. We will compute the Borel--Laplace resummation of $\Phi_\Xi$ with the algorithm described in Section~\ref{sec:alg}. We will see that this exactly matches the numerical predictions of~\cite{GGM:I}. The critical points for the volume $V$ are at $p_1=(-5/6,0)$ and $p_2=(-1/6,0)$ and their critical values are respectively at $V_1=-0.0514175\ldots i$ and $V_2=0.0514175\ldots i$. Thus the first Stokes line is at $e^{i\vartheta}\BR_{>0}$ with $\vartheta=3.1416$ and it separates the regions $\mathrm{I}$ and $\mathrm{II}$. While the second one is at $\vartheta=1.6733$ and separates the regions $\mathrm{II}$ and $\mathrm{III}$ (see Figure~\ref{fig:41.stokes}).
\begin{figure}[ht]
\begin{tiny}
\begin{tikzpicture}[scale=2]
\draw[<->,thick] (-2,0) -- (2,0);
\draw[<->,thick] (0,-1) -- (0,1);
\foreach \x in {1,2,3,4,5,6,7,8,9,10,11,12,13,14,15,16,17,18,19,20,21,22,23,24,25}
\draw[red,->] (0,0)--(1/\x,1);
\foreach \x in {-3,-2,-1,0,1,2,3}
\draw[red,->,thick] (0,0)--(0+0.01*\x,1);
\foreach \x in {1,2,3,4,5,6,7,8,9,10,11,12,13,14,15,16,17,18,19,20,21,22,23,24,25}
\draw[red,->] (0,0)--(-1/\x,1);
\foreach \x in {1,2,3,4,5,6,7,8,9,10,11,12,13,14,15,16,17,18,19,20,21,22,23,24,25}
\draw[red,->] (0,0)--(1/\x,-1);
\foreach \x in {-3,-2,-1,0,1,2,3}
\draw[red,->,thick] (0,0)--(0+0.01*\x,-1);
\foreach \x in {1,2,3,4,5,6,7,8,9,10,11,12,13,14,15,16,17,18,19,20,21,22,23,24,25}
\draw[red,->] (0,0)--(-1/\x,-1);
\draw[red,->] (0,0)--(2,0);
\draw[red,->] (0,0)--(-2,0);
\draw(1,-0.4) node {$\mathrm{I}$};
\draw(1,0.4) node {$\mathrm{II}$};
\draw(0.6,0.83) node {$\mathrm{III}$};
\draw(-1,0.4) node {$\mathrm{X}$};
\draw[red] (2,0) node[right] {$\vartheta=\pi$};
\draw[red] (1,1) node[right] {$\vartheta=1.6733$};
\draw[red] (-1,1) node[left] {$\vartheta=1.4683$};
\draw[red] (-2,0) node[left] {$\vartheta=0$};
\end{tikzpicture}
\end{tiny}
\caption{This figure illustrates the Stokes rays in $\tau$-plane with $\vartheta=\arg(-1/\tau)$ forming a peacock pattern. Note that this is stretched in the direction of the $x$-axis and the Stokes rays are all even closer to the vertical line.}\label{fig:41.stokes}
\end{figure}
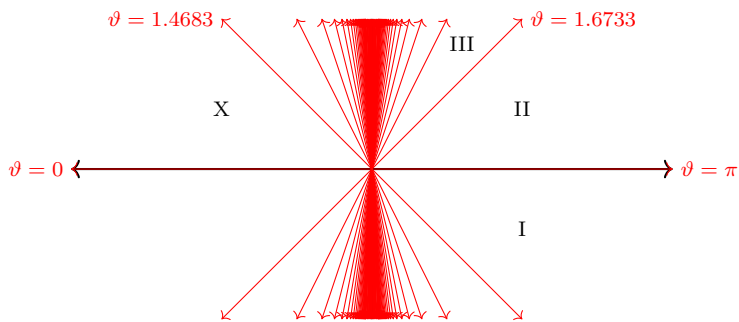


To perform the computation, we plot the set $\calC_{p_0,\vartheta}$ and keep track of the branching of the function $V$. In the plots, this will be noted at the end point with $(m,n)$ keeping track of the sheet of $\Sigma$ by $m\in\BZ$ and analytic continuation of the function $V$ with values $n\in\BZ$ so that for $\Im(z)>0$ we have $V(z)=B\tfrac{\Li_{2}(\e(z))}{(2\pi i)^2}+\tfrac{B}{24}+\tfrac{A}{2}z(z+1)+mz+n$ while for $\Im(z)<0$ we have $V(z)=-B\tfrac{\Li_{2}(\e(-z))}{(2\pi i)^2}+\tfrac{B}{12}-\tfrac{B}{2}(z-\tfrac{1}{2})^2+\tfrac{A}{2}z(z+1)+mz+n$. This allows us to read off the state integrals given by the algorithm.
\subsubsection{The critical point $p_2=(-1/6,0)$}
The critical point $p_2$ has no Stokes rays for $\vartheta\in(\tfrac{\pi}{2},\tfrac{3\pi}{2})$. Therefore, computing the Borel--Laplace resummation for a fixed $\vartheta\in(\tfrac{\pi}{2},\tfrac{3\pi}{2})$ will be enough to determine it for the whole region. We plot the set $\calC_{p_2,\vartheta}$ in Figure~\ref{fig:thimb.41.ak}.
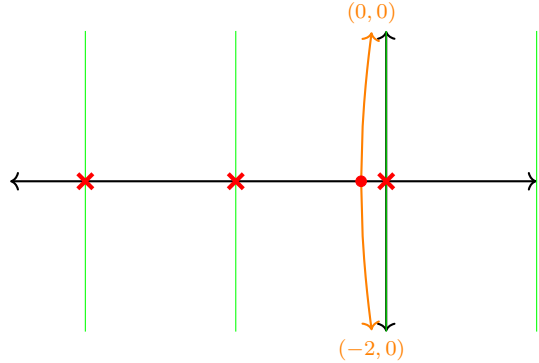
\begin{figure}[ht]
\begin{tiny}
\begin{tikzpicture}[scale=2]
\draw[<->,thick] (-2.5,0) -- (1,0);
\draw[<->,thick] (0,-1) -- (0,1);
\foreach \x in {-2,-1,0,1}
\draw[green] (\x,-1)--(\x,1);
\draw[ultra thick,red] (-0.05,-0.05)--(0.05,0.05);
\draw[ultra thick,red] (0.05,-0.05)--(-0.05,0.05);
\draw[ultra thick,red,xshift=-1cm] (-0.05,-0.05)--(0.05,0.05);
\draw[ultra thick,red,xshift=-1cm] (0.05,-0.05)--(-0.05,0.05);
\draw[ultra thick,red,xshift=-2cm] (-0.05,-0.05)--(0.05,0.05);
\draw[ultra thick,red,xshift=-2cm] (0.05,-0.05)--(-0.05,0.05);
\draw[orange,thick,->]
(-0.16667, 0)--(-0.15667, 0.36539)--(-0.14667, 0.51902)--(-0.13667, 0.63842)--(-0.12667, 0.74034)--(-0.11667, 0.83122)--(-0.10667, 0.91436)--(-0.096667, 0.99171) node[above] {$(0,0)$};
\draw[orange,thick,->]
(-0.16667,0)--(-0.15667, -0.36539)--(-0.14667, -0.51902)--(-0.13667, -0.63842)--(-0.12667, -0.74034)--(-0.11667, -0.83122)--(-0.10667, -0.91436)--(-0.096667, -0.99171) node[below] {$(-2,0)$};
\filldraw[red] (-1/6,0) circle (1pt);
\end{tikzpicture}
\end{tiny}
\caption{This figure depicts in \textcolor{orange}{orange} the set $\calC_{(-1/6,0),\vartheta}$ for $\vartheta=\arg(-1/\tau)=3.1416$ and $(A,B)=(1,2)$.}\label{fig:thimb.41.ak}
\end{figure}
We can see that this steepest descent contour is equivalent to the state integral contour $\calJ_{0,0}$. Therefore, the algorithm gives
\be\label{eq:Iarb.41}
  \calI_{p_2,V_2,\mathrm{Y}}
  \=
  \calI_{0,0}\,,
\ee
where for example $\mathrm{Y}=\mathrm{I},\mathrm{II},\mathrm{III}$.
This is the original Andersen--Kashaev invariant for $4_1$~\cite[Eq.~(38)]{AK:I} and~\cite[Sec.~1.3]{GK:qser}. Therefore, our results gives a refined version of their volume conjecture. In particular, not only does the Andersen--Kashaev invariant have exponential decay determined by the volume of the figure eight knot but it is in fact the resummation of its own all order asymptotics.

\subsubsection{State integrals decomposition in $\mathrm{I}$}
Let $\vartheta\in\mathrm{I}$. We plot the set $\calC_{p_1,\vartheta}$ in Figure~\ref{fig:thimb.41.I}.
\begin{figure}[ht]
\begin{tiny}
\begin{tikzpicture}[scale=2]
\draw[<->,thick] (-2.5,0) -- (1,0);
\draw[<->,thick] (0,-1) -- (0,1);
\foreach \x in {1}
\draw[green] (\x-0.80902/0.58779/4,0.58779*4/0.58779/4)--(1,-0.58779*4/0.80902+\x*0.58779*4/0.80902);
\foreach \x in {-2,-1,0}
\draw[green] (\x+1*0.80902/0.58779/4,-1)--(\x-0.80902/0.58779/4,0.58779*4/0.58779/4);
\draw[ultra thick,red] (-0.05,-0.05)--(0.05,0.05);
\draw[ultra thick,red] (0.05,-0.05)--(-0.05,0.05);
\draw[ultra thick,red,xshift=-1cm] (-0.05,-0.05)--(0.05,0.05);
\draw[ultra thick,red,xshift=-1cm] (0.05,-0.05)--(-0.05,0.05);
\draw[ultra thick,red,xshift=-2cm] (-0.05,-0.05)--(0.05,0.05);
\draw[ultra thick,red,xshift=-2cm] (0.05,-0.05)--(-0.05,0.05);
\draw[orange,thick,<->]
(-0.52333, 0.97696) node[above] {$(0,0)$}--(-0.53333, 0.92828)--(-0.54333, 0.88157)--(-0.55333, 0.83668)--(-0.56333, 0.79349)--(-0.57333, 0.75187)--(-0.58333, 0.71173)--(-0.59333, 0.67294)--(-0.60333, 0.63542)--(-0.61333, 0.59908)--(-0.62333, 0.56385)--(-0.63333, 0.52965)--(-0.64333, 0.49641)--(-0.65333, 0.46409)--(-0.66333, 0.43262)--(-0.67333, 0.40197)--(-0.68333, 0.37208)--(-0.69333, 0.34293)--(-0.70333, 0.31448)--(-0.71333, 0.28669)--(-0.72333, 0.25955)--(-0.73333, 0.23304)--(-0.74333, 0.20713)--(-0.75333, 0.18182)--(-0.76333, 0.15708)--(-0.77333, 0.13292)--(-0.78333, 0.10933)--(-0.79333, 0.086308)--(-0.80333, 0.063855)--(-0.81333, 0.041978)--(-0.82333, 0.020688)--(-0.83333, 0)--(-0.84333, -0.020067)--(-0.85333, -0.039487)--(-0.86333, -0.058230)--(-0.87333, -0.076258)--(-0.88333, -0.093524)--(-0.89333, -0.10997)--(-0.90333, -0.12554)--(-0.91333, -0.14014)--(-0.92333, -0.15369)--(-0.93333, -0.16607)--(-0.94333, -0.17715)--(-0.95333, -0.18679)--(-0.96333, -0.19482)--(-0.97333, -0.20108)--(-0.98333, -0.20537)--(-0.99333, -0.20756)--(-1.0033, -0.20750)--(-1.0133, -0.20513)--(-1.0233, -0.20043)--(-1.0333, -0.19344)--(-1.0433, -0.18423)--(-1.0533, -0.17291)--(-1.0633, -0.15958)--(-1.0733, -0.14436)--(-1.0833, -0.12735)--(-1.0933, -0.10866)--(-1.1033, -0.088382)--(-1.1133, -0.066606)--(-1.1233, -0.043408)--(-1.1333, -0.018860)--(-1.1433, 0.0069742)--(-1.1533, 0.034036)--(-1.1633, 0.062273)--(-1.1733, 0.091638)--(-1.1833, 0.12209)--(-1.1933, 0.15358)--(-1.2033, 0.18609)--(-1.2133, 0.21958)--(-1.2233, 0.25402)--(-1.2333, 0.28939)--(-1.2433, 0.32567)--(-1.2533, 0.36284)--(-1.2633, 0.40089)--(-1.2733, 0.43980)--(-1.2833, 0.47957)--(-1.2933, 0.52019)--(-1.3033, 0.56166)--(-1.3133, 0.60399)--(-1.3233, 0.64717)--(-1.3333, 0.69121)--(-1.3433, 0.73613)--(-1.3533, 0.78195)--(-1.3633, 0.82868)--(-1.3733, 0.87636)--(-1.3833, 0.92501)--(-1.3933, 0.97467) node[above] {$(2,2)$};
\filldraw[red] (-5/6,0) circle (1pt);
\filldraw[magenta] (-1+1*0.80902*0.074,-0.58779*4*0.074) circle (1pt);
\draw[magenta,thick,->] (-1+1*0.80902*0.074,-0.58779*4*0.074)--(-0.93013, -0.22358)--(-0.92013, -0.26718)--(-0.91013, -0.30635)--(-0.90013, -0.34206)--(-0.89013, -0.37495)--(-0.88013, -0.40554)--(-0.87013, -0.43418)--(-0.86013, -0.46116)--(-0.85013, -0.48670)--(-0.84013, -0.51099)--(-0.83013, -0.53417)--(-0.82013, -0.55636)--(-0.81013, -0.57768)--(-0.80013, -0.59822)--(-0.79013, -0.61804)--(-0.78013, -0.63722)--(-0.77013, -0.65582)--(-0.76013, -0.67388)--(-0.75013, -0.69146)--(-0.74013, -0.70860)--(-0.73013, -0.72533)--(-0.72013, -0.74170)--(-0.71013, -0.75772)--(-0.70013, -0.77344)--(-0.69013, -0.78887)--(-0.68013, -0.80404)--(-0.67013, -0.81897)--(-0.66013, -0.83369)--(-0.65013, -0.84821)--(-0.64013, -0.86255)--(-0.63013, -0.87673)--(-0.62013, -0.89076)--(-0.61013, -0.90466)--(-0.60013, -0.91845)--(-0.59013, -0.93213)--(-0.58013, -0.94572)--(-0.57013, -0.95923)--(-0.56013, -0.97267)--(-0.55013, -0.98605)--(-0.54013, -0.99939) node[below] {$(-3,-1)$};
\end{tikzpicture}
\end{tiny}
\caption{This figure depicts in \textcolor{orange}{orange} the set $\calC_{(-5/6,0),\vartheta}$ for $\vartheta=\arg(-1/\tau)=4.0841$, and $(A,B)=(1,2)$. The \textcolor{green}{green} lines are parallel to the lines $i/\tau$. The \textcolor{magenta}{magenta} curve represents the tail at the intersection of the set $\calC_{(-5/6,0),\vartheta}$ with the lines parallel to $i/\tau$.}
\label{fig:thimb.41.I}
\end{figure}
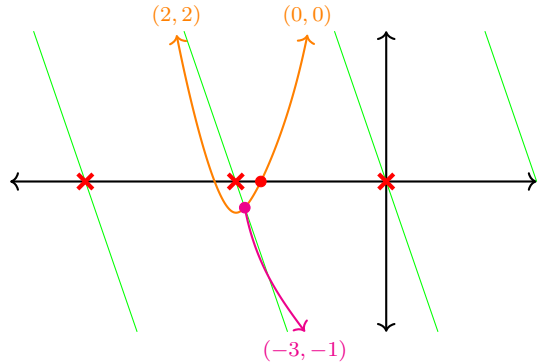
We can decompose the thimble in terms on state integrals contours. We have contributions from $\calI_{0,0}$ from the intersection of the orange contour with the interval $(-1,0)$, which has $(n,m,\ell)=(0,0,0)$ and $-q^2\calI_{2,-1}$ from the intersection of the orange contour with the interval $(-2,-1)$, which has $(n,m,\ell)=(2,2,-1)$. We can see that the tail leads to no contribution as it does not cross the reals and heads to $\infty$. The algorithm therefore gives
\be\label{eq:I1.41}
\calI_{p_1,V_1,\mathrm{I}}
\=
\calI_{0,0}-q^2\calI_{2,-1}
\=
\calI_{0,0}+\calI_{1,0}\,,
\ee
where the second equality follows from Equation~\eqref{eq:state_integral_1}.
\subsubsection{The saddle connection between $\mathrm{I}$ and $\mathrm{II}$}
The saddle connection is depicted in Figure~\ref{fig:thimb.41.saddleI.II}.
\begin{figure}[ht]
\begin{tiny}
\begin{tikzpicture}[scale=2]
\draw[<->,thick] (-2.5,0) -- (1,0);
\draw[<->,thick] (0,-1) -- (0,1);
\foreach \x in {-2,-1,0,1}
\draw[green] (\x,-1)--(\x,1);
\draw[ultra thick,red] (-0.05,-0.05)--(0.05,0.05);
\draw[ultra thick,red] (0.05,-0.05)--(-0.05,0.05);
\draw[ultra thick,red,xshift=-1cm] (-0.05,-0.05)--(0.05,0.05);
\draw[ultra thick,red,xshift=-1cm] (0.05,-0.05)--(-0.05,0.05);
\draw[ultra thick,red,xshift=-2cm] (-0.05,-0.05)--(0.05,0.05);
\draw[ultra thick,red,xshift=-2cm] (0.05,-0.05)--(-0.05,0.05);
\draw[orange,thick,->]
(-0.16667, 0)--(-0.15667, 0.36539)--(-0.14667, 0.51902)--(-0.13667, 0.63842)--(-0.12667, 0.74034)--(-0.11667, 0.83122)--(-0.10667, 0.91436)--(-0.096667, 0.99171) node[above] {$(0,0)$};
\draw[orange,thick,->]
(-0.16667,0)--(-0.15667, -0.36539)--(-0.14667, -0.51902)--(-0.13667, -0.63842)--(-0.12667, -0.74034)--(-0.11667, -0.83122)--(-0.10667, -0.91436)--(-0.096667, -0.99171) node[below] {$(-2,0)$};
\draw[orange,thick]
(-0.16667,0)--(-1, 0);
\draw[ultra thick,orange,,xshift=-1cm] (-0.05,0)--(0.05,0);
\draw[ultra thick,orange,xshift=-1cm] (0,-0.05)--(0,0.05);
\filldraw[red] (-5/6,0) circle (1pt);
\filldraw[red] (-1/6,0) circle (1pt);
\end{tikzpicture}
\end{tiny}
\caption{This figure depicts in \textcolor{orange}{orange} the set $\calC_{(-5/6,0),\vartheta}$ for $\vartheta=\arg(-1/\tau)=3.1416$ and $(A,B)=(1,2)$. The \textcolor{green}{green} lines are parallel to the lines $i/\tau$.}\label{fig:thimb.41.saddleI.II}
\end{figure}
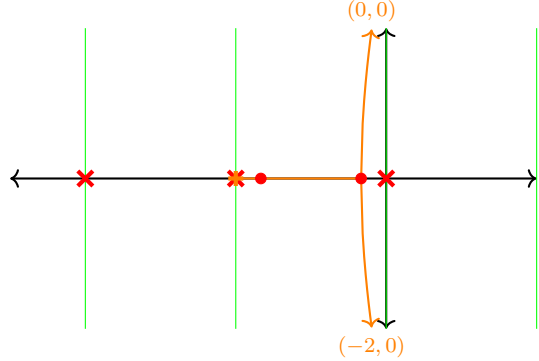

\subsubsection{State integrals decomposition in $\mathrm{II}$}
Let $\vartheta\in\mathrm{II}$ then $\calC_{p_1,\vartheta}$ is depicted in Figure~\ref{fig:thimb.41.II}.
\begin{figure}[ht]
\begin{tiny}
\begin{tikzpicture}[scale=2]
\draw[<->,thick] (-2.5,0) -- (1,0);
\draw[<->,thick] (0,-1) -- (0,1);
\foreach \x in {}
\draw[green] (-2.5,-0.58779*4*2.5/0.80902-0.58779*4*\x/0.80902)--(\x+0.80902/0.58779/4,0.58779*4/0.58779/4);
\foreach \x in {}
\draw[green] (-2.5,-0.58779*4*2.5/0.80902-0.58779*4*\x/0.80902)--(1,0.58779*4/0.80902-\x*0.58779*4);
\foreach \x in {1}
\draw[green] (\x-0.80902/0.58779/4,-1)--(1,0.58779*4/0.80902-\x*0.58779*4/0.80902);
\foreach \x in {-2,-1,0}
\draw[green] (\x-0.80902/0.58779/4,-1)--(\x+0.80902/0.58779/4,0.58779*4/0.58779/4);
\draw[ultra thick,red] (-0.05,-0.05)--(0.05,0.05);
\draw[ultra thick,red] (0.05,-0.05)--(-0.05,0.05);
\draw[ultra thick,red,xshift=-1cm] (-0.05,-0.05)--(0.05,0.05);
\draw[ultra thick,red,xshift=-1cm] (0.05,-0.05)--(-0.05,0.05);
\draw[ultra thick,red,xshift=-2cm] (-0.05,-0.05)--(0.05,0.05);
\draw[ultra thick,red,xshift=-2cm] (0.05,-0.05)--(-0.05,0.05);
\draw[orange,thick,<->]
(-0.52333, -0.97696) node[below] {$(-2,0)$}--(-0.53333, -0.92828)--(-0.54333, -0.88157)--(-0.55333, -0.83668)--(-0.56333, -0.79349)--(-0.57333, -0.75187)--(-0.58333, -0.71173)--(-0.59333, -0.67294)--(-0.60333, -0.63542)--(-0.61333, -0.59908)--(-0.62333, -0.56385)--(-0.63333, -0.52965)--(-0.64333, -0.49641)--(-0.65333, -0.46409)--(-0.66333, -0.43262)--(-0.67333, -0.40197)--(-0.68333, -0.37208)--(-0.69333, -0.34293)--(-0.70333, -0.31448)--(-0.71333, -0.28669)--(-0.72333, -0.25955)--(-0.73333, -0.23304)--(-0.74333, -0.20713)--(-0.75333, -0.18182)--(-0.76333, -0.15708)--(-0.77333, -0.13292)--(-0.78333, -0.10933)--(-0.79333, -0.086308)--(-0.80333, -0.063855)--(-0.81333, -0.041978)--(-0.82333, -0.020688)--(-0.83333, 0)--(-0.84333, 0.020067)--(-0.85333, 0.039487)--(-0.86333, 0.058230)--(-0.87333, 0.076258)--(-0.88333, 0.093524)--(-0.89333, 0.10997)--(-0.90333, 0.12554)--(-0.91333, 0.14014)--(-0.92333, 0.15369)--(-0.93333, 0.16607)--(-0.94333, 0.17715)--(-0.95333, 0.18679)--(-0.96333, 0.19482)--(-0.97333, 0.20108)--(-0.98333, 0.20537)--(-0.99333, 0.20756)--(-1.0033, 0.20750)--(-1.0133, 0.20513)--(-1.0233, 0.20043)--(-1.0333, 0.19344)--(-1.0433, 0.18423)--(-1.0533, 0.17291)--(-1.0633, 0.15958)--(-1.0733, 0.14436)--(-1.0833, 0.12735)--(-1.0933, 0.10866)--(-1.1033, 0.088382)--(-1.1133, 0.066606)--(-1.1233, 0.043408)--(-1.1333, 0.018860)--(-1.1433, -0.0069742)--(-1.1533, -0.034036)--(-1.1633, -0.062273)--(-1.1733, -0.091638)--(-1.1833, -0.12209)--(-1.1933, -0.15358)--(-1.2033, -0.18609)--(-1.2133, -0.21958)--(-1.2233, -0.25402)--(-1.2333, -0.28939)--(-1.2433, -0.32567)--(-1.2533, -0.36284)--(-1.2633, -0.40089)--(-1.2733, -0.43980)--(-1.2833, -0.47957)--(-1.2933, -0.52019)--(-1.3033, -0.56166)--(-1.3133, -0.60399)--(-1.3233, -0.64717)--(-1.3333, -0.69121)--(-1.3433, -0.73613)--(-1.3533, -0.78195)--(-1.3633, -0.82868)--(-1.3733, -0.87636)--(-1.3833, -0.92501)--(-1.3933, -0.97467)node[below] {$(-4,-2)$};
\filldraw[red] (-5/6,0) circle (1pt);
\filldraw[magenta](-1+1*0.80902*0.074,0.58779*4*0.074) circle (1pt);
\draw[magenta,->,thick] (-0.94013, 0.17413)--(-0.93013, 0.22356)--(-0.92013, 0.26717)--(-0.91013, 0.30635)--(-0.90013, 0.34205)--(-0.89013, 0.37495)--(-0.88013, 0.40553)--(-0.87013, 0.43417)--(-0.86013, 0.46115)--(-0.85013, 0.48669)--(-0.84013, 0.51098)--(-0.83013, 0.53416)--(-0.82013, 0.55636)--(-0.81013, 0.57768)--(-0.80013, 0.59821)--(-0.79013, 0.61803)--(-0.78013, 0.63721)--(-0.77013, 0.65581)--(-0.76013, 0.67388)--(-0.75013, 0.69146)--(-0.74013, 0.70860)--(-0.73013, 0.72533)--(-0.72013, 0.74169)--(-0.71013, 0.75772)--(-0.70013, 0.77343)--(-0.69013, 0.78887)--(-0.68013, 0.80404)--(-0.67013, 0.81897)--(-0.66013, 0.83369)--(-0.65013, 0.84821)--(-0.64013, 0.86255)--(-0.63013, 0.87673)--(-0.62013, 0.89076)--(-0.61013, 0.90466)--(-0.60013, 0.91844)--(-0.59013, 0.93212)--(-0.58013, 0.94571)--(-0.57013, 0.95922)--(-0.56013, 0.97266)--(-0.55013, 0.98605)--(-0.54013, 0.99939) node[above] {$(1,1)$};
\end{tikzpicture}
\end{tiny}
\caption{This figure depicts in \textcolor{orange}{orange} the set $\calC_{(-5/6,0),\vartheta}$ for $\vartheta=\arg(-1/\tau)=2.1991$, and $(A,B)=(1,2)$. The \textcolor{green}{green} lines are parallel to the lines $i/\tau$. The \textcolor{magenta}{magenta} curve represents the tail at the intersection of the set $\calC_{(-5/6,0),\vartheta}$ with the lines parallel to $1/\tau$.}
\label{fig:thimb.41.II}
\end{figure}
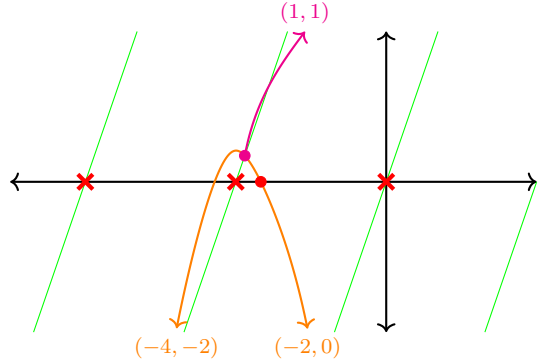
We can decompose the thimble in terms on state integrals contours. We have contributions from $-\calI_{0,0}$ from the intersection of the orange contour with the interval $(-1,0)$, which has $(n,m,\ell)=(0,0,0)$ and $\calI_{0,-1}$ from the intersection of the orange contour with the interval $(-2,-1)$, which has $(n,m,\ell)=(0,0,-1)$. We can see that the tail leads to no contribution as it does not cross the reals and heads to $\infty$. The algorithm therefore gives
\be\label{eq:I2.41}
\calI_{p_1,V_1,\mathrm{II}}
\=
-\calI_{0,0}+\calI_{0,-1}
\=
-\calI_{0,0}-\calI_{-1,0}\,,
\ee
where the equality follows from Equation~\eqref{eq:state_integral_1}.
\subsubsection{The saddle connection between II and III}
The saddle connection is depicted in Figure~\ref{fig:thimb.41.saddle.II.III}.
\begin{figure}[ht]
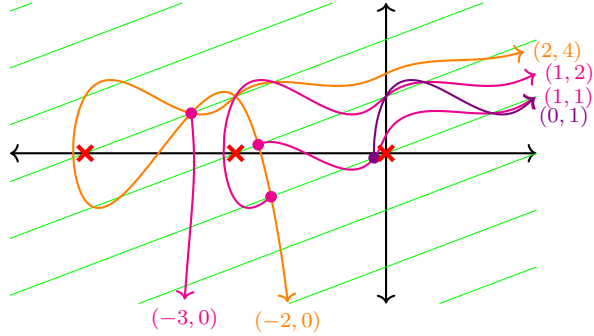

\begin{tiny}

\end{tiny}
\caption{This figure depicts in \textcolor{orange}{orange} the set $\calC_{(-5/6,0),\vartheta}$ for $\vartheta=\arg(-1/\tau)=1.6650$, and $(A,B)=(1,2)$. The \textcolor{green}{green} lines are parallel to the lines $i/\tau$. The \textcolor{magenta}{magenta} contours represent half of the additional contours that appear in the first step of the algorithm where we ignore any zero contributions. Then the \textcolor{violet}{violet} contour represents half of the contour that appears in the second step of the algorithm. The additional contours are all crossing the real axis, hence they lead to a non-zero state integral.
}\label{fig:thimb.41.III.t1}
\end{figure}
This picture gives rise to many intersections between the thimble and the green lines $\BZ+\tau^{-1}\BR$. Therefore, it could require iterations of the algorithm. This is indeed true and will involve two additional integrals at the first iteration and an additional integral at the second iteration.

\medskip

We can decompose the thimble in terms on state integrals contours. We have contributions of $-\calI_{0,0}$ from the intersection of the orange contour with the interval $(-1,0)$, which has $(n,m,\ell)=(0,0,0)$, $\calI_{0,-1}$ from the intersection of the orange contour with the interval $(-2,-1)$, which has $(n,m,\ell)=(0,0,-1)$, and $-q^4\calI_{2,-2}$ from the intersection of the orange contour with the interval $(-3,-2)$, which has $(n,m,\ell)=(4,2,-2)$.

\medskip

At the first step of the algorithm we obtain two additional cycles coming from the magenta curves as seen in Figure~\ref{fig:thimb.41.III.t1}. We have a contribution of $4q^2\calI_{1,-1}$ from the intersection of the two magenta contours in $(-1,0)$, which have $(n,m,\ell)=(2,1,-1)$. There is a final magenta curve depicted, which has additional intersections with the line $\tau^{-1}\BR$.

\medskip

At the second step of the algorithm there is an additional contribution coming from the violet contour in Figure~\ref{fig:thimb.41.III.t1}. This appears with the constant attached to the magenta contour it emanates from multiplied by its own constant. We have a contribution of $-4q\calI_{0,0}$ from the intersection of the violet contour with the interval $(-1,0)$, which has $(n,m,\ell)=(1,0,0)$. The additional state integrals added are depicted in Figure~\ref{fig:thimb.41.III.t2}.
\begin{figure}[ht]
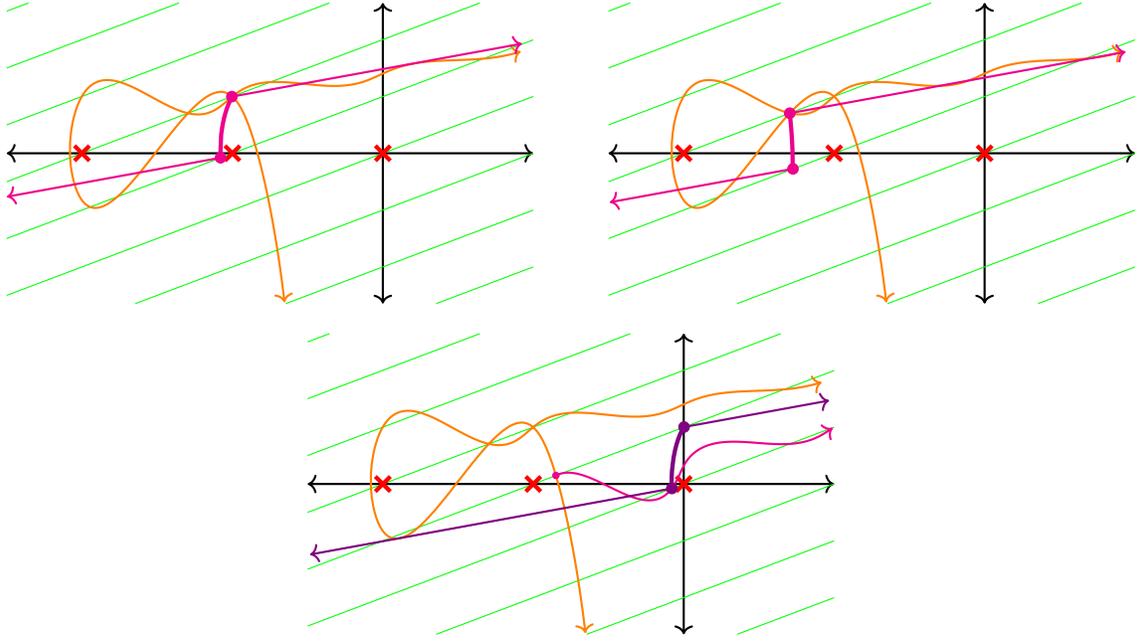


\caption{This figure depicts the contours $\gamma_{e,\vartheta}$ that we add at the first and second iteration of the algorithm to $\calI_{p_1,V_1,\mathrm{III}}$ as showed in Figure~\ref{fig:thimb.41.III.t1}.
}\label{fig:thimb.41.III.t2}
\end{figure}
\medskip

All together the algorithm produces the function
\be\label{eq:I3.41}
  \calI_{p_1,V_1,\mathrm{III}}
  \=
  -\calI_{0,0}+\calI_{0,-1}
  +4q^2\calI_{1,-1}
  -4q\calI_{0,0}
  -q^4\calI_{2,-2}
  \=
  -\calI_{0,0}-\calI_{-1,0}-9q\calI_{0,0}\,,
\ee
where the second equality follows from Equation~\eqref{eq:state_integral_1}.
\subsubsection{First few Stokes constants}
We can now describe the Stokes matrices. We see from Equation~\eqref{eq:Iarb.41}, Equation~\eqref{eq:I1.41}, Equation~\eqref{eq:I2.41}, and Equation~\eqref{eq:I3.41} that
\be
  \begin{pmatrix}
    \calI_{p_1,V_1,\mathrm{II}} & \calI_{p_2,V_2,\mathrm{II}}
  \end{pmatrix}
  \=
  \begin{pmatrix}
    \calI_{p_1,V_1,\mathrm{I}} & \calI_{p_2,V_2,\mathrm{I}}
  \end{pmatrix}
  \begin{pmatrix}
    1 & 0\\
    -3 & 1
  \end{pmatrix}\,,
\ee
and
\be
  \begin{pmatrix}
    \calI_{p_1,V_1,\mathrm{III}} & \calI_{p_2,V_2,\mathrm{III}}
  \end{pmatrix}
  \=
  \begin{pmatrix}
    \calI_{p_1,V_1,\mathrm{II}} & \calI_{p_2,V_2,\mathrm{II}}
  \end{pmatrix}
  \begin{pmatrix}
    1 & 0\\
    -9q & 1
  \end{pmatrix}\,.
\ee
These agree with the Stokes constants computed in~\cite{GGM:I}. Therefore, we see that
\be
  \mathfrak{S}_{\mathrm{I},\mathrm{II}}(q)
  \=
  \begin{pmatrix}
    1 & 0\\
    -3 & 1
  \end{pmatrix}\,,\qquad
  \mathfrak{S}_{\mathrm{II},\mathrm{III}}(q)
  \=
  \begin{pmatrix}
    1 & 0\\
    -9q & 1
  \end{pmatrix}\,.
\ee

\subsubsection{Computing all Stokes constants}

We want to compute the Stokes matrix $\mathfrak{S}_{\pi-\epsilon,0+\epsilon}$ for small $\epsilon>0$. In order to do this, we will use formulae for state integrals found by~\cite{GK:qser}. This was the method used in the numerical approach of~\cite{GGM:I}. We can express state integrals---and therefore their Borel--Laplace resummations---as bilinear combinations of $q=\e(\tau)$ and $\tq=\e(-1/\tau)$-series. Consider the collections of $q$-series
\be
\begin{aligned}
  g_m(q)
  &\=
  \sum_{k=0}^{\infty}(-1)^k\frac{q^{k(k+1)/2+mk}}{(q;q)_{k}^2}\,,\\
  G_m(q)
  &\=
  \sum_{k=0}^{\infty}(-1)^k\frac{q^{k(k+1)/2+mk}}{(q;q)_{k}^2}\Big(m-2E_1(q)+\sum_{\ell=1}\frac{1+q^{\ell}}{1-q^{\ell}}\Big)\,,
\end{aligned}
\ee
where
\be
  E_1(q)
  \=
  -\frac{1}{4}+\sum_{k=1}^{\infty}\frac{q^k}{1-q^k}
  \=-\tfrac{1}{4}+q + 2q^2 + 2q^3 + 3q^4 + 2q^5 + 4q^6 + 2q^7 + 4q^8 + 3q^9+\cdots\,.
\ee
For example, we have
\be
\begin{aligned}
  g_{0}(q)
  &\=
  1 - q - 2q^2 - 2q^3 - 2q^4 + q^6 + 5q^7 + 7q^8 + 11q^9+ 13q^{10} + 16q^{11}+\cdots\,,\\
  G_{0}(q)
  &\=
  \tfrac{1}{2}(1 - 7q - 14q^2 - 8q^3 - 2q^4 + 30q^5 + 43q^6 + 95q^7 + 109q^8 + 137q^9+\cdots)\,.
\end{aligned}
\ee
Then as shwon in~\cite[Eq.~(14)]{GK:qser}, the state integral is given by the following formula
\be\label{eq:si.qqt}
\begin{aligned}
  &i\tq^{-1/12}q^{1/12}
  \int_{\calJ_{\ell,\tau}}\Phi(z\tau,\tau)^2\,\e\Big(\frac{1}{2} z(z\tau+\tau+1)+mz\tau+nz\Big) dz\\
  &\=
  \tau^{-1}g_m(q)G_{-n}(\tq)-g_{-n}(\tq)G_m(q)\,.
\end{aligned}
\ee
We can use this to write the Borel--Laplace resummation in matrix form
\be
\begin{aligned}
  &\begin{pmatrix}
    \calI_{p_1,V_1,\mathrm{I}} & \calI_{p_2,V_2,\mathrm{I}}
  \end{pmatrix}\\
  &\=
  \begin{pmatrix}
    g_{0}(\tq) & G_{0}(\tq)
  \end{pmatrix}
  \begin{pmatrix}
    -1 & 0\\
    0 & \tau^{-1}
  \end{pmatrix}
  \begin{pmatrix}
    G_0(q)+G_{1}(q) & G_0(q)\\
    g_0(q)+g_{1}(q) & g_0(q)
  \end{pmatrix}\,.
\end{aligned}
\ee
We can similarly, write
\be
\begin{aligned}
  &\begin{pmatrix}
    \calI_{p_1,V_1,\mathrm{X}} & \calI_{p_2,V_2,\mathrm{X}}
  \end{pmatrix}\\
  &\=
  \begin{pmatrix}
    g_{0}(\tq) & G_{0}(\tq)
  \end{pmatrix}
  \begin{pmatrix}
    1 & 0\\
    0 & \tau^{-1}
  \end{pmatrix}
  \begin{pmatrix}
    -G_0(q) & G_0(q)+G_1(q)\\
    -g_0(q) & g_0(q)+g_1(q)
  \end{pmatrix}\,.
\end{aligned}
\ee
Then we find\footnote{To really prove this, one needs to make a full matrix equation by taking another choice of $n$ in Equation~\eqref{eq:si.qqt}, which has exactly the same Stokes automorphisms as discussed in the Section~\ref{sec:BL-intro}.} that the Stokes matrix going from $\mathrm{I}$ to $\mathrm{X}$ is given by
\be
\begin{aligned}
  &\mathsf{S}_+(q)\\
  &\=\begin{pmatrix}
    G_0(q)+G_{-1}(q) & G_0(q)\\
    g_0(q)+g_{-1}(q) & g_0(q)
  \end{pmatrix}^{-1}
  \begin{pmatrix}
    -1 & 0\\
    0 & 1
  \end{pmatrix}
  \begin{pmatrix}
    -G_0(q) & G_0(q)+G_1(q)\\
    -g_0(q) & g_0(q)+g_1(q)
  \end{pmatrix}\\
  &\=
  \begin{pmatrix}
  -1 + 8q + 9q^2 - 18q^3 - 46q^4 & 3 - 15q - 24q^2 + 15q^3 + 69q^4\\
  -9q - 3q^2 + 39q^3 + 69q^4 & -1 + 19q + 17q^2 - 53q^3 - 126q^4
  \end{pmatrix}+\cdots\\
  &\=
  \begin{pmatrix}
  1 & 0\\
  -3 - 9q - 75q^2 - 642q^3 - 5580q^4 & 1
  \end{pmatrix}\\
  &\quad\times\begin{pmatrix}
  1 - 8q - 9q^2 + 18q^3 + 46q^4 & 0\\
  0 & 1 + 8q + 73q^2 + 638q^3 + 5571q^4
  \end{pmatrix}\\
  &\quad\times\begin{pmatrix}
  1 & 9q + 75q^2 + 642q^3 + 5580q^4\\
  0 & 1
  \end{pmatrix}+\cdots\,.
\end{aligned}
\ee
One can see exact agreement with the previous computations of the first two Stokes constants $-3$ and $-9$. To compute the next constant $-75$ using the methods we have described would presumably involve many iterations of the algorithm and would be best done via computer implementation, which we have not attempted.
\subsection{\texorpdfstring{The case of $(A,B)=(4,1)$}{The case of (A,B)=(4,1)}}\label{sec:A4B1}
We will compute the Borel--Laplace resummation of $\Phi_\Xi$ with the algorithm described in Section~\ref{sec:alg}. We will see that this exactly matches the numerical predictions of~\cite[Example~39]{Wh:thesis}. The critical points for the function $V$ and their critical values are listed below 
\begin{center}
\begin{longtblr}{colspec={c|c|[3pt] c|c}}
$p_1$ & $(-0.5 -0.0317451 i,0)$ & $V_1$ & $-0.435753$\\ 
\hline
$p_2$ & $(0.0512932 i,-2)$ & $V_2$ & $0.0127913$\\
\hline
$p_3$ & $(-0.212516-0.009774 i, -1)$ & $V_3$ & $-0.0801858 +0.0248584 i$\\
\hline
$p_4$ & $(0.212516-0.009774 i, -3)$ & $V_4$ & $-0.0801858 -0.0248584 i$
\end{longtblr}
\end{center}
Taking the relative difference between volumes, the first non trivial\footnote{By non trivial we mean with non zero Stokes constants. Indeed the line at $\vartheta=1.27018$ is not an effective Stokes line.} Stokes line is at $e^{i\vartheta}\BR_{>0}$ with $\vartheta=1.30955$ and it separates the regions $\mathrm{I}_\star$ and $\mathrm{II}$. The second one is at $\vartheta=1.501$ and separates the regions $\mathrm{II}$ and $\mathrm{III}$. The third one is at $\vartheta=1.52112$ and separates the regions $\mathrm{III}$ and $\mathrm{IV}$. As expected, they form the peacock pattern as illustrated in Figure~\ref{fig:A4B1.stokes}.
\begin{figure}[ht]
\begin{tiny}
\begin{tikzpicture}[scale=2]
\draw[<->,thick] (-2,0) -- (2,0);
\draw[<->,thick] (0,-1) -- (0,1);
\foreach \x in {1,2,3,4,5,6,7,8,9,10,11,12,13,14,15,16,17,18,19,20}
\draw[red,->] (0,0)--(1/\x/5,1);
\foreach \x in {1,2,3,4,5,6,7,8,9,10,11,12,13,14,15,16,17,18,19,20}
\draw[red,->] (0,0)--(1/\x/0.9,1);
\foreach \x in {1,2,3,4,5,6,7,8,9,10,11,12,13,14,15,16,17,18,19,20}
\draw[red,->] (0,0)--(1/\x/3.7,1);
\foreach \x in {1,2,3,4,5,6,7,8,9,10,11,12,13,14,15,16,17,18,19,20}
\draw[red,->] (0,0)--(1/\x/6.4,1);
\foreach \x in {-3,-2,-1,0,1,2,3}
\draw[red,->,thick] (0,0)--(0-0.01*\x,1);
\foreach \x in {1,2,3,4,5,6,7,8,9,10,11,12,13,14,15,16,17,18,19,20}
\draw[red,->] (0,0)--(-1/\x/6.4,1);
\foreach \x in {1,2,3,4,5,6,7,8,9,10,11,12,13,14,15,16,17,18,19,20}
\draw[red,->] (0,0)--(-1/\x/5,1);
\foreach \x in {1,2,3,4,5,6,7,8,9,10,11,12,13,14,15,16,17,18,19,20}
\draw[red,->] (0,0)--(-1/\x/3.7,1);
\foreach \x in {1,2,3,4,5,6,7,8,9,10,11,12,13,14,15,16,17,18,19,20}
\draw[red,->] (0,0)--(-1/\x/0.9,1);
\foreach \x in {1,2,3,4,5,6,7,8,9,10,11,12,13,14,15,16,17,18,19,20}
\draw[red,->] (0,0)--(1/\x/6.4,-1);
\foreach \x in {1,2,3,4,5,6,7,8,9,10,11,12,13,14,15,16,17,18,19,20}
\draw[red,->] (0,0)--(1/\x/5,-1);
\foreach \x in {1,2,3,4,5,6,7,8,9,10,11,12,13,14,15,16,17,18,19,20}
\draw[red,->] (0,0)--(1/\x/3.7,-1);
\foreach \x in {1,2,3,4,5,6,7,8,9,10,11,12,13,14,15,16,17,18,19,20}
\draw[red,->] (0,0)--(1/\x/0.9,-1);
\foreach \x in {1,2,3,4,5,6,7,8,9,10,11,12,13,14,15,16,17,18,19,20}
\draw[red,->] (0,0)--(-1/\x/6.4,-1);
\foreach \x in {1,2,3,4,5,6,7,8,9,10,11,12,13,14,15,16,17,18,19,20}
\draw[red,->] (0,0)--(-1/\x/5,-1);
\foreach \x in {1,2,3,4,5,6,7,8,9,10,11,12,13,14,15,16,17,18,19,20}
\draw[red,->] (0,0)--(-1/\x/3.7,-1);
\foreach \x in {1,2,3,4,5,6,7,8,9,10,11,12,13,14,15,16,17,18,19,20}
\draw[red,->] (0,0)--(-1/\x/0.9,-1);
\foreach \x in {-3,-2,-1,0,1,2,3}
\draw[red,->,thick] (0,0)--(0-0.01*\x,-1);
\draw[red,dashed,->] (0,0)--(1.5,1.5*0.6); 
\draw[red,<->] (-2,0)--(2,0);
\draw[font=\tiny](1.4,0.4) node {$\mathrm{I}$};
\draw[font=\tiny](1,0.73) node {$\mathrm{I_\star}$};
\draw[font=\tiny](0.7,0.9) node {$\mathrm{II}$};
\draw[font=\fontsize{3pt}{4pt}\selectfont](0.415,0.9) node {$\mathrm{III}$};
\draw[red] (2,0) node[right] {$\vartheta=\pi$};
\draw[red] (-2,0) node[left] {$\vartheta=0$};
\end{tikzpicture}
\end{tiny}
\caption{This figure depicts the Stokes rays in $\tau$-plane with $\vartheta=\arg(-1/\tau)$ forming a peacock pattern. Note that this is stretched in the direction of the $x$-axis and the Stokes rays are all even closer to the vertical line. In addition, it is symmetric with respect to the imaginary axis: on the right, the first solid Stokes rays contain the singularties at the points $V_2-V_4$, $V_3-V_1$, $V_3-V_4+1$, $V_1-V_4+1$, which correspond to $\vartheta=1.83204$, $\vartheta=1.64069$, $\vartheta=1.62047$ and $\vartheta=1.60935$, respectively. The dashed Stokes line corresponds to $-V_4$, with $\vartheta=1.87141$, and crossing that ray there will be no Stokes phenomenon as shown in Section~\ref{sec:stokes_A4B1}.}\label{fig:A4B1.stokes}
\end{figure}
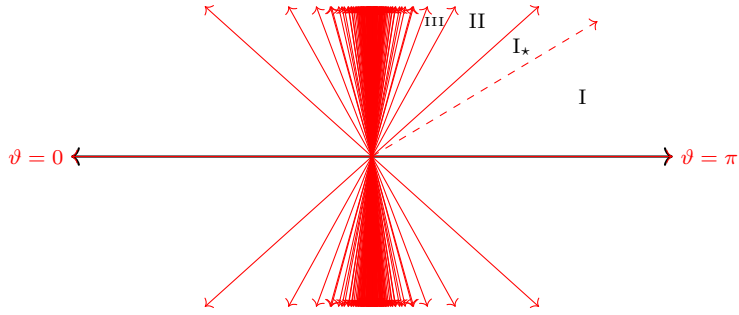

\medskip

To perform the computation, we plot the set $\calC_{p_0,\vartheta}$ and keep track of the branching of the function $V$. We adopt the same conventions as in Section~\ref{sec:4_1}. 

\subsubsection{State integrals decomposition in $\mathrm{I}$}
Let $\vartheta\in\mathrm{I}$. We plot the curves $\calC_{p_j,\vartheta}$ for $j=1,2,3,4$ in Figure~\ref{fig:thimb.A4B1.I}.
\begin{figure}[ht]
\center
\begin{tiny}
\begin{tikzpicture}[scale=2]
\draw[<->,thick] (-2.5,0) -- (1.5,0);
\draw[<->,thick] (0,-1) -- (0,1);
\foreach \x in {-2,-1,0,1}
\draw[green] (-0.564642/0.825336/8+\x,-1)--(0.564642/0.825336/8+\x,1);
\draw[thick,red] (-0.05,-0.05)--(0.05,0.05);
\draw[ thick,red] (0.05,-0.05)--(-0.05,0.05);
\draw[thick,red,xshift=-1cm] (-0.05,-0.05)--(0.05,0.05);
\draw[thick,red,xshift=-1cm] (0.05,-0.05)--(-0.05,0.05);
\draw[thick,red,xshift=-2cm] (-0.05,-0.05)--(0.05,0.05);
\draw[thick,red,xshift=-2cm] (0.05,-0.05)--(-0.05,0.05);
\draw[thick,red,xshift=1cm] (-0.05,-0.05)--(0.05,0.05);
\draw[thick,red,xshift=1cm] (0.05,-0.05)--(-0.05,0.05);
\draw[orange,thick,->] (-0.5,- 0.0317451*8)--(-0.49000, -0.21168)--(-0.48000, -0.16925)--(-0.47000, -0.12667)--(-0.46000, -0.083928)--(-0.45000, -0.041031)--(-0.44000, 0.0020231)--(-0.43000, 0.045235)--(-0.42000, 0.088605)--(-0.41000, 0.13213)--(-0.40000, 0.17582)--(-0.39000, 0.21967)--(-0.38000, 0.26369)--(-0.37000, 0.30786)--(-0.36000, 0.35219)--(-0.35000, 0.39669)--(-0.34000, 0.44135)--(-0.33000, 0.48618)--(-0.32000, 0.53116)--(-0.31000, 0.57631)--(-0.30000, 0.62162)--(-0.29000, 0.66709)--(-0.28000, 0.71272)--(-0.27000, 0.75850)--(-0.26000, 0.80443)--(-0.25000, 0.85051)--(-0.24000, 0.89673)--(-0.23000, 0.94309)--(-0.22000, 0.98957) node[left] {$(0,0)$};
\draw[orange,thick,->] (-0.5,- 0.0317451*8)--(-0.5190233114, -0.3339608000)--(-0.5381794073, -0.4139608000)--(-0.5574701920, -0.4939608000)--(-0.5768977462, -0.5739608000)--(-0.5964642113, -0.6539608000)--(-0.6161717141, -0.7339608000)--(-0.6360222521, -0.8139608000)--(-0.6560175276, -0.8939608000)--(-0.6761587146, -0.9739608000)--(-0.6964461414, -1.053960800) node[left] {$(-1,0)$};
\filldraw[red] (-0.5,- 0.0317451*8) circle (0.5pt);
\end{tikzpicture}
\vspace{0.5mm}
\begin{tikzpicture}[scale=2]
\draw[<->,thick] (-2.5,0) -- (1.5,0);
\draw[<->,thick] (0,-1) -- (0,1);
\foreach \x in {-2,-1,0,1}
\draw[green] (-0.564642/0.825336/8+\x,-1)--(0.564642/0.825336/8+\x,1);
\draw[thick,red] (-0.05,-0.05)--(0.05,0.05);
\draw[ thick,red] (0.05,-0.05)--(-0.05,0.05);
\draw[thick,red,xshift=-1cm] (-0.05,-0.05)--(0.05,0.05);
\draw[thick,red,xshift=-1cm] (0.05,-0.05)--(-0.05,0.05);
\draw[thick,red,xshift=-2cm] (-0.05,-0.05)--(0.05,0.05);
\draw[thick,red,xshift=-2cm] (0.05,-0.05)--(-0.05,0.05);
\draw[thick,red,xshift=1cm] (-0.05,-0.05)--(0.05,0.05);
\draw[thick,red,xshift=1cm] (0.05,-0.05)--(-0.05,0.05);
\draw[orange,thick,->] (0,0.0512932*8)--(0.020183, 0.49035)--(0.042316, 0.57035)--(0.065578, 0.65035)--(0.089337, 0.73035)--(0.11321, 0.81035)--(0.13699, 0.89035)--(0.16058, 0.97035) node[right] {$(-2,0)$};
\draw[orange,thick,->] (0,0.0512932*8)--(-0.017626, 0.33035)--(-0.032785, 0.25035)--(-0.046140, 0.17035)--(-0.058391, 0.090346)--(-0.070036, 0.010346)--(-0.081387, -0.069654)--(-0.092637, -0.14965)--(-0.10391, -0.22965)--(-0.11528, -0.30965)--(-0.12681, -0.38965)--(-0.13853, -0.46965)--(-0.15045, -0.54965)--(-0.16260, -0.62965)--(-0.17498, -0.70965)--(-0.18759, -0.78965)--(-0.20043, -0.86965)--(-0.21351, -0.94965) node[left] {$(-3,0)$};
\filldraw[brown] (0,0.0512932*8) circle (0.5pt);
\end{tikzpicture}
\vspace{0.5mm}
\begin{tikzpicture}[scale=2]
\draw[<->,thick] (-2.5,0) -- (1.5,0);
\draw[<->,thick] (0,-1) -- (0,1);
\foreach \x in {-2,-1,0,1}
\draw[green] (-0.564642/0.825336/8+\x,-1)--(0.564642/0.825336/8+\x,1);
\draw[thick,red] (-0.05,-0.05)--(0.05,0.05);
\draw[ thick,red] (0.05,-0.05)--(-0.05,0.05);
\draw[thick,red,xshift=-1cm] (-0.05,-0.05)--(0.05,0.05);
\draw[thick,red,xshift=-1cm] (0.05,-0.05)--(-0.05,0.05);
\draw[thick,red,xshift=-2cm] (-0.05,-0.05)--(0.05,0.05);
\draw[thick,red,xshift=-2cm] (0.05,-0.05)--(-0.05,0.05);
\draw[thick,red,xshift=1cm] (-0.05,-0.05)--(0.05,0.05);
\draw[thick,red,xshift=1cm] (0.05,-0.05)--(-0.05,0.05);
\draw[orange,thick,->] (-0.212516,-0.00977403*8)--(-0.22252, -0.12993)--(-0.23252, -0.18126)--(-0.24252, -0.23222)--(-0.25252, -0.28280)--(-0.26252, -0.33303)--(-0.27252, -0.38291)--(-0.28252, -0.43247)--(-0.29252, -0.48171)--(-0.30252, -0.53064)--(-0.31252, -0.57928)--(-0.32252, -0.62764)--(-0.33252, -0.67573)--(-0.34252, -0.72355)--(-0.35252, -0.77112)--(-0.36252, -0.81845)--(-0.37252, -0.86554)--(-0.38252, -0.91240)--(-0.39252, -0.95904) node[left] {$(-2,0)$};
\draw[orange,thick,->] (-0.212516,-0.00977403*8)--(-0.20252, -0.026048)--(0.00022597, 1.0652) node[right] {$(-1,0)$};
\filldraw[cyan] (-0.212516,-0.009774*8) circle (0.5pt);
\end{tikzpicture}
\vspace{0.5mm}
\begin{tikzpicture}[scale=2]
\draw[<->,thick] (-2.5,0) -- (1.5,0);
\draw[<->,thick] (0,-1) -- (0,1);
\foreach \x in {-2,-1,0,1}
\draw[green] (-0.564642/0.825336/8+\x,-1)--(0.564642/0.825336/8+\x,1);
\draw[thick,red] (-0.05,-0.05)--(0.05,0.05);
\draw[ thick,red] (0.05,-0.05)--(-0.05,0.05);
\draw[thick,red,xshift=-1cm] (-0.05,-0.05)--(0.05,0.05);
\draw[thick,red,xshift=-1cm] (0.05,-0.05)--(-0.05,0.05);
\draw[thick,red,xshift=-2cm] (-0.05,-0.05)--(0.05,0.05);
\draw[thick,red,xshift=-2cm] (0.05,-0.05)--(-0.05,0.05);
\draw[thick,red,xshift=1cm] (-0.05,-0.05)--(0.05,0.05);
\draw[thick,red,xshift=1cm] (0.05,-0.05)--(-0.05,0.05);
\draw[orange,thick,->] (0.212516,-0.009774*8)--(0.22252, -0.044656)--(0.23252, -0.010760)--(0.24252, 0.023478)--(0.25252, 0.058045)--(0.26252, 0.092924)--(0.27252, 0.12810)--(0.28252, 0.16357)--(0.29252, 0.19932)--(0.30252, 0.23533)--(0.31252, 0.27159)--(0.32252, 0.30811)--(0.33252, 0.34487)--(0.34252, 0.38185)--(0.35252, 0.41907)--(0.36252, 0.45650)--(0.37252, 0.49414)--(0.38252, 0.53199)--(0.39252, 0.57004)--(0.40252, 0.60829)--(0.41252, 0.64672)--(0.42252, 0.68535)--(0.43252, 0.72415)--(0.44252, 0.76313)--(0.45252, 0.80229)--(0.46252, 0.84162)--(0.47252, 0.88111)--(0.48252, 0.92077)--(0.49252, 0.96059)--(0.50252, 1.0006) node[right] {$(-3,0)$};
\draw[thick,orange,->] (0.212516,-0.009774*8)--(0.20252, -0.11135)--(0.19252, -0.14411)--(0.18252, -0.17646)--(0.17252, -0.20836)--(0.16252, -0.23980)--(0.15252, -0.27074)--(0.14252, -0.30116)--(0.13252, -0.33104)--(0.12252, -0.36033)--(0.11252, -0.38902)--(0.10252, -0.41707)--(0.092516, -0.44447)--(0.082516, -0.47119)--(0.072516, -0.49725)--(0.062516, -0.52266)--(0.052516, -0.54749)--(0.042516, -0.57184)--(0.032516, -0.59585)--(0.022516, -0.61976)--(0.012516, -0.64381)--(0.0025160, -0.66830)--(0, -0.67457)--(-0.010000, -0.70003)--(-0.020000, -0.72654)--(-0.030000, -0.75431)--(-0.040000, -0.78347)--(-0.050000, -0.81407)--(-0.060000, -0.84612)--(-0.070000, -0.87959)--(-0.080000, -0.91439)--(-0.090000, -0.95043)--(-0.10000, -0.98762) node[left] {$(-3,0)$};
\filldraw[blue] (0.212516,-0.009774*8) circle (0.5pt);
\end{tikzpicture}
\end{tiny}
\caption{From the right, this figure depicts in \textcolor{orange}{orange} the analytic continuation of the set $\calC_{p_1,\vartheta}, \calC_{p_2,\vartheta}, \calC_{p_3,\vartheta}$ and $\calC_{p_4,\vartheta}$ for $\vartheta=\arg(-1/\tau)=2.54159$ and $(A,B)=(4,1)$. The \textcolor{green}{green} lines are parallel to the lines $i/\tau$.}
\label{fig:thimb.A4B1.I}
\end{figure}
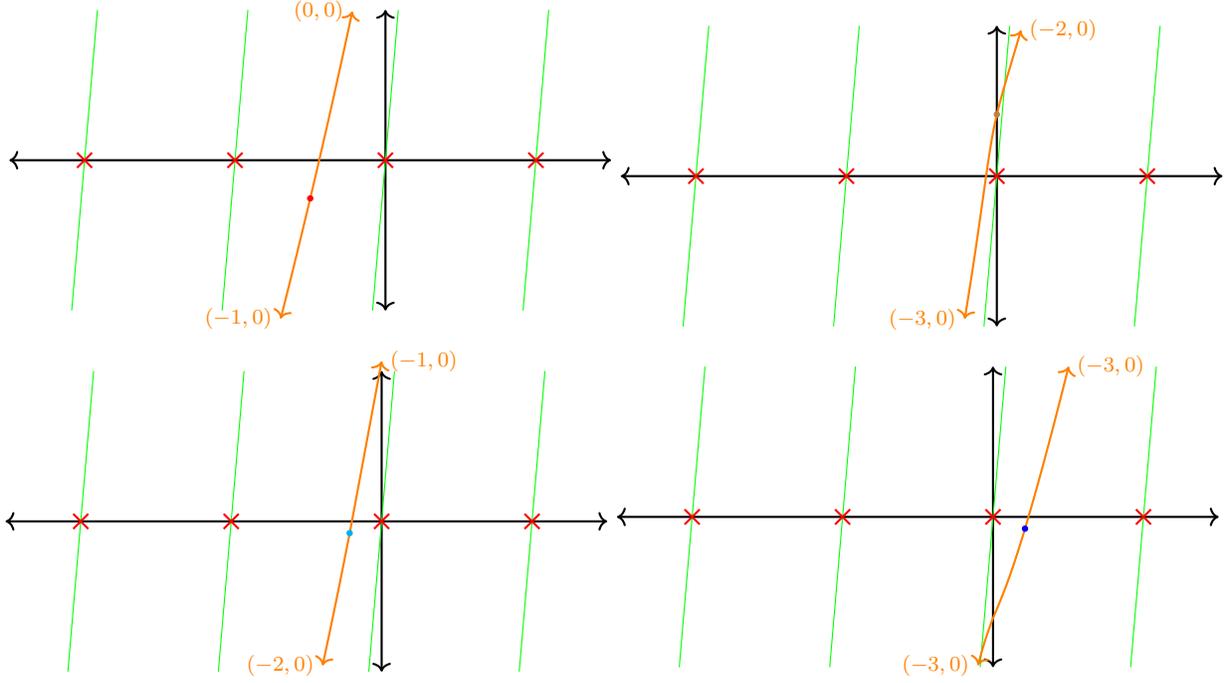
We can decompose the thimble in terms on state integrals contours, hence the algorithm gives
\be\label{eq:region_I_A4B1}
\begin{aligned}
\calI_{p_1,V_1,\mathrm{I}}
&\=
\calI_{0,0}\,,\qquad 
\calI_{p_2,V_2,\mathrm{I}}
&\=
\calI_{-2,0}\,, \qquad
\calI_{p_3,V_3,\mathrm{I}}
&\=
\calI_{-1,0}\,, \qquad
\calI_{p_4,V_4,\mathrm{I}}
&\=
\calI_{-3,1}\,.
\end{aligned}
\ee
\subsubsection{The \emph{false} saddle connection}
Let $\vartheta=1.87141$, which corresponds to the Stokes line at the angle $\arg\big(-\frac{2\pi i}{V_4}\big)$. We plot the set $\calC_{p_4,\vartheta}$ in Figure~\ref{fig:FAKE_saddle.A4B1}.
\begin{figure}[ht]
\begin{tiny}
\begin{tikzpicture}[scale=2]
\draw[<->,thick] (-2.5,0) -- (1.5,0);
\draw[<->,thick] (0,-1) -- (0,1);
\foreach \x in {-2,-1,0,1}
\draw[green] (-0.95516/8/0.29609+\x,-1)--(0.95516/8/0.29609+\x,1);
\draw[thick,red] (-0.05,-0.05)--(0.05,0.05);
\draw[thick,red] (0.05,-0.05)--(-0.05,0.05);
\draw[thick,red,xshift=-1cm] (-0.05,-0.05)--(0.05,0.05);
\draw[thick,red,xshift=-1cm] (0.05,-0.05)--(-0.05,0.05);
\draw[thick,red,xshift=-2cm] (-0.05,-0.05)--(0.05,0.05);
\draw[thick,red,xshift=-2cm] (0.05,-0.05)--(-0.05,0.05);
\draw[thick,red,xshift=1cm] (-0.05,-0.05)--(0.05,0.05);
\draw[thick,red,xshift=1cm] (0.05,-0.05)--(-0.05,0.05);
\draw[orange,thick,->] (0.2225160000, -0.07324363262)--(0.2325160000, -0.06800069288)--(0.2425160000, -0.06247886801)--(0.2525160000, -0.05669201082)--(0.2625160000, -0.05065262820)--(0.2725160000, -0.04437204753)--(0.2825160000, -0.03786055704)--(0.2925160000, -0.03112752498)--(0.3025160000, -0.02418150133)--(0.3125160000, -0.01703030502)--(0.3225160000, -0.009681099135)--(0.3325160000, -0.002140455987)--(0.3425160000, 0.005585586302)--(0.3525160000, 0.01349147447)--(0.3625160000, 0.02157209693)--(0.3725160000, 0.02982274590)--(0.3825160000, 0.03823908403)--(0.3925160000, 0.04681711498)--(0.4025160000, 0.05555315746)--(0.4125160000, 0.06444382221)--(0.4225160000, 0.07348599155)--(0.4325160000, 0.08267680137)--(0.4425160000, 0.09201362503)--(0.4525160000, 0.1014940591)--(0.4625160000, 0.1111159108)--(0.4725160000, 0.1208771868)--(0.4825160000, 0.1307760832)--(0.4925160000, 0.1408109772)--(0.5025160000, 0.1509804192)--(0.5125160000, 0.1612831261)--(0.5225160000, 0.1717179761)--(0.5325160000, 0.1822840032)--(0.5425160000, 0.1929803935)--(0.5525160000, 0.2038064818)--(0.5625160000, 0.2147617489)--(0.5725160000, 0.2258458196)--(0.5825160000, 0.2370584613)--(0.5925160000, 0.2483995831)--(0.6025160000, 0.2598692360)--(0.6125160000, 0.2714676126)--(0.6225160000, 0.2831950485)--(0.6325160000, 0.2950520232)--(0.6425160000, 0.3070391622)--(0.6525160000, 0.3191572388)--(0.6625160000, 0.3314071769)--(0.6725160000, 0.3437900541)--(0.6825160000, 0.3563071040)--(0.6925160000, 0.3689597199)--(0.7025160000, 0.3817494577)--(0.7125160000, 0.3946780385)--(0.7225160000, 0.4077473507)--(0.7325160000, 0.4209594512)--(0.7425160000, 0.4343165647)--(0.7525160000, 0.4478210811)--(0.7625160000, 0.4614755494)--(0.7725160000, 0.4752826669)--(0.7825160000, 0.4892452627)--(0.7925160000, 0.5033662713)--(0.8025160000, 0.5176486963)--(0.8125160000, 0.5320955562)--(0.8225160000, 0.5467098111)--(0.8325160000, 0.5614942599)--(0.8425160000, 0.5764514011)--(0.8525160000, 0.5915832460)--(0.8625160000, 0.6068910698)--(0.8725160000, 0.6223750869)--(0.8825160000, 0.6380340332)--(0.8925160000, 0.6538646422)--(0.9025160000, 0.6698610107)--(0.9125160000, 0.6860138648)--(0.9225160000, 0.7023097711)--(0.9325160000, 0.7187303841)--(0.9425160000, 0.7352518792)--(0.9525160000, 0.7518447743)--(0.9625160000, 0.7684743567)--(0.9725160000, 0.7851018675)--(0.9825160000, 0.8016864291)--(0.9925160000, 0.8181874790)--(1.002516000, 0.8345672810)--(1.012516000, 0.8507930410)--(1.022516000, 0.8668382826)--(1.032516000, 0.8826833709)--(1.042516000, 0.8983152945)--(1.052516000, 0.9137269384)--(1.062516000, 0.9289161009)--(1.072516000, 0.9438844549)--(1.082516000, 0.9586365826)--(1.092516000, 0.9731791489)--(1.102516000, 0.9875202289)--(1.112516000, 1.001668785)--(1.122516000, 1.015634273)--(1.132516000, 1.029426347)--(1.142516000, 1.043054654)--(1.152516000, 1.056528688)--(1.162516000, 1.069857687)--(1.172516000, 1.083050574)--(1.182516000, 1.096115915)--(1.192516000, 1.109061899)--(1.202516000, 1.121896331)--(1.212516000, 1.134626633) node[right] {$(-3,0)$};
\draw[orange,thick] (0.2025160000, -0.08282948439)--(0.1925160000, -0.08713596354)--(0.1825160000, -0.09109004966)--(0.1725160000, -0.09466720439)--(0.1625160000, -0.09783948589)--(0.1525160000, -0.1005748645)--(0.1425160000, -0.1028363482)--(0.1325160000, -0.1045808493)--(0.1225160000, -0.1057576905)--(0.1125160000, -0.1063065974)--(0.1025160000, -0.1061549425)--(0.09251600000, -0.1052138628)--(0.08251600000, -0.1033726209)--(0.07251600000, -0.1004901121)--(0.06251600000, -0.09638148330)--(0.05251600000, -0.09079581999)--(0.04251600000, -0.08337607088)--(0.03251600000, -0.07357935415)--(0.02251600000, -0.06049274119)--(0.01251600000, -0.04228477713)--(0.002516000000, -0.01321851667);
\filldraw[brown] (0,0.0512932*8) circle (0.5pt);
\filldraw[blue] (0.212516,- 0.0097740*8) circle (0.5pt);
\draw[thick,orange,] (-0.05,0)--(0.05,0);
\draw[thick,orange] (0,-0.05)--(0,0.05);
\end{tikzpicture}
\end{tiny}
\caption{This figure depicts in \textcolor{orange}{orange} the set $\calC_{p_4,\vartheta}$ for $\vartheta=\arg(-1/\tau)=1.87141$, and $(A,B)=(4,1)$. The \textcolor{green}{green} lines are parallel to the lines $i/\tau$.}
\label{fig:FAKE_saddle.A4B1}
\end{figure}
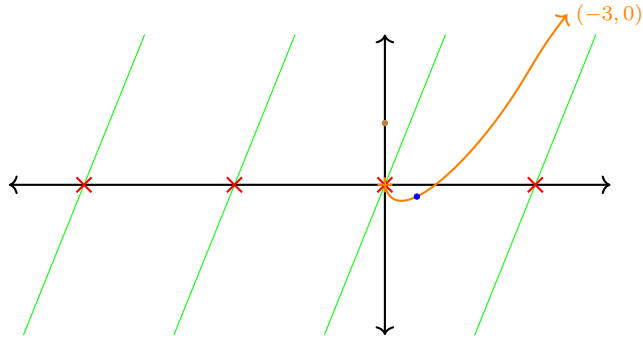
Moving away from this apparent saddle, we plot the set $\calC_{p_4,\vartheta}$ in Figure~\ref{fig:after_FAKE_saddle.A4B1} for $\vartheta=1.85159$.

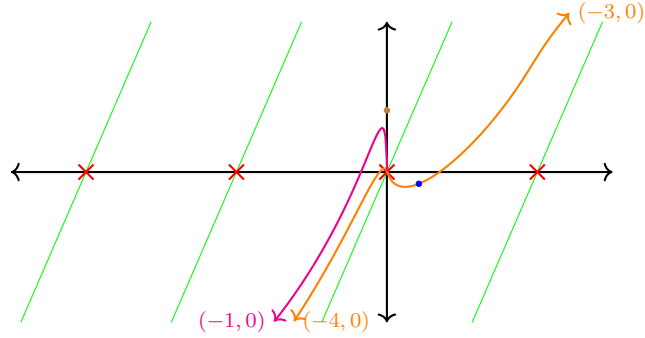
\begin{figure}[ht]
\begin{tiny}
\begin{tikzpicture}[scale=2]
\draw[<->,thick] (-2.5,0) -- (1.5,0);
\draw[<->,thick] (0,-1) -- (0,1);
\foreach \x in {-2,-1,0,1}
\draw[green] (-0.960835/8/0.277121+\x,-1)--(0.960835/8/0.277121+\x,1);
\draw[thick, red] (-0.05,-0.05)--(0.05,0.05);
\draw[thick, red] (0.05,-0.05)--(-0.05,0.05);
\draw[thick,red,xshift=-1cm] (-0.05,-0.05)--(0.05,0.05);
\draw[thick,red,xshift=-1cm] (0.05,-0.05)--(-0.05,0.05);
\draw[thick,red,xshift=-2cm] (-0.05,-0.05)--(0.05,0.05);
\draw[thick,red,xshift=-2cm] (0.05,-0.05)--(-0.05,0.05);
\draw[thick,red,xshift=1cm] (-0.05,-0.05)--(0.05,0.05);
\draw[thick,red,xshift=1cm] (0.05,-0.05)--(-0.05,0.05);
\draw[orange,thick,->] (0.2225160000, -0.07403834333)--(0.2325160000, -0.06959075409)--(0.2425160000, -0.06486487986)--(0.2525160000, -0.05987454944)--(0.2625160000, -0.05463224781)--(0.2725160000, -0.04914928232)--(0.2825160000, -0.04343592277)--(0.2925160000, -0.03750152039)--(0.3025160000, -0.03135460936)--(0.3125160000, -0.02500299386)--(0.3225160000, -0.01845382319)--(0.3325160000, -0.01171365670)--(0.3425160000, -0.004788520271)--(0.3525160000, 0.002316044467)--(0.3625160000, 0.009594937017)--(0.3725160000, 0.01704346022)--(0.3825160000, 0.02465728697)--(0.3925160000, 0.03243243089)--(0.4025160000, 0.04036522043)--(0.4125160000, 0.04845227584)--(0.4225160000, 0.05669048898)--(0.4325160000, 0.06507700517)--(0.4425160000, 0.07360920732)--(0.4525160000, 0.08228470169)--(0.4625160000, 0.09110130543)--(0.4725160000, 0.1000570354)--(0.4825160000, 0.1091500986)--(0.4925160000, 0.1183788832)--(0.5025160000, 0.1277419515)--(0.5125160000, 0.1372380334)--(0.5225160000, 0.1468660204)--(0.5325160000, 0.1566249614)--(0.5425160000, 0.1665140585)--(0.5525160000, 0.1765326642)--(0.5625160000, 0.1866802787)--(0.5725160000, 0.1969565481)--(0.5825160000, 0.2073612636)--(0.5925160000, 0.2178943612)--(0.6025160000, 0.2285559214)--(0.6125160000, 0.2393461705)--(0.6225160000, 0.2502654817)--(0.6325160000, 0.2613143773)--(0.6425160000, 0.2724935312)--(0.6525160000, 0.2838037720)--(0.6625160000, 0.2952460866)--(0.6725160000, 0.3068216242)--(0.6825160000, 0.3185317013)--(0.6925160000, 0.3303778058)--(0.7025160000, 0.3423616031)--(0.7125160000, 0.3544849406)--(0.7225160000, 0.3667498528)--(0.7325160000, 0.3791585664)--(0.7425160000, 0.3917135039)--(0.7525160000, 0.4044172855)--(0.7625160000, 0.4172727297)--(0.7725160000, 0.4302828495)--(0.7825160000, 0.4434508443)--(0.7925160000, 0.4567800846)--(0.8025160000, 0.4702740864)--(0.8125160000, 0.4839364727)--(0.8225160000, 0.4977709150)--(0.8325160000, 0.5117810495)--(0.8425160000, 0.5259703571)--(0.8525160000, 0.5403419963)--(0.8625160000, 0.5548985714)--(0.8725160000, 0.5696418188)--(0.8825160000, 0.5845721856)--(0.8925160000, 0.5996882780)--(0.9025160000, 0.6149861580)--(0.9125160000, 0.6304584812)--(0.9225160000, 0.6460934987)--(0.9325160000, 0.6618740025)--(0.9425160000, 0.6777763758)--(0.9525160000, 0.6937700070)--(0.9625160000, 0.7098173966)--(0.9725160000, 0.7258752566)--(0.9825160000, 0.7418967135)--(0.9925160000, 0.7578343846)--(1.002516000, 0.7736437495)--(1.012516000, 0.7892860916)--(1.022516000, 0.8047304347)--(1.032516000, 0.8199542712)--(1.042516000, 0.8349432362)--(1.052516000, 0.8496900890)--(1.062516000, 0.8641933779)--(1.072516000, 0.8784560773)--(1.082516000, 0.8924843649)--(1.092516000, 0.9062866094)--(1.102516000, 0.9198725786)--(1.112516000, 0.9332528435)--(1.122516000, 0.9464383453)--(1.132516000, 0.9594400884)--(1.142516000, 0.9722689297)--(1.152516000, 0.9849354385)--(1.162516000, 0.9974498077)--(1.172516000, 1.009821801)--(1.182516000, 1.022060727)--(1.192516000, 1.034175430)--(1.202516000, 1.046174291)--(1.212516000, 1.058065244) node[right] {$(-3,0)$};
\draw[orange,thick,->] (0.2025160000, -0.08203540376)--(0.1925160000, -0.08554850470)--(0.1825160000, -0.08870993936)--(0.1725160000, -0.09149521207)--(0.1625160000, -0.09387643034)--(0.1525160000, -0.09582162230)--(0.1425160000, -0.09729386493)--(0.1325160000, -0.09825015434)--(0.1225160000, -0.09863991735)--(0.1125160000, -0.09840301309)--(0.1025160000, -0.09746699094)--(0.09251600000, -0.09574323108)--(0.08251600000, -0.09312134696)--(0.07251600000, -0.08946076810)--(0.06251600000, -0.08457750966)--(0.05251600000, -0.07822218452)--(0.04251600000, -0.07004071319)--(0.03251600000, -0.05949683693)--(0.02251600000, -0.04569571805)--(0.01251600000, -0.02687723372)--(0.002516000000, 0.001801860594)--(-0.007484000000, 0.02216148560)--(-0.01748400000, 0.02406287056)--(-0.02748400000, 0.01973984130)--(-0.03748400000, 0.01166417041)--(-0.04748400000, 0.0009870574582)--(-0.05748400000, -0.01160991022)--(-0.06748400000, -0.02567377689)--(-0.07748400000, -0.04088231655)--(-0.08748400000, -0.05699584640)--(-0.09748400000, -0.07383041880)--(-0.1074840000, -0.09124157538)--(-0.1174840000, -0.1091138617)--(-0.1274840000, -0.1273537258)--(-0.1374840000, -0.1458845233)--(-0.1474840000, -0.1646428956)--(-0.1574840000, -0.1835760775)--(-0.1674840000, -0.2026398547)--(-0.1774840000, -0.2217969863)--(-0.1874840000, -0.2410159694)--(-0.1974840000, -0.2602700585)--(-0.2074840000, -0.2795364786)--(-0.2174840000, -0.2987957881)--(-0.2274840000, -0.3180313585)--(-0.2374840000, -0.3372289453)--(-0.2474840000, -0.3563763336)--(-0.2574840000, -0.3754630415)--(-0.2674840000, -0.3944800721)--(-0.2774840000, -0.4134197048)--(-0.2874840000, -0.4322753175)--(-0.2974840000, -0.4510412371)--(-0.3074840000, -0.4697126106)--(-0.3174840000, -0.4882852953)--(-0.3274840000, -0.5067557649)--(-0.3374840000, -0.5251210276)--(-0.3474840000, -0.5433785562)--(-0.3574840000, -0.5615262274)--(-0.3674840000, -0.5795622684)--(-0.3774840000, -0.5974852114)--(-0.3874840000, -0.6152938535)--(-0.3974840000, -0.6329872210)--(-0.4074840000, -0.6505645396)--(-0.4174840000, -0.6680252067)--(-0.4274840000, -0.6853687683)--(-0.4374840000, -0.7025948974)--(-0.4474840000, -0.7197033768)--(-0.4574840000, -0.7366940816)--(-0.4674840000, -0.7535669660)--(-0.4774840000, -0.7703220500)--(-0.4874840000, -0.7869594085)--(-0.4974840000, -0.8034791612)--(-0.5074840000, -0.8198814643)--(-0.5174840000, -0.8361665022)--(-0.5274840000, -0.8523344812)--(-0.5374840000, -0.8683856233)--(-0.5474840000, -0.8843201608)--(-0.5574840000, -0.9001383322)--(-0.5674840000, -0.9158403779)--(-0.5774840000, -0.9314265370)--(-0.5874840000, -0.9468970445)--(-0.5974840000, -0.9622521291)--(-0.6074840000, -0.9774920116)--(-0.6174840000, -0.9926169037)node[right] {$(-4,0)$};
\draw[magenta,thick] (-0.4900000000, -0.6232432934)--(-0.4800000000, -0.6068781913)--(-0.4700000000, -0.5903410822)--(-0.4600000000, -0.5736289234)--(-0.4500000000, -0.5567385108)--(-0.4400000000, -0.5396664753)--(-0.4300000000, -0.5224092798)--(-0.4200000000, -0.5049632160)--(-0.4100000000, -0.4873244031)--(-0.4000000000, -0.4694887858)--(-0.3900000000, -0.4514521350)--(-0.3800000000, -0.4332100489)--(-0.3700000000, -0.4147579564)--(-0.3600000000, -0.3960911237)--(-0.3500000000, -0.3772046632)--(-0.3400000000, -0.3580935466)--(-0.3300000000, -0.3387526244)--(-0.3200000000, -0.3191766507)--(-0.3100000000, -0.2993603173)--(-0.3000000000, -0.2792982979)--(-0.2900000000, -0.2589853071)--(-0.2800000000, -0.2384161753)--(-0.2700000000, -0.2175859469)--(-0.2600000000, -0.1964900059)--(-0.2500000000, -0.1751242384)--(-0.2400000000, -0.1534852409)--(-0.2300000000, -0.1315705886)--(-0.2200000000, -0.1093791824)--(-0.2100000000, -0.08691169588)--(-0.2000000000, -0.06417115559)--(-0.1900000000, -0.04116369643)--(-0.1800000000, -0.01789954901)--(-0.1700000000, 0.005605661938)--(-0.1600000000, 0.02932919915)--(-0.1500000000, 0.05323892146)--(-0.1400000000, 0.07729020072)--(-0.1300000000, 0.1014217291)--(-0.1200000000, 0.1255497428)--(-0.1100000000, 0.1495599338)--(-0.1000000000, 0.1732958822)--(-0.09000000000, 0.1965420414)--(-0.08000000000, 0.2189977167)--(-0.07000000000, 0.2402350357)--(-0.06000000000, 0.2596256700)--(-0.05000000000, 0.2761991247)--(-0.04000000000, 0.2883297458)--(-0.03000000000, 0.2929252230)--(-0.02000000000, 0.2828947942)--(-0.01000000000, 0.2375848532)--(0, 0.08364163774)--(0.001855021165, 0.008000000000)--(0.001793269547, 0.01600000000)--(0.001676846075, 0.02400000000)--(0.001522073587, 0.03200000000);
\draw[magenta,thick,->] (-0.4900000000, -0.6232432934)--(-0.5100000000, -0.6554688619)--(-0.5200000000, -0.6713346405)--(-0.5300000000, -0.6870390439)--(-0.5400000000, -0.7025843650)--(-0.5500000000, -0.7179727585)--(-0.5600000000, -0.7332062444)--(-0.5700000000, -0.7482867122)--(-0.5800000000, -0.7632159239)--(-0.5900000000, -0.7779955183)--(-0.6000000000, -0.7926270141)--(-0.6100000000, -0.8071118136)--(-0.6200000000, -0.8214512063)--(-0.6300000000, -0.8356463724)--(-0.6400000000, -0.8496983868)--(-0.6500000000, -0.8636082230)--(-0.6600000000, -0.8773767573)--(-0.6700000000, -0.8910047737)--(-0.6800000000, -0.9044929688)--(-0.6900000000, -0.9178419581)--(-0.7000000000, -0.9310522822)--(-0.7100000000, -0.9441244150)--(-0.7200000000, -0.9570587729)--(-0.7300000000, -0.9698557253)--(-0.7400000000, -0.9825156075)--(-0.7500000000, -0.9950387364) node[left] {$(-1,0)$};
\filldraw[brown] (0,0.0512932*8) circle (0.5pt);
\filldraw[blue] (0.212516,- 0.0097740*8) circle (0.5pt);
\end{tikzpicture}
\end{tiny}
\caption{This figure depicts in \textcolor{orange}{orange} the set $\calC_{p_4,\vartheta}$ for $\vartheta=\arg(-1/\tau)=1.85159$, and $(A,B)=(4,1)$. The \textcolor{green}{green} lines are parallel to the lines $i/\tau$. The \textcolor{magenta}{magenta} curve represents the tail at the intersection of the set $\calC_{p_4,\vartheta}$ with the green lines.}
\label{fig:after_FAKE_saddle.A4B1}
\end{figure}
\noindent We can decompose the thimble $\calC_{p_4,\vartheta}$ in terms of state integral contours. We have contribution of $I_{-3,0}$ from the intersection of the orange contour with the interval $(-1,0)$, which has $(n,m,\ell)=(0,-3,0)$. Then, at the second iteration of the algorithm we get the contribution of $-I_{-2,0}$ from the intersection of the magenta curve with the interval $(-1,0)$, which has $(n,m,\ell)=(0,-2,0)$. Therefore, we find 
\be\label{eq:region_Is_A4B1}
\calI_{p_4,V_4,I_\star}=\calI_{-3,0}-\calI_{-2,0}=\calI_{-3,1}
\ee
where in the last equality follows from Equation~\eqref{eq:state_integral_1}. Notice that the set $\calC_{p_4,\vartheta}$ intersects the interval $(0,1)$ twice, thus there is no contribution coming from the state integral $\calI_{-3,1}$. 
\subsubsection{The saddle connection between $\mathrm{I}$ and $\mathrm{II}$}
The saddle connection is depicted in Figure~\ref{fig:saddle.A4B1.I-II}.
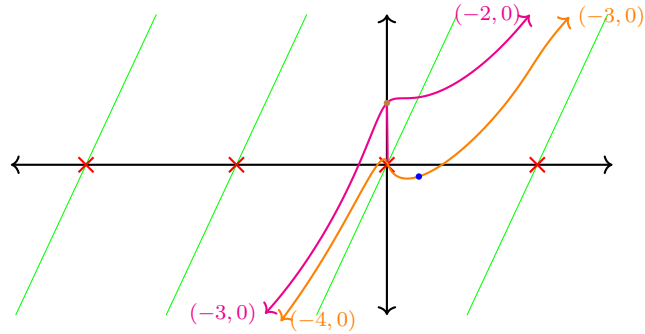
\begin{figure}[ht]
\begin{tiny}
\begin{tikzpicture}[scale=2]
\draw[<->,thick] (-2.5,0) -- (1.5,0);
\draw[<->,thick] (0,-1) -- (0,1);
\foreach \x in {-2,-1,0,1}
\draw[green] (-0.966069/8/0.258285+\x,-1)--(0.966069/8/0.258285+\x,1);
\draw[thick,red] (-0.05,-0.05)--(0.05,0.05);
\draw[thick,red] (0.05,-0.05)--(-0.05,0.05);
\draw[thick,red,xshift=-1cm] (-0.05,-0.05)--(0.05,0.05);
\draw[thick,red,xshift=-1cm] (0.05,-0.05)--(-0.05,0.05);
\draw[thick,red,xshift=-2cm] (-0.05,-0.05)--(0.05,0.05);
\draw[thick,red,xshift=-2cm] (0.05,-0.05)--(-0.05,0.05);
\draw[thick,red,xshift=1cm] (-0.05,-0.05)--(0.05,0.05);
\draw[thick,red,xshift=1cm] (0.05,-0.05)--(-0.05,0.05);
\draw[orange,thick,->] (0.212516,-0.009774*8)--(0.22252, -0.074822)--(0.23252, -0.071159)--(0.24252, -0.067218)--(0.25252, -0.063013)--(0.26252, -0.058557)--(0.27252, -0.053860)--(0.28252, -0.048934)--(0.29252, -0.043787)--(0.30252, -0.038428)--(0.31252, -0.032864)--(0.32252, -0.027104)--(0.33252, -0.021153)--(0.34252, -0.015017)--(0.35252, -0.0087020)--(0.36252, -0.0022132)--(0.37252, 0.0044449)--(0.38252, 0.011268)--(0.39252, 0.018252)--(0.40252, 0.025393)--(0.41252, 0.032689)--(0.42252, 0.040135)--(0.43252, 0.047729)--(0.44252, 0.055468)--(0.45252, 0.063351)--(0.46252, 0.071374)--(0.47252, 0.079537)--(0.48252, 0.087836)--(0.49252, 0.096271)--(0.50252, 0.10484)--(0.51252, 0.11354)--(0.52252, 0.12237)--(0.53252, 0.13134)--(0.54252, 0.14043)--(0.55252, 0.14966)--(0.56252, 0.15901)--(0.57252, 0.16849)--(0.58252, 0.17810)--(0.59252, 0.18783)--(0.60252, 0.19770)--(0.61252, 0.20769)--(0.62252, 0.21782)--(0.63252, 0.22807)--(0.64252, 0.23845)--(0.65252, 0.24897)--(0.66252, 0.25962)--(0.67252, 0.27040)--(0.68252, 0.28131)--(0.69252, 0.29236)--(0.70252, 0.30355)--(0.71252, 0.31488)--(0.72252, 0.32636)--(0.73252, 0.33797)--(0.74252, 0.34974)--(0.75252, 0.36165)--(0.76252, 0.37372)--(0.77252, 0.38594)--(0.78252, 0.39833)--(0.79252, 0.41088)--(0.80252, 0.42359)--(0.81252, 0.43648)--(0.82252, 0.44954)--(0.83252, 0.46279)--(0.84252, 0.47622)--(0.85252, 0.48984)--(0.86252, 0.50366)--(0.87252, 0.51767)--(0.88252, 0.53188)--(0.89252, 0.54629)--(0.90252, 0.56090)--(0.91252, 0.57570)--(0.92252, 0.59069)--(0.93252, 0.60584)--(0.94252, 0.62114)--(0.95252, 0.63656)--(0.96252, 0.65205)--(0.97252, 0.66757)--(0.98252, 0.68306)--(0.99252, 0.69847)--(1.0025, 0.71374)--(1.0125, 0.72883)--(1.0225, 0.74370)--(1.0325, 0.75833)--(1.0425, 0.77270)--(1.0525, 0.78680)--(1.0625, 0.80063)--(1.0725, 0.81420)--(1.0825, 0.82751)--(1.0925, 0.84059)--(1.1025, 0.85343)--(1.1125, 0.86605)--(1.1225, 0.87846)--(1.1325, 0.89068)--(1.1425, 0.90271)--(1.1525, 0.91458)--(1.1625, 0.92629)--(1.1725, 0.93785)--(1.1825, 0.94927)--(1.1925, 0.96056)--(1.2025, 0.97173)--(1.2125, 0.98279) node[right] {$(-3,0)$};
\draw[orange,thick,->] (0.212516,-0.009774*8)--(0.20252, -0.081252)--(0.19252, -0.083983)--(0.18252, -0.086362)--(0.17252, -0.088366)--(0.16252, -0.089967)--(0.15252, -0.091132)--(0.14252, -0.091825)--(0.13252, -0.092004)--(0.12252, -0.091617)--(0.11252, -0.090605)--(0.10252, -0.088895)--(0.092516, -0.086400)--(0.082516, -0.083009)--(0.072516, -0.078582)--(0.062516, -0.072938)--(0.052516, -0.065830)--(0.042516, -0.056909)--(0.032516, -0.045650)--(0.022516, -0.031196)--(0.012516, -0.011927)--(-0.0060000, 0.032361)--(-0.016000, 0.036853)--(-0.026000, 0.034511)--(-0.036000, 0.028157)--(-0.046000, 0.019052)--(-0.056000, 0.0079286)--(-0.066000, -0.0047344)--(-0.076000, -0.018598)--(-0.086000, -0.033412)--(-0.096000, -0.048985)--(-0.10600, -0.065166)--(-0.11600, -0.081837)--(-0.12600, -0.098899)--(-0.13600, -0.11627)--(-0.14600, -0.13390)--(-0.15600, -0.15171)--(-0.16600, -0.16967)--(-0.17600, -0.18774)--(-0.18600, -0.20589)--(-0.19600, -0.22408)--(-0.20600, -0.24230)--(-0.21600, -0.26052)--(-0.22600, -0.27873)--(-0.23600, -0.29691)--(-0.24600, -0.31505)--(-0.25600, -0.33314)--(-0.26600, -0.35116)--(-0.27600, -0.36912)--(-0.28600, -0.38700)--(-0.29600, -0.40479)--(-0.30600, -0.42250)--(-0.31600, -0.44012)--(-0.32600, -0.45763)--(-0.33600, -0.47505)--(-0.34600, -0.49237)--(-0.35600, -0.50958)--(-0.36600, -0.52668)--(-0.37600, -0.54368)--(-0.38600, -0.56056)--(-0.39600, -0.57734)--(-0.40600, -0.59400)--(-0.41600, -0.61055)--(-0.42600, -0.62698)--(-0.43600, -0.64331)--(-0.44600, -0.65951)--(-0.45600, -0.67560)--(-0.46600, -0.69158)--(-0.47600, -0.70744)--(-0.48600, -0.72319)--(-0.49600, -0.73882)--(-0.50600, -0.75434)--(-0.51600, -0.76974)--(-0.52600, -0.78503)--(-0.53600, -0.80020)--(-0.54600, -0.81526)--(-0.55600, -0.83020)--(-0.56600, -0.84503)--(-0.57600, -0.85974)--(-0.58600, -0.87434)--(-0.59600, -0.88882)--(-0.60600, -0.90320)--(-0.61600, -0.91745)--(-0.62600, -0.93160)--(-0.63600, -0.94562)--(-0.64600, -0.95954)--(-0.65600, -0.97334)--(-0.66600, -0.98703)--(-0.67600, -1.0006)--(-0.68600, -1.0141)--(-0.69600, -1.0274)--(-0.70600, -1.0406) node[right] {$(-4,0)$};
\draw[magenta, thick,-] (-0.00019916, 0.42000)--(0.0011425, 0.34000)--(0.0023724, 0.26000)--(0.0034922, 0.18000)--(0.0043780, 0.10000)--(0.0045469, 0.020000);
\draw[magenta, thick,->] (0.010000, 0.41962)--(0.020000, 0.42676)--(0.030000, 0.43203)--(0.040000, 0.43578)--(0.050000, 0.43837)--(0.060000, 0.44007)--(0.070000, 0.44114)--(0.080000, 0.44177)--(0.090000, 0.44211)--(0.10000, 0.44227)--(0.11000, 0.44235)--(0.12000, 0.44242)--(0.13000, 0.44254)--(0.14000, 0.44275)--(0.15000, 0.44308)--(0.16000, 0.44355)--(0.17000, 0.44419)--(0.18000, 0.44500)--(0.19000, 0.44600)--(0.20000, 0.44719)--(0.21000, 0.44859)--(0.22000, 0.45018)--(0.23000, 0.45198)--(0.24000, 0.45398)--(0.25000, 0.45618)--(0.26000, 0.45858)--(0.27000, 0.46118)--(0.28000, 0.46398)--(0.29000, 0.46697)--(0.30000, 0.47016)--(0.31000, 0.47353)--(0.32000, 0.47709)--(0.33000, 0.48084)--(0.34000, 0.48476)--(0.35000, 0.48886)--(0.36000, 0.49314)--(0.37000, 0.49759)--(0.38000, 0.50220)--(0.39000, 0.50699)--(0.40000, 0.51193)--(0.41000, 0.51704)--(0.42000, 0.52230)--(0.43000, 0.52773)--(0.44000, 0.53330)--(0.45000, 0.53903)--(0.46000, 0.54490)--(0.47000, 0.55093)--(0.48000, 0.55710)--(0.49000, 0.56341)--(0.50000, 0.56987)--(0.51000, 0.57647)--(0.52000, 0.58321)--(0.53000, 0.59008)--(0.54000, 0.59709)--(0.55000, 0.60424)--(0.56000, 0.61153)--(0.57000, 0.61894)--(0.58000, 0.62649)--(0.59000, 0.63418)--(0.60000, 0.64199)--(0.61000, 0.64994)--(0.62000, 0.65801)--(0.63000, 0.66622)--(0.64000, 0.67456)--(0.65000, 0.68303)--(0.66000, 0.69162)--(0.67000, 0.70035)--(0.68000, 0.70921)--(0.69000, 0.71819)--(0.70000, 0.72731)--(0.71000, 0.73656)--(0.72000, 0.74594)--(0.73000, 0.75545)--(0.74000, 0.76509)--(0.75000, 0.77487)--(0.76000, 0.78478)--(0.77000, 0.79482)--(0.78000, 0.80500)--(0.79000, 0.81532)--(0.80000, 0.82577)--(0.81000, 0.83636)--(0.82000, 0.84708)--(0.83000, 0.85794)--(0.84000, 0.86894)--(0.85000, 0.88007)--(0.86000, 0.89134)--(0.87000, 0.90274)--(0.88000, 0.91426)--(0.89000, 0.92591)--(0.90000, 0.93768)--(0.91000, 0.94956)--(0.92000, 0.96154)--(0.93000, 0.97362)--(0.94000, 0.98577)--(0.95000, 0.99798) node[left] {$(-2,0)$};
\draw[magenta,thick,->] (-0.010000, 0.39863)--(-0.020000, 0.38433)--(-0.030000, 0.36762)--(-0.040000, 0.34878)--(-0.050000, 0.32815)--(-0.060000, 0.30611)--(-0.070000, 0.28304)--(-0.080000, 0.25924)--(-0.090000, 0.23496)--(-0.10000, 0.21041)--(-0.11000, 0.18575)--(-0.12000, 0.16109)--(-0.13000, 0.13653)--(-0.14000, 0.11214)--(-0.15000, 0.087966)--(-0.16000, 0.064041)--(-0.17000, 0.040393)--(-0.18000, 0.017040)--(-0.19000, -0.0060086)--(-0.20000, -0.028746)--(-0.21000, -0.051169)--(-0.22000, -0.073279)--(-0.23000, -0.095078)--(-0.24000, -0.11657)--(-0.25000, -0.13776)--(-0.26000, -0.15865)--(-0.27000, -0.17924)--(-0.28000, -0.19956)--(-0.29000, -0.21959)--(-0.30000, -0.23935)--(-0.31000, -0.25885)--(-0.32000, -0.27809)--(-0.33000, -0.29708)--(-0.34000, -0.31582)--(-0.35000, -0.33432)--(-0.36000, -0.35258)--(-0.37000, -0.37062)--(-0.38000, -0.38844)--(-0.39000, -0.40603)--(-0.40000, -0.42342)--(-0.41000, -0.44060)--(-0.42000, -0.45757)--(-0.43000, -0.47435)--(-0.44000, -0.49093)--(-0.45000, -0.50733)--(-0.46000, -0.52354)--(-0.47000, -0.53956)--(-0.48000, -0.55540)--(-0.49000, -0.57107)--(-0.50000, -0.58657)--(-0.51000, -0.60190)--(-0.52000, -0.61706)--(-0.53000, -0.63205)--(-0.54000, -0.64688)--(-0.55000, -0.66155)--(-0.56000, -0.67607)--(-0.57000, -0.69043)--(-0.58000, -0.70463)--(-0.59000, -0.71868)--(-0.60000, -0.73259)--(-0.61000, -0.74634)--(-0.62000, -0.75994)--(-0.63000, -0.77340)--(-0.64000, -0.78671)--(-0.65000, -0.79988)--(-0.66000, -0.81290)--(-0.67000, -0.82578)--(-0.68000, -0.83851)--(-0.69000, -0.85111)--(-0.70000, -0.86356)--(-0.71000, -0.87587)--(-0.72000, -0.88804)--(-0.73000, -0.90007)--(-0.74000, -0.91196)--(-0.75000, -0.92371)--(-0.76000, -0.93532)--(-0.77000, -0.94679)--(-0.78000, -0.95811)--(-0.79000, -0.96930)--(-0.80000, -0.98035)--(-0.81000, -0.99125) node[left] {$(-3,0)$};
\filldraw[brown] (0,0.0512932*8) circle (0.5pt);
\filldraw[blue] (0.212516,- 0.0097740*8) circle (0.5pt);
\end{tikzpicture}
\end{tiny}
\caption{This figure depicts in \textcolor{orange}{orange} the set $\calC_{p_4,\vartheta}$ for $\vartheta=\arg(-1/\tau)=1.83204$, and $(A,B)=(4,1)$. The \textcolor{green}{green} lines are parallel to the lines $i/\tau$. The \textcolor{magenta}{magenta} curve represents the tail at the intersection of the set $\calC_{p_4,\vartheta}$ with the green lines.}
\label{fig:saddle.A4B1.I-II}
\end{figure}
Notice that the magenta line hits the critical point $p_2$. This suggests that in the state integral decomposition we should see the contribution from $\calI_{-2,0}$, as we indeed verify in Section~\ref{sec:region_II}.

\subsubsection{State integrals decomposition in $\mathrm{II}$}\label{sec:region_II}
Let $\vartheta\in\mathrm{II}$. We plot the curve $\calC_{p_4,\vartheta}$ in Figure~\ref{fig:thimb.A4B1.II}.
We can decompose the thimble in terms of state integrals contours. We have contributions of $\calI_{-3,0}$ from the intersection of the orange contour with the interval $(-1,0)$, which has $(n,m,\ell)=(0,-3,0)$. Then, due to the intersection with the green lines we should run the algorithm another time for the magenta contour. The latter intersects a green line, thus we apply the algorithm one more time to produce the violet contours. Both the magenta and the violet contours do not contribute as they do not intersect the reals~\ref{cor:half.thimb.stateint.zero}. Summarising, the algorithm gives
\be\label{eq:region_II_A4B1}
\calI_{p_4,V_4,\mathrm{II}}
\=
\calI_{-3,0}=\calI_{-3,1}+\calI_{-2,0}\,,
\ee
where the second equality follows from Equation~\eqref{eq:state_integral_1}.
\begin{figure}[ht]
\begin{tiny}
\begin{tikzpicture}[scale=2]
\draw[<->,thick] (-2.5,0) -- (1.5,0);
\draw[<->,thick] (0,-1) -- (0,1);
\foreach \x in {-1,0}
\draw[green] (-0.98545/8/0.169967+\x,-1)--(0.98545/8/0.169967+\x,1);
\draw[green] (-0.98545/8/0.169967+1,-1)--(1.5,0.5*8*0.169967/0.98545);
\draw[green] (-0.98545/8/0.169967+2,-1)--(1.5,-0.5*8*0.169967/0.98545);
\draw[green] (-2.5,-0.5*8*0.169967/0.98545)--(0.98545/8/0.169967-2,1);
\draw[thick,red] (-0.05,-0.05)--(0.05,0.05);
\draw[thick,red] (0.05,-0.05)--(-0.05,0.05);
\draw[thick,red,xshift=-1cm] (-0.05,-0.05)--(0.05,0.05);
\draw[thick,red,xshift=-1cm] (0.05,-0.05)--(-0.05,0.05);
\draw[thick,red,xshift=-2cm] (-0.05,-0.05)--(0.05,0.05);
\draw[thick,red,xshift=-2cm] (0.05,-0.05)--(-0.05,0.05);
\draw[thick,red,xshift=1cm] (-0.05,-0.05)--(0.05,0.05);
\draw[thick,red,xshift=1cm] (0.05,-0.05)--(-0.05,0.05);
\draw[orange,thick,->] (0.2225160000, -0.07844283940)--(0.2325160000, -0.07840218376)--(0.2425160000, -0.07808547578)--(0.2525160000, -0.07750643293)--(0.2625160000, -0.07667744544)--(0.2725160000, -0.07560973907)--(0.2825160000, -0.07431351261)--(0.2925160000, -0.07279805484)--(0.3025160000, -0.07107184447)--(0.3125160000, -0.06914263598)--(0.3225160000, -0.06701753368)--(0.3325160000, -0.06470305583)--(0.3425160000, -0.06220519048)--(0.3525160000, -0.05952944411)--(0.3625160000, -0.05668088423)--(0.3725160000, -0.05366417679)--(0.3825160000, -0.05048361905)--(0.3925160000, -0.04714316860)--(0.4025160000, -0.04364646896)--(0.4125160000, -0.03999687218)--(0.4225160000, -0.03619745893)--(0.4325160000, -0.03225105618)--(0.4425160000, -0.02816025287)--(0.4525160000, -0.02392741378)--(0.4625160000, -0.01955469171)--(0.4725160000, -0.01504403824)--(0.4825160000, -0.01039721308)--(0.4925160000, -0.005615792233)--(0.5025160000, -0.0007011749930)--(0.5125160000, 0.004345410085)--(0.5225160000, 0.009522900226)--(0.5325160000, 0.01483039441)--(0.5425160000, 0.02026715025)--(0.5525160000, 0.02583258172)--(0.5625160000, 0.03152625775)--(0.5725160000, 0.03734790165)--(0.5825160000, 0.04329739139)--(0.5925160000, 0.04937476075)--(0.6025160000, 0.05558020126)--(0.6125160000, 0.06191406514)--(0.6225160000, 0.06837686920)--(0.6325160000, 0.07496929964)--(0.6425160000, 0.08169221802)--(0.6525160000, 0.08854666840)--(0.6625160000, 0.09553388565)--(0.6725160000, 0.1026553053)--(0.6825160000, 0.1099125746)--(0.6925160000, 0.1173075660)--(0.7025160000, 0.1248423911)--(0.7125160000, 0.1325194186)--(0.7225160000, 0.1403412928)--(0.7325160000, 0.1483109558)--(0.7425160000, 0.1564316726)--(0.7525160000, 0.1647070586)--(0.7625160000, 0.1731411119)--(0.7725160000, 0.1817382491)--(0.7825160000, 0.1905033455)--(0.7925160000, 0.1994417803)--(0.8025160000, 0.2085594867)--(0.8125160000, 0.2178630075)--(0.8225160000, 0.2273595542)--(0.8325160000, 0.2370570704)--(0.8425160000, 0.2469642952)--(0.8525160000, 0.2570908213)--(0.8625160000, 0.2674471400)--(0.8725160000, 0.2780446540)--(0.8825160000, 0.2888956311)--(0.8925160000, 0.3000130485)--(0.9025160000, 0.3114102430)--(0.9125160000, 0.3231002258)--(0.9225160000, 0.3350944300)--(0.9325160000, 0.3474005226)--(0.9425160000, 0.3600187569)--(0.9525160000, 0.3729362620)--(0.9625160000, 0.3861190073)--(0.9725160000, 0.3995026298)--(0.9825160000, 0.4129864197)--(0.9925160000, 0.4264378371)--(1.002516000, 0.4397122314)--(1.012516000, 0.4526810511)--(1.022516000, 0.4652529838)--(1.032516000, 0.4773788053)--(1.042516000, 0.4890437236)--(1.052516000, 0.5002558375)--(1.062516000, 0.5110362163)--(1.072516000, 0.5214121116)--(1.082516000, 0.5314128629)--(1.092516000, 0.5410676327)--(1.102516000, 0.5504042543)--(1.112516000, 0.5594487160)--(1.122516000, 0.5682249953)--(1.132516000, 0.5767550743)--(1.142516000, 0.5850590470)--(1.152516000, 0.5931552636)--(1.162516000, 0.6010604874)--(1.172516000, 0.6087900486)--(1.182516000, 0.6163579881)--(1.192516000, 0.6237771895)--(1.202516000, 0.6310594971)--(1.212516000, 0.6382158223) node[right] {$(-3,0)$};
\draw[orange,thick,->] (0.2025160000, -0.07763328710)--(0.1925160000, -0.07674706300)--(0.1825160000, -0.07551213284)--(0.1725160000, -0.07390430427)--(0.1625160000, -0.07189607259)--(0.1525160000, -0.06945597050)--(0.1425160000, -0.06654774388)--(0.1325160000, -0.06312929380)--(0.1225160000, -0.05915129881)--(0.1125160000, -0.05455539246)--(0.1025160000, -0.04927170956)--(0.09251600000, -0.04321551772)--(0.08251600000, -0.03628249281)--(0.07251600000, -0.02834193909)--(0.06251600000, -0.01922683492)--(0.05251600000, -0.008718955770)--(0.04251600000, 0.003473263373)--(0.03251600000, 0.01774453162)--(0.02251600000, 0.03460725890)--(0.01251600000, 0.05446770841)--(0.002516000000, 0.07554749425)--(-0.007484000000, 0.09068074973)--(-0.01748400000, 0.09796876395)--(-0.02748400000, 0.09998671936)--(-0.03748400000, 0.09844340234)--(-0.04748400000, 0.09433118230)--(-0.05748400000, 0.08828085388)--(-0.06748400000, 0.08072479005)--(-0.07748400000, 0.07197568567)--(-0.08748400000, 0.06226857375)--(-0.09748400000, 0.05178523466)--(-0.1074840000, 0.04066936281)--(-0.1174840000, 0.02903650490)--(-0.1274840000, 0.01698085500)--(-0.1374840000, 0.004580063474)--(-0.1474840000, -0.008101261660)--(-0.1574840000, -0.02100894450)--(-0.1674840000, -0.03409725625)--(-0.1774840000, -0.04732737979)--(-0.1874840000, -0.06066620459)--(-0.1974840000, -0.07408536894)--(-0.2074840000, -0.08756049009)--(-0.2174840000, -0.1010705395)--(-0.2274840000, -0.1145973316)--(-0.2374840000, -0.1281251016)--(-0.2474840000, -0.1416401554)--(-0.2574840000, -0.1551305763)--(-0.2674840000, -0.1685859791)--(-0.2774840000, -0.1819973015)--(-0.2874840000, -0.1953566279)--(-0.2974840000, -0.2086570382)--(-0.3074840000, -0.2218924791)--(-0.3174840000, -0.2350576532)--(-0.3274840000, -0.2481479228)--(-0.3374840000, -0.2611592278)--(-0.3474840000, -0.2740880134)--(-0.3574840000, -0.2869311675)--(-0.3674840000, -0.2996859661)--(-0.3774840000, -0.3123500255)--(-0.3874840000, -0.3249212603)--(-0.3974840000, -0.3373978464)--(-0.4074840000, -0.3497781884)--(-0.4174840000, -0.3620608906)--(-0.4274840000, -0.3742447318)--(-0.4374840000, -0.3863286421)--(-0.4474840000, -0.3983116831)--(-0.4574840000, -0.4101930295)--(-0.4674840000, -0.4219719530)--(-0.4774840000, -0.4336478078)--(-0.4874840000, -0.4452200174)--(-0.4974840000, -0.4566880628)--(-0.5074840000, -0.4680514714)--(-0.5174840000, -0.4793098080)--(-0.5274840000, -0.4904626649)--(-0.5374840000, -0.5015096545)--(-0.5474840000, -0.5124504010)--(-0.5574840000, -0.5232845338)--(-0.5674840000, -0.5340116810)--(-0.5774840000, -0.5446314628)--(-0.5874840000, -0.5551434864)--(-0.5974840000, -0.5655473397)--(-0.6074840000, -0.5758425868)--(-0.6174840000, -0.5860287624)--(-0.6274840000, -0.5961053676)--(-0.6374840000, -0.6060718646)--(-0.6474840000, -0.6159276726)--(-0.6574840000, -0.6256721634)--(-0.6674840000, -0.6353046570)--(-0.6774840000, -0.6448244175)--(-0.6874840000, -0.6542306493)--(-0.6974840000, -0.6635224934)--(-0.7074840000, -0.6726990237)--(-0.7174840000, -0.6817592443)--(-0.7274840000, -0.6907020867)--(-0.7374840000, -0.6995264080)--(-0.7474840000, -0.7082309900)--(-0.7574840000, -0.7168145395)--(-0.7674840000, -0.7252756901)--(-0.7774840000, -0.7336130069)--(-0.7874840000, -0.7418249933)--(-0.7974840000, -0.7499101027)--(-0.8074840000, -0.7578667547)--(-0.8174840000, -0.7656933590)--(-0.8274840000, -0.7733883477)--(-0.8374840000, -0.7809502202)--(-0.8474840000, -0.7883776028)--(-0.8574840000, -0.7956693283)--(-0.8674840000, -0.8028245407)--(-0.8774840000, -0.8098428296)--(-0.8874840000, -0.8167244031)--(-0.8974840000, -0.8234703032)--(-0.9074840000, -0.8300826680)--(-0.9174840000, -0.8365650428)--(-0.9274840000, -0.8429227280)--(-0.9374840000, -0.8491631457)--(-0.9474840000, -0.8552961830)--(-0.9574840000, -0.8613344529)--(-0.9674840000, -0.8672933939)--(-0.9774840000, -0.8731911275)--(-0.9874840000, -0.8790480144)--(-0.9974840000, -0.8848859021)--(-1.010000000, -0.8921999091)--(-1.020000000, -0.8980757759)--(-1.030000000, -0.9040015130)--(-1.040000000, -0.9099940461)--(-1.050000000, -0.9160672501)--(-1.060000000, -0.9222318263)--(-1.070000000, -0.9284954341)--(-1.080000000, -0.9348629908)--(-1.090000000, -0.9413370606)--(-1.100000000, -0.9479182681)--(-1.110000000, -0.9546056932)--(-1.120000000, -0.9613972245)--(-1.130000000, -0.9682898596)--(-1.140000000, -0.9752799535)--(-1.150000000, -0.9823634172)--(-1.160000000, -0.9895358746)--(-1.170000000, -0.9967927835) node[left] {$(-4,0)$};
\draw[magenta,thick,->](0.02155852790, 0.01000000000)--(0.02245802909, 0.02000000000)--(0.02337971789, 0.03000000000)--(0.02432857291, 0.04000000000)--(0.02530987729, 0.05000000000)--(0.02632929659, 0.06000000000)--(0.02739297450, 0.07000000000)--(0.02850764951, 0.08000000000)--(0.02968079737, 0.09000000000)--(0.03092080632, 0.1000000000)--(0.03223719504, 0.1100000000)--(0.03364088735, 0.1200000000)--(0.03514456353, 0.1300000000)--(0.03676311689, 0.1400000000)--(0.03851425747, 0.1500000000)--(0.04041932548, 0.1600000000)--(0.04250441064, 0.1700000000)--(0.04480192895, 0.1800000000)--(0.04735290445, 0.1900000000)--(0.05021037557, 0.2000000000)--(0.05344467103, 0.2100000000)--(0.05715195092, 0.2200000000)--(0.06146880786, 0.2300000000)--(0.06659900794, 0.2400000000)--(0.07286709632, 0.2500000000)--(0.08084012081, 0.2600000000)--(0.09166042803, 0.2700000000)--(0.1082957490, 0.2800000000)--(0.1100000000, 0.2807588142)--(0.1200000000, 0.2844740892)--(0.1300000000, 0.2871726252)--(0.1400000000, 0.2891175524)--(0.1500000000, 0.2905042528)--(0.1600000000, 0.2914803276)--(0.1700000000, 0.2921588892)--(0.1800000000, 0.2926276584)--(0.1900000000, 0.2929553432)--(0.2000000000, 0.2931962049)--(0.2100000000, 0.2933933879)--(0.2200000000, 0.2935813864)--(0.2300000000, 0.2937878990)--(0.2400000000, 0.2940352396)--(0.2500000000, 0.2943414244)--(0.2600000000, 0.2947210162)--(0.2700000000, 0.2951857881)--(0.2800000000, 0.2957452478)--(0.2900000000, 0.2964070558)--(0.3000000000, 0.2971773616)--(0.3100000000, 0.2980610746)--(0.3200000000, 0.2990620843)--(0.3300000000, 0.3001834406)--(0.3400000000, 0.3014275012)--(0.3500000000, 0.3027960533)--(0.3600000000, 0.3042904141)--(0.3700000000, 0.3059115152)--(0.3800000000, 0.3076599715)--(0.3900000000, 0.3095361403)--(0.4000000000, 0.3115401700)--(0.4100000000, 0.3136720415)--(0.4200000000, 0.3159316033)--(0.4300000000, 0.3183186007)--(0.4400000000, 0.3208327015)--(0.4500000000, 0.3234735171)--(0.4600000000, 0.3262406213)--(0.4700000000, 0.3291335658)--(0.4800000000, 0.3321518941)--(0.4900000000, 0.3352951535)--(0.5000000000, 0.3385629054)--(0.5100000000, 0.3419547346)--(0.5200000000, 0.3454702574)--(0.5300000000, 0.3491091292)--(0.5400000000, 0.3528710506)--(0.5500000000, 0.3567557740)--(0.5600000000, 0.3607631093)--(0.5700000000, 0.3648929289)--(0.5800000000, 0.3691451732)--(0.5900000000, 0.3735198553)--(0.6000000000, 0.3780170662)--(0.6100000000, 0.3826369797)--(0.6200000000, 0.3873798571)--(0.6300000000, 0.3922460527)--(0.6400000000, 0.3972360189)--(0.6500000000, 0.4023503117)--(0.6600000000, 0.4075895967)--(0.6700000000, 0.4129546550)--(0.6800000000, 0.4184463896)--(0.6900000000, 0.4240658325)--(0.7000000000, 0.4298141516)--(0.7100000000, 0.4356926587)--(0.7200000000, 0.4417028169)--(0.7300000000, 0.4478462496)--(0.7400000000, 0.4541247480)--(0.7500000000, 0.4605402806)--(0.7600000000, 0.4670950007)--(0.7700000000, 0.4737912543)--(0.7800000000, 0.4806315863)--(0.7900000000, 0.4876187448)--(0.8000000000, 0.4947556820)--(0.8100000000, 0.5020455498)--(0.8200000000, 0.5094916883)--(0.8300000000, 0.5170976020)--(0.8400000000, 0.5248669204)--(0.8500000000, 0.5328033354)--(0.8600000000, 0.5409105035)--(0.8700000000, 0.5491919025)--(0.8800000000, 0.5576506204)--(0.8900000000, 0.5662890542)--(0.9000000000, 0.5751084863)--(0.9100000000, 0.5841085028)--(0.9200000000, 0.5932862191)--(0.9300000000, 0.6026352940)--(0.9400000000, 0.6121447541)--(0.9500000000, 0.6217977393)--(0.9600000000, 0.6315704134)--(0.9700000000, 0.6414314373)--(0.9800000000, 0.6513424896)--(0.9900000000, 0.6612601783)--(1.000000000, 0.6711392627)--(1.010000000, 0.6809365251)--(1.020000000, 0.6906142817)--(1.030000000, 0.7001426789)--(1.040000000, 0.7095004661)--(1.050000000, 0.7186744956)--(1.060000000, 0.7276584645)--(1.070000000, 0.7364513977)--(1.080000000, 0.7450562001)--(1.090000000, 0.7534784378)--(1.100000000, 0.7617253890)--(1.110000000, 0.7698053423)--(1.120000000, 0.7777270978)--(1.130000000, 0.7854996230)--(1.140000000, 0.7931318228)--(1.150000000, 0.8006323910)--(1.160000000, 0.8080097181)--(1.170000000, 0.8152718391)--(1.180000000, 0.8224264073)--(1.190000000, 0.8294806862)--(1.200000000, 0.8364415527)--(1.210000000, 0.8433155087)--(1.220000000, 0.8501086954)--(1.230000000, 0.8568269123)--(1.240000000, 0.8634756355)--(1.250000000, 0.8700600376)--(1.260000000, 0.8765850067)--(1.270000000, 0.8830551651)--(1.280000000, 0.8894748867)--(1.290000000, 0.8958483143)--(1.300000000, 0.9021793751)--(1.310000000, 0.9084717955)--(1.320000000, 0.9147291149)--(1.330000000, 0.9209546986)--(1.340000000, 0.9271517497)--(1.350000000, 0.9333233203)--(1.360000000, 0.9394723215)--(1.370000000, 0.9456015335)--(1.380000000, 0.9517136140)--(1.390000000, 0.9578111068)--(1.400000000, 0.9638964493)--(1.410000000, 0.9699719796)--(1.420000000, 0.9760399436)--(1.430000000, 0.9821025007)--(1.440000000, 0.9881617297)--(1.450000000, 0.9942196347) node[right] {$(-2,0)$};
\draw[violet,thick,->] (0.2100000000, 0.2915678695)--(0.2200000000, 0.2985029496)--(0.2300000000, 0.3052032895)--(0.2400000000, 0.3116952344)--(0.2500000000, 0.3180023364)--(0.2600000000, 0.3241457174)--(0.2700000000, 0.3301443754)--(0.2800000000, 0.3360154450)--(0.2900000000, 0.3417744208)--(0.3000000000, 0.3474353479)--(0.3100000000, 0.3530109883)--(0.3200000000, 0.3585129633)--(0.3300000000, 0.3639518786)--(0.3400000000, 0.3693374335)--(0.3500000000, 0.3746785159)--(0.3600000000, 0.3799832871)--(0.3700000000, 0.3852592556)--(0.3800000000, 0.3905133428)--(0.3900000000, 0.3957519415)--(0.4000000000, 0.4009809682)--(0.4100000000, 0.4062059089)--(0.4200000000, 0.4114318610)--(0.4300000000, 0.4166635708)--(0.4400000000, 0.4219054667)--(0.4500000000, 0.4271616900)--(0.4600000000, 0.4324361219)--(0.4700000000, 0.4377324087)--(0.4800000000, 0.4430539841)--(0.4900000000, 0.4484040904)--(0.5000000000, 0.4537857971)--(0.5100000000, 0.4592020182)--(0.5200000000, 0.4646555282)--(0.5300000000, 0.4701489771)--(0.5400000000, 0.4756849038)--(0.5500000000, 0.4812657489)--(0.5600000000, 0.4868938668)--(0.5700000000, 0.4925715364)--(0.5800000000, 0.4983009719)--(0.5900000000, 0.5040843327)--(0.6000000000, 0.5099237327)--(0.6100000000, 0.5158212493)--(0.6200000000, 0.5217789318)--(0.6300000000, 0.5277988102)--(0.6400000000, 0.5338829025)--(0.6500000000, 0.5400332227)--(0.6600000000, 0.5462517885)--(0.6700000000, 0.5525406281)--(0.6800000000, 0.5589017875)--(0.6900000000, 0.5653373374)--(0.7000000000, 0.5718493794)--(0.7100000000, 0.5784400525)--(0.7200000000, 0.5851115388)--(0.7300000000, 0.5918660689)--(0.7400000000, 0.5987059258)--(0.7500000000, 0.6056334491)--(0.7600000000, 0.6126510365)--(0.7700000000, 0.6197611442)--(0.7800000000, 0.6269662842)--(0.7900000000, 0.6342690184)--(0.8000000000, 0.6416719478)--(0.8100000000, 0.6491776951)--(0.8200000000, 0.6567888789)--(0.8300000000, 0.6645080762)--(0.8400000000, 0.6723377695)--(0.8500000000, 0.6802802740)--(0.8600000000, 0.6883376379)--(0.8700000000, 0.6965115094)--(0.8800000000, 0.7048029603)--(0.8900000000, 0.7132122578)--(0.9000000000, 0.7217385744)--(0.9100000000, 0.7303796315)--(0.9200000000, 0.7391312781)--(0.9300000000, 0.7479870243)--(0.9400000000, 0.7569375708)--(0.9500000000, 0.7659704114)--(0.9600000000, 0.7750696174)--(0.9700000000, 0.7842159322)--(0.9800000000, 0.7933872835)--(0.9900000000, 0.8025597452)--(1.000000000, 0.8117088587)--(1.010000000, 0.8208110998)--(1.020000000, 0.8298452213)--(1.030000000, 0.8387932382)--(1.040000000, 0.8476409432)--(1.050000000, 0.8563779691)--(1.060000000, 0.8649975098)--(1.070000000, 0.8734958371)--(1.080000000, 0.8818717416)--(1.090000000, 0.8901259817) node[above] {$(-1,0)$};
\filldraw[brown] (0,0.0512932*8) circle (0.5pt);
\filldraw[magenta] (0.02245802909, 0.02000000000) circle (0.5pt);
\filldraw[blue] (0.212516,- 0.0097740*8) circle (0.5pt);
\filldraw[violet] (0.2100000000, 0.2915678695) circle (0.5pt);
\end{tikzpicture}
\end{tiny}
\caption{This figure depicts in \textcolor{orange}{orange} the set $\calC_{p_4,\vartheta}$ for $\vartheta=\arg(-1/\tau)=1.74159$, and $(A,B)=(4,1)$. The \textcolor{green}{green} lines are parallel to the lines $i/\tau$. The \textcolor{magenta}{magenta} curve represents the tail at the intersection of the set $\calC_{p_4,\vartheta}$ with the green lines. The \textcolor{violet}{violet} contour represents half of the contour that appears when we run the algorithm a second time.}
\label{fig:thimb.A4B1.II}
\end{figure}
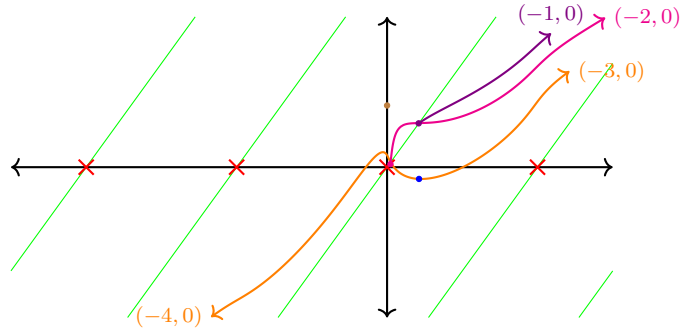
\subsubsection{The saddle connection between $\mathrm{II}$ and $\mathrm{III}$}
The saddle connection is depicted in Figure~\ref{fig:saddle.A4B1.II-III}. Note that it appears in the contours coming from the first iteration of the algorithm.
\begin{figure}[ht]
\begin{tiny}

\end{tiny}
\caption{This figure depicts in \textcolor{orange}{orange} the set $\calC_{p_1,\vartheta}$ for $\vartheta=\arg(-1/\tau)=1.64069$ and $(A,B)=(4,1)$. The \textcolor{green}{green} lines are parallel to the lines $i/\tau$. The \textcolor{magenta}{magenta} curves represent the tails at the intersection of the set $\calC_{p_4,\vartheta}$ with the green lines. The \textcolor{violet}{violet} contour represents half of the contour that appears when we run the algorithm a second time. Notice that the \textcolor{magenta}{magenta} contour splits at the critical point $\textcolor{blue}{p_3}$ as this is a saddle connection.}
\label{fig:saddle.A4B1.II-III}
\end{figure}

\subsubsection{State integrals decomposition in $\mathrm{III}$}
Let $\vartheta\in\mathrm{III}$. We plot the curve $\calC_{p_2,\vartheta}$ in Figure~\ref{fig:thimb.A4B1.III}.
\begin{figure}[ht]
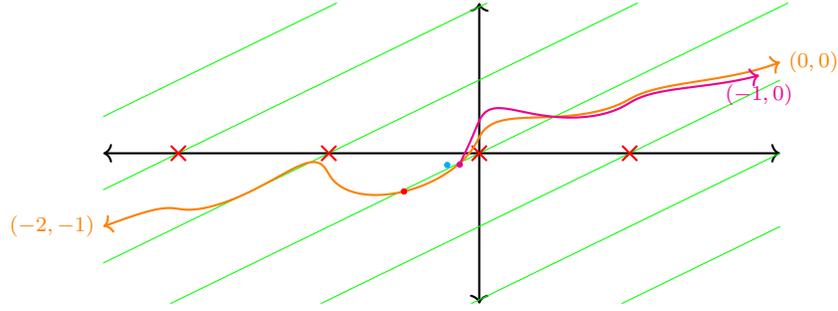

\begin{tiny}

\end{tiny}
\caption{This figure depicts in \textcolor{orange}{orange} the set $\calC_{p_1,\vartheta}$ for $\vartheta=\arg(-1/\tau)=1.63159$, and $(A,B)=(4,1)$. The \textcolor{green}{green} lines are parallel to the lines $i/\tau$. The \textcolor{magenta}{magenta} curve represents the tail at the intersection of the set $\calC_{p_4,\vartheta}$ with the green lines.}
\label{fig:thimb.A4B1.III}
\end{figure}
We can decompose the thimble in terms on state integrals contours. We have contributions from $\calI_{0,0}$ from the intersection of the orange contour with the interval $(-1,0)$, which has $(n,m,\ell)=(0,0,0)$. Then due to the intersection with the green line we should study the contributions from the magenta line. Since the latter crosses the reals in the interval $(-1,0)$, we get the contribution of $\calI_{-1,0}$, which has $(n,m,\ell)=(0,-1,0)$. Any other tails lead to no contribution as they do not cross the reals and head to $\infty$. The algorithm therefore gives
\be\label{eq:region_III_A4B1}
\calI_{p_1,V_1,\mathrm{III}}
\=
\calI_{0,0}-\calI_{-1,0}\,.
\ee
\subsubsection{The saddle connection between $\mathrm{III}$ and $\mathrm{IV}$}
The saddle connection is depicted in Figure~\ref{fig:saddle.A4B1.III-IV}.
\begin{figure}[ht]
\begin{tiny}

\end{tiny}
\caption{This figure depicts in \textcolor{orange}{orange} the set $\calC_{p_4,\vartheta}$ for $\vartheta=\arg(-1/\tau)=1.62047$, and $(A,B)=(4,1)$. The \textcolor{green}{green} lines are parallel to the lines $i/\tau$. The \textcolor{magenta}{magenta} curve represents the tail at the intersection of the set $\calC_{p_4,\vartheta}$ with the green lines. The \textcolor{violet}{violet} contour represents half of the contour that appears when we run the algorithm a second time. While it is hard to see at this scale, the violet contour hits the critical point $\textcolor{blue}{p_3+1}$, which is just above the green line. Therefore, at this critical angle the violet contour splits at this point.}
\label{fig:saddle.A4B1.III-IV}
\end{figure}

\subsubsection{State integrals decomposition in $\mathrm{IV}$}
Let $\vartheta\in\mathrm{IV}$. We plot the set $\calC_{p_4,\vartheta}$ in Figure~\ref{fig:thimb.A4B1.IV}.
\begin{figure}[ht]
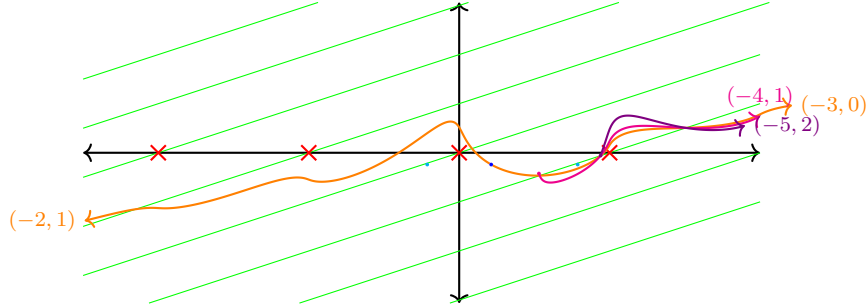

\begin{tiny}

\end{tiny}
\caption{This figure depicts in \textcolor{orange}{orange} the set $\calC_{p_4,\tau}$ for $\vartheta=\arg(-1/\tau)=1.61159$, and $(A,B)=(4,1)$. The \textcolor{green}{green} lines are parallel to the lines $i/\tau$. The \textcolor{magenta}{magenta} curves represent the tails at the intersection of the set $\calC_{p_4,\vartheta}$ with the green lines. The \textcolor{violet}{violet} contour represents half of the contour that appears when we run the algorithm a second time.}
\label{fig:thimb.A4B1.IV}
\end{figure}
We can decompose the thimble in terms of state integrals contours. We have contributions of $\calI_{-3,0}$ from the intersection of the orange contour with the interval $(-1,0)$, which has $(n,m,\ell)=(0,-3,0)$. Then, due to the intersection with the green lines we should run the algorithm another time for the magenta line. The latter intersects a green line, thus we apply the algorithm one more time for the violet line. Then we find a contribution of $-q^2\calI_{-5,1}$ from the intersection of the violet line with the interval $(0,1)$, which has $(n,m,\ell)=(2,-5,1)$. Summarising, the algorithm gives
\be\label{eq:region_IV_A4B1}
\calI_{p_4,V_4,\mathrm{IV}}
\=
\calI_{-3,0}-q^2\calI_{-5,1}=\calI_{-3,0}-q\calI_{-1,0}\,,
\ee
where the second equality follows from Equation~\eqref{eq:state_integral_1}. Notice that the magenta line in Figure~\ref{fig:thimb.A4B1.IV} does not contribute even if it intersects the interval $(0,1)$. Indeed it intersects the green line before intersecting the reals, and this leads to the contribution from the violet tail, as shown in Figure~\ref{fig:thimb.A4B1.IV--no_magenta}.
\begin{figure}[ht]
\begin{tiny}
.&\,&
  \end{aligned}
\ee

\subsubsection{Computing all Stokes constants}
We can again compute the Stokes matrix $\mathfrak{S}_{\mathrm{I},\mathrm{X}}$ using quantum modularity~\cite{GZ:RQMOD} or equivalently the factorisations of the state integrals~\cite{GGM:I,GK:qser}. Here it is more complicated than $4_1$ or $(A,B)=(1,2)$ from Section~\ref{sec:4_1} as one needs to use an ``untrapping'' of the state integral and various identities between $q$-series and state integrals. This was all done in~\cite[Sec.~10.1]{Wh:thesis}. To give these results we need some notation.

Let $\Theta(q)$ be the matrix whose columns are given by the coefficients of $\epsilon^0,\epsilon^1,\epsilon^2,\epsilon^4$ respectively of the series
\be
    \exp\Big(\frac{1}{2}\epsilon+\frac{1}{4}\epsilon^2E_{2}(q)\Big)
    \begin{pmatrix}
        \theta(-\exp(\epsilon);q^{4})\\
        -q^{5/8}\exp(\epsilon/4)\theta(-q\exp(\epsilon);q^{4})\\
        q^{12/8}\exp(\epsilon/2)\theta(-q^{2}\exp(\epsilon);q^{4})\\
        -q^{21/8}\exp(3\epsilon/4)\theta(-q^{3}\exp(\epsilon);q^{4})
    \end{pmatrix}\,,
\ee
where $\theta(x;q)=(qx;q)_{\infty}(x^{-1};q)_{\infty}(q;q)_{\infty}$ and $E_{2}(q)=-\tfrac{1}{24}+\sum_{k=1}^{\infty}k\tfrac{q^k}{1-q^k}$.
Let
\be
    G(t;q)
    \=
    \sum_{k=0}^{\infty}\frac{q^{3k(4k+1)/2}t^{4k}}{(q;q)_{4k}}
    \begin{pmatrix}
        \frac{1}{(q;q)_{4k}} &
        \frac{q^{3k+\frac{3}{4}}t}{(q;q)_{4k+1}} &
        \frac{q^{6k+\frac{9}{4}}t^{2}}{(q;q)_{4k+2}} &
        \frac{q^{9k+\frac{9}{2}}t^{3}}{(q;q)_{4k+3}}\\
        \frac{q^{4k}}{(q;q)_{4k}} &
        \frac{q^{3k+\frac{3}{4}+4k+1}t}{(q;q)_{4k+1}} &
        \frac{q^{6k+\frac{9}{4}+4k+2}t^{2}}{(q;q)_{4k+2}} &
        \frac{q^{9k+\frac{9}{2}+4k+3}t^{3}}{(q;q)_{4k+3}}\\
        \frac{q^{8k}}{(q;q)_{4k}} &
        \frac{q^{3k+\frac{3}{4}+8k+2}t}{(q;q)_{4k+1}} &
        \frac{q^{6k+\frac{9}{4}+8k+4}t^{2}}{(q;q)_{4k+2}} &
        \frac{q^{9k+\frac{9}{2}+8k+6}t^{3}}{(q;q)_{4k+3}}\\
        \frac{q^{12k}}{(q;q)_{4k}} &
        \frac{q^{3k+\frac{3}{4}+12k+3}t}{(q;q)_{4k+1}} &
        \frac{q^{6k+\frac{9}{4}+12k+6}t^{2}}{(q;q)_{4k+2}} &
        \frac{q^{9k+\frac{9}{2}+12k+9}t^{3}}{(q;q)_{4k+3}}
    \end{pmatrix}\,,
\ee
\be
\begin{tiny}
\begin{aligned}
    P(q)
    \=
    \begin{pmatrix}
    1 & 0 & 0 & 0\\
    -q^{-3} - q^{-2} - q^{-1} &  q^{-3} + q^{-2} + 2q^{-1} + 1 + q &  -q^2 - q^3 - q^4 - q^5 - q^{12} &  q^{12}\\
    q^{-5} + q^{-4} + q^{-3} &  -q^{-5} - 2q^{-4} - 2q^{-3} - 2q^{-2} - q^{-1} + q^3 &  1 + 2q + 2q^2 + q^3 + q^4 + q^{10} + q^{11} &  -q^{10} - q^{11}\\
    -q^{-6} - q^{-1} &  q^{-6} + q^{-5} + q^{-4} - q^2 &  -q^{-1} - 1 - q - q^9 &  q^9
    \end{pmatrix}
\end{aligned}
\end{tiny}
\ee
and
\be
\begin{aligned}
    &F(q)\=P(q)
    \begin{pmatrix}
        1 & 0 & 0 & 0\\
        0 & 1 & 0 & 0\\
        0 & 0 & q^{-4} & 0\\
        0 & 0 & 0 & q^{-12}\\
    \end{pmatrix}
    G(q^{-3/8};q)
    \Theta(q)\,.
\end{aligned}
\ee
The first column of $F(q)$ is given by
\be
  \sum_{k=0}^{\infty}\frac{q^{2k(k+1)}}{(q;q)_{k}}
  \begin{pmatrix}
    q^{-2k}\\q^{-k}\\1\\q^k
  \end{pmatrix}\,.
\ee
The matrix $F(q)$ can be extended to $|q|\neq1$ using the relations $\theta(x;q^{-1})=\theta(x^{-1};q)^{-1}$ and $E_{2}(q^{-1})=-E_{2}(q)$.
Let $f(q)$ be the third row of $F(q)$. Then using the identities of~\cite[Sec.~10.1]{Wh:thesis}, we find that
\be
\begin{aligned}
\begin{pmatrix}
    \calI_{p_1,V_1,\mathrm{I}} & \calI_{p_2,V_2,\mathrm{I}} &\calI_{p_3,V_3,\mathrm{I}} & \calI_{p_4,V_4,\mathrm{I}}
  \end{pmatrix}
  \=
  f(q^{-1})
  \begin{pmatrix}
        1 & 0 & 0 & 0\\
        0 & \tau & 0 & 0\\
        0 & 0 & \tau^2 & 0\\
        0 & 0 & 0 & \tau^4
    \end{pmatrix}^{-1}
\!\!\!\!\!\!
    F(q^{-1})^{-1}\!
    \begin{pmatrix}
        0 & 0 & 1 & 0\\
        1 & 0 & 0 & 0\\
        0 & 1 & 0 & 0\\
        0 & 0 & 0 & q^{-1}
    \end{pmatrix}^{-1}\!\!\!\!\!\!\!.
\end{aligned}
\ee
This exactly agrees with the conjectures~\cite[Ex.~61]{Wh:thesis} for the Borel--Laplace resummation in region $\mathrm{I}$. A similar computation shows that in the region $\mathrm{X}$ we also find agreement with the conjectures and
\be
\begin{aligned}
\!\begin{pmatrix}
    \calI_{p_1,V_1,\mathrm{X}} & \calI_{p_2,V_2,\mathrm{X}} &\calI_{p_3,V_3,\mathrm{X}} & \calI_{p_4,V_4,\mathrm{X}}
  \end{pmatrix}
  \=
  f(q^{-1})\!
  \begin{pmatrix}
        1 & 0 & 0 & 0\\
        0 & -\tau & 0 & 0\\
        0 & 0 & \tau^2 & 0\\
        0 & 0 & 0 & \tau^4
    \end{pmatrix}^{-1}
\!\!\!\!\!\!\!
    F(q^{-1})^{-1}\!\!
    \begin{pmatrix}
        0 & 0 & 1 & 0\\
        1 & 0 & 0 & 0\\
        0 & 0 & 0 & q^{-1}\\
        0 & 1 & 0 & 0
    \end{pmatrix}^{-1}\!\!\!\!\!\!\!.
\end{aligned}
\ee
This implies that
\be
\begin{aligned}
  \mathsf{S}_+(q)
  &\=\begin{pmatrix}
        0 & 0 & 1 & 0\\
        1 & 0 & 0 & 0\\
        0 & 1 & 0 & 0\\
        0 & 0 & 0 & q^{-1}
    \end{pmatrix}
    F(q^{-1})
    \begin{pmatrix}
        1 & 0 & 0 & 0\\
        0 & -1 & 0 & 0\\
        0 & 0 & 1 & 0\\
        0 & 0 & 0 & 1
    \end{pmatrix}
    F(q^{-1})^{-1}
    \begin{pmatrix}
        0 & 0 & 1 & 0\\
        1 & 0 & 0 & 0\\
        0 & 0 & 0 & q^{-1}\\
        0 & 1 & 0 & 0
    \end{pmatrix}^{-1}\\
    &\=
    \left(\begin{pmatrix}
            0 & 0 & 1 & 0\\
            1 & 0 & 0 & 0\\
            0 & 1 & 0 & 0\\
            0 & 0 & 0 & q
        \end{pmatrix}
        F(q)
        \begin{pmatrix}
            1 & 0 & 0 & 0\\
            0 & -1 & 0 & 0\\
            0 & 0 & 1 & 0\\
            0 & 0 & 0 & 1
        \end{pmatrix}
        F(q)^{-1}
        \begin{pmatrix}
            0 & 0 & 1 & 0\\
            1 & 0 & 0 & 0\\
            0 & 0 & 0 & q^{-1}\\
            0 & 1 & 0 & 0
        \end{pmatrix}^{-1}\right)^{-T}\\
    &\=
    \begin{pmatrix}
    1 - q - 2q^2 & q^2 & q & -q - q^2\\
    1 + q + q^2 & 1 - q - q^2 & -1 + q + q^2 & 1 - q^2\\
    -1 - q & q + q^2 & 1 - q - 2q^2 & 2q^2\\
    1 - q^2 & -q & 2q + q^2 & 1 - q - 2q^2
    \end{pmatrix}
    +O(q^3)\\
    &\=
    \begin{pmatrix}
     1 & 0 & 0 & 0\\
     0 & 1 & 0 & 1 \\
     0 & 0 & 1 & 0 \\
     0 & 0 & 0 & 1
    \end{pmatrix}
    \begin{pmatrix}
     1 & 0 & 0 & 0\\
     0 & 1 & 0 & 0 \\
     -1 & 0 & 1 & 0 \\
     0 & 0 & 0 & 1
    \end{pmatrix}
    \begin{pmatrix}
     1 & 0 & 0 & 0\\
     0 & 1 & 0 & 0 \\
     0 & 0 & 1 & -q \\
     0 & 0 & 0 & 1
    \end{pmatrix}
    \cdots\,.
\end{aligned}
\ee
The second equality follows from the duality of~\cite[Conj.~1]{GGM:I} and can be explicitly proved in this example.
This exactly agrees with the computations of Equation~\eqref{eq:AB41stokes}.



\appendix

\section{Critical points and independence of state integrals}~\label{app:crit.pts}
In this appendix we prove that the critical points of the function $V(z)$ are non-degenerate away from the branch points.
\begin{lemma}
For $P(z)=(-z)^A-(1-z)^B$ with $A,B\in\BZ_{>0}$ with $A\neq B$ there are no solutions to the equations
\be
  P(z)\=P(z')\=0\,.
\ee
\end{lemma}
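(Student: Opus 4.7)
The plan is to assume some $z$ satisfies $P(z)=P'(z)=0$, extract a unique algebraic candidate for $z$ from these two relations, and then show that this candidate fails the equation $P(z)=0$. As a preliminary, I note that $z\in\{0,1\}$ is ruled out immediately since $P(0)=-1$ and $P(1)=(-1)^A$; in particular $-z$ and $1-z$ will be nonzero throughout the argument.

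Computing $P'(z)=-A(-z)^{A-1}+B(1-z)^{B-1}$, the simultaneous vanishing $P=P'=0$ is equivalent to
\begin{equation*}
(-z)^A\=(1-z)^B \qquad\text{and}\qquad A(-z)^{A-1}\=B(1-z)^{B-1}.
\end{equation*}
Dividing the first relation by the second yields $(1-z)/(-z)=B/A$, which rearranges to the unique candidate
\begin{equation*}
z_0\=\frac{A}{A-B},
\end{equation*}
well defined since $A\neq B$. One then has $-z_0=A/(B-A)$ and $1-z_0=B/(B-A)$.

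It remains to plug $z=z_0$ back into $(-z)^A=(1-z)^B$ and show the equation fails. Comparing absolute values, the identity reduces to
\begin{equation*}
A^A\,|B-A|^{B-A}\=B^B \quad\text{if }B>A, \qquad A^A\=B^B\,|A-B|^{A-B} \quad\text{if }A>B,
\end{equation*}
where in the second case the negative exponent has been moved across. The plan is to rule out both equalities using the strict convexity of $x\mapsto x\log x$ on $(0,\infty)$, equivalently the strict log-sum inequality: for positive reals $a,b$ with $a+b=s$ one has $a\log a+b\log b<s\log s$. Applied with $(a,b,s)=(A,B-A,B)$ or $(B,A-B,A)$, this yields the strict inequalities $A^A(B-A)^{B-A}<B^B$ and $B^B(A-B)^{A-B}<A^A$, each contradicting the corresponding candidate equality.

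The only small obstacle is the sign bookkeeping when $A>B$, in which case $-z_0$ and $1-z_0$ are negative reals and $(-z_0)^A$, $(1-z_0)^B$ acquire parity-dependent signs $(-1)^A$ and $(-1)^B$. However, since the magnitudes already disagree strictly by the convexity inequality, the sign issue is cosmetic and no further case analysis is needed. This completes the sketch.
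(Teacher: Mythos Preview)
Your proof is correct and takes a genuinely different route from the paper. Both arguments start identically: from $P(z)=P'(z)=0$ one extracts the unique candidate $z_0=A/(A-B)$ and must then show $P(z_0)\neq 0$. At this point the paper proceeds arithmetically: it clears the greatest common divisor $d=(A,B)$, writes $A=A_0d$, $B=B_0d$ with $(A_0,B_0)=1$, and reduces the resulting integer relation modulo $B_0$ (and implicitly modulo $A_0$ by symmetry) to force $A_0=B_0=1$, contradicting $A\neq B$. Your argument instead compares absolute values and invokes the strict superadditivity of $x\mapsto x\log x$ on $[0,\infty)$ (equivalently, $a^a b^b<(a+b)^{a+b}$ for $a,b>0$), which immediately rules out the required equality of magnitudes in both cases $A\lessgtr B$.

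Your approach is cleaner and more self-contained: it avoids the coprimality bookkeeping and the implicit symmetry step the paper needs when one of $A_0,B_0$ equals $1$. The paper's modular argument, on the other hand, is purely algebraic and would survive in contexts where analytic inequalities are unavailable. A minor expository point: the inequality $a\log a+b\log b<(a+b)\log(a+b)$ follows already from monotonicity of $\log$ (or from convexity together with $f(0)=0$), so calling it ``strict convexity'' is slightly loose, but the inequality itself is correct and the proof is complete.
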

\begin{proof}
We prove this by contradiction. Suppose there exists a degenerate critical point $x_0$ so that
\be
  (-x_0)^A-(1-x_0)^B
  \=P(x_0)
  \=0
  \=P'(x_0)
  \=(-x_0)^A\frac{A}{x_0}+(1-x_0)^B\frac{B}{1-x_0}\,.
\ee
This would imply that
\be
  x_0
  \=
  \frac{A}{A-B}\,,
\ee
which would in turn imply that
\be
  (-1)^A\frac{A^A}{(A-B)^A}
  \=(-1)^B\frac{B^B}{(A-B)^B}\,.
\ee
We can clear the greatest common divisor defining $A=A_0(A,B)$ and $B=B_0(A,B)$ to obtain
\be\label{eq:int.equ.app}
  (-1)^A\frac{A_0^A}{(A_0-B_0)^A}
  \=(-1)^B\frac{B_0^B}{(A_0-B_0)^B}\,.
\ee
Given that $(A_0,B_0)=1$, we see that $A_0$ is invertible modulo $B_0$. Therefore, Equation~\eqref{eq:int.equ.app}
implies that
\be
  (-1)^A\equiv(-1)^AA_0^A(A_0-B_0)^{-A}\equiv(-1)^B\frac{B_0^B}{(A_0-B_0)^B}\equiv0\pmod{B_0}\,,
\ee
which is a contradiction.
\end{proof}

\begin{corollary}\label{cor:phi.is.invert}
The matrix $\widehat{\Phi}_j(\hbar)$ for Equation~\eqref{eq:phimat} is invertible.
\end{corollary}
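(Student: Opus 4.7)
The plan is to read off the leading $\hbar \to 0$ behaviour of the entries of $\widehat{\Phi}_j(\hbar)$ via formal Gaussian integration, factor out column-dependent prefactors, and recognise the remaining structure as a Vandermonde matrix built from the roots of the polynomial $P(x)$. First I would observe that the definition of $\Phi_{(A,B,p_0),j+\ell}(\hbar)$ differs from that of $\Phi_{\Xi}$ in \eqref{eq:form.int} only by the $\hbar$-regular multiplicative factor $\e(z(j+\ell))$ in the integrand. Performing the formal stationary-phase expansion at the saddle $z_0 = z_0(p_0)$ therefore reproduces the prefactor $\sqrt{-2\pi i \hbar/\delta_{p_0}}\,e^{-V(p_0)/\hbar}$ and all higher-order data of $\Phi_{(A,B,p_0),0}$ as in \eqref{eq:def.phi}, multiplied by the extra leading constant $\e(z_0(j+\ell)) = x_0(p_0)^{j+\ell}$, where $x_0(p_0) = \e(z_0(p_0))$ is the root of $P$ attached to $p_0$ through \eqref{eq:crit.pts}.

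Next I would pull out of each column of $\det \widehat{\Phi}_j(\hbar)$ the column-dependent factors
\[
\sqrt{\frac{-2\pi i \hbar}{\delta_{p_0}}}\, e^{-V(p_0)/\hbar}\, x_0(p_0)^j,
\]
together with the nonzero constant leading coefficient of the formal series part of $\Phi_{(A,B,p_0),0}$. What remains begins, as a formal series in $\hbar$, with the $\max\{A,B\} \times \max\{A,B\}$ Vandermonde matrix $(x_0(p_0)^\ell)_{0 \le \ell < \max\{A,B\},\, p_0}$, whose determinant equals $\prod_{p_0 \ne p_0'}(x_0(p_0') - x_0(p_0))$.

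The last step is to invoke the preceding lemma, which ensures that the roots of $P(x) = (-x)^A - (1-x)^B$ are pairwise distinct, so the Vandermonde determinant is nonzero. Hence the leading coefficient of the power-series part of $\det \widehat{\Phi}_j(\hbar)$ is nonzero, so $\det \widehat{\Phi}_j(\hbar)$ is itself nonzero and $\widehat{\Phi}_j(\hbar)$ is invertible. I do not expect any substantive obstacle; the main care needed is in the bookkeeping of the column factors, so that the Vandermonde structure becomes visible once the correct leading behaviour is pulled out of each column.
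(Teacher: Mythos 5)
Your argument is correct and follows essentially the same route as the paper: both extract from each column the prefactor $\sqrt{-2\pi i\hbar/\delta_{p_0}}\,e^{-V_0/\hbar}\,x_0(p_0)^j$ together with the nonvanishing leading coefficient of $\Phi_{(A,B,p_0),0}$, identify the remaining leading matrix as the Vandermonde matrix $(x_0(p_0)^\ell)$, and then invoke the preceding lemma's assertion that the roots of $P$ are distinct to conclude the determinant is nonzero. The only difference is that you spell out the stationary-phase bookkeeping a bit more explicitly, but the structure and key lemma invocation are the same.
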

\begin{proof}
The lemma shows that the roots of $P(z)$ are independent. Moreover, the constant term of $\Phi_{(A,B,p_0),j}(\hbar)$ is equal to the constant term of $\Phi_{(A,B,p_0),0}(\hbar)$ multiplied by $x_0^j$ where $P(x_0)=0$ is the root corresponding to $p_0$. Therefore, the constant term of the matrix $\widehat{\Phi}_j(\hbar)$ gives an invertible diagonal matrix times a Vandermonde matrix with distinct entries and therefore invertible.
\end{proof}

\section{Faddeev's dilogarithm}\label{app:Faddeev}

This appendix is dedicated to a description and proof of the asymptotics of Faddeev's quantum dilogarithm given in the introduction (in particular, Theorem~\ref{thm:fad.asymp}).

\subsection{Asymptotics of the Pochhammer symbol}

The asymptotics of the Pochhammer symbol give rise to a particular branch of the dilogarithm for each argument of $\tau$. The zeros or poles then line up along the branch cuts defining this principle branch. For $\theta\in[0,2\pi)$ define $\mathrm{L}_{\theta}:\BC\backslash(\BZ+e^{i\theta}\BR_{\leq0})\rightarrow\BC$ to be the holomorphic function such that
\be
  \mathrm{L}_{\theta}(z)
  \=
  \int_{z+e^{i\theta}\BR_{\geq0}}
  \int_{w+e^{i\theta}\BR_{\geq0}}
  \frac{\e(\z)}{1-\e(\z)}\,d\z\,dw\,.
\ee
This function $\mathrm{L}_{\theta}(z)$ gives a particular branch of the multivalued  function $z\mapsto\Li_{2}(\e(z))/(2\pi i)^2$ for each $\theta$ and if $\theta\in(0,\pi)$ and $\Im(z)>0$ then $\mathrm{L}_{\theta}(z)=\Li_{2}(\e(z))/(2\pi i)^2$ for the principle branch of $\Li_2$. The domain of $\mathrm{L}_{\theta}$ is pictured in Figure~\ref{fig:prin.bran}. This function also satisfies
\be
  \frac{d^{k+2}}{dz^{k+2}}
  \mathrm{L}_{\theta}(z)
  \=
  (2\pi i)^{k}\Li_{-k}(\e(z))\,.
\ee
\begin{center}
\begin{figure}
\begin{tikzpicture}
\draw[<->,thick] (0,-2) -- (0,2);
\draw[<->,thick] (-4.5,0) -- (4.5,0);
\draw[red] (0,0) -- (2,-2);
\draw[red] (2,0) -- (2+2,-2);
\draw[red] (4,0) -- (4+0.5,-0.5);
\draw[red] (-2,0) -- (-2+2,-2);
\draw[red] (-4,0) -- (-4+2,-2);
\draw[thick,red] (-0.05,-0.05)--(0.05,0.05);
\draw[thick,red] (0.05,-0.05)--(-0.05,0.05);
\draw[thick,red,xshift=-4cm] (-0.05,-0.05)--(0.05,0.05);
\draw[thick,red,xshift=-4cm] (0.05,-0.05)--(-0.05,0.05);
\draw[thick,red,xshift=-2cm] (-0.05,-0.05)--(0.05,0.05);
\draw[thick,red,xshift=-2cm] (0.05,-0.05)--(-0.05,0.05);
\draw[thick,red,xshift=2cm] (-0.05,-0.05)--(0.05,0.05);
\draw[thick,red,xshift=2cm] (0.05,-0.05)--(-0.05,0.05);
\draw[thick,red,xshift=4cm] (-0.05,-0.05)--(0.05,0.05);
\draw[thick,red,xshift=4cm] (0.05,-0.05)--(-0.05,0.05);
\end{tikzpicture}
\caption{Domain of $\mathrm{L}_{3\pi/4}(z)$ given by $\BC\backslash(\BZ+(i-1)\BR_{\leq0})$.}
\label{fig:prin.bran}
\end{figure}
\end{center}
\begin{lemma}\label{lem:asymp.qpoch}
Suppose that $\Im(\tau)>0$ and $\varepsilon\in\BR_{>0}$. Then as $\tau\rightarrow i\infty$ with fixed argument and $z$ is bounded away from the half lines $\{z\,|\,\Im((z+\BZ_{\geq0})\tau)=0,\text{ and }\Im(z)\leq0\}$ by $\varepsilon$ with $\theta=\arg(-1/\tau)\in(0,\pi)$ there exists a constant $C>0$ such that
\be
    \Big|(\e(z);\tq)_{\infty}
    -
    \e\Big(-\mathrm{L}_{\theta}(z)\tau
    -
    \frac{1}{2}\mathrm{L}_{\theta}'(z)
    -
    \sum_{k=2}^{K-1}\frac{B_k}{k!}\frac{(2\pi i)^{k-2}}{\tau^{k-1}}\Li_{2-k}(\e(z))\Big)
    \Big|
    <
    C\,\varepsilon^{-K}K!\,|\tau|^{-K}.
\ee
\end{lemma}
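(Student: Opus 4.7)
The plan is Euler--Maclaurin applied to $\log(\e(z);\tq)_{\infty}=\sum_{k\geq 0}\phi(z+kh)$ with $h=-1/\tau$ and $\phi(u)=\log(1-\e(u))$. The hypothesis $\theta=\arg(-1/\tau)\in(0,\pi)$ forces $\Im(h)>0$, so $|\e(z+kh)|$ decays exponentially in $k$ and the series converges absolutely; the $\varepsilon$-separation from the half-lines in the statement keeps the discrete grid $\{z+kh\}_{k\geq 0}$ uniformly away from the integer logarithmic singularities of $\phi$.

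Euler--Maclaurin to order $K$ yields
\[
  \sum_{k\geq 0}\phi(z+kh)
  \=
  \int_{0}^{\infty}\phi(z+xh)\,dx
  +\tfrac{1}{2}\phi(z)
  -\sum_{k=2}^{K-1}\frac{B_{k}}{k!}h^{k-1}\phi^{(k-1)}(z)+R_{K},
\]
where odd-order corrections with $k\geq 3$ drop out since $B_{k}=0$ there. After substituting $u=z+xh$ and using the antiderivative $\int\log(1-\e(u))\,du=-\Li_{2}(\e(u))/(2\pi i)$, the integral becomes $-2\pi i\tau\,\mathrm{L}_{\theta}(z)$. The boundary term equals $-\pi i\,\mathrm{L}_{\theta}'(z)$ via $\mathrm{L}_{\theta}'(z)=-\log(1-\e(z))/(2\pi i)$. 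The correction terms use $\phi^{(k)}(u)=-(2\pi i)^{k}\Li_{1-k}(\e(u))$ and $h=-1/\tau$ to match exactly the polylogarithmic sum, after multiplying through by $2\pi i$ to pass from the logarithm to $\e(\,\cdot\,)$.

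The main obstacle is the remainder bound. The remainder
\[
  R_{K}
  \=
  -\int_{0}^{\infty}\frac{B_{K}(\{x\})}{K!}h^{K}\phi^{(K)}(z+xh)\,dx
\]
is controlled via $\phi^{(K)}(u)=-(2\pi i)^{K}\Li_{1-K}(\e(u))$ and the Eulerian polynomial representation $\Li_{-n}(w)=wA_{n}(w)/(1-w)^{n+1}$, whose coefficients are non-negative integers summing to $n!$. For $\Im(u)\geq 0$ we have $|\e(u)|\leq 1$, hence $|wA_{K-1}(\e(u))|\leq(K-1)!$; and the separation hypothesis gives $|1-\e(u)|\geq c\min(\varepsilon,1)$ along the grid. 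Together, $|\Li_{1-K}(\e(z+xh))|\leq(K-1)!(c\varepsilon)^{-K}$ uniformly in $x\geq 0$. Combined with the classical bound $|B_{K}(\{x\})|/K!\leq 2/(2\pi)^{K}$, the $(2\pi)^{K}$ factors cancel precisely against those in $\phi^{(K)}$, and the remaining $x$-integral converges uniformly because $|\e(z+xh)|$ decays exponentially as $x\to\infty$. This yields $|R_{K}|\leq C\varepsilon^{-K}K!\,|\tau|^{-K}$.

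Writing $(\e(z);\tq)_{\infty}=e^{L+R_{K}}$ with $L$ the explicit series of the lemma, the elementary estimate $|e^{R_{K}}-1|\leq 2|R_{K}|$ (valid once $|R_{K}|\leq 1$) converts the log-scale bound to the multiplicative form in the statement, after absorbing the prefactor $|e^{L}|$---which is controlled along each fixed direction of $\tau$---into the constant~$C$.
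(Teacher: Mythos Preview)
Your proof is correct and follows essentially the same route as the paper: both apply Euler--Maclaurin to $\log(\e(z);\tq)_{\infty}$, identify the integral, boundary, and correction terms with $\mathrm{L}_{\theta}$, $\mathrm{L}_{\theta}'$, and the polylogarithms, and bound the remainder via Eulerian-number estimates on $\Li_{1-K}$ together with Lehmer's bound on periodic Bernoulli polynomials.

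One small gap worth noting: your bound $|wA_{K-1}(w)|\leq (K-1)!$ assumes $|w|=|\e(u)|\leq 1$, i.e.\ $\Im(u)\geq 0$, but the ray $u=z+xh$ can begin in the lower half-plane when $\Im(z)<0$. The paper closes this uniformly by invoking the reflection $\Li_{-k}(x)=-(-1)^{k}\Li_{-k}(1/x)$ to obtain $|\Li_{-k}(x)|\leq (k+1)!\min\{|x|,|x^{-1}|\}\varepsilon^{-k-1}$, which both covers $|x|>1$ and supplies the exponential decay for the $x$-integral in a single estimate rather than as a separate observation. Your argument is easily repaired along the same lines.
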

\begin{proof}
Given the positions of the branch cuts of $\mathrm{L}_{\theta}$, we see that
\be
    \frac{1}{2\pi i}\log(\e(z);\e(-1/\tau))_{\infty}
    \=
    -\sum_{n=0}^{\infty}\mathrm{L}_{\theta}'(z-n/\tau)\,.
\ee
Therefore, applying Euler-Maclaurin summation (and the fact that $B_{2k+1}=0$ for $k>0$) we find that
\be
\begin{aligned}
    -\sum_{n=0}^{\infty}\mathrm{L}_{\theta}'(z-n/\tau)
    &\=
    -\int_{0}^{\infty}\mathrm{L}_{\theta}'(z-\eta/\tau)\;d\eta-\frac{\mathrm{L}_{\theta}'(z)}{2}-\sum_{k=2}^{K-1}\frac{B_{k}}{k!}\frac{(2\pi i)^{k-2}}{\tau^{k-1}}\Li_{2-k}(\e(z))\\
    &\qquad+\frac{(-2\pi i)^{K}}{\tau^{K-1}}\int_{0}^{\infty}\Li_{2-K}(\e(z-\eta/\tau))\frac{B_{K}(\eta-\lfloor \eta\rfloor)}{K!}\;d\eta\,.
\end{aligned}
\ee
We see that
\be
    -\int_{0}^{\infty}\mathrm{L}_{\theta}'(z-\eta/\tau)\;d\eta
    \=
    -\tau \mathrm{L}_{\theta}(z)\,.
\ee
The functions $\Li_{k}$ for $k\leq0$ can be expressed in terms of Eulerian numbers
\be
  \Li_{-k}(x)
  \=
  \frac{1}{(1-x)^{k+1}}\sum_{\ell=0}^{k-1}\Big\langle\!\!\begin{array}{c}k\\\ell\end{array}\!\!\Big\rangle\, x^{k-\ell}\,,
\ee
where
\be
  \Big\langle\!\!\begin{array}{c}k\\\ell\end{array}\!\!\Big\rangle
  \=
  \sum_{i=0}^{\ell}(-1)^{i}\binom{k+1}{i}(\ell+1+i)^{k}
  \quad\text{and}\quad
  0\leq
  \Big\langle\!\!\begin{array}{c}k\\\ell\end{array}\!\!\Big\rangle
  \leq
  k!\,.
\ee
Therefore, using the fact that $\Li_{k}(x)=-(-1)^{k}\Li_{k}(1/x)$ for $k>0$ we find that for $|x-1|\geq\varepsilon$
\be
  |\Li_{-k}(x)|
  \leq
  (k+1)!\,\min\{|x|,|x^{-1}|\}\varepsilon^{-k-1}\,.
\ee
Therefore, as $z$ is bounded away from the branch cuts of $L_{\theta}$ by $\varepsilon$, we see that there is a constant $C_1$ such that
\be\label{eq:bound.polylog}
    |\Li_{2-K}(\e(z-\eta/\tau))|
    <
    (K-1)!\varepsilon^{1-K}C_1|\e(-\eta/\tau)|\,.
\ee
Moreover, we have Lehmer's bounds for the periodic Bernoulli polynomials
\be
  \Big|\frac{B_{K}(\eta-\lfloor \eta\rfloor)}{K!}\Big|
  <
  \frac{2\z(K)}{(2\pi)^K}\,.
\ee
Therefore, there exists $C_2$ such that
\be
  \Big|\int_{0}^{\infty}\Li_{2-K}(\e(z-\eta/\tau))B_{K}(\eta-\lfloor \eta\rfloor)\;d\eta\Big|
  \;<\;C_2\,\varepsilon^{1-K}\frac{(K-1)!}{(2\pi)^K}\,.
\ee
Therefore, there exists $C$ such that
\be
\Big|(\e(z);\tq)_{\infty}
    -
    \e\Big(-\mathrm{L}_{\theta}(z)\tau
    -
    \frac{1}{2}\mathrm{L}_{\theta}'(z)
    -
    \sum_{k=2}^{K-1}\frac{B_k}{k!}\frac{(2\pi i)^{k-2}}{\tau^{k-1}}\Li_{2-k}(\e(z))\Big)
    \Big|
    <
    C\,\varepsilon^{1-K}(K-1)!|\tau|^{1-K}.
\ee
\end{proof}
\begin{remark}
An easy way to see that $\mathrm{L}_{\theta}$ is the correct branching of the multivalued function $\Li_{2}(\e(z))/(2\pi i)^2$ is that the zeros of $(\e(z);\e(-1/\tau))_{\infty}$ accumulate to the lines $\{z\,|\,\Im((z+\BZ_{\geq0})\tau)=0,\text{ and }\Im(z)\leq0\}$ in the limit as $\tau$ tends to infinity.
\end{remark}

\subsection{\texorpdfstring{Asymptotics of Faddeev's dilogarithm for $\Im(\tau)\neq0$}{Asymptotics of Faddeev's dilogarithm for Im(tau)=/=0}}

We are interested in Faddeev's quantum dilogarithm, which in the upper half plane has the expression
\be
  \Phi(z;\tau)
  \=
  \frac{(q\e(z);q)_{\infty}}{(\e(z/\tau);\tq)_{\infty}}\,.
\ee
To understand the asymptotics of this function we need to consider a different branch of the dilogarithm function again. For $\theta\in(0,2\pi)$ define the domain $\BC_{\theta}=\BC\backslash\big((\BZ_{\geq0}+e^{i\theta}\BR_{\leq0})\cup(\BZ_{<0}+e^{i\theta}\BR_{\geq0})\big)$ depicted in Figure~\ref{fig:prin.bran.fad}. Then define $\mathrm{D}_{\theta}:\BC_{\theta}\rightarrow\BC$ to be
\be
  \mathrm{D}_{\theta}(z)
  \=
  \int_{z}^{e^{i\theta/2}\infty}
  \int_{w}^{e^{i\theta/2}\infty}
  \frac{\e(\z)}{1-\e(\z)}\,d\z\,dw\,,
\ee
where the contours are contained in $\BC_{\theta}$. Using this function we have the following asymptotics of $\Phi(z;\tau)$ in the upper half plane.
\begin{center}
\begin{figure}
\begin{tikzpicture}
\draw[red] (0,0) -- (2,-2);
\draw[red] (2,0) -- (2+2,-2);
\draw[red] (4,0) -- (4+0.5,-0.5);
\draw[red] (-2,0) -- (-2-2,2);
\draw[red] (-4,0) -- (-4-0.5,0.5);
\draw[<->,thick] (0,-2) -- (0,2);
\draw[<->,thick] (-4.5,0) -- (4.5,0);
\draw[thick,red] (-0.05,-0.05)--(0.05,0.05);
\draw[thick,red] (0.05,-0.05)--(-0.05,0.05);
\draw[thick,red,xshift=-4cm] (-0.05,-0.05)--(0.05,0.05);
\draw[thick,red,xshift=-4cm] (0.05,-0.05)--(-0.05,0.05);
\draw[thick,red,xshift=-2cm] (-0.05,-0.05)--(0.05,0.05);
\draw[thick,red,xshift=-2cm] (0.05,-0.05)--(-0.05,0.05);
\draw[thick,red,xshift=2cm] (-0.05,-0.05)--(0.05,0.05);
\draw[thick,red,xshift=2cm] (0.05,-0.05)--(-0.05,0.05);
\draw[thick,red,xshift=4cm] (-0.05,-0.05)--(0.05,0.05);
\draw[thick,red,xshift=4cm] (0.05,-0.05)--(-0.05,0.05);
\end{tikzpicture}
\caption{Domain of $\mathrm{D}_{3\pi/4}(z)$ given by $\BC\backslash\big((\BZ_{\geq0}+(i-1)\BR_{\leq0})\cup(\BZ_{<0}+(i-1)\BR_{\geq0})\big)$.}
\label{fig:prin.bran.fad.2}
\end{figure}
\end{center}
\begin{lemma}\label{lem:asymp.qpoch.2}
Suppose that $\Im(\tau)>0$ and $\varepsilon\in\BR_{>0}$. Then as $\tau\rightarrow i\infty$ with fixed argument and $z$ is bounded away from the half lines $\BC\backslash\BC_{\theta}$ and is bounded away from at least one of the lines $\Im((z+1)\tau)=0$ and $\Im((z+1)\tau)$ by $\varepsilon$ with $\theta=\arg(-1/\tau)\in(0,\pi)$ there exists a constant $C\in\BR_{>0}$ such that
\be
    \Big|\Phi(z\tau;\tau)
    -
    \e\Big(\mathrm{D}_{\theta}(z)\tau
    +
    \frac{1}{2}\mathrm{D}_{\theta}'(z)
    +
    \sum_{k=2}^{K}\frac{B_k}{k!}\frac{(2\pi i)^{k-2}}{\tau^{k-1}}\Li_{2-k}(\e(z))\Big)
    \Big|
    <
    C\,K!\, \varepsilon^{-K}\,|\tau|^{-K}\,.
\ee
\end{lemma}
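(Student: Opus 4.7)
The plan is to factor $\Phi(z\tau;\tau) = N \cdot D^{-1}$ where
\[
N = (q\e(z\tau);q)_\infty = \prod_{n\geq 1}(1-\e((n+z)\tau)), \qquad D = (\e(z);\tq)_\infty,
\]
and to treat the two factors separately. The denominator $D$ is immediately amenable to Lemma~\ref{lem:asymp.qpoch}, which gives its asymptotic expansion in powers of $1/\tau$ in terms of $\mathrm{L}_\theta(z)$, its derivative, and the polylogarithm corrections $\Li_{2-k}(\e(z))$. The numerator $N$, by contrast, is an infinite product in which the modulus of $\e((n+z)\tau)$ decays like $e^{-2\pi n \Im\tau}$ as $n\to\infty$, so $N$ converges very rapidly and only the finitely many factors with $\Im((n+z)\tau)\leq 0$ can contribute at polynomial-in-$\tau$ order.

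First I would apply Lemma~\ref{lem:asymp.qpoch} to $D$ and take the reciprocal to write
\[
D^{-1} = \e\Big(\mathrm{L}_\theta(z)\tau + \tfrac{1}{2}\mathrm{L}_\theta'(z) + \sum_{k=2}^{K-1}\frac{B_k}{k!}\frac{(2\pi i)^{k-2}}{\tau^{k-1}}\Li_{2-k}(\e(z))\Big)\big(1+O(\varepsilon^{-K}K!|\tau|^{-K})\big).
\]
Next I would split $N$ according to the sign of $\Im((n+z)\tau)$. For integers $n$ with $\Im((n+z)\tau)\geq \varepsilon$ the term $\log(1-\e((n+z)\tau))$ is bounded by $Ce^{-2\pi n\Im\tau}$ and the sum over such $n$ is trans-exponentially small in $|\tau|$, hence absorbed into the stated error. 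For the (at most finitely many) integers $n\geq 1$ with $\Im((n+z)\tau)\leq -\varepsilon$, I would apply the identity $\log(1-x) = \log(-x) + \log(1-x^{-1})$ with $x=\e((n+z)\tau)$, splitting each such factor into an exact polynomial-in-$\tau$ piece $i\pi + 2\pi i(n+z)\tau$ plus a trans-exponentially small remainder.

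The crux is then to recognise these polynomial contributions from $N$ as precisely the difference between the $\mathrm{L}_\theta$-expansion and the sought $\mathrm{D}_\theta$-expansion. The key point is a contour-deformation argument for the defining double integrals: the inner contour of $\mathrm{D}_\theta(z)$ runs from $w$ to $e^{i\theta/2}\infty$, while that of $\mathrm{L}_\theta(z)$ runs along $w+e^{i\theta}\BR_{\geq 0}$, and these two rays enclose exactly the integers $n\in\BZ_{>0}$ for which $\Im((n+z)\tau)$ is negative. The residues $\mathrm{Res}_{\zeta=n}\tfrac{\e(\zeta)}{1-\e(\zeta)} = -\tfrac{1}{2\pi i}$ then produce, after the outer integration, contributions of the form $(n+z)\tau + \tfrac{1}{2}$ at each such $n$, matching exactly (including the $\log(-1)=i\pi$) the ``large'' factors of $N$ identified above. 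The $k\geq 2$ corrections involve $\Li_{2-k}(\e(z))$, which are single-valued periodic functions of $z$, hence unaffected by any such contour deformation and thus identical in the two expansions.

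The main obstacle will be careful bookkeeping: matching the residue contributions of $\mathrm{D}_\theta-\mathrm{L}_\theta$ and $\mathrm{D}_\theta'-\mathrm{L}_\theta'$ at each integer with the corresponding polynomial pieces extracted from $N$ (with correct signs and constants), and verifying that the hypothesis that $z$ be bounded away from the branch cuts of $\BC\backslash\BC_\theta$ and from at least one of the two critical lines $\Im(z\tau)=0$, $\Im((z+1)\tau)=0$ is enough to make every relevant sign unambiguous. Once the matching is complete, combining the Lemma~\ref{lem:asymp.qpoch} error with the trans-exponential remainders from $N$ into a single bound of the form $C\varepsilon^{-K}K!|\tau|^{-K}$, uniformly as $\tau\to i\infty$ at fixed argument, is a routine but careful computation.
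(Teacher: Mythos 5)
The proposal diverges from the paper's proof, and the difference is not cosmetic: there is a genuine gap. The paper splits into two regimes. When $\Im((z+1)\tau)>\varepsilon|\tau|$ it argues exactly as you do: the numerator $N=(q\e(z\tau);q)_\infty$ is exponentially close to $1$ and Lemma~\ref{lem:asymp.qpoch} for the denominator finishes the job. In the complementary regime $\Im(z\tau)<-\varepsilon|\tau|$, however, the paper does \emph{not} try to analyse $N$ factor by factor; it instead invokes the inversion identity $\Phi(z;\tau)\Phi(-\tau-z;\tau) = iq^{1/6}\tq^{-1/6}\e(z^2/2\tau+z/2+z/2\tau)$ to replace $z$ by $-1-z$ and thereby land back in the first regime. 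This step is essential, and your factorisation $\Phi=N\cdot D^{-1}$ does not reproduce it.

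Here is where your argument breaks. You classify the factors of $N$ according to whether $\Im((n+z)\tau)\geq\varepsilon|\tau|$ or $\Im((n+z)\tau)\leq-\varepsilon|\tau|$, but the hypotheses of the lemma do not rule out a ``transition'' index $n_0$ with $|\Im((n_0+z)\tau)|$ small. For instance, with $\theta=3\pi/4$ and $z=-3-e^{i\theta}$ one checks that $z$ lies in $\BC_\theta$ at distance about $0.77$ from the nearest branch cut, and $\Im(z\tau)$ and $\Im((z+1)\tau)$ are both of order $|\tau|$, so all stated hypotheses are met; yet $\Im((z+3)\tau)=0$, so the factor $1-\e((z+3)\tau)=1-e^{2\pi i|\tau|}$ merely oscillates on the unit circle as $|\tau|\to\infty$, vanishing whenever $|\tau|\in\BZ$. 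This near-zero of $N$ is mirrored by a near-zero of $D=(\e(z);\tq)_\infty$ at the same $z$ (indeed $z$ sits on one of the excluded half-lines of Lemma~\ref{lem:asymp.qpoch}, so you cannot even invoke it for the denominator), and the two cancel only in the ratio $\Phi=N/D$. Your separate treatment of $N$ and $D$ has no mechanism to exhibit this cancellation, and the error estimates do not simply add. Secondarily, the claimed residue matching---that the integers enclosed between the rays $z+e^{i\theta}\BR_{\geq0}$ and $z+e^{i\theta/2}\BR_{\geq0}$ are exactly the $n\in\BZ_{>0}$ with $\Im((n+z)\tau)<0$---is asserted but not checked; the corresponding angular sectors $\arg(m-z)\in(\theta/2,\theta)$ and $\arg(n+z)\in(\theta,\theta+\pi)$ do not obviously coincide, so even the clean part of the argument needs justification.
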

\begin{proof}
This follows from the Lemma~\ref{lem:asymp.qpoch} when $\Im((z+1)\tau)>|\tau|\varepsilon$ and the fact that in this domain we have
\be
  |(q\e(z\tau);q)_{\infty}-1|<2\exp(-2\pi\varepsilon|\tau|)\,.
\ee
Then we use the fact that by the modularity of the $\theta$-function and $\eta$-function, and the Jacobi triple product
\be
  \Phi(z;\tau)\Phi(-\tau-z;\tau)
  \=
  \frac{(q\e(z);q)_{\infty}(\e(-z);q)_{\infty}}{(\e(z/\tau);\tq)_{\infty}(\tq\e(z/\tau);\tq)_{\infty}}
  \=
  iq^{1/6}\tq^{-1/6}\e(z^2/2\tau+z/2+z/2\tau).
\ee
Therefore, this implies that for $\Im(z\tau)<|\tau|\varepsilon$
\be
  \Phi(z\tau;\tau)
  \=
  \Phi(-\tau-z\tau;\tau)^{-1}
  iq^{1/6}\tq^{-1/6}\e(z(z+1+1/\tau)\tau/2)\,.
\ee
Therefore, using the first half of the proof and the relations between the polylogarithms and $D_{\theta}$ under the map $z\mapsto-z$ gives the result when $\Im(z\tau)<|\tau|\varepsilon$, which completes the proof.
\end{proof}

\begin{remark}
The previous remark about the function $\mathrm{L}_{\theta}$ also applies to the quantum dilogarithm. An easy way to see that $\mathrm{D}_{\theta}$ is the correct branching of the multivalued function $\Li_{2}(\e(z))/(2\pi i)^2$ is that the zeros and poles of $\Phi(z\tau;\tau)$ accumulate to the lines $\BC\backslash\BC_{\theta}$ in the limit as $\tau$ tends to infinity. Here the most important point is that this domain remains connected for all $\theta\in(0,2\pi)$, which was not true for the lone Pochhammer symbol.
\end{remark}

\subsection{\texorpdfstring{Asymptotics of Faddeev's dilogarithm for $\Im(\tau)$ near $0$}{Asymptotics of Faddeev's dilogarithm for Im(tau) near 0}}

To understand the asymptotics as we cross the real numbers we use the following equality for $\Im(z+\tau)>0$ and $\Im(z/\tau)>0$ along with $\Re(-\sqrt{\tau}-1/\sqrt{\tau})<\Re(z/\sqrt{\tau})<0$
\be\label{eq:fad.expressions.II}
\begin{aligned}
  \Phi(z;\tau)
  &\=
  \frac{(q\e(z);q)_{\infty}}{(\e(z/\tau);\tq)_{\infty}}
  \=
  \exp\Big(
    \sum_{k=1}^{\infty}
    \frac{\e(kz)}{k(q^{-k}-1)}
    -\frac{\e(kz/\tau)}{k(\tq^k-1)}
  \Big)\\
  &\=
  \exp\Big(
    \int_{i\sqrt{\tau}\BR+\varepsilon\sqrt{\tau}}\frac{\e((z+1+\tau)w/\tau)}{(\e(w)-1)(\e(w/\tau)-1)}
    \frac{dw}{w}
  \Big)\,,
\end{aligned}
\ee
for some small $\varepsilon>0$. 
The various regions where the expression for $\Phi(\tau z;\tau)$ as $\tau$ tends to infinity are depicted in Figure~\ref{fig:fad.qpoch.overlap}.
\begin{center}
\begin{figure}
\begin{tikzpicture}
\draw[red] (0,0) -- (2,-2);
\draw[red] (2,0) -- (2+2,-2);
\draw[red] (4,0) -- (4+0.5,-0.5);
\draw[red] (-2,0) -- (-2-2,2);
\draw[red] (-4,0) -- (-4-0.5,0.5);
\draw[<->,thick] (0,-2) -- (0,2);
\draw[<->,thick] (-4.5,0) -- (4.5,0);
\draw[thick,red] (-0.05,-0.05)--(0.05,0.05);
\draw[thick,red] (0.05,-0.05)--(-0.05,0.05);
\draw[thick,red,xshift=-4cm] (-0.05,-0.05)--(0.05,0.05);
\draw[thick,red,xshift=-4cm] (0.05,-0.05)--(-0.05,0.05);
\draw[thick,red,xshift=-2cm] (-0.05,-0.05)--(0.05,0.05);
\draw[thick,red,xshift=-2cm] (0.05,-0.05)--(-0.05,0.05);
\draw[thick,red,xshift=2cm] (-0.05,-0.05)--(0.05,0.05);
\draw[thick,red,xshift=2cm] (0.05,-0.05)--(-0.05,0.05);
\draw[thick,red,xshift=4cm] (-0.05,-0.05)--(0.05,0.05);
\draw[thick,red,xshift=4cm] (0.05,-0.05)--(-0.05,0.05);
\filldraw[blue,opacity=0.2] (-2,0) -- (4.3,0) -- (4.3,2) -- (-4,2);
\filldraw[green,opacity=0.2] (-2.5,-2) -- (2.5,2) -- (0.5,2) -- (-4.5,-2);
\end{tikzpicture}
\caption{The blue region depicts the region the first line of Equation~\eqref{eq:fad.expressions.II} considered for $\Phi(\tau z;\tau)$ as $\tau$ is defined and the green region depicts the region the second line is defined.}
\label{fig:fad.qpoch.overlap}
\end{figure}
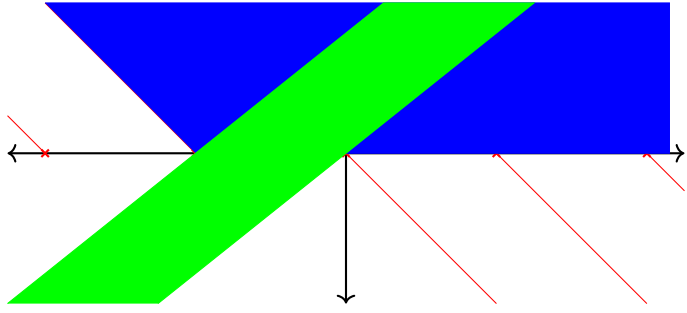
\end{center}
The asymptotics of Faddeev's quantum dilogarithm in the lower half plane can be carried out by a completely similar analysis as in the upper half plane using the formula for $\Im(\tau)<0$ given by
\be
  \Phi(z;\tau)
  \=
  \frac{(\tq^{-1}\e(z/\tau);\tq^{-1})_{\infty}}{(\e(z);q^{-1})_{\infty}}\,.
\ee
In the upper and lower half planes there is one technicality that when $\tau$ is very close to the reals or argument comparable with $\varepsilon$ (the bound for $z$ from the branch cuts) we find that for $z$ in a neighbourhood of the interval $[-1+2\varepsilon,-2\varepsilon]$ the methods we have previously mentioned fail to produce the asymptotics. We also have the issue that the previous methods do not work when $\tau\in\BR_{>0}$. These two problems can be dealt with by using Faddeev's original formula given in Equation~\eqref{eq:fad.expressions.II}.
\begin{lemma}\label{lem:asymp.qpoch.3}
Suppose that $\tau\in\BC\backslash\BR_{\leq0}$ and $\varepsilon\in\BR_{>0}$. Then as $\tau\rightarrow \infty$ with fixed argument bounded by $\varepsilon$ and $z$ bounded away from $\BC_{\theta}$ by $\varepsilon$ with $\theta=\arg(-1/\tau)\in(\pi-\varepsilon,\pi+\varepsilon)$ there exists a constant $C\in\BR_{>0}$ such that
\be
    \Big|\Phi(z\tau;\tau)
    -
    \e\Big(\mathrm{D}_{\theta}(z)\tau
    +
    \frac{1}{2}\mathrm{D}_{\theta}'(z)
    +
    \sum_{k=2}^{K}\frac{B_k}{k!}\frac{(2\pi i)^{k-2}}{\tau^{k-1}}\Li_{2-k}(\e(z))\Big)
    \Big|
    <
    C\,K!\,\varepsilon^{-K}\,|\tau|^{-K}\,.
\ee
\end{lemma}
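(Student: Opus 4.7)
The plan is to work directly with Faddeev's integral representation (the right-most expression in Equation~\eqref{eq:fad.expressions}), which---unlike the infinite-product formulas used in the proofs of Lemmas~\ref{lem:asymp.qpoch} and~\ref{lem:asymp.qpoch.2}---remains valid for $\tau \in \BC \setminus \BR_{\leq 0}$ even when $\Im(\tau) = 0$, provided $z$ lies in an appropriate strip.

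First I would substitute $z \mapsto z\tau$ and apply the Bernoulli expansion
\be
\frac{1}{\e(w/\tau) - 1} \= \frac{\tau}{2\pi i w} + \sum_{k=1}^{K-1} \frac{B_k (2\pi i)^{k-1}}{k!} \frac{w^{k-1}}{\tau^{k-1}} + R_K(w/\tau)\,,
\ee
where $R_K$ admits the same periodic Bernoulli polynomial estimate used in the proof of Lemma~\ref{lem:asymp.qpoch}. Substituting into Faddeev's integral yields, for each Bernoulli term, an integral of the form
\be
\int_{i\sqrt{\tau}\BR+\varepsilon_0\sqrt{\tau}} \frac{\e((z+1+1/\tau)w)}{\e(w)-1}\, w^{k-2}\, dw\,.
\ee
The $k = 0$ contribution should reproduce $\mathrm{D}_{\theta}(z)\tau$, the $k=1$ contribution yields $\tfrac{1}{2}\mathrm{D}_{\theta}'(z)$, and for $k \geq 2$ contour deformation picking up residues at $w \in 2\pi i \BZ$ reproduces the classical integral expansion of $\Li_{2-k}(\e(z))$ and matches the stated polylog terms.

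Next I would estimate the remainder: combining the Bernoulli bound $|R_K(x)| = O(K! |x|^K/(2\pi)^K)$ with the rapid decay of $\e((z+1)w)/(\e(w)-1)$ along the rotated contour $i\sqrt{\tau}\BR+\varepsilon_0\sqrt{\tau}$ should yield a bound of the form $C\,K!\,\varepsilon^{-K}|\tau|^{-K}$, uniformly for $\arg(\tau) \in (-\varepsilon,\varepsilon)$. Finally, the integral representation only converges in the strip $\Re(-\sqrt{\tau}-1/\sqrt{\tau}) < \Re(z/\sqrt{\tau}) < 0$, so to cover the full hypothesis one would extend via the quasi-periodicity $\Phi(z-\tau;\tau) = (1-\e(z))\Phi(z;\tau)$ (together with its $z \mapsto -z-\tau$ counterpart from the functional equation used in Lemma~\ref{lem:asymp.qpoch.2}), verifying that the induced shifts of $\mathrm{D}_\theta$ and the polylog terms match under these transformations.

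The main obstacle will be correctly identifying the branch $\mathrm{D}_\theta$ directly from the contour integral: as $\arg(\tau)$ varies across $0$, the contour $i\sqrt{\tau}\BR$ rotates, and one must verify that the residues of $(\e(w)-1)^{-1}$ collected during contour deformation reproduce exactly the branch-cut structure of $\BC_\theta$, namely $\BZ_{\geq 0} + e^{i\theta}\BR_{\leq 0}$ and $\BZ_{<0} + e^{i\theta}\BR_{\geq 0}$. A secondary technical point is making the remainder estimate genuinely uniform in $\arg(\tau)$ as $\tau$ approaches $\BR_{>0}$, where the strip of validity of the integral representation shrinks and the handling of residues near $w = 0$ becomes delicate.
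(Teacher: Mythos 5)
Your proposal is correct and follows essentially the same route as the paper: start from Faddeev's integral representation (which is the only one valid uniformly across $\BR_{>0}$), expand $(\e(-w/\tau)-1)^{-1}$ by its Bernoulli/Taylor series in $w/\tau$, identify the resulting elementary integrals $\int \e(zw)(\e(-w)-1)^{-1}w^{k-2}\,dw$ with $\mathrm{D}_\theta(z)$, $\mathrm{D}_\theta'(z)$, and $\Li_{2-k}(\e(z))$ by pushing the contour to $\pm\infty$ and collecting residues at $w\in 2\pi i\BZ$ (which is exactly what selects the branch $\mathrm{D}_\theta$), and then bound the Taylor remainder against the exponential decay of $\e(zw)/(\e(-w)-1)$ along the rotated contour to get the $C\,K!\,\varepsilon^{-K}|\tau|^{-K}$ estimate. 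The one minor divergence is how you propose to leave the fundamental strip $\Re(-\sqrt{\tau}-1/\sqrt{\tau})<\Re(z/\sqrt{\tau})<0$: you suggest invoking the quasi-periodicity $\Phi(z-\tau;\tau)=(1-\e(z))\Phi(z;\tau)$, whereas the paper simply continues to deform the integration contour, picking up poles of $(\e(-w)-1)^{-1}$ as it sweeps across them; the two are equivalent by residue calculus (each crossed pole contributes precisely the multiplicative factor coming from the functional equation), so this is a cosmetic rather than substantive difference.
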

\begin{proof}
By deforming the contour to infinity for $\Im(z)>0$ we have
\be\label{eq:fad.int.asymp}
  \int_{i\sqrt{\tau}\BR+\varepsilon_1\sqrt{\tau}}
  \frac{\e(zw)}{(\e(-w)-1)}
  \frac{dw}{w^{2-k}}
  \=
  \sum_{\ell=1}^{\infty}\frac{\e(\ell z)}{\ell^{2-k}}
  \=
  \Li_{2-k}(\e(z))\,,
\ee
while for $\Im(z)<0$ we can deform in the other direction and find that for $k>1$ we have
\be
\begin{aligned}
  \int_{i\sqrt{\tau}\BR+\varepsilon_1\sqrt{\tau}}
  \frac{\e(zw)}{(\e(-w)-1)}
  \frac{dw}{w^{2-k}}
  &\=
  -\delta_{k,2}
  -\sum_{\ell=1}^{\infty}\frac{\e(-\ell z)}{(-\ell)^{2-k}}\\
  \=
  \Li_{2-k}(\e(z))\,,
\end{aligned}
\ee
While for $k=0$ we have
\be
  \int_{i\sqrt{\tau}\BR+\varepsilon_1\sqrt{\tau}}
  \frac{\e(zw)}{(\e(-w)-1)}
  \frac{dw}{w^{2-k}}
  \=
  -(2\pi i)^2(\frac{1}{2}z(z+1)+\frac{1}{12})
  -\sum_{\ell=1}^{\infty}\frac{\e(-\ell z)}{(-\ell)^{2}}
  \=
  D_{\theta}(z)\,,
\ee
and $k=1$ we have
\be\label{eq:fad.int.asymp.2}
  \int_{i\sqrt{\tau}\BR+\varepsilon_1\sqrt{\tau}}
  \frac{\e(zw)}{(\e(-w)-1)}
  \frac{dw}{w^{2-k}}
  \=
  -(2\pi i)(z+\frac{1}{2})
  +\sum_{\ell=1}^{\infty}\frac{\e(-\ell z)}{\ell}
  \=
  D_{\theta}'(z)\,.
\ee
Therefore, these equations hold for all $z$ in our region. We can use these equations to deduce our desired asymptotics. We want to compute the asymptotics of the integral 
\be
  \int_{\mathcal{C}}
  \frac{\e(zw)}{(\e(-w)-1)(\e(-w/\tau)-1)}
  \frac{dw}{w}\,,
\ee
where for $z$ with $\Re((\varepsilon_1-1)\sqrt{\tau}-1/\sqrt{\tau})<\Re(z\sqrt{\tau})<-\varepsilon_1\Re(\sqrt{\tau})$ we can take $\mathcal{C}=i\sqrt{\tau}\BR+\varepsilon_1\sqrt{\tau}$. We can deform the contour of this integral so that for $z$ bounded away from $\BC_{\theta}$ by $\varepsilon$ the integral converges. This is given up to exponentially small terms by the same integral cut off with $|w|<|\sqrt{\tau}|$. Then by Taylor's theorem there exists a constant $C_1$ such that for $|w|<|\sqrt{\tau}|$ we have
\be
  \Big|\frac{1}{\e(-w/\tau)-1}
  -
  \sum_{k=0}^{K}\frac{B_{k}}{k!}\Big(\frac{-2\pi iw}{\tau}\Big)^{k-1}
  \Big|
  \;\leq\;
  C_1\Big|2\pi\frac{w}{\tau}\Big|^{K}
\ee
Therefore, we find that
\be
\begin{aligned}
  &\Big|\int_{\mathcal{C}}
  \Big(\frac{\e(zw)}{(\e(-w)-1)(\e(-w/\tau)-1)}
  -
  \frac{\e(zw)}{(\e(-w)-1)}\sum_{k=0}^{K}\frac{B_{k}}{k!}\Big(\frac{-2\pi iw}{\tau}\Big)^{k-1}
  \Big)
  \frac{dw}{w}
  \Big|\\
  &<C_1\,(2\pi)^{K}|\tau|^{-K}\int_{\mathcal{C}}
  \Big|\frac{\e(zw)}{w^{1-K}(\e(-w)-1)}\Big|
  dw\,.
\end{aligned}
\ee
This final integral can be split into two parts depending on which dominates $\max\{\e(-w),1\}$. The size of one of these integrals is then approximated by a constant times an integral of the form
\be
  \int_{0}^{\infty}
  \exp(-2\pi\varepsilon w)w^{K-1}
  dw
  \=
  (2\pi\varepsilon)^{-K}(k-1)!
\ee
The other integral has a similar bound. Therefore, we see that there is a constant $C$ such that
\be
\begin{aligned}
  &\Big|\int_{i\sqrt{\tau}\BR+\varepsilon_1\sqrt{\tau}}
  \Big(\frac{\e(zw)}{(\e(-w)-1)(\e(-w/\tau)-1)}
  -
  \frac{\e(zw)}{(\e(-w)-1)}\sum_{k=0}^{K}\frac{B_{k}}{k!}\Big(\frac{-2\pi iw}{\tau}\Big)^{k-1}
  \Big)
  \frac{dw}{w}
  \Big|\\
  &<C\,K!\,\varepsilon^{-K}\,|\tau|^{-K}.
\end{aligned}
\ee
\end{proof}

\begin{theorem}\label{thm:fad.asy.app}
Suppose that $\tau\in\BC\backslash\BR_{\leq0}$ and $\varepsilon\in\BR_{>0}$. Then as $|\tau|\rightarrow \infty$ with fixed argument and $z$ bounded away from $\BC_{\theta}$ by $\varepsilon$ with $\theta=\arg(-1/\tau)\in(0,2\pi)$ there exists a constant $C\in\BR_{>0}$ such that
\be
    \Big|\Phi(z\tau;\tau)
    -
    \e\Big(\mathrm{D}_{\theta}(z)\tau
    +
    \frac{1}{2}\mathrm{D}_{\theta}'(z)
    +
    \sum_{k=2}^{K}\frac{B_k}{k!}\frac{(2\pi i)^{k-2}}{\tau^{k-1}}\Li_{2-k}(\e(z))\Big)
    \Big|
    <
    C\,\varepsilon^{-K}\,K!\,|\tau|^{-K}\,.
\ee
\end{theorem}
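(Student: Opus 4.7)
The plan is to patch together Lemmas~\ref{lem:asymp.qpoch.2} and~\ref{lem:asymp.qpoch.3}, together with their analogues in the lower half $\tau$-plane, into a single uniform statement covering all $\theta=\arg(-1/\tau)\in(0,2\pi)$. Fix $\varepsilon>0$ and choose an auxiliary angle $\delta=\delta(\varepsilon)>0$ (to be determined), and split the proof into three overlapping cases according to the value of $\theta$.

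In the first case, $\theta\in[\delta,\pi-\delta]$, we have $\Im(\tau)$ bounded away from zero in the upper half plane, and Lemma~\ref{lem:asymp.qpoch.2} applies directly. The only subtlety is that this lemma requires $z$ to be bounded away from at least one of the lines $\Im(z\tau)=0$ and $\Im((z+1)\tau)=0$. For $\theta\in[\delta,\pi-\delta]$ these two lines have distinct directions, and a routine geometric argument shows that any $z$ satisfying the hypothesis $\mathrm{dist}(z,\mathbb{C}\setminus\mathbb{C}_\theta)\geq\varepsilon$ is automatically bounded away from one of them by a constant multiple of $\varepsilon$ (depending on $\delta$). In the second case, $\theta\in[\pi+\delta,2\pi-\delta]$, we apply the identity
\be
\Phi(z;\tau)\=\frac{(\tq^{-1}\e(z/\tau);\tq^{-1})_{\infty}}{(\e(z);q^{-1})_{\infty}}
\ee
valid when $\Im(\tau)<0$, and repeat the derivation of Lemmas~\ref{lem:asymp.qpoch} and~\ref{lem:asymp.qpoch.2} with $\tau$ replaced by $-\tau$ and the Euler--Maclaurin/Lehmer bound argument applied to the two factors. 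The branching is governed by $D_\theta$ for $\theta\in(\pi,2\pi)$, which is consistent since the accumulation locus of the poles/zeros of $\Phi(z\tau;\tau)$ reverses orientation when $\Im(\tau)$ changes sign.

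In the third case, $\theta\in[\pi-2\delta,\pi+2\delta]$, Lemma~\ref{lem:asymp.qpoch.3} applies directly. This case is indispensable: as $\theta\to\pi$ the two lines $\Im(z\tau)=0$ and $\Im((z+1)\tau)=0$ collapse onto the real axis, both passing through the real interval $[-1,0]$, so the first case breaks down precisely for $z$ near this interval, and there is no good way to use the product formula across the real axis $\tau\in\BR_{>0}$ at all. The integral representation~\eqref{eq:fad.expressions.II} circumvents this by expressing $\Phi(z\tau;\tau)$ as a single Mellin--Barnes contour integral that can be analyzed uniformly as $\tau$ crosses the positive real axis; this is exactly what is done in Lemma~\ref{lem:asymp.qpoch.3}, which gives the estimate with the same $D_\theta$-asymptotic for $\theta$ in a full neighborhood of $\pi$ and for all $z$ at distance $\geq\varepsilon$ from $\mathbb{C}\setminus\mathbb{C}_\theta$.

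The three cases cover $(0,2\pi)$ with overlap. To finish, one chooses $\delta$ small enough relative to $\varepsilon$ that the first-case hypothesis on the distance to the auxiliary lines is met on $[\delta,\pi-\delta]$, and similarly for the mirror case; then takes $C$ to be the maximum of the constants produced by the three cases (uniform in $\theta$ by a standard compactness argument on the rays of fixed argument). The main technical obstacle is the bookkeeping in Case~(i) and Case~(ii): one must check that the branch $D_\theta$ produced by Lemma~\ref{lem:asymp.qpoch.2} (where the branches were patched together via the reflection $\Phi(z;\tau)\Phi(-\tau-z;\tau)=\cdots$ across the line $\Im(z\tau)=0$) agrees with the branch produced by Lemma~\ref{lem:asymp.qpoch.3} (where the branches arise from the contour deformation~\eqref{eq:fad.int.asymp}--\eqref{eq:fad.int.asymp.2}) on their common region of validity. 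This matching is a direct computation from the definition of $D_\theta$ and the distribution identities for $\mathrm{Li}_{2-k}(\e(z))$ across integer points, and once established the theorem follows.
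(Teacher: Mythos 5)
Your proposal matches the paper's approach: the paper's own proof of Theorem~\ref{thm:fad.asy.app} is a single sentence citing Lemmas~\ref{lem:asymp.qpoch}, \ref{lem:asymp.qpoch.2} and~\ref{lem:asymp.qpoch.3}, with the lower-half-plane analogue of Lemma~\ref{lem:asymp.qpoch.2} dispatched in two sentences just before Lemma~\ref{lem:asymp.qpoch.3}; your three-case decomposition by $\theta$ is exactly the right elaboration of that ``combine the lemmas'' step.

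One inaccuracy in your Case~(i) worth fixing: the lines $\Im(z\tau)=0$ and $\Im((z+1)\tau)=0$ are \emph{parallel} (both in direction $e^{i\theta}$), not in distinct directions; the relevant quantity is their separation $|\sin\theta|$. As a consequence, a point at distance $\geq\varepsilon$ from the branch cuts $\BC\setminus\BC_\theta$ need only be at distance $\geq c\,\varepsilon$ from one of those lines with $c$ possibly less than $1$ (this happens when $\varepsilon\gtrsim|\sin\theta|$), and a factor $c^{-K}$ cannot be absorbed into a $K$-independent constant $C$. That regime is precisely the one that should instead be routed through Lemma~\ref{lem:asymp.qpoch.3}, so the overlap between your Case~(i) and Case~(iii) is doing real work; making the handoff quantitative so that the stated $\varepsilon^{-K}$ survives is the one technical loose end---one which the paper's terse proof also does not spell out. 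Your remarks on matching the branch $D_\theta$ across the two derivations are apt and correspond to the paper's observation that the zero/pole accumulation locus of $\Phi(z\tau;\tau)$ pins down the branch.
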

\begin{proof}
The proof follows by combining the results of Lemma~\ref{lem:asymp.qpoch}, Lemma~\ref{lem:asymp.qpoch.2} and Lemma~\ref{lem:asymp.qpoch.3}.
\end{proof}
\begin{corollary}
If $\Im(z)>0$ and $-\arg(z)+\pi/2<\arg(\tau)<\arg(z+1)+\pi/2$, $\Im(z)<0$ and $-\pi/2-\arg(z+1)<\arg(\tau)<-\arg(z)-\pi/2$, or $z\in(-1,0)$ and $\tau\in\BC\backslash\BR_{\leq0}$, then $\Phi(z\tau;\tau)$ is the Borel--Laplace resummation of its asymptotics.
\end{corollary}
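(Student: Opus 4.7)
The plan is to deduce the corollary from Theorem~\ref{thm:fad.asy.app} by exactly the same Nevanlinna/Mordell argument already used in the proof of Corollary~\ref{cor:fad.resum}, with the only additional ingredient being a careful conversion of the geometric condition ``$z \in \BC_\theta$ with distance $\geq \varepsilon$'' into the stated sectors for $\arg(\tau)$. First I would rewrite Theorem~\ref{thm:fad.asy.app} as a uniform Gevrey-1 asymptotic expansion: pulling the leading factor $\e(D_\theta(z)\tau + \tfrac{1}{2}D_\theta'(z))$ out of the estimate leaves a formal power series in $1/\tau$ whose $k$-th coefficient is a constant multiple of $B_k \Li_{2-k}(\e(z))$, with remainder bounded by $C\varepsilon^{-K}K!\,|\tau|^{-K}$. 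Up to the prefactors $\mu_8\tq^{-1/24}q^{1/24}$ this is precisely the defining asymptotic series of $\Psi(z;-2\pi i/\tau)$ from the introduction, and the bound is the Gevrey-1 estimate hypothesized by Nevanlinna's theorem.

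The second step is the branch-cut bookkeeping. For $\Im(z)>0$, the set $\BC\setminus\BC_\theta$ decomposes into half-rays $\BZ_{\geq 0} + e^{i\theta}\BR_{\leq 0}$ pointing into the lower half-plane (which never touch $z$) and half-rays $\BZ_{<0} + e^{i\theta}\BR_{\geq 0}$ pointing into the upper half-plane. Of the latter, those based at $-1$ and $0$ are the first to collide with $z$: they pass through $z$ exactly when $\theta = \arg(z+1)$ and $\theta = \pi + \arg(z)$ respectively. Hence $z$ stays at positive distance from $\BC\setminus\BC_\theta$ on the open interval $\theta\in(\arg(z+1),\pi+\arg(z))$; using $\theta = \pi - \arg(\tau)$ this is exactly the interval $-\arg(z)+\tfrac{\pi}{2} < \arg(\tau) < \arg(z+1)+\tfrac{\pi}{2}$ stated in the corollary. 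The lower half-plane case is handled symmetrically, either directly or by using the functional equation $\Phi(z;\tau)\Phi(-z-\tau;\tau)=iq^{1/6}\tq^{-1/6}\e(z(z+1+1/\tau)\tau/2)$ from the proof of Lemma~\ref{lem:asymp.qpoch.2}. For $z\in(-1,0)$ one checks that none of the half-rays ever cross $z$, so $z\in\BC_\theta$ for every $\theta\in(0,2\pi)$ and the uniform estimate is valid throughout $\tau\in\BC\setminus\BR_{\leq 0}$.

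Having uniform Gevrey-1 asymptotics on the stated open sector in $\tau$, the identification of $\Phi(z\tau;\tau)$ with the Borel--Laplace sum of its asymptotic series in the corresponding direction is now a direct application of the general principle recalled in Section~\ref{sec:BL-intro}: Mordell's inverse Laplace formula produces the analytic continuation of the Borel transform in the corresponding cone and the Laplace transform reproduces $\Phi(z\tau;\tau)$, so the latter is the Borel--Laplace sum of its asymptotics. This is the same mechanism used to establish Corollary~\ref{cor:fad.resum} from the more restrictive asymptotic estimate available there; the only change is that Theorem~\ref{thm:fad.asy.app} now covers the full range $\theta\in(0,2\pi)$, including directions near $\arg(\tau)=\pm\tfrac{\pi}{2}$ where the earlier argument did not apply. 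The case $z\in(-1,0)$ follows because the asymptotics hold uniformly on a sector of opening $2\pi$, so every non-singular direction produces the same Borel--Laplace sum.

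The only genuine bit of work is the branch-cut case analysis of the previous paragraph. Everything else is a mechanical repetition of the argument already given for Corollary~\ref{cor:fad.resum}, since Theorem~\ref{thm:fad.asy.app} supplies the required Gevrey-1 asymptotics essentially verbatim.
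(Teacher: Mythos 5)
Your overall strategy is the same as the paper's: Theorem~\ref{thm:fad.asy.app} furnishes a uniform Gevrey-1 bound, and the Nevanlinna/Mordell machinery recalled in Section~\ref{sec:BL-intro} then identifies $\Phi(z\tau;\tau)$ with the Borel--Laplace sum of its asymptotic series, exactly as in the proof of Corollary~\ref{cor:fad.resum}. The paper's own proof is essentially a one-line invocation of this; your write-up is more detailed, in particular because you try to derive the stated sector in $\arg(\tau)$ from the geometry of $\BC_\theta$.

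That derivation, however, contains a concrete arithmetic error. You correctly identify that for $\Im(z)>0$ the point $z$ stays a positive distance from $\BC\setminus\BC_\theta$ precisely on the interval $\theta\in(\arg(z+1),\,\pi+\arg(z))$ --- that part is fine. But with $\theta=\pi-\arg(\tau)$ this interval pulls back to
\[
  \arg(\tau)\in\bigl(-\arg(z),\,\pi-\arg(z+1)\bigr),
\]
which is not $\bigl(-\arg(z)+\tfrac{\pi}{2},\,\arg(z+1)+\tfrac{\pi}{2}\bigr)$ as you assert. The two intervals differ by $\pi/2$ at the lower endpoint and by $\pi-2\arg(z+1)$ at the upper one, so the sentence ``this is exactly the interval stated in the corollary'' is simply false. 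The $\pi/2$ offsets in the corollary are not an artefact of the $\theta\mapsto\arg(\tau)$ change of variable at all; they come from the requirement in Nevanlinna's theorem that the uniform asymptotics hold on a half-plane (a sector of opening $\geq\pi$ in $\arg(\hbar)$) around the chosen resummation direction, and your write-up never performs that shrinking. Because you assert the equality rather than perform the shrinking, the argument you give does not establish the corollary with the sector as stated, and a reader who tries to follow the claimed translation will get stuck. (You also briefly misdescribe the rays $\BZ_{\geq 0}+e^{i\theta}\BR_{\leq 0}$ as always pointing into the lower half-plane, which is only true for $\theta\in(0,\pi)$; your final interval is right, but the stated justification of it is sloppy.) The fix is to keep your $\theta$-interval, but then explicitly carry out the $\pi/2$-buffer coming from the Nevanlinna/Mordell step before converting to $\arg(\tau)$, checking the result against the corollary's sector rather than declaring them equal.
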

\begin{proof}
As $\tau\to\infty$ we have
\be
    \Big|\Phi(z\tau;\tau)
    -
    \e\Big(\mathrm{D}_{\theta}(z)\tau
    +
    \frac{1}{2}\mathrm{D}_{\theta}'(z)
    +
    \sum_{k=2}^{K}\frac{B_k}{k!}\frac{(2\pi i)^{k-2}}{\tau^{k-1}}\Li_{2-k}(\e(z))\Big)
    \Big|
    <
    C\,\varepsilon^{-K}\,K!\,|\tau|^{-K}\,.
\ee
Therefore, we see that $\Phi(z\tau,\tau)$ is the Borel--Laplace resummation of its asymptotics for $\tau$ in these cones.
\end{proof}

\bibliographystyle{plain}
\bibliography{biblio}
\end{document}